\numberwithin{equation}{section}
\newtheorem{theorem}{Theorem}[section]
\newtheorem{definition}[theorem]{Definition}
\newtheorem{proposition}[theorem]{Proposition}
\newtheorem{conjecture}[theorem]{Conjecture}
\newtheorem{corollary}[theorem]{Corollary}
\theoremstyle{definition}
\newtheorem{example}[theorem]{Example}
\newtheorem{remark}[theorem]{Remark}
\title{\textbf{The Dixmier problem\\ for skew $PBW$ extensions and rings}}
\author{William Fajardo\\
\texttt{william.fajardo@uptc.edu.co}\\
Universidad Pedagógica y Tecnológica de Colombia, Tunja\\
Oswaldo Lezama\\
\texttt{jolezamas@unal.edu.co}
\\Seminario de Álgebra Constructiva - SAC$^2$\\ Departamento de Matemáticas\\ Universidad Nacional de
Colombia, Sede Bogot\'a}
\date{}
\begin{document}
\maketitle
\begin{abstract}
\noindent In this paper we discuss for skew $PBW$ extensions the famous Dixmier problem formulated by Jacques Dixmier in 1968. The skew $PBW$ extensions are noncommutative rings of polynomial type and covers several algebras and rings arising in mathematical physics and noncommutative algebraic geometry. For this purpose, we introduce the Dixmier algebras and we will study the Dixmier problem for algebras over commutative rings, in particular, for $\mathbb{Z}$-algebras, i.e., for arbitrary rings. The results are focused on the investigation of the Dixmier problem for matrix algebras, product of algebras, tensor product of algebras and also on the Dixmier question for the following particular key skew $PBW$ extension: Let $K$ be a field of characteristic zero and let $\mathcal{CSD}_n(K)$ be the $K$-algebra generated by $n\geq 2$ elements $x_1,\dots,x_n$ subject to relations
\begin{center}
	$x_jx_i=x_ix_j+d_{ij}$, for all $1\leq i<j\leq n$, with $d_{ij}\in K-\{0\}$.
\end{center}
We prove that the algebra $\mathcal{CSD}_n(K)$ is central and simple. In the last section we present a matrix-computational  approach to the problem formulated by Jacques Dixmier and also we compute some concrete nontrivial examples of automorphisms of the first Weyl algebra $A_1(K)$ and $\mathcal{CSD}_n(K)$ using the \texttt{MAPLE} library \textbf{SPBWE} developed for the first author. We compute the inverses of these automorphisms, and for $A_1(K)$, its factorization through some elementary automorphisms. For $n$ odd, we found some endomorphisms of $\mathcal{CSD}_n(K)$ that are not automorphisms. We conjecture that $\mathcal{CSD}_n(K)$ is Dixmier when $n$ is even.
\bigskip

\noindent \textit{Key words and phrases.} Dixmier conjecture, rings and algebras, skew $PBW$ extensions, \textbf{SPBWE}.

\bigskip

\noindent 2020 \textit{Mathematics Subject Classification.}
Primary: 16W20. Secondary: 16S35, 16S80, 16Z05.
\end{abstract}

\tableofcontents

\section{Introduction}

In this section we recall the Dixmier question formulated by Jacques Dixmier in 1968 as well as some key conjectures close related to the Dixmier problem and the relationship between them.

\subsection{The Dixmier conjecture}\label{subsection1.1}

Let $K$ be a field and $A_1(K)$ be the \textbf{\textit{first Weyl algebra}} defined as the quotient algebra
\begin{center}
$A_1(K):=K\{t,x\}/\langle xt-tx-1\rangle$,
\end{center}
where $K\{t,x\}$ is the free $K$-algebra generated by $t$ and $x$, and $\langle xt-tx-1\rangle$ is the two-sided ideal of $K\{t,x\}$ generated by $xt-tx-1$. Thus, $A_1(K)$ is the associative $K$-algebra generated by two elements $t,x$ that satisfy the relation
\begin{center}
$xt=tx+1$.
\end{center}
In 1968, Jacques Dixmier in his paper \cite{Dixmier} set the following question when $char(K)=0$:
\begin{center}
	It is every endomorphism of $A_1(K)$ an automorphism?
\end{center}
Here an endomorphism should be understood as a $K$-linear ring homomorphism, i.e., an algebra homomorphism. Observe that the original problem is not a conjecture but it is a question. However, in the mathematical literature this problem is known as a conjecture in the following way.
\begin{conjecture}[\textbf{Dixmier conjecture}]
Let $K$ be a field of characteristic zero.
\begin{center}
Every endomorphism of $A_1(K)$ is an automorphism.
\end{center}
\end{conjecture}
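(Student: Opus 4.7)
The plan is to split the problem into injectivity and surjectivity. Since $A_1(K)$ is a simple $K$-algebra when $\mathrm{char}(K)=0$, the kernel of any nonzero $K$-algebra endomorphism $\varphi\colon A_1(K)\to A_1(K)$ is a two-sided ideal which must therefore be $(0)$, and the requirement $\varphi(1)=1$ rules out the zero map. Hence injectivity is automatic, and the whole difficulty lies in proving surjectivity: one must show that the subalgebra of $A_1(K)$ generated by $\varphi(t)$ and $\varphi(x)$ is all of $A_1(K)$, equivalently that $t$ and $x$ lie in the image of $\varphi$.

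My first step would be to exploit the Bernstein filtration $F_{\leq n}=\mathrm{span}_K\{t^a x^b : a+b\leq n\}$ of $A_1(K)$, whose associated graded $\mathrm{gr}\,A_1(K)$ is the commutative polynomial ring $K[T,X]$ and carries a natural Poisson bracket induced by the commutator. Writing $\varphi(t)=P$ and $\varphi(x)=Q$, the defining relation $xt-tx=1$ forces $[Q,P]=1$ in $A_1(K)$. Passing to the associated graded, the principal symbols $\sigma(P),\sigma(Q)$ of top degrees $d_P,d_Q$ satisfy $\{\sigma(Q),\sigma(P)\}=0$, since the Poisson bracket would generically have degree $d_P+d_Q-2$ while $[Q,P]=1$ has degree $0$. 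A classical argument in commutative algebra then forces $\sigma(P)$ and $\sigma(Q)$ to be algebraically dependent, in fact polynomially dependent on a common primitive symbol, which yields strong numerical constraints on the pair $(d_P,d_Q)$.

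From here the plan is inductive: compose $\varphi$ with elementary tame automorphisms of $A_1(K)$, namely triangular automorphisms of the form $t\mapsto t,\ x\mapsto x+f(t)$ together with their symmetric counterparts and invertible linear changes of $t,x$, aiming at each step to decrease the total degree $d_P+d_Q$ of the image pair. If this reduction terminates at a pair of bidegree $(1,1)$, then the resulting composite endomorphism is a linear symplectic change of coordinates, hence an automorphism, and unwinding the composition shows that the original $\varphi$ was already an automorphism. This is essentially the strategy by which Dixmier and Makar-Limanov described $\mathrm{Aut}(A_1(K))$ as a tame amalgamated free product, and it reduces the conjecture to a combinatorial statement about which leading bidegrees $(d_P,d_Q)$ are compatible with $\{\sigma(Q),\sigma(P)\}=0$.

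The hard part, and the reason the conjecture has remained open since 1968, is that the reduction step is not known to terminate in general: there may exist ``irreducible'' pairs $(\sigma(P),\sigma(Q))$ satisfying the Poisson condition whose bidegrees admit no decreasing elementary automorphism, and ruling them out requires a deeper geometric input that is not presently available. An alternative I would pursue in parallel is the Tsuchimoto--Belov-Kanel--Kontsevich equivalence, which translates the Dixmier conjecture in dimension $n$ into the Jacobian conjecture in dimension $2n$ by reducing modulo $p$ and exploiting the $p$-centre of the Weyl algebra; within this framework one could attempt to leverage the central simplicity and PBW structure of algebras of the type $\mathcal{CSD}_n(K)$ emphasised later in the paper. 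I expect the main obstacle in either route to be the same: the single scalar identity $[Q,P]=1$ must somehow be forced to imply full algebraic surjectivity of $\varphi$, and no currently known technique converts that one equation into the required global control.
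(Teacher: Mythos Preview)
This statement is presented in the paper as a \emph{conjecture}, not as a theorem: the paper gives no proof of it. The paper does record (Theorem~1.3) that Zheglov has recently claimed a proof based on the theory of normal forms for ordinary differential operators, but it neither reproduces nor relies on that argument; the remainder of the paper treats the Dixmier property as a definition to be investigated for other algebras, and its own computational results for $A_1(K)$ (Corollary~5.3, the factorization tables) explicitly invoke Zheglov's theorem as a black box rather than supplying an independent proof.

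Your proposal is therefore not to be compared against any proof in the paper, because there is none. What you have written is an accurate and well-informed survey of the two classical lines of attack---the Bernstein filtration/tame reduction strategy going back to Dixmier and Makar-Limanov, and the Tsuchimoto--Belov-Kanel--Kontsevich reduction to the Jacobian conjecture---together with a correct identification of why neither is known to close. In particular you are right that injectivity is free from simplicity and that the entire content is surjectivity; you are also right that the single commutator identity $[Q,P]=1$ does not by any presently understood mechanism force the subalgebra $K\langle P,Q\rangle$ to be all of $A_1(K)$. But you should be clear that what you have produced is a research plan, not a proof: the reduction step in your filtration argument is exactly the point at which the problem has been stuck for over fifty years, and you have not supplied any new idea to make it terminate. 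If you intend to claim a proof, you would need either to carry out Zheglov's normal-form argument (which the paper cites but does not reproduce) or to provide the missing termination lemma yourself.
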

In his original paper \cite{Dixmier} Dixmier wrote: ``A. A Kirillov informed me that the Moscow school also
considered this problem". Thus, the problem could be named as the Dixmier-Kirillov conjecture.

The Dixmier conjecture can be formulated in general for the \textbf{\textit{$n$-th Weyl algebra}} $A_n(K)$, $n\geq 1$ (\cite{Essen}). This algebra is generated by $2n$ elements $t_1,\dots,t_n,x_1,\dots,x_n$ that satisfy the following relations:
\begin{center}
$[x_i, t_j]=\delta_{ij}$, $[x_i,x_j]=0=[t_i,t_j]$, $1\leq i,j\leq n$.
\end{center}

\begin{conjecture}[\textbf{Generalized Dixmier conjecture}]
	Let $K$ be a field of characteristic zero and $n\geq 1$.
	\begin{center}
		Every endomorphism of $A_n(K)$ is an automorphism.
	\end{center}
\end{conjecture}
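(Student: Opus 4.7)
The plan is to split the problem into injectivity (which is essentially free in characteristic zero) and surjectivity (which is the entire content of the conjecture). For any $K$-algebra endomorphism $\varphi\colon A_n(K)\to A_n(K)$, the ideal $\ker\varphi$ is two-sided, and since $A_n(K)$ is a simple Noetherian domain when $\mathrm{char}(K)=0$, one has $\ker\varphi\in\{0,A_n(K)\}$; because $\varphi(1)=1$, necessarily $\ker\varphi=0$. Thus the real task is to force $\varphi$ to be surjective, i.e., to produce preimages of the generators $t_i,x_i$.

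My approach would exploit the Bernstein filtration $F_\bullet A_n(K)$ with $\deg t_i=\deg x_i=1$, whose associated graded algebra is the commutative polynomial ring $R=K[T_1,\ldots,T_n,X_1,\ldots,X_n]$ carrying the standard symplectic Poisson bracket $\{X_i,T_j\}=\delta_{ij}$. After filtering $\varphi$ by $F_m A_n(K)$ and examining the leading symbols $\overline{\varphi(t_i)},\overline{\varphi(x_j)}$, the canonical relations $[\varphi(x_i),\varphi(t_j)]=\delta_{ij}$ translate into Poisson identities $\{\overline{\varphi(x_i)},\overline{\varphi(t_j)}\}=0$ together with a nondegeneracy condition that is exactly a Jacobian-type requirement on the induced map $\overline{\varphi}\colon R\to R$. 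The hope would then be to show that $\overline{\varphi}$ is a symplectic automorphism of $R$ and lift this automorphism back to $A_n(K)$ via a deformation/quantization argument, recovering a two-sided inverse of $\varphi$.

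The critical input one would then invoke is the Tsuchimoto--Belov--Kontsevich equivalence of the Dixmier conjecture with the Jacobian conjecture, via reduction modulo large primes $p$: the center of $A_n(\mathbb{F}_p)$ is a polynomial ring on the $p$-th powers of the generators, making $A_n(\mathbb{F}_p)$ an Azumaya algebra of rank $p^{2n}$ over that center, and an endomorphism of $A_n(\mathbb{F}_p)$ induces a polynomial endomorphism of the center whose Jacobian must be a nonzero scalar. Combining this reduction for all $p$ would reduce the Generalized Dixmier conjecture to the Jacobian conjecture in $2n$ variables.

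The hard part, of course, is what has kept this open since 1968: there is no known effective bound on the degree of $\varphi(t_i),\varphi(x_j)$ in terms of any a priori data, so every attempt to control the leading symbols degree-by-degree runs into an unbounded recursion; equivalently, the Jacobian conjecture itself is no easier than the Dixmier conjecture. For this reason I would not expect to close the argument, but only to frame it cleanly and identify the precise Jacobian-type obstruction as the last obstacle. In the body of the present paper this is why the authors instead formulate a restricted Dixmier-type statement for the algebra $\mathcal{CSD}_n(K)$ with $n$ even as a conjecture and confine their proofs to the central simplicity of $\mathcal{CSD}_n(K)$ and to matrix-computational evidence via \textbf{SPBWE}.
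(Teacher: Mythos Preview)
This statement is labelled a \emph{conjecture} in the paper, not a theorem: the paper offers no proof of it, and indeed treats it as open (citing only Zheglov's recent claim for $n=1$ and the known equivalence $JC_{2n}\Rightarrow DC_n$ in Subsection~1.3). So there is no ``paper's own proof'' to compare your proposal against.

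Your write-up is consistent with this. You correctly isolate injectivity as immediate from simplicity of $A_n(K)$ in characteristic zero, and you correctly identify surjectivity as the entire content. Your sketch via the Bernstein filtration and the Tsuchimoto--Belov-Kanel--Kontsevich reduction to the Jacobian conjecture is accurate as a summary of the known landscape, and you explicitly acknowledge that you ``would not expect to close the argument.'' That is the honest situation: your proposal is not a proof, and does not pretend to be one; it is a correct framing of why the problem is hard and where the obstruction lies. The paper's own treatment is in the same spirit---it records the implications $DC_n\Rightarrow JC_n$ and $JC_{2n}\Rightarrow DC_n$ with references, and then moves on to restricted or computational settings (the algebras $\mathcal{CSD}_n(K)$, matrix characterizations, and \textbf{SPBWE} experiments) rather than attacking the conjecture itself.

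One minor correction: in your Poisson-bracket step you write $\{\overline{\varphi(x_i)},\overline{\varphi(t_j)}\}=0$, but the canonical relation $[\varphi(x_i),\varphi(t_j)]=\delta_{ij}$ forces the leading symbols to Poisson-commute only when their degrees sum to more than $2$; when both are linear the bracket equals $\delta_{ij}$, not $0$. This does not affect your overall point, since you already concede the argument cannot be completed.
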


For $char(K)=p>0$, Bavula in \cite{Bavula4} presents a monomorphism of $A_1(K)$ that is not an automorphism:

\begin{center}
$\alpha: A_1(K)\to A_1(K)$, $t\mapsto t+t^p$, $x\mapsto x$.
\end{center}
$\alpha$ is not an automorphism since its restriction to the center $Z(A_1(K))=K[t^p,x^p]$ is not an automorphism:
\begin{align}\label{equ1.1}
	\alpha\mid_{Z(A_1(K))}: Z(A_1(K))\to Z(A_1(K)), \ \ t^p\mapsto t^p+t^{p^2}, x^p\mapsto x^p.
\end{align}

The Dixmier conjecture invites to investigate the question for other algebras. For example, in \cite{Launois-Kitchin} Kitchin and Launois proved that every endomorphism of a simple quantum generalized Weyl algebra over a commutative Laurent polynomial ring in one variable is an automorphism. The quantum generalized Weyl algebras were introduced by Bavula and Jordan in \cite{Bavula.Jordan}, a particular example of this type of algebras is the algebra $A_{\alpha,q}$ defined in the following way (see \cite{Launois-Kitchin}): Let $K$ be a field and $0\neq q,\alpha\in K$, then $A_{\alpha,q}$ is generated by three variables $e_1,e_2,e_3$ subject to the following relations:
\begin{center}
$e_1e_3=q^{-2}e_3e_1$, $e_2e_3=q^2e_3e_2+\alpha$, $e_2e_1=q^{-2}e_1e_2-q^{-2}e_3$, $e_2
^2+(q^4-1)e_3e_1e_2+\alpha q^2(q^2+1)e_1=0$.
\end{center}
In \cite{Launois-Kitchin} is proved that every endomorphism of $A_{\alpha,q}$ is an automorphism when $q$ is not a
root of unity and $\alpha$ nonzero.

In \cite{Bavula1} the Dixmier problem was studied for the algebra of polynomial integro-differential operators over a field of characteristic zero: \textit{Let $\mathbb{I}_1$ be the \textbf{algebra of polynomial integro-differential operators} over a field $K$ of characteristic zero. Then, each algebra endomorphism of $\mathbb{I}_1$ is an automorphism}. Recall (see \cite{Bavula1}) that $\mathbb{I}_1$ is the associative $K$-algebra generated by three elements $\partial,\int, H$ that satisfy the relations
\begin{center}
	$\partial \int=1$, $[H,\int]=\int$, $[H,\partial]=-\partial$, $H(1-\int\partial)=(1-\int\partial)H=1-\int\partial$,
\end{center}
where $\partial,\int, H$ are the operators defined on the commutative polynomial algebra $K[t]$ by
\begin{align*}
K[t] & \xrightarrow{\partial} K[t]  & K[t] & \xrightarrow{\int} K[t] & K[t] & \xrightarrow{H:=\partial\circ t} K[t]\\
p & \mapsto \frac{dp}{dt}  & t^n & \mapsto \frac{t^{n+1}}{n+1}& p & \mapsto t\frac{dp}{dt}+p
\end{align*}
In \cite{Bavula3} were investigated several ring and homological properties not only for the algebra $\mathbb{I}_1$, but also for its generalization $\mathbb{I}_n$, $n\geq 1$. $\mathbb{I}_n$ is the $K$-algebra generated by $3n$ variables $\partial_i,\int_i,H_i$, $1\leq i\leq n$, with relations as in $\mathbb{I}_1$ for every $i$, and in addition, $a_ia_j=a_ja_i$ for every $i,j$ with $a_k\in \{\partial_k,\int_k,H_k\}_{k=1}^n$. In \cite{Bavula3} was proved that the algebra $\mathbb{I}_n$ is a non-simple, non-Noetherian algebra with trivial center which is not a domain. In \cite{Bavula1} is conjectured that every endomorphism of $\mathbb{I}_n$ is an automorphism.

The Dixmier problem has been studied even for some non-associative algebras (\cite{Bavula2}):

\textit{Let $K$ be a field of characteristic zero and $n\geq 2$. Then, every monomorphism of the \textbf{Lie algebra $\boldsymbol{u_n}$ of triangular derivations of the
polynomial algebra} $K[T]:= K[t_1, . . . , t_n]$ is an automorphism}.

$u_n$ is defined as a Lie subalgebra of the Lie algebra $Der_K(K[T])$ of $K$-derivations of $K[T]$:
\begin{center}
$u_n:=K\partial_1+P_1\partial_2+\cdots+P_{n-1}\partial_n$, where $\partial_i:=\frac{\partial}{\partial t_i}$ and $P_i:=K[t_1,\dots,t_{i}]$, $1\leq i\leq n$.
\end{center}
According to (2) in \cite{Bavula2}, $u_n$ is in fact a Lie subalgebra of $Der_K(K[T])$.
Nothing is said in \cite{Bavula2} about the Dixmier question for $char(K)\neq 0$.

The Dixmier problem has been investigated also for some quantum algebras and its localizations. In \cite{Backelin} was proved that there are monomorphisms of the complex \textbf{\textit{quantized Weyl algebra}} $A_1^q(\mathbb{C})$ that are not automorphisms, where $A_1^q(\mathbb{C})$ is the $\mathbb{C}$-algebra generated by two elements $t$ and $x$ such that
\begin{center}
$xt-qtx=1$, with $q\in \mathbb{C}^*:=\mathbb{C}-\{0,1\}$.
\end{center}
If $q$ is not a root of unity, later was proved that every algebra endomorphism of a simple localization of  $A_1^q(\mathbb{C})\otimes \cdots \otimes A_1^q(\mathbb{C})$ is an automorphism (see  page 3 in \cite{Tang} and \cite{Launois}). In particular, every algebra endomorphism of the simple localization $A_1^q(\mathbb{C})_{\mathcal{Z}}$ of $A_1^q(\mathbb{C})$ is an automorphism, where ${\mathcal{Z}}$ is the Ore set of $A_1^q(\mathbb{C})$ defined by $\mathcal{Z}:=\{z^k\mid k\geq 0\}$, with $z:=xt-tx$ (\cite{Tang}). Another interesting case of localization was presented in Theorem 1.6 of \cite{Bavula5}, there in was proved that the localization of the first Weyl algebra $A_1(K)$ by the Ore multiplicatively closed subset generated by $S:=\{\partial t+i\mid i\in \mathbb{Z}\}$ does not satisfies the Dixmier conjecture, i.e., not every endomorphism of $A_1(K)_S$ is an automorphism.

Some results on the Dixmier conjecture have been published recently. In particular, in \cite{Zheglov} Alexander Zheglov from Lomonosov Moscow State University claims that the Dixmier conjecture is true.
\begin{theorem}[Theorem 1.1, \cite{Zheglov}]\label{Theorem1.3}
Let $K$ be a field of characteristic zero. The Dixmier conjecture for the first Weyl algebra is true, i.e., $End_K(A_1(K))=Aut_K(A_1(K))$.
\end{theorem}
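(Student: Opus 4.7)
The plan is to split the statement into its trivial half (injectivity) and its genuinely hard half (surjectivity), and to point out how the latter forces one into the machinery of commuting ordinary differential operators.

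First I would record the well-known fact that $A_1(K)$ is a simple $K$-algebra when $\mathrm{char}(K)=0$: its only two-sided ideals are $0$ and $A_1(K)$ itself. Hence for any algebra endomorphism $\phi$ we have $\mathrm{ker}(\phi) \in \{0, A_1(K)\}$, and since $\phi(1)=1 \neq 0$, $\phi$ is automatically injective. So the content of the theorem lies entirely in proving surjectivity. Put $P := \phi(t)$ and $Q := \phi(x)$; these satisfy the Heisenberg relation $[Q,P]=1$, and we must show that any such pair in $A_1(K)$ generates the whole algebra.

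Next I would attack surjectivity through the Bernstein filtration $F_n A_1(K) = \mathrm{span}\{t^i x^j : i+j \leq n\}$, whose associated graded is the commutative polynomial algebra $K[T,X]$ with the canonical symplectic Poisson bracket. Passing to principal symbols, the relation $[Q,P]=1$ forces $\{\sigma(Q),\sigma(P)\}=0$ as soon as $\deg P + \deg Q > 2$, so $\sigma(P)$ and $\sigma(Q)$ are algebraically dependent homogeneous polynomials of positive degree in two variables. A careful analysis then shows they must both be powers of a common polynomial, which severely constrains the Newton polygons of $P$ and $Q$. If these constraints alone were enough to force $(P,Q)$ into the orbit of a tame automorphism, one could conclude by the Makar--Limanov/Dixmier classification of $\mathrm{Aut}_K(A_1(K))$.

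The main obstacle, and the reason the conjecture was open for half a century, is that symbol-level information does not suffice: one really has to analyze $P$ and $Q$ themselves, not merely their leading terms. Here I would invoke the structural theory of the centralizer $C_{A_1(K)}(P)$, which is a commutative subalgebra of rank equal to $\deg P$ and, by Burchnall--Chaundy--Krichever theory, corresponds to a spectral curve together with a torsion-free rank-one sheaf (a Schur pair). The endomorphism $\phi$ produces such a Schur pair satisfying the additional constraint $[Q,P]=1$, i.e.\ the existence of an element in the ambient Weyl algebra whose commutator with a generator of the centralizer is exactly the unit. The decisive step, which is the heart of Zheglov's argument, is to show that this constraint is so rigid that the associated spectral curve must be rational with a single cusp and the sheaf must be trivial; equivalently, that the only Schur pair compatible with the Heisenberg relation is the one coming from an automorphism. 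Everything else is formal: once one knows $(P,Q)$ lies in the image of $\mathrm{Aut}_K(A_1(K))$ acting on $(t,x)$, surjectivity of $\phi$ follows immediately.
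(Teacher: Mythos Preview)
Your proposal is not a proof but an outline that explicitly defers the decisive step to Zheglov's argument. You correctly isolate the trivial half (injectivity from simplicity of $A_1(K)$) and correctly frame the hard half as showing that any Heisenberg pair $(P,Q)$ generates the whole algebra, but the paragraph beginning ``The decisive step, which is the heart of Zheglov's argument\ldots'' is a description of what needs to be proved, not a proof of it. Nothing in your text establishes that the spectral curve is rational with a single cusp or that the sheaf is trivial; you merely assert that this is what Zheglov shows.

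That said, the paper you are being compared against does not prove this theorem either. It states the result as Theorem~1.1 of Zheglov's preprint, remarks that ``the proof by Zheglov is based on the theory of normal forms for ordinary differential operators'' together with the Guccione--Guccione--Valqui analysis of the shape of possible counterexamples, and then explicitly moves on to a different, computational programme. So in both cases the actual argument is outsourced to \cite{Zheglov}. The one substantive discrepancy is that the paper identifies the key tool as \emph{normal forms} for ordinary differential operators (citing Guo--Zheglov), whereas you frame the approach through Burchnall--Chaundy--Krichever spectral data and Schur pairs; these circles of ideas overlap, but Zheglov's actual proof is organised around the normal-form machinery rather than the spectral-curve picture you sketch.
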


The proof by Zheglov is based on the theory of normal forms for ordinary differential operators (\cite{Guo}). We are interested in a pure algebraic  matrix-constructive and computational approach to the conjecture.

One of the purposes of this paper is to discuss the Dixmier problem for skew $PBW$ extensions. This class of noncommutative rings (Definition \ref{gpbwextension}) covers several algebras and rings arising in mathematical physics and noncommutative algebraic geometry. The Weyl algebra and the quantized Weyl algebra are particular examples of skew $PBW$ extensions. A second purpose of the paper is to consider the Dixmier question not only for algebras over fields but also for algebras over commutative rings, in particular, for $\mathbb{Z}$-algebras, i.e., for arbitrary associative rings with unit. The results are focused on the Dixmier problem for matrix algebras, product of algebras, tensor product of algebras and also on the Dixmier problem for a key particular skew $PBW$ extension that includes the Weyl algebra $A_1(K)$ in characteristic zero, see Definition \ref{definition4.21}. Our third purpose consists in presenting a matrix approach to the problem formulated by Dixmier and also to compute concrete nontrivial examples of automorphisms of $A_1(K)$ and $\mathcal{CSD}_n(K)$ using the \texttt{MAPLE} library \textbf{SPBWE} developed for the first author in \cite{Fajardo2} and \cite{Fajardo3} (see also \cite{Lezama-sigmaPBW} and \cite{Lezama-algebraic}). We will compute the inverses of these automorphisms, and for $A_1(K)$, its factorization through some elementary automorphisms. For $n$ odd, we found some endomorphisms of $\mathcal{CSD}_n(K)$ that are not automorphisms.

The paper is organized in the following way: In this introductory section we recall some key conjectures close related to the Dixmier conjecture and the relationship between them. In the second section we will review some results on the Dixmier conjecture, including some recent interesting pure algebraic advances of the problem. In the third section we will introduce the Dixmier algebras and rings as a preliminary material for the investigation of the Dixmier problem for skew $PBW$ extensions. The fourth section is dedicated to study the Dixmier question for skew $PBW$ extensions. In the last section we will present a matrix approach to the original problem formulated by Jacques Dixmier and also we will compute concrete nontrivial examples of automorphisms of $A_1(K)$ and $\mathcal{CSD}_n(K)$, the inverses of these automorphisms, and for $A_1(K)$, its factorization through some elementary automorphisms, using the \texttt{MAPLE} library \textbf{SPBWE}. For $n$ odd, we found some endomorphisms of $\mathcal{CSD}_n(K)$ that are not automorphisms. The novelty of the paper and the main results are concentrated in Subsections \ref{subsection3.1}, \ref{subsection3.2}, \ref{subsection4.2} and Section \ref{section5}.

In this paper, a ring means an associative ring  with unit non necessarily commutative. If $A$ is a ring, then $A^*$ is the group of invertible elements of $A$. If $S\subseteq A$, then the left ideal of $A$ generated by $S$ is denoted by $\left\langle S\right\rbrace $. If $a,b\in A$, then $[a,b]:=ab-ba$. The center of $A$ is denoted by $Z(A)$.  An element $a\in A$ is \textbf{\textit{normal}} if $aA=Aa$. For $n\geq 1$, the ring of square matrices of size $n\times n$ over the ring $A$ will be denoted by $M_n(A)$. If nothing contrary is assumed, $K$ denotes a field.

\subsection{Some close related conjectures}

In this subsection we recall some famous conjectures close related to the Dixmier problem (see \cite{Essen}, \cite{Essen2} and \cite{Essen3}). We start with the Jacobian conjecture formulated in \cite{Tsuchimoto} in the following way (see also \cite{Bavula5}).

\begin{conjecture}[\textbf{Jacobian conjecture}]
Let $K$ be a field of characteristic zero and let $K[T] = K[t_1, \dots, t_n]$ be the polynomial algebra, $n\geq 1$. Let $\sigma:K[T]\to K[T]$ be an algebra endomorphism of $K[T]$. If the \textbf{Jacobian} $J(\sigma):=\det[\frac{\partial \sigma(t_i)}{\partial t_j}]\in K^*$, then $\sigma$ is an automorphism.
\end{conjecture}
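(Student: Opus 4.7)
The statement is the Jacobian conjecture, a famously open problem in dimension $n\geq 2$ since Keller's 1939 formulation (the case $n=1$ is trivial: if $\sigma(t)=p(t)$ and $p'(t)\in K^*$, then $p$ has degree $1$, hence $\sigma$ is invertible). Rather than attempt a self-contained proof, my plan is to follow the strategy that ties directly to this paper's theme: reduce the Jacobian conjecture to the Dixmier conjecture via the Tsuchimoto--Belov-Kontsevich equivalence, which asserts that the Dixmier conjecture for the $n$-th Weyl algebra $A_n(K)$ implies the Jacobian conjecture in dimension $n$. Combined with Theorem \ref{Theorem1.3} (at least in the case $n=1$) this would close the one-variable Jacobian problem, and the general case would follow from the generalized Dixmier conjecture.

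First I would fix a polynomial endomorphism $\sigma$ of $K[T]$ with $J(\sigma)\in K^*$ and, by spreading out, reduce to the case where $\sigma$ is defined over a finitely generated $\mathbb{Z}$-subalgebra $R\subset K$; passing to a maximal ideal $\mathfrak{m}$ of $R$ yields a reduction $\bar{\sigma}$ over a finite field $K_p=R/\mathfrak{m}$ of positive characteristic $p$ with $J(\bar{\sigma})\in K_p^*$. Next I would quantize $\bar{\sigma}$ to an endomorphism $\tilde{\sigma}$ of the Weyl algebra $A_n(K_p)$ using the $p$-curvature together with the Poisson structure on the large center $Z(A_n(K_p))=K_p[t_1^p,\dots,t_n^p,x_1^p,\dots,x_n^p]$, arranged so that $\tilde{\sigma}$ restricts on the center to $\sigma$ composed with the $p$-th power map (cf. the behaviour of \eqref{equ1.1}). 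Assuming the Dixmier conjecture, $\tilde{\sigma}$ is an automorphism of $A_n(K_p)$; its restriction to the center is then invertible, forcing $\bar{\sigma}$ to be an automorphism of $K_p[T]$ for infinitely many primes $p$, and a standard Lefschetz/compactness argument recovers that $\sigma$ itself is an automorphism over $K$.

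The hard part is obviously the Dixmier conjecture itself, which is the very subject of this paper, so in practice this proposal reduces one open problem to another. Even granting Dixmier, the technical crux is the quantization step: one must lift an arbitrary polynomial endomorphism with scalar Jacobian to an endomorphism of the Weyl algebra over $K_p$ whose center-restriction reproduces the Frobenius twist of $\sigma$, and the non-canonical nature of this lift, together with the need to keep the unit $J(\sigma)$ visible throughout, is where I would expect the most delicate bookkeeping. In keeping with the spirit of the paper, I would try to render the Tsuchimoto construction in the matrix-computational language of skew $PBW$ extensions and the \textbf{SPBWE} library, so that the lift $\tilde{\sigma}$ can be written explicitly on the standard basis; but without a proof of the Dixmier conjecture the reduction is necessarily the endpoint, which is exactly why the paper's focus on Dixmier-type problems is foundational rather than incidental.
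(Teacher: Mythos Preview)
The statement you are trying to prove is the Jacobian conjecture itself, which the paper states as an \emph{open conjecture} and does not attempt to prove. What the paper does prove is the conditional implication $DC_n\Rightarrow JC_n$ (Proposition~\ref{proposition1.7}), and this is done by a short, direct construction entirely in characteristic zero: given $\sigma$ with $\det[\partial\sigma(t_i)/\partial t_j]\in K^*$, one defines an endomorphism $\varphi$ of $A_n(K)$ by $\varphi(t_i):=F_i:=\sigma(t_i)$ and $\varphi(x_i):=\partial/\partial F_i$, where the $\partial/\partial F_i$ are the derivations built from the inverse Jacobian matrix as in Conjecture~\ref{KC}. Assuming $DC_n$, $\varphi$ is surjective, and applying the resulting differential operators to $1\in K[T]$ forces $K[T]=K[F_1,\dots,F_n]$; injectivity of $\sigma$ is automatic from $J(\sigma)\in K^*$. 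No reduction mod $p$, no quantization, no spreading-out is needed for this direction.

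Your proposal has a genuine structural problem beyond the (acknowledged) reliance on Dixmier. The Tsuchimoto and Belov--Kanel--Kontsevich arguments you invoke establish the \emph{opposite} implication, $JC_{2n}\Rightarrow DC_n$: one starts with a Weyl algebra endomorphism, reduces mod $p$, and restricts to the large center $Z(A_n(K_p))=K_p[t_1^p,\dots,t_n^p,x_1^p,\dots,x_n^p]$, obtaining a polynomial endomorphism in $2n$ variables to which $JC_{2n}$ is applied. Your plan runs this backward, and the ``quantization'' step---lifting a polynomial endomorphism $\bar\sigma$ of $K_p[t_1,\dots,t_n]$ (in $n$ variables) to an endomorphism of $A_n(K_p)$ whose center-restriction recovers $\bar\sigma$---is ill-posed as written: the center has $2n$ generators, not $n$, so there is an unresolved dimension mismatch, and no canonical lift of this type exists. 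The ``hard part'' you flag is therefore not merely delicate bookkeeping but a gap in the logical architecture. If your goal is only the conditional $DC_n\Rightarrow JC_n$, the paper's two-line construction of $\varphi$ via the inverse Jacobian is both correct and vastly simpler than any characteristic-$p$ detour.
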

From a geometric point of view, the conjecture has been formulated in the mathematical literature in the following way. Let $F:=(F_1,\dots,F_n):K^n\to K^n$ be a \textbf{\textit{polynomial function}}, i.e.,
\begin{center}
	$z:=(z_1,\dots,z_n)\mapsto (F_1(z_1,\dots,z_n),\dots,F_n(z_1,\dots,z_n))$,
\end{center}
for some polynomials $F_i\in K[T]$, $1\leq i\leq n$. For $z\in K^n$, let $F'(z):=\det (JF(z))$, where $JF:=[\frac{\partial F_i}{\partial t_j}]_{1\leq i,j\leq
	n}$ is the \textit{\textbf{Jacobian matrix}} of $F$. Since there exists a correspondence between the algebra endomorphisms of $K[T]$ and the polynomial functions on $K^n$, the Jacobian conjecture can be formulated in the following equivalently way:

\begin{center}
 \textit{If $F'(z)\neq 0$ for every $z\in K^n$ $($or equivalently, $\det(JF)\in K^*$ $)$, then $F$ is invertible $($i.e., $F$ has an inverse which is also a polynomial function$)$}.
\end{center}
The Jacobian conjecture was first formulated in 1939 by O. Keller in \cite{Keller} for $n=2$ and polynomials with integer coefficients. Some researchers refer to the Jacobian conjecture as the Keller problem. Perhaps the most complete review of the Jacobian conjecture is the monograph \cite{Essen} of Essen, Kuroda, and Crachiloa.

Another interesting conjecture related to the previous, and hence, to de Dixmier question, is a conjecture about the kernel of some special derivation (see \cite{Essen2} and \cite{Essen3}). We preserve the previous notation.

\begin{conjecture}[\textbf{Kernel conjecture}]\label{KC}
	Let $K$ be a field of characteristic zero and $F:=(F_1,\dots,F_n):K^n\to K^n$ be a polynomial function. Assume that
	$\det(JF)\in K^*$ and consider an $n$-tuple of derivations on $K[T]$, denoted by
	$\frac{\partial}{\partial F_i}$, $1\leq i\leq n$, as follows:
	\begin{center}
		$
		\begin{bmatrix}
		\frac{\partial}{\partial F_1}\\
		\vdots \\
		\frac{\partial}{\partial F_n}
		\end{bmatrix}:=((JF)^{-1})^T\begin{bmatrix}
		\frac{\partial}{\partial t_1}\\
		\vdots \\
		\frac{\partial}{\partial t_n}
		\end{bmatrix}.
		$
	\end{center}
Then, $\ker(\frac{\partial}{\partial
	F_n})=K[F_1,\dots,F_{n-1}]$.

\end{conjecture}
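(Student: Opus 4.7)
The plan is to split the claim into its two inclusions. The easy direction $K[F_1,\dots,F_{n-1}] \subseteq \ker(\frac{\partial}{\partial F_n})$ should follow directly from the defining matrix identity: reading off row $j$ gives $\frac{\partial}{\partial F_j} = \sum_{k=1}^n ((JF)^{-1})_{kj}\,\frac{\partial}{\partial t_k}$, and evaluating this derivation on $F_i$ yields
$$\frac{\partial}{\partial F_j}(F_i) = \sum_{k=1}^n ((JF)^{-1})_{kj}\,\frac{\partial F_i}{\partial t_k} = \sum_{k=1}^n (JF)_{ik}\,((JF)^{-1})_{kj} = \delta_{ij}.$$
Since each $\frac{\partial}{\partial F_j}$ is a $K$-derivation of $K[T]$ (being a $K[T]$-linear combination of the standard partials), the derivation $\frac{\partial}{\partial F_n}$ annihilates each $F_j$ for $j<n$ and hence all of $K[F_1,\dots,F_{n-1}]$.

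For the reverse inclusion, the natural route is to appeal to the Jacobian conjecture. Granting it, $F$ is an automorphism of $K[T]$, so $K[T]=K[F_1,\dots,F_n]$, and the algebra automorphism $\sigma:K[T]\to K[T]$ sending $t_i\mapsto F_i$ carries $\frac{\partial}{\partial t_n}$ to $\frac{\partial}{\partial F_n}$ (which is exactly the chain-rule content of the defining matrix identity). The elementary fact $\ker(\frac{\partial}{\partial t_n})=K[t_1,\dots,t_{n-1}]$ then transports under $\sigma$ to
$$\ker\!\left(\tfrac{\partial}{\partial F_n}\right) = \sigma\!\left(\ker\!\left(\tfrac{\partial}{\partial t_n}\right)\right) = \sigma(K[t_1,\dots,t_{n-1}]) = K[F_1,\dots,F_{n-1}],$$
as required.

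The main obstacle is precisely that this route depends on the Jacobian conjecture, which is itself open; in fact, Conjecture \ref{KC} is known to be equivalent to the Jacobian conjecture, so any argument that avoids this detour would constitute a new proof of the latter. An approach not routed through JC would have to produce directly, for each $g\in\ker(\frac{\partial}{\partial F_n})$, a polynomial expansion of $g$ in $F_1,\dots,F_{n-1}$; one might try induction on the total degree, building successive approximations $g_m\in K[F_1,\dots,F_n]$ with the aid of the hypothesis $\det(JF)\in K^*$ and using the vanishing of $\frac{\partial g}{\partial F_n}$ to rule out any $F_n$-dependence. The essential difficulty lies in producing such a polynomial expansion for an \emph{arbitrary} $g\in K[T]$, and this is exactly where the Jacobian-conjecture obstruction reappears.
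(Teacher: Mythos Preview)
The statement you were asked to prove is a \emph{conjecture}, not a theorem: the paper states it as Conjecture~\ref{KC} and provides no proof. Your proposal correctly recognizes this. The easy inclusion $K[F_1,\dots,F_{n-1}]\subseteq\ker(\frac{\partial}{\partial F_n})$ is fine and follows from the $\delta_{ij}$ computation you give. For the reverse inclusion, you rightly observe that the natural route passes through the Jacobian conjecture, and you correctly flag that this is the obstruction: the paper itself records (Subsection~\ref{subsection1.3}) that $JC_n\Rightarrow KC_n$ and $KC_{n+1}\Rightarrow JC_n$, so the Kernel conjecture is sandwiched between instances of the Jacobian conjecture and is open for the same reason.

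In short, there is no proof in the paper to compare against; your write-up is not a proof but an accurate diagnosis of why no proof is currently available, and it aligns with the equivalences the paper cites.
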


For the next conjecture (see \cite{Essen2} and \cite{Essen3}) we need to recall the following notion: A derivation $D$ of a ring $A$ is \textbf{\textit{locally nilpotent}} if given $a\in A$ there exists an integer $m(a)$ such that $D^{m(a)}a=0$.

\begin{conjecture}[\textbf{Second Kernel Conjecture}]
	
	Let $K$ be a field of characteristic zero and let $D$ be a locally nilpotent derivation on
$K[T]$ such that there exists a polynomial $p\in K[T]$ with $Dp=1$. Then, $\ker(D)=K[F_1,\dots,F_{n-1}]$, for some $F_i\in K[T]$ algebraically independent
over $K$.
\end{conjecture}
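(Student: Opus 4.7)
The plan is to proceed in two clearly separable stages. As the first stage, I would establish the slice theorem of van den Essen: given a locally nilpotent derivation $D$ on $K[T]$ with $Dp=1$, the \emph{Dixmier projection}
\[
\pi : K[T] \longrightarrow K[T], \qquad \pi(a) := \sum_{k \geq 0} \frac{(-p)^{k}}{k!}\, D^{k}(a),
\]
is well defined because $D$ is locally nilpotent (so the sum terminates for each $a$). A direct computation using the Leibniz rule shows that $\pi$ is a $K$-algebra homomorphism, that $D\circ\pi = 0$, and that $\pi|_{\ker D}=\mathrm{id}$; hence $\pi$ is a retraction of $K[T]$ onto $\ker D$. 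From the Taylor-type identity $a=\sum_{k\geq 0}\frac{p^{k}}{k!}D^{k}(\pi(a))$ one deduces the decomposition $K[T]=(\ker D)[p]$, exhibiting $K[T]$ as a polynomial ring in the single variable $p$ over $\ker D$. This already forces $\ker D$ to be a finitely generated $K$-algebra, a UFD, and of transcendence degree $n-1$ over $K$.

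The second stage is the decisive one: I must upgrade the identification $K[T]=(\ker D)[p]$ to an isomorphism $\ker D \cong K[t_{1},\dots,t_{n-1}]$ of polynomial rings, since the existence of algebraically independent generators $F_{1},\dots,F_{n-1}\in \ker D$ is exactly what the conjecture asks. This step transforms the problem into an instance of the \emph{cancellation problem for affine spaces}: if $R$ is a $K$-algebra with $R[x]\cong K[t_{1},\dots,t_{n}]$, must $R\cong K[t_{1},\dots,t_{n-1}]$? In characteristic zero, cancellation is classical for $n=2$ by Abhyankar-Eakin-Heinzer and for $n=3$ by Fujita and Miyanishi-Sugie; combining these with the slice theorem settles the Second Kernel Conjecture for $n\leq 3$.

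The main obstacle is cancellation for $n\geq 4$, which remains open in characteristic zero (and is known to fail in positive characteristic by recent work of Gupta). A more direct attack would try to build the $F_{i}$ without going through cancellation, for example by analyzing the degree filtration $\{a\in K[T]\,:\, D^{j+1}(a)=0\}$, lifting a transcendence basis of $\ker D$ step by step via the exponential automorphism $\exp(sD)$, or invoking Makar-Limanov type invariants of $\ker D$. However, every such strategy eventually has to recognize $\ker D$ as a polynomial ring, and no method is currently known to accomplish this in general without solving the cancellation problem. Thus, realistically, the proof plan reduces to (i) the slice-theorem identification, which is routine, and (ii) affine cancellation, which is the real obstacle and is the point at which any attempted proof at the present state of the art must stop.
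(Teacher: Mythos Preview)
Your analysis is accurate, and in fact it matches the paper's treatment: the statement is recorded there as a \emph{conjecture}, not a theorem, and no proof is attempted. The paper simply lists it among the problems related to the Dixmier conjecture and notes the equivalence $ZCP_n \Leftrightarrow 2KC_n$ (citing van den Essen), which is exactly the reduction you carry out in detail. Your slice-theorem argument giving $K[T]=(\ker D)[p]$ is the standard route to that equivalence, and your identification of affine cancellation in characteristic zero for $n\geq 4$ as the obstruction is precisely why the statement remains conjectural. So there is no gap on your side and no divergence from the paper: both you and the authors recognize this as open, with your write-up supplying the details behind the equivalence that the paper only cites.
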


The Zariski cancellation problem arises in commutative algebra and can be formulated as follows.

\begin{conjecture}[\textbf{Zariski cancellation problem}]
Let $K$ be a field and $A:=K[T]=K[t_1,\dots,t_n]$ be the polynomial algebra and $B$ be a commutative $K$-algebra,
\begin{center}
	if $A[t]\cong B[t]$, then $A\cong B$?
\end{center}	
\end{conjecture}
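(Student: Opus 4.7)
The plan is to try to reduce the isomorphism $A[t]\cong B[t]$ to structural invariants of $B$ that force $B$ to be a polynomial algebra, and then treat the low-dimensional cases first since the general case is known to be open. Given an isomorphism $\varphi:A[t]\to B[t]$ with $A=K[t_1,\dots,t_n]$, I would first extract the elementary invariants of $B$ that descend from those of $B[t]\cong K[t_1,\dots,t_n,t]$: $B$ is a finitely generated commutative $K$-algebra of Krull dimension $n$, an integral domain, a UFD (since a ring whose polynomial extension is a UFD is itself a UFD), and regular. So $B$ is a smooth affine $K$-variety of dimension $n$ whose product with the affine line is $\mathbb{A}^{n+1}_K$.

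For $n=1$, these invariants already suffice via the Abhyankar--Heinzer--Eakin theorem: a one-dimensional finitely generated regular $K$-domain $B$ with $B[t]\cong K[s,t]$ is forced to be $K[s]$. I would write out this case first as a sanity check. For $n=2$ and $\mathrm{char}(K)=0$, my approach would be to use the Makar--Limanov invariant $\mathrm{ML}(B):=\bigcap_{D}\ker(D)$, where $D$ ranges over locally nilpotent $K$-derivations of $B$. Since $\mathrm{ML}(K[t_1,t_2])=K$, and locally nilpotent derivations extend between $B$ and $B[t]$ in controlled ways, one tries to show $\mathrm{ML}(B)=K$ from $B[t]\cong K[t_1,t_2,t]$, and then invoke the classification of smooth affine surfaces with trivial Makar--Limanov invariant (Miyanishi--Sugie--Fujita) to conclude $B\cong K[t_1,t_2]$.

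The hard part will be the general case $n\geq 3$ in characteristic zero, which is precisely where the conjecture is open. Gupta's counterexamples for $n\geq 3$ in positive characteristic show that any proof must make essential use of $\mathrm{char}(K)=0$, presumably through a refined analysis of $\mathbb{G}_a$-actions, the Derksen invariant, or the geometry of $\mathbb{A}^1$-fibrations on $\mathrm{Spec}(B)$. Barring a genuinely new idea in that direction, my proposal would be to settle only $n\leq 2$ and to flag the $n\geq 3$ case as the principal obstruction, emphasizing the thematic link with the Dixmier and Jacobian conjectures discussed in the rest of the paper (through the Bass--Connell--Wright stable tameness and Yagzhev formalism) rather than attempting a resolution here.
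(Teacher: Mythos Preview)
The statement you were asked to prove is labeled \textbf{Conjecture} in the paper, not Theorem: the Zariski cancellation problem is an open question and the paper offers no proof whatsoever. Immediately after the statement the authors simply record the known status: Abhyankar--Eakin--Heinzer for $n=1$, Fujita and Miyanishi--Sugie for $n=2$ in characteristic zero, Russell for $n=2$ in positive characteristic, Gupta's negative answer for $n\geq 3$ in positive characteristic, and the remark that $n\geq 3$ in characteristic zero remains open. There is nothing further in the paper to compare your proposal against.

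Your write-up is a perfectly reasonable survey of the same landscape --- you identify the same settled cases, the same open range, and the same obstruction (Gupta's counterexamples blocking any characteristic-free argument). Your sketch for $n=1$ via Abhyankar--Eakin--Heinzer and for $n=2$ via the Makar-Limanov invariant plus Miyanishi--Sugie--Fujita is the standard route and is consistent with the references the paper cites. But you should recognize that you were not expected to produce a proof here: the paper presents this purely as background, alongside the Jacobian and Dixmier conjectures, to motivate the later introduction of cancellativity for noncommutative algebras (used, for instance, in Theorem~4.13(c)). Your final paragraph, flagging $n\geq 3$ as the genuine obstruction and declining to attempt it, is the only honest conclusion --- and it is exactly where the paper leaves matters too.
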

Here the isomorphisms should be understood as isomorphisms of $K$-algebras. Abhyankar-Eakin-Heinzer (1972, \cite{Abhyankar}) proved that $K[t_1]$ is cancellative. Fujita (1979, \cite{Fujita2}) and Miyanishi-Sugie (1980, \cite{Miyanishi}) proved that if $char K=0$, then $K[t_1,t_2]$ is cancellative. If $char K\neq 0$, Russell in 1981 proved that $K[t_1,t_2]$ is cancellative (\cite{Russell}). In 2014, Gupta proved that if $n\geq 3$ and $char K\neq 0$ then $K[t_1,\dots,t_n]$ is not cancellative (\cite{Gupta}, \cite{Gupta2}). The problem remains open for $n\geq 3$ and $char K=0$. Recently the problem has been considered for noncommutative algebras (\cite{BellZhang}), and even for arbitrary rings (\cite{Lezama-sigmaPBW}, Chapter 20). Let $A$ be an arbitrary $K$-algebra, it is said that $A$ is \textbf{\textit{cancellative}} if for every $K$-algebra $B$,
\begin{center}
$A[t]\cong B[t]\Rightarrow A\cong B$.
\end{center}

\subsection{Relationship between conjectures}\label{subsection1.3}

In this subsection we present some well-known results about the relationship between the conjectures presented before. For this,
we denote the conjectures in the following way:

\begin{itemize}
	\item[]The Generalized Dixmier Conjecture: $DC_n$
	\item[]The Jacobian Conjecture: $JC_n$
	\item[]The Kernel Conjecture: $KC_n$
	\item[]The Second Kernel Conjecture: $2KC_n$
	\item[]The Zariski Cancellation Problem: $ZCP_n$
\end{itemize}
 The subscript $n$ indicates the number of variables either of $K[T]$ or of the Weyl algebra.

We include only the proof of that $DC_n$ implies $JC_n$. For the others relations we indicate the references.

\begin{proposition}[\cite{Essen2}, Proposition 3.28]\label{proposition1.7}
$DC_n$ $\Rightarrow$ $JC_n$
\end{proposition}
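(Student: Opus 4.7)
The plan is to manufacture from the polynomial map $F=(F_1,\dots,F_n)$ an endomorphism $\widetilde{\sigma}$ of the Weyl algebra $A_n(K)$ that extends the candidate algebra map $\sigma:K[T]\to K[T]$, $t_i\mapsto F_i$, then invoke $DC_n$ to upgrade $\widetilde{\sigma}$ to an automorphism of $A_n(K)$, and finally descend back to $K[T]$ by a centralizer argument.

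To carry out the lift, I would set $\widetilde{\sigma}(t_i):=F_i$ and $\widetilde{\sigma}(x_i):=\sum_{j=1}^n b_{ij}x_j$, where $B=(b_{ij}):=((JF)^{-1})^{T}$. Since $\det(JF)\in K^{*}$, each $b_{ij}$ is genuinely an element of $K[T]$ (a cofactor of $JF$ divided by a unit). The required verifications are: $[\widetilde{\sigma}(t_i),\widetilde{\sigma}(t_j)]=0$, which is immediate; $[\widetilde{\sigma}(x_i),\widetilde{\sigma}(t_j)]=\delta_{ij}$, which is the identity $B(JF)^{T}=I$ rewritten as
\[
[\widetilde{\sigma}(x_i),\widetilde{\sigma}(t_j)]=\sum_k b_{ik}[x_k,F_j]=\sum_k b_{ik}\frac{\partial F_j}{\partial t_k}=\bigl(B(JF)^{T}\bigr)_{ij}=\delta_{ij};
\]
and $[\widetilde{\sigma}(x_i),\widetilde{\sigma}(x_j)]=0$, which is the only nontrivial check and follows from the equality of mixed second partials of the $F_k$ together with the explicit cofactor formula for the inverse Jacobian. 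This establishes that $\widetilde{\sigma}$ is a well-defined $K$-algebra endomorphism of $A_n(K)$.

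By $DC_n$ applied to $\widetilde{\sigma}$, the map $\widetilde{\sigma}$ is an automorphism of $A_n(K)$. To deduce that $\sigma$ is an automorphism of $K[T]$, injectivity is automatic since $\det(JF)\ne 0$ forces $F_1,\dots,F_n$ to be algebraically independent over $K$. For surjectivity I would use the classical fact that in characteristic zero $K[T]$ is a maximal commutative subalgebra of $A_n(K)$, equivalently $K[T]=C_{A_n(K)}(K[T])$. Since $\widetilde{\sigma}$ is an automorphism it commutes with the centralizer operation, so writing $L:=\widetilde{\sigma}(K[T])=K[F_1,\dots,F_n]$ we obtain $L=\widetilde{\sigma}(C_{A_n(K)}(K[T]))=C_{A_n(K)}(L)$. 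Because $L\subseteq K[T]$ and $K[T]$ is commutative, every element of $K[T]$ centralizes $L$, whence $K[T]\subseteq C_{A_n(K)}(L)=L$; combined with the reverse inclusion $L\subseteq K[T]$ this forces $K[F_1,\dots,F_n]=K[T]$, so $\sigma$ is surjective.

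The main obstacle I expect is the relation $[\widetilde{\sigma}(x_i),\widetilde{\sigma}(x_j)]=0$ in the second step, which unwinds to a symmetry identity on the cofactors of $JF$ equivalent to the pairwise commutation of the derivations $\sum_j b_{ij}\,\partial/\partial t_j$; this is precisely the content underlying Conjecture \ref{KC}. The descent step is short once one grants that $K[T]$ is maximal commutative in $A_n(K)$, a fact whose proof uses the standard PBW normal form for the Weyl algebra in characteristic zero (and which is the key place where the characteristic hypothesis enters).
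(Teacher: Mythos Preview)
Your proof is correct and shares the same overall architecture as the paper's: lift $\sigma$ to an endomorphism of $A_n(K)$ by sending $t_i\mapsto F_i$ and $x_i\mapsto\sum_j b_{ij}x_j$ with $B=((JF)^{-1})^T$ (the paper writes these as $\frac{\partial}{\partial F_i}$, exactly as in Conjecture~\ref{KC}), verify the Weyl relations, apply $DC_n$, and then descend to $K[T]$.

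The genuine difference is in the descent step. The paper exploits the standard action of $A_n(K)$ on $K[T]$: given $g\in K[T]$, surjectivity of the lifted map allows one to write $g=\sum_\alpha c_\alpha(\partial/\partial F)^\alpha$ with $c_\alpha\in K[F_1,\dots,F_n]$, and then applying this operator to $1\in K[T]$ kills every term with $|\alpha|>0$ and leaves $g=c_0\in K[F_1,\dots,F_n]$. Your route instead uses that $K[T]$ is its own centralizer in $A_n(K)$ (maximal commutativity), and that an automorphism preserves self-centralizing subalgebras, to force $K[F_1,\dots,F_n]=K[T]$. Both arguments are short and standard; yours is slightly more structural and avoids writing out the normal form in the $\partial/\partial F_i$, while the paper's is more hands-on and needs nothing beyond the module action. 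Your remark that the commutation $[\widetilde{\sigma}(x_i),\widetilde{\sigma}(x_j)]=0$ is the only nontrivial check is accurate; note that it follows cleanly from the fact that $[D_i,D_j]$ is a derivation annihilating every $F_m$, hence has coefficient vector in the kernel of the invertible matrix $(JF)^T$, so vanishes.
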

\begin{proof}
Let $\sigma:K[T]\to K[T]$ be an algebra endomorphism of $K[T]$ such that $\det[\frac{\partial \sigma(t_i)}{\partial t_j}]\in K^*$ and let $F_i:=\sigma(t_i)$, $1\leq i\leq n$. Then $\det(JF)\in K^*$ and we can define
\begin{center}
	$\varphi:A_n(K)\to A_n(K)$, $\varphi(t_i):=F_i$, $\varphi(x_i):=\frac{\partial}{\partial F_i}$, $1\leq i\leq n$,
\end{center}
where $\frac{\partial}{\partial F_i}$ is as in Conjecture \ref{KC}. Observe that $\varphi$ is a well-defined algebra homomorphism since
\begin{center}
	$[\varphi(x_i), \varphi(t_j)]=\delta_{ij}$, $[\varphi(x_i),\varphi(x_j)]=0=[\varphi(t_i),\varphi(t_j)]$, $1\leq i,j\leq n$.
\end{center}
Assuming $DC_n$ we get that $\varphi$ is surjective. Let $g\in K[T]\subset A_n(K)$, then there exists $p\in A_n(K)$ such that $\varphi(p)=g$. Hence, $g$ can be written as a polynomial in the variables $\frac{\partial}{\partial F_i}$ with coefficients in $K[F_1,\dots,F_n]$, $g=\sum_{\alpha}c_{\alpha}(\frac{\partial}{\partial F})^{\alpha}$, with $\alpha:=(\alpha_1,\dots,\alpha_n)\in \mathbb{N}^n$, $(\frac{\partial}{\partial F})^{\alpha}:=(\frac{\partial}{\partial F_1})^{\alpha_1}\cdots (\frac{\partial}{\partial F_n})^{\alpha_n}$ and $c_{\alpha}\in K[F_1,\dots,F_n]$. Considering $g$ as a differential operator we can apply $g$ to $1\in K[T]$, so $g\cdot 1=g1=g=c_0\in K[F_1,\dots,F_n]$, whence $K[T]\subseteq K[F_1,\dots,F_n]\subseteq K[T]$, i.e., $K[F_1,\dots,F_n]=K[T]$. Since $Im(\sigma)=K[F_1,\dots,F_n]$, then $\sigma$ is surjective . Finally, the condition $\det(JF)\in K^*$ implies that $\sigma$ is injective (see \cite{Bavula2}, page 554).
\end{proof}

\begin{itemize}
	\item[]$JC_{2n}$	$\Rightarrow$ $DC_n$ (See \cite{Tsuchimoto}, Corollary 7.3; \cite{Kontsevich}, Theorem 1; \cite{Bavula6}, Theorem 3)
	\item[]$JC_n$ $\Rightarrow$ $KC_n$ (See \cite{Essen2}, page 63)
	\item[]$KC_{n+1} $ $\Rightarrow$ $JC_n$ (See \cite{Essen2}, Proposition 2.5)
	\item[]$ZCP_n$ $\Leftrightarrow$ $2KC_n$ (See \cite{Essen2}, Proposition 3.8)
	\end{itemize}

\section{Some recent pure algebraic results on the Dixmier conjecture}

In \cite{Zheglov} Alexander Zheglov from Lomonosov Moscow State University claims that the Dixmier conjecture is true (see Theorem \ref{Theorem1.3}). The proof given by Zheglov is based on the theory of normal forms for ordinary differential operators (see \cite{Guo}) and also from the works \cite{Guccione} and \cite{Guccione2} about the
shape of possible counterexamples to the Dixmier conjecture.

In this section we review some recent interesting pure algebraic results on the classical Dixmier conjecture. Inspired in this, in the next sections we will introduce the Dixmier algebras and rings and we will investigate the Dixmier problem for skew $PBW$ extensions.

\subsection{Two recent results using graduations and bimodules over $A_n(K)$}

In 2018 V.V. Bavula and V. Levandovskyy proved a theorem, published in 2020 in \cite{Levandovsky}, about the Dixmier conjecture in a particular situation. We present next this advance on the conjecture that involves an interesting graduation of $A_1(K)$. In \cite{Levandovsky}, $A:=A_1(K)$ is described as a $\mathbb{Z}$-graded algebra with graduation $A=\oplus_{i\in \mathbb{Z}}A_{i}$, where $A_{0}:=K[h]$, with $h:=xt$ and, for $i\geq 1$, $A_{i}:=K[h]t^i$ and $A_{-i}:=K[h]x^i$ (we have adapted to our notation the notation used in \cite{Levandovsky}). Given a non-zero polynomial $p\in A$, the number of non-zero homogeneous components of $p$ is called the \textbf{\textit{mass}} of $p$, denoted by $m(p)$.

\begin{proposition}[\cite{Levandovsky}, Theorem 1.1]\label{theorem2.1}
Let $p,q$ be elements of the first Weyl algebra $A_1(K)$ with $m(p)\leq 2$ and $m(q)\leq 2$. If $[p,q]=1$, then $p=\phi(x)$ and $q=\phi(t)$, for some automorphism $\phi\in Aut_K(A_1(K))$.
\end{proposition}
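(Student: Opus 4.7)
The plan is to exploit the $\mathbb{Z}$-grading $A_1(K) = \bigoplus_{i \in \mathbb{Z}} A_i$ recalled just before the statement in order to convert $[p,q] = 1$ into a finite system of polynomial identities in $K[h]$, and then to identify each resulting solution with an automorphism. First I would write $p = p_a + p_b$ and $q = q_c + q_d$ with $a \leq b$ and $c \leq d$ as graded decompositions, allowing one summand in each to vanish in order to accommodate $m(p) \leq 2$ and $m(q) \leq 2$. For $i \geq 0$ a non-zero homogeneous component of degree $i$ has the form $f(h)\, t^i$, and for $i < 0$ the form $f(h)\, x^{-i}$. The bracket $[p,q]$ then splits into at most four pieces lying in $A_{a+c}$, $A_{a+d}$, $A_{b+c}$, $A_{b+d}$; since $1 \in A_0$, the sum in every non-zero graded component must vanish, and the sum landing in $A_0$ must equal $1$.

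Next I would enumerate the degree configurations $(a,b,c,d)$ compatible with these constraints. A short argument rules out most: if all four sums are pairwise distinct and non-zero, every bracket must vanish independently, forcing $[p,q] = 0$, a contradiction. The surviving configurations reduce to a short list in which a sign-dual pairing of degrees produces exactly one contribution landing in $A_0$. In each such configuration I would substitute the explicit coefficient polynomials and simplify using the commutation identities $t^i g(h) = g(h-i)\, t^i$, $x^i g(h) = g(h+i)\, x^i$, together with $t^i x^i = (h-1)(h-2)\cdots(h-i)$ and $x^i t^i = h(h+1)\cdots(h+i-1)$. The resulting equations in $K[h]$ pin down the coefficient polynomials; I expect the admissible pairs to be exactly the scaling pairs $(\lambda x, \lambda^{-1} t)$, the Fourier-type pairs $(\lambda t, -\lambda^{-1} x)$, and the elementary shears $(x + c\, t^n,\, t)$ and $(x,\, t + c\, x^n)$, together with compositions of these with scaling and with the involution $x \mapsto t,\ t \mapsto -x$.

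For each surviving pair, the map $\phi$ defined by $\phi(x) := p$ and $\phi(t) := q$ is a well-known automorphism of $A_1(K)$ (a scaling, the Fourier involution, an elementary triangular automorphism, or a composition thereof), and that $\phi$ is an automorphism is routine in each case; this yields the conclusion of the proposition.

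The main obstacle is the $K[h]$-polynomial step when $|a|$ or $|b|$ is $\geq 2$: here the Pochhammer-like factors $(h-1)(h-2)\cdots(h-|a|)$ and the shifted arguments $f(h \pm a)$ interact in such a way that a non-constant choice of the coefficient polynomial $f$ forces $[p,q]$ to have strictly positive degree in $h$, contradicting $[p,q] = 1$. It is this degree-collapsing argument, driven jointly by the hypothesis $m(p), m(q) \leq 2$ and by the explicit form of the Pochhammer factors, that does the genuine work of the proposition and prevents the emergence of solutions that would correspond to wilder endomorphisms.
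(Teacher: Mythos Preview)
The paper does not give its own proof of this proposition; it is stated purely as a citation of Theorem~1.1 from Bavula--Levandovskyy \cite{Levandovsky}, and the paper moves on immediately to the next proposition. So there is no proof in the present paper to compare your attempt against.

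That said, your outline follows the natural strategy, and indeed the strategy of the cited source: exploit the $\mathbb{Z}$-grading $A_1(K)=\bigoplus_i A_i$, write $p$ and $q$ as sums of at most two homogeneous pieces, project $[p,q]=1$ onto graded components, and reduce to polynomial identities in $K[h]$ via the shift rules $t^i g(h)=g(h-i)t^i$, $x^i g(h)=g(h+i)x^i$. As a plan this is sound, but it remains a sketch rather than a proof. Two points deserve flagging. First, your list of expected admissible pairs is too narrow: while every automorphism of $A_1(K)$ is a composite of the elementary $\Phi_{n,\lambda}$ and $\Phi'_{n,\lambda}$, the pairs $(p,q)$ with $m(p),m(q)\le 2$ and $[p,q]=1$ are not simply single shears up to scaling and the Fourier swap; the case analysis in the original source is longer and produces less tidy normal forms than you anticipate. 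Second, you correctly identify the ``main obstacle'' --- the degree-collapsing argument in $K[h]$ involving the Pochhammer factors --- but you do not carry it out, and this is exactly where the bulk of the work in \cite{Levandovsky} lies. So the proposal is a reasonable roadmap, not a proof, and the hard step you name is genuinely hard.
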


The next proposition gives an equivalent form of $DC_1$. Recall that $char(K)=0$ and hence $A_1(K)$ is a simple algebra.
\begin{proposition}\label{proposition2.2}
The following conditions are equivalent:
\begin{enumerate}
\item[\rm (i)]$DC_1$.
\item[\rm (ii)]If $[p,q]=1$ for some $p,q\in A_1(K)$, then $p,q$ generate $A_1(K)$ as $K$-algebra.
\end{enumerate}
\end{proposition}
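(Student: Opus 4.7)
The plan is to treat this as a straightforward application of the universal property of $A_1(K)$ together with its simplicity.

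For (i) $\Rightarrow$ (ii), I would start from a pair $p,q \in A_1(K)$ with $[p,q]=1$. Since $A_1(K)$ is presented as $K\{t,x\}/\langle xt-tx-1\rangle$, the assignments $x \mapsto p$, $t \mapsto q$ respect the defining relation $[x,t]=1$ and therefore extend to a well-defined $K$-algebra endomorphism $\varphi \colon A_1(K)\to A_1(K)$. By $DC_1$, $\varphi$ is an automorphism, in particular surjective, so the image of $\varphi$ — which is exactly the $K$-subalgebra generated by $p$ and $q$ — equals $A_1(K)$.

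For the converse (ii) $\Rightarrow$ (i), I would take an arbitrary $K$-algebra endomorphism $\varphi$ of $A_1(K)$, set $p := \varphi(x)$ and $q := \varphi(t)$, and observe that $[p,q]=\varphi([x,t])=\varphi(1)=1$. By (ii), $p$ and $q$ generate $A_1(K)$ as a $K$-algebra, and since these elements lie in the image of $\varphi$, it follows that $\varphi$ is surjective. For injectivity, I would use that $\mathrm{char}(K)=0$ forces $A_1(K)$ to be simple (as noted just before the statement): $\ker\varphi$ is a two-sided ideal, and it is proper because $\varphi(1)=1\neq 0$, so $\ker\varphi=0$. Therefore $\varphi$ is an automorphism.

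Neither direction presents a serious obstacle; the only points that require mild attention are the well-definedness of $\varphi$ in the first implication (which reduces exactly to the hypothesis $[p,q]=1$) and the appeal to simplicity of $A_1(K)$ in the second, to upgrade surjectivity to bijectivity. No further structural results beyond what is already stated in the excerpt are needed.
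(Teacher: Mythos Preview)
Your proposal is correct and follows essentially the same approach as the paper: in both directions the paper defines $\phi(x):=p$, $\phi(t):=q$ (resp.\ sets $p:=\phi(x)$, $q:=\phi(t)$), uses the universal property to get a well-defined endomorphism, and appeals to simplicity of $A_1(K)$ for injectivity. There is no substantive difference between your argument and the paper's.
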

\begin{proof}
$\rm (i)\Rightarrow \rm (ii)$:
Let $p,q\in A_1(K)$ such that $[p,q]=1$, then $\phi: A_1(K)\to A_1(K)$ defined by $\phi(x):=p$ and $\phi(t):=q$ is a well-defined algebra endomorphism of $A_1(K)$, so $\phi$ is an automorphism, in particular, $Im(\phi)=A_1(K)$, but $Im(\phi)$ is the subalgebra of $A_1(K)$ generated by $\phi(x)$ and $\phi(t)$, i.e., $p,q$ generate $A_1(K)$ as $K$-algebra.

$\rm (ii)\Rightarrow \rm (i)$: Let $\phi: A_1(K)\to A_1(K)$ be an algebra endomorphism, and let $p:=\phi(x)$ and $q:=\phi(t)$, then $[p,q]=pq-qp=\phi(xt-tx)=\phi(1)=1$ and $Im(\phi)$ is generated by $p,q$. By the hypothesis, $\phi$ is surjective, i.e., $\phi$ is an automorphism since $\ker(\phi)=0$.
\end{proof}

Thus, Proposition \ref{theorem2.1} says that the Dixmier conjecture holds if there exist elements $p,q\in A_1(K)$ that satisfy the following conditions: (a) $[p,q]=1$ (b) $m(p)\leq 2$ and $m(q)\leq 2$.

\begin{remark}
In a recent paper Gang Han and Bowen Tan (see \cite{Han}) consider the following $\mathbb{Z}$-graduation for $A_1(K)$: Take the inner derivation $ad_{xt}$ of $A_1(K)$, $ad_{xt}(f):=xtf-fxt$, for every $f\in A_1(K)$. Then the spectrum of this inner derivation is $\mathbb{Z}$ and for every $i\in \mathbb{Z}$, the $i$-homogeneous component of $A_1(K)$ is the $i$-eigenspace of $ad_{xt}$ denoted $D_i$. Thus, $A_1(K)$ has the $\mathbb{Z}$-graduation, $A_1(K)=\bigoplus_{i\in \mathbb{Z}}D_i$. Theorem 3.15 in \cite{Han} says that if $z,w\in A_1(K)$ are such that $[z,w]=1$ and $z$ is a sum of not more than $2$ homogeneous elements of $A_1(K)$, then $z$ and $w$ generate $A_1(K)$. This theorem of Han and Tan improves the result of Bavula and Levandovskyy.
\end{remark}

A second interesting recent advance on the Dixmier conjecture is presented by Niels Lauritzen and Jesper Funch Thomsen in 2019, and published in 2021 in the beautiful paper \cite{Lauritzen}. This advance involves bimodules over $A_n(K)$. We present next the main results of \cite{Lauritzen} related to $DC_n$ and the most important tools needed. We have included some proofs for a better understanding of the concepts involved.

\begin{itemize}
\item Let $K$ be a field and $A$ be a $K$-algebra.
\item A \textbf{\textit{bimodule}} $M$ over $A$ is a left and right $A$-module such that $(a_1m)a_2=a_1(ma_2)$, for every $a_1,a_2\in A$ and $m\in M$; an \textbf{\textit{homomorphism of bimodules}} $f:M\to N$ is a homomorphism of left and right modules. If $f:A\to A$ is a $K$-algebra endomorphism and $M$ is a bimodule over $A$, then $\boldsymbol{M^f}$ denotes the bimodule over $A$ defined by $a_1ma_2:=a_1mf(a_2)$, for $a_1,a_2\in A$ and $m\in M$. Similarly is defined $\boldsymbol{^fM}$.
\item The \textbf{\textit{graph}} of an algebra homomorphism $f:S\to A$ of $K$-algebras $S$ and $A$ is the $A-S$-bimodule $\boldsymbol{A^f}$ defined by $axs:=axf(s)$, for $a,x\in A$ and $s\in S$. In particular, we have the graph $A^f$ of an algebra endomorphism $f:A\to A$. The \textbf{\textit{dual graph}} of $f$ is $^fA$ (see Proposition \ref{proposition2.3} below).
\item Let $f:A\to A$ be an algebra endomorphism. Then, $(^fA)^f=\,^f(A^f)$. This bimodule is denoted by $^fA^f$ and we have the isomorphism of bimodules $^fA\otimes_A A^f\cong\, ^fA^f$ given by $a\otimes b\mapsto ab$, for $a,b\in A$.
\item The \textbf{\textit{enveloping algebra}} $A^e$ of the algebra $A$ is defined by $A^e:=A\otimes_K A^{\circ}$, where $A^{\circ}$ is the \textbf{\textit{opposite algebra}} of $A$ (in $A^{\circ}$ the product is given by $a*b:=ba$, for $a,b\in A$). If $M$ is a bimodule over $A$, then $M$ is a left $A^e$-module through $(a\otimes b)m:=amb$, for $a,b\in A$ and $m\in M$.
\item If $M$ y $N$ are bimodules over $A$, then
\begin{center}
$M\otimes N:=M\otimes_A N$ and $Hom(M,N):=Hom_A(_AM,_AN)$
\end{center}
are bimodules over $A$, with products
\begin{center}
$a(m\otimes n)b:=am\otimes nb$, $(afb)(x):=f(xa)b$, for $a,b\in A$ and $m\in M, n\in N$.
\end{center}
\item A bimodule $P$ is \textbf{\textit{invertible}} if there exists a bimodule $Q$ and bimodule isomorphisms
\begin{center}
	$\alpha: P\otimes Q\to A$ and $\beta:Q\otimes P\to A$
\end{center}
such that
\begin{equation}\label{equation2.1}
\alpha(p\otimes q)p'=p\beta(q\otimes p')\ \text{and} \ \beta(q\otimes p)q'=q\alpha(p\otimes q'), \ \text{for all} \ p,p'\in P, q,q'\in Q.
\end{equation}

Observe that if $P$ in invertible, then $Q\cong Hom(P,A)$.
\begin{proof}
We adapt to our notation the proof given in \cite{Curtis}, Theorem 3.54, page 60.
We define
\begin{center}
	$\phi:Q\to Hom(P,A)$,  $q\mapsto \phi_q:P\to A$,
	
	$\phi_q(p):=\alpha(p\otimes q)$, for $p\in P$.
\end{center}
Note that $\phi_q$ is a homomorphism of left $A$-modules. If $\phi_q=0$, then $\alpha(p\otimes q)=0$ for every $p\in P$, let $1=\beta(q_1\otimes p_1+\cdots+q_n\otimes p_n)$, so $q=1q=\beta(q_1\otimes p_1+\cdots+q_n\otimes p_n)q=q_1\alpha(p_1\otimes q)+\cdots+q_n\alpha(p_n\otimes q)=0$, i.e., $\phi$ is injective. Now let $h\in Hom(P,A)$, notice that $\phi(q_1h(p_1)+\cdots+q_nh(p_n))=h$. In fact, let $q:=q_1h(p_1)+\cdots+q_nh(p_n)\in Q$, then for $p\in P$,
\begin{center}
$\phi_q(p)=\alpha(p\otimes (q_1h(p_1)+\cdots+q_nh(p_n)))=\alpha(p\otimes q_1h(p_1))+\cdots+\alpha(p\otimes q_nh(p_n))=\alpha(p\otimes q_1)h(p_1)+\cdots+\alpha(p\otimes q_n)h(p_n)=h(\alpha(p\otimes q_1)p_1)+\cdots+h(\alpha(p\otimes q_n)p_n)=h(\alpha(p\otimes q_1)p_1+\cdots+\alpha(p\otimes q_n)p_n)=h(p\beta(q_1\otimes p_1)+\cdots+p\beta(q_n\otimes p_n))=h(p\beta(q_1\otimes p_1+\cdots+q_n\otimes p_n))=h(p1)=h(p)$.
\end{center}
Thus, $\phi_q(p)=h(p)$, for every $p\in P$, i.e., $\phi(q)=\phi_q=h$. This proves that $\phi$ is surjective.
\end{proof}

\item For an arbitrary field $K$ and $n\geq 1$, $A_n(K)\otimes _KA_n(K)\cong A_{2n}(K)$,

where the algebra isomorphism is defined by
\begin{center}
	$t_i\otimes 1\mapsto t_i$,\ \  $1\otimes t_i\mapsto t_{n+i}$,
	
		$x_i\otimes 1\mapsto x_i$,\ \  $1\otimes x_i\mapsto x_{n+i}$,
\end{center}
for $1\leq i\leq n$. Moreover, $A_n(K)\cong A_n(K)^{\circ}$, where the algebra isomorphism is defined by
\begin{center}
	$t_i\mapsto t_i$,\ \  $x_i\mapsto -x_i$, for $1\leq i\leq n$.
\end{center}
These algebra isomorphisms imply that $M$ is an $A_n(K)$-bimodule if and only if $M$ is a left $A_n(K)^e=A_n(K)\otimes_KA_n(K)^{\circ}\cong A_{2n}(K)$-module.
\end{itemize}

\begin{proposition}[\cite{Lauritzen}, Proposition 1.1]\label{proposition2.3}
Let $K$ be a field and $A$ be a $K$-algebra. Let $f:A\to A$ be an endomorphism of $A$ and $M$ be a bimodule over $A$. Then,
\begin{enumerate}
	\item[\rm (i)]$M\otimes A^f\cong M^f$.
	\item[\rm (ii)]$Hom(A^f,A)\cong\, ^fA$.
	\item[\rm (iii)]Assume that $A^*=K^*$. $f$ is invertible if and only if $A^f$ is invertible. In this case, $(A^f)^{-1}\cong\, ^fA$.
	\item[\rm (iv)]If $f$ is invertible, then $Ann_{A^e}(^{f^{-1}}A)=Ann_{A^e}(A^f)$.
\end{enumerate}
\end{proposition}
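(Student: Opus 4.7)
My plan is to treat the four statements in order: (i) and (ii) are direct constructions, (iii) builds on them (the forward direction is immediate and the reverse is the main obstacle), and (iv) is a short consequence.

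For (i), I would define $\phi: M \otimes_A A^f \to M^f$ by $m \otimes a \mapsto ma$, and check the tensor-balance relation and both bimodule structures directly; the right-action check uses $(m \otimes a) \cdot s = m \otimes af(s) \mapsto maf(s) = (ma) \cdot_{M^f} s$. The inverse is $m \mapsto m \otimes 1$. For (ii), I would define $\Psi: Hom(A^f, A) \to {}^fA$ by $\Psi(h) := h(1)$, with inverse $a \mapsto (x \mapsto xa)$; bijectivity is immediate since $A^f$ is left-generated over $A$ by $1$. The right action on both sides is standard and poses no issue; the delicate check is the left action, where the formula $(a \cdot h)(x) = h(x \cdot_{A^f} a) = h(xf(a))$ together with left $A$-linearity of $h$ yields $(a \cdot h)(1) = h(f(a)) = f(a)h(1)$, matching the twisted left action $a \cdot z = f(a)z$ on ${}^fA$.

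For the forward direction of (iii), I take $g := f^{-1}$ and apply (i) twice: $A^f \otimes A^g \cong (A^f)^g = A^{fg} = A$ and $A^g \otimes A^f \cong (A^g)^f = A^{gf} = A$, with isomorphisms $\alpha(p \otimes q) = pf(q)$ and $\beta(q \otimes p) = qf^{-1}(p)$. The compatibility (\ref{equation2.1}) then reduces to the identity $pf(q)p' = pf(qf^{-1}(p'))$, which follows from $f \circ f^{-1} = \mathrm{id}$. Combining with (ii) and the general fact (shown in the preceding text) that the inverse of an invertible bimodule $P$ is $Hom(P, A)$ yields $(A^f)^{-1} \cong {}^fA$.

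The reverse direction of (iii) is the main obstacle and is precisely where the hypothesis $A^* = K^*$ must enter. Suppose $A^f$ is invertible with inverse $Q$; by the preceding general fact and (ii), $Q \cong {}^fA$ as bimodules, so the invertibility isomorphism $\beta: Q \otimes A^f \to A$ yields a bimodule isomorphism ${}^fA \otimes_A A^f \cong A$. Computing the left-hand side via multiplication, I find it is $A$ as an abelian group but carries the bimodule structure inherited from ${}^fA$'s twisted left and $A^f$'s twisted right, namely $a \cdot z \cdot b = f(a)zf(b)$; in other words, it is ${}^fA^f$. Hence there is a bimodule isomorphism $\gamma: A \to {}^fA^f$, which forces $\gamma(amb) = f(a)\gamma(m)f(b)$; setting $m = 1$ gives $\gamma(a) = cf(a) = f(a)c$ for $c := \gamma(1)$, so $c$ centralizes $f(A)$. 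Bijectivity of $\gamma$ provides $a_0, a_1 \in A$ with $f(a_0)c = 1 = cf(a_1)$, making $c$ a two-sided unit of $A$; by $A^* = K^*$, $c \in K^*$ is a nonzero scalar. Then $\gamma(a) = cf(a)$ being bijective forces $f$ itself to be bijective. I expect the subtle step to be the correct identification of the bimodule structure on ${}^fA \otimes_A A^f$ as ${}^fA^f$; once this is in place, the role of $A^* = K^*$ is clean — it promotes ``$c$ is a unit'' to ``$c$ is a scalar'', which is exactly what is needed to conclude bijectivity of $f$ rather than of some merely scaled version.

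For (iv), assuming $f$ is invertible I define $\psi: {}^{f^{-1}}A \to A^f$ by $x \mapsto f(x)$ and check that it is a bimodule isomorphism: on the left, $\psi(f^{-1}(a)x) = f(f^{-1}(a)x) = af(x) = a \cdot_{A^f} \psi(x)$; on the right, $\psi(xa) = f(x)f(a) = \psi(x) \cdot_{A^f} a$. Bijectivity is immediate since $f$ is an automorphism. Because isomorphic bimodules — equivalently, isomorphic left $A^e$-modules — have equal annihilators, $Ann_{A^e}({}^{f^{-1}}A) = Ann_{A^e}(A^f)$ follows.
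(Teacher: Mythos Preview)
Your proposal is correct and follows essentially the same route as the paper for parts (i)--(iii): the same explicit maps $m\otimes a\mapsto ma$ and $h\mapsto h(1)$, the same use of (i) to show $A^f\otimes A^{f^{-1}}\cong A$ for the forward direction of (iii), and the same reduction of the reverse direction to a bimodule isomorphism $A\cong {}^fA^f$ followed by the observation that $\gamma(1)\in A^*=K^*$. The only difference is in (iv): the paper checks elementwise that $\sum a_i\otimes b_i$ kills the cyclic generator $1$ in ${}^{f^{-1}}A$ iff it kills $1$ in $A^f$ (via $\sum f^{-1}(a_i)b_i=0 \Leftrightarrow \sum a_i f(b_i)=0$), whereas you exhibit the bimodule isomorphism ${}^{f^{-1}}A\to A^f$, $x\mapsto f(x)$, and invoke equality of annihilators for isomorphic $A^e$-modules --- a cosmetic variation.
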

\begin{proof}
	(i) The isomorphism of bimodules is given by $m\otimes a\mapsto ma$, for $m\in M$ and $a\in A^f$.
	
	(ii) In this case the isomorphism of bimodules is given by $\phi \mapsto \phi(1)$ , for $\phi\in Hom(A^f,A)$.
	
	(iii) $\Rightarrow)$: According to (i), $A^f\otimes A^{f^{-1}}\cong (A^f)^{f^{-1}}$ ($a\otimes b\mapsto ab$), but $(A^f)^{f^{-1}}\cong A$ ($x\mapsto x$), thus $A^f\otimes A^{f^{-1}}\cong A$. Similarly, $A^{f^{-1}}\otimes A^f\cong (A^{f^{-1}})^f\cong A$. These isomorphisms trivially satisfy (\ref{equation2.1}). This means that $A^f$ is invertible with inverse $A^{f^{-1}}$. But $A^{f^{-1}}\cong \, ^fA$, where the isomorphism is namely $f$. Thus, $(A^f)^{-1}=\,^fA$.
	
	$\Leftarrow)$: Assume that $A^f$ is invertible, then there exists $Q$ such that $A^f\otimes Q\cong A$ and $Q\otimes A^f\cong A$, moreover, $Q\cong Hom(A^f,A)\cong\, ^fA$. Hence, $(A^f)^{-1}=\,^fA$. Thus, we have a bimodule isomorphism $A\cong\, ^fA\otimes A^f$, but observe the bimodule isomorphism $^fA\otimes A^f\cong \, ^fA^f$ given by $a\otimes b\mapsto ab$. Therefore, we have a bimodule isomorphism $\phi: A\to\, ^fA^f$ such that for every $a\in A$, $\phi(a)=\phi(a1)=a\cdot \phi(1)=f(a)\phi(1)=\phi(1a)=\phi(1)\cdot a=\phi(1)f(a)$. Since $\phi$ is injective then $f$ is injective. In order to prove that $\phi$ is surjective observe first that $\phi(1)\in A^*$: Indeed, since $\phi$ is surjective there exists $x\in A$ such that $\phi(x)=1$, so $1=f(x)\phi(1)=\phi(1)f(x)$. Now, let $a\in A$, then there exists $b\in A$ such that $\phi(b)=a$, hence $a=\phi(b)=\phi(1)f(b)$, but $\phi(1)\in A^*=K^*$ and $f$ is $K$-linear, so $a=f(\phi(1)b)$.
	
	(iv) $a_1\otimes b_1+\cdots+a_n\otimes b_n\in Ann_{A^e}(^{f^{-1}}A)$ if and only if $(a_1\otimes b_1+\cdots+a_n\otimes b_n)1=0$ if and only if $f^{-1}(a_1)b_1+\cdots+f^{-1}(a_n)b_n=0$ if and only if $a_1f(b_1)+\cdots+a_nf(b_n)=0$ if and only if $(a_1\otimes b_1+\cdots+a_n\otimes b_n)1=0$ if and only if $a_1\otimes b_1+\cdots+a_n\otimes b_n\in Ann_{A^e}(A^f)$.
\end{proof}

\begin{proposition}[\cite{Lauritzen}, Corollary 2.6]
Let $K$ be a field of characteristic zero and let $A:=A_n(K)$. If $f:A\to A$ is an algebra endomorphism, then the graph $A^f$ and the dual graph $^fA$ are simple bimodules.
\end{proposition}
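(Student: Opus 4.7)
\textbf{Setup and reduction.} The plan is to prove $A^f$ is simple as a left $A^e$-module, which by the bullet list preceding the proposition (establishing $A^e \cong A_{2n}(K)$) amounts to simplicity as an $A$-bimodule; the statement for $^fA$ will follow symmetrically, or by transport via the involutive $K$-algebra isomorphism $A\cong A^\circ$ ($t_i\mapsto t_i$, $x_i\mapsto -x_i$) from that same bullet list. Since $A = A_n(K)$ is simple in characteristic zero and $f(1)=1$, the kernel of $f$ is a proper two-sided ideal of $A$, hence $0$, so $f$ is injective. Next, $A^f$ is cyclic over $A^e$ generated by $1 \in A$: the assignment $(a\otimes b)\cdot 1 = a f(b)$ sweeps out the left ideal $A\cdot f(A)$ of $A$, which contains $f(1)=1$ and is therefore all of $A$. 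Setting $J_f := \operatorname{Ann}_{A^e}(1)$, we have $A^f \cong A^e / J_f$, so simplicity of $A^f$ becomes the statement that $J_f$ is a maximal left ideal of $A^e \cong A_{2n}(K)$. Concretely, sub-$A^e$-submodules of $A^f$ correspond to left ideals $M \subseteq A$ satisfying the twisted right invariance $M\cdot f(A)\subseteq M$.

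\textbf{Holonomicity.} I equip $A$ with its Bernstein filtration and $A^e$ with the induced tensor filtration, so $\operatorname{gr} A^e$ is the polynomial algebra in $4n$ variables. Let $d:=\max_i\max\{\deg f(t_i),\deg f(x_i)\}$. A suitable rescaling of the Bernstein filtration on $A$ (to absorb the factor $d$) produces a good filtration of the cyclic $A^e$-module $A^f = A^e\cdot 1$ whose Hilbert polynomial has degree $2n = \tfrac12\operatorname{GK}(A^e)$, matching the Bernstein lower bound for a nonzero finitely generated $A_{2n}(K)$-module. Therefore $A^f$ is a holonomic $A_{2n}(K)$-module, and in particular has finite length over $A^e$.

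\textbf{From finite length to simplicity, and the main obstacle.} Finite length alone is not enough. The annihilator of $A^f$ in $A^e$ vanishes because $A^e$ is simple, so $A^f$ is faithful; the remaining and critical step is to analyze the characteristic cycle of $A^f$ — the support of $\operatorname{gr}(A^f)$ in $\operatorname{Spec}(\operatorname{gr} A^e)\cong \mathbb{A}^{4n}_K$, counted with multiplicity — and to identify it as an irreducible Lagrangian subvariety coming from the graph of the principal symbol of $f$, occurring with multiplicity one. A proper nonzero sub-bimodule $M\subsetneq A^f$ would split this characteristic cycle as $[A^f] = [M] + [A^f/M]$ into two nonzero summands supported on the same irreducible component, contradicting the multiplicity-one assertion; hence $A^f$ is simple, and $^fA$ follows by the mirror argument. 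This characteristic-cycle step is the hardest part: for $f=\operatorname{id}$ the simplicity of $A^f = A$ is trivial (sub-bimodules are two-sided ideals of the simple ring $A$), but for a potentially non-surjective endomorphism the twisted right action only involves $f(A)\subsetneq A$, so that elementary argument collapses and one genuinely needs the geometric content of holonomicity — irreducibility and unit multiplicity of the Lagrangian attached to the graph bimodule — which in turn relies on the hypothesis $\operatorname{char}(K)=0$ through Bernstein's inequality and the theory of $\cD$-modules on affine space.
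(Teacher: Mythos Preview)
The paper itself gives no proof of this proposition; it is stated with a bare citation to \cite{Lauritzen} (their Corollary~2.6), so there is no in-paper argument to compare your proposal against.

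Your architecture---realize $A^f$ as the cyclic $A^e$-module $A^e/J_f$, prove holonomicity over $A^e\cong A_{2n}(K)$, then deduce simplicity from a characteristic-cycle statement---does match the strategy of the cited reference, and the reduction and holonomicity steps are fine: the filtration $G_m:=F_{dm}(A)$ is compatible with the $A^e$-action and witnesses $\mathrm{GKdim}_{A^e}(A^f)\le 2n$, hence holonomicity by Bernstein's inequality.

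However, the specific mechanism you propose for the last step is not merely unexecuted but incorrect as stated. With respect to the standard Bernstein filtration on $A^e$, the characteristic cycle of $A^f$ does \emph{not} in general have multiplicity one. Take $n=1$ and $f(t)=t$, $f(x)=x+t^2$. Writing $A^e=A_2=K\langle a,b,c,d\rangle$ with $[c,a]=[d,b]=1$, one has $J_f=A^e(b-a)+A^e(d+c+a^2)$; the Bernstein symbols $b-a$ and $a^2$ form a regular sequence in $K[a,b,c,d]$, so $\operatorname{gr}(J_f)=(b-a,a^2)$ and the characteristic cycle is $2\cdot[\{a=b=0\}]$, giving Bernstein multiplicity $e(A^f)=2$. (The module $A^f$ is nonetheless simple here---the condition $L\cdot f(A)\subseteq L$ forces $Lt\subseteq L$ and then $Lx\subseteq L$, so $L$ is two-sided---but your argument does not detect this.) Two related symptoms: your rescaled filtration $G_m=F_{dm}(A)$ is compatible but not \emph{good} when $d>1$ (it differs from the good filtration $F_m(A^e)\cdot 1$ by a rescaling rather than a bounded shift), so its Hilbert polynomial, with leading coefficient $d^{2n}/(2n)!$, computes nothing; and the slogan ``graph of the principal symbol of $f$'' fails literally once $\deg f>1$, since the Bernstein symbol of $1\otimes t_i-f(t_i)\otimes 1$ then loses the linear term $1\otimes t_i$. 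Getting from holonomicity to simplicity therefore requires an extra idea---in \cite{Lauritzen} this is the substantive content of their Section~2---and that idea is what is missing from your proposal.
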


\begin{proposition}[\cite{Lauritzen}, page 168]\label{proposition2.5}
	Let $K$ be a field of characteristic zero and let $A:=A_n(K)$. The following conditions are equivalent:
\begin{enumerate}
	\item[\rm (i)]$DC_n$.
	\item[\rm (ii)]If $f:A\to A$ is an algebra endomorphism, then the bimodule $^fA\otimes_A A^f$ is simple.
\end{enumerate}
\end{proposition}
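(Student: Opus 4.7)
The plan is to reduce the statement to simplicity of the twisted bimodule $^fA^f$, using the bimodule isomorphism $^fA\otimes_A A^f\cong \,^fA^f$ given by $a\otimes b\mapsto ab$ (this holds for every endomorphism $f$, by the same computation already used in Proposition \ref{proposition2.3}). Thus the condition in (ii) is equivalent to requiring that $^fA^f$ be a simple $A^e$-module for every algebra endomorphism $f$ of $A=A_n(K)$. The key external input is that, since $\mathrm{char}(K)=0$, the algebra $A_n(K)$ is simple, so its two-sided ideals coincide with sub-bimodules of $A$, which means that $A$ itself is a simple $A^e$-module.

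For the implication $(i)\Rightarrow (ii)$, I would argue as follows. Assuming $DC_n$, every endomorphism $f$ is an automorphism. The map $f\colon A\to \,^fA^f$, $a\mapsto f(a)$, is then a bijection, and it is in fact a bimodule homomorphism since for $b,a,c\in A$ one has $f(bac)=f(b)f(a)f(c)=b\cdot f(a)\cdot c$ in $^fA^f$. Hence $^fA^f\cong A$ as bimodules, and therefore $^fA^f$ is simple by simplicity of $A$. Combined with the isomorphism $^fA\otimes A^f\cong \,^fA^f$, this proves (ii). (Alternatively, one can invoke Proposition \ref{proposition2.3}(iii) directly, using $A^*=K^*$ for the Weyl algebra, to get $^fA\otimes A^f\cong A$.)

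For the harder implication $(ii)\Rightarrow (i)$, I would fix an arbitrary algebra endomorphism $f\colon A\to A$ and show it is bijective. Injectivity is automatic: $\ker f$ is a two-sided ideal of the simple algebra $A_n(K)$ and $f(1)=1\neq 0$, so $\ker f=0$. For surjectivity, I would again use the bimodule map $\phi:=f\colon A\to \,^fA^f$, whose image $f(A)$ is a sub-bimodule of $^fA^f$ containing $f(1)=1$, hence nonzero. By hypothesis $^fA\otimes A^f$ is simple, and via the isomorphism above so is $^fA^f$; therefore $f(A)=\,^fA^f=A$ as sets, giving surjectivity. Hence $f$ is an automorphism and $DC_n$ holds.

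The only place where something could actually go wrong is the verification that the natural multiplication map $^fA\otimes_A A^f\to \,^fA^f$ is an isomorphism of bimodules for a general endomorphism $f$ (not only when $f$ is invertible), since the paper only states it explicitly in the invertible case; but this is an easy check using that every simple tensor can be rewritten as $1\otimes z$ through the relation $xa\otimes y=x\otimes ay$ in the tensor product over $A$. Beyond this routine verification, the argument reduces cleanly to the well-known simplicity of $A_n(K)$ in characteristic zero, so I do not anticipate any substantial obstacle.
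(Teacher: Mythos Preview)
Your proof is correct and follows essentially the same route as the paper: both first replace $^fA\otimes_A A^f$ by $^fA^f$ via the multiplication isomorphism, and for (ii)$\Rightarrow$(i) both observe that $\mathrm{Im}(f)$ is a nonzero sub-bimodule of $^fA^f$, hence all of $A$. For (i)$\Rightarrow$(ii) your phrasing (the map $a\mapsto f(a)$ gives a bimodule isomorphism $A\cong{}^fA^f$) is a clean repackaging of the paper's direct check that every nonzero sub-bimodule of $^fA^f$ is a two-sided ideal of $A$; the underlying computation is the same.
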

\begin{proof}
Recall first the bimodule isomorphism $^fA\otimes_A A^f\cong \,^fA^f$.

(i)$\Rightarrow$(ii): Let $N\neq 0$ be a bisubmodule of $^fA^f$, then observe that $N$ is a two-sided ideal of $A$: In fact, let $a,b\in A$ and $n\in N$, then since $f$ is surjective there exist $a',b'\in A$ such that $f(a')=a$ and $f(b')=b$, so $anb=f(a')nf(b')=a'nb'\in N$. Therefore, $N=\,^fA^f$.

(ii)$\Rightarrow$(i): $Im(f)$ is a nonzero bisubmodule of $^fA^f$, so $Im(f)=A$, hence $f$ is an automorphism.
\end{proof}

\begin{proposition}[\cite{Lauritzen}, Theorem 3.9]\label{proposition2.6}
Let $K$ be an arbitrary field and let $A:=A_n(K)$. Let $f:A\to A$ be an algebra endomorphism of $A$. Consider the left ideal $J$ of $A^e$ defined by
\begin{center}
$J:=\left\langle 1\otimes t_1-f(t_1)\otimes 1, \dots, 1\otimes t_n-f(t_n)\otimes 1,1\otimes \partial_1-f(\partial_1)\otimes 1, \dots, 1\otimes \partial_n-f(\partial_n)\otimes 1\right\rbrace $.
\end{center}
Then,
\begin{enumerate}
\item[\rm (i)]$J=Ann_{A^e}(A^f)$.
\item[\rm (ii)]If $f$ is invertible, then
\begin{center}
	{\small
	$J=\left\langle t_1\otimes 1-1\otimes f^{-1}(t_1), \dots, t_n\otimes 1-1\otimes f^{-1}(t_n),\partial_1\otimes 1-1\otimes f^{-1}(\partial_1), \dots, \partial_n\otimes 1-1\otimes f^{-1}(\partial_n)\right\rbrace $.}
\end{center}
\item[\rm (iii)]If
\begin{center}
	$J=\left\langle t_1\otimes 1-1\otimes q_1, \dots, t_n\otimes 1-1\otimes q_n,\partial_1\otimes 1-1\otimes p_1, \dots, \partial_n\otimes 1-1\otimes p_n\right\rbrace $,
\end{center}
	for some $q_i,p_i\in A^{\circ}$, $1\leq i\leq n$, then $f$ is invertible and $q_i=f^{-1}(t_i)$, $p_i=f^{-1}(\partial_i)$.
\end{enumerate}
\end{proposition}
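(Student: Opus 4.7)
The plan is to prove the three parts in sequence, with part (i) providing the key reduction that drives both (ii) and (iii).

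For part (i), the inclusion $J \subseteq \mathrm{Ann}_{A^e}(A^f)$ is routine: each generator kills $1 \in A^f$ (for instance, $(1 \otimes t_i - f(t_i) \otimes 1) \cdot 1 = f(t_i) - f(t_i) = 0$), and the left annihilator of $1$ is automatically a left ideal. The real content lies in the reverse inclusion. Left-multiplying the generator $1 \otimes t_j - f(t_j) \otimes 1$ by an arbitrary $a \otimes b \in A^e$ and using that multiplication in $A^e = A \otimes_K A^\circ$ reverses the order in the right factor, I obtain
\[ a \otimes (t_j b) \equiv a f(t_j) \otimes b \pmod J, \]
and the analogous rule with $\partial_j$ in place of $t_j$. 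Iterating by induction on a monomial expression for $b$ in the generators $t_1, \dots, t_n, \partial_1, \dots, \partial_n$ and extending by $K$-linearity, I get $a \otimes b \equiv a f(b) \otimes 1 \pmod J$ for every $a, b \in A$. Consequently, any $u = \sum a_i \otimes b_i$ satisfies $u \equiv \bigl(\sum a_i f(b_i)\bigr) \otimes 1 \pmod J$; if $u$ annihilates $1$, the right-hand side vanishes and $u \in J$.

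For part (ii), I would invoke Proposition \ref{proposition2.3}(iv), which in the invertible case identifies $\mathrm{Ann}_{A^e}(A^f)$ with $\mathrm{Ann}_{A^e}({}^{f^{-1}}A)$. Applying the mirror computation to ${}^{f^{-1}}A$ (whose left action is twisted by $f^{-1}$) produces the symmetric reduction
\[ a t_j \otimes b \equiv a \otimes f^{-1}(t_j)\, b \pmod{J'}, \]
and by the same induction scheme the annihilator coincides with the left ideal $J'$ generated by the $t_i \otimes 1 - 1 \otimes f^{-1}(t_i)$ and $\partial_i \otimes 1 - 1 \otimes f^{-1}(\partial_i)$. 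Hence $J = J'$.

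For part (iii), the hypothesis together with part (i) forces each proposed generator to annihilate $1 \in A^f$: from $(t_i \otimes 1 - 1 \otimes q_i) \cdot 1 = 0$ one reads off $f(q_i) = t_i$, and similarly $f(p_i) = \partial_i$. Since $t_1, \dots, t_n, \partial_1, \dots, \partial_n$ generate $A_n(K)$ as a $K$-algebra, $\mathrm{Im}(f) = A_n(K)$, so $f$ is surjective. To promote surjectivity to bijectivity I would invoke the left Noetherian property of $A_n(K)$, valid in every characteristic: the ascending chain $\ker f \subseteq \ker f^2 \subseteq \cdots$ of two-sided ideals stabilizes at some $\ker f^n = \ker f^{n+1}$, and then for any $x \in \ker f$ the surjectivity of $f^n$ supplies $y$ with $f^n(y) = x$, forcing $y \in \ker f^{n+1} = \ker f^n$ and hence $x = 0$. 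So $f$ is invertible, and the relations $f(q_i) = t_i$, $f(p_i) = \partial_i$ rearrange to $q_i = f^{-1}(t_i)$ and $p_i = f^{-1}(\partial_i)$.

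The main obstacle is the bookkeeping in part (i). Because multiplication in $A^\circ$ reverses the order and $J$ is only a \emph{left} ideal, every reduction must be obtained by left multiplication, so the induction has to peel the leftmost generator off $b$ and transfer its image under $f$ into the left-hand tensor slot. Once that machinery is in place, parts (ii) and (iii) become variations on the same theme, with (iii) additionally calling on the Noetherian trick to promote surjectivity to invertibility.
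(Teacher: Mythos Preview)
The paper does not supply its own proof of this proposition; it is simply quoted from \cite{Lauritzen} without argument. So there is nothing in the paper to compare your proposal against directly.

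That said, your proposal is correct and essentially the natural argument. A few remarks. In part (i) your reduction $a\otimes b \equiv af(b)\otimes 1 \pmod J$ is exactly right, and your care about peeling off the \emph{leftmost} letter of $b$ (because $(a\otimes b)(1\otimes t_j)=a\otimes t_jb$ in $A\otimes A^\circ$) is the key bookkeeping point; once you have it, the annihilator of $1$ collapses to $J$, and since $1$ generates $A^f$ as a left $A^e$-module this gives $J=\mathrm{Ann}_{A^e}(A^f)$. In part (ii) your use of Proposition~\ref{proposition2.3}(iv) together with the mirror reduction $a t_j\otimes b\equiv a\otimes f^{-1}(t_j)b$ (now peeling from the right of $a$) is clean and correct. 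In part (iii) the deduction $f(q_i)=t_i$, $f(p_i)=\partial_i$ from annihilation of $1$ is right, and the Noetherian ``surjective implies injective'' argument is the standard way to finish over an arbitrary field, where simplicity of $A_n(K)$ is not available; note that $A_n(K)$ is left Noetherian in every characteristic (iterated Ore extension of a field), so the ascending chain $\ker f\subseteq \ker f^2\subseteq\cdots$ does stabilize.
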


\subsection{The quantum analogue of the Dixmier conjecture}

In this subsection we present the study given in \cite{Tang} about the Dixmier conjecture for some simple localizations of two quantum algebras, namely, for the ``symmetric" multiparameter quantized Weyl algebra $\mathcal{A}_n^{\overline{q},\Lambda}(K)$ and the Maltsiniotis multiparameter quantized Weyl algebra $A_n^{\overline{q},\Gamma}(K)$. The Dixmier problem for localizations of quantum algebras defined by the submonoid generated by some normal elements is known in the literature as \textbf{\textit{the quantum analogue of the Dixmier conjecture}} (see \cite{Tang} and \cite{Tang2}). We start recalling the definition of algebras $\mathcal{A}_n^{\overline{q},\Lambda}(K)$ and $A_n^{\overline{q},\Gamma}(K)$ given in \cite{Tang} (the algebra of Maltsiniotis was introduced first in \cite{Maltsiniotis}). We remark that these two algebras are particular examples of the skew $PBW$ extensions that we will consider in Section \ref{Section4}.

\begin{definition}[\cite{Tang}]
Let $K$ be an arbitrary field, $n\geq 1$ and $\overline{q}:=(q_1,\dots,q_n)\in (K^*)^n$.
\begin{enumerate}
\item[\rm (i)]Let $\Gamma:=[\gamma_{ij}]\in M_n(K^*)$ be a multiplicatively skew-symmetric matrix. The \textbf{Maltsiniotis multiparameter quantized Weyl algebra} $A_n^{\overline{q},\Gamma}(K)$ is the $K$-algebra generated by $x_1,y_1,\dots,x_n,y_n$ subject to the following relations:
\begin{center}
$y_iy_j=\gamma_{ij}y_jy_i$, $1\leq i,j\leq n$;

$x_ix_j=q_i\gamma_{ij}x_jx_i$, $1\leq i<j\leq n$;

$x_iy_j=\gamma_{ji}y_jx_i$, $1\leq i<j\leq n$;

$x_iy_j=q_i\gamma_{ji}y_jx_i$, $1\leq j<i\leq n$;

$x_iy_i-q_iy_ix_i=1+\sum_{k=1}^{i-1}(q_k-1)y_kx_k$, $1\leq i\leq n$.
\end{center}
\item[\rm (ii)]Let $\Lambda:=[\lambda_{ij}]\in M_n(K^*)$ with $\lambda_{ij}=\lambda_{ji}^{-1}$ and $\lambda_{ii}=1$. The \textbf{``symmetric" multiparameter quantized Weyl algebra} $\mathcal{A}_n^{\overline{q},\Lambda}(K)$ is the $K$-algebra generated by $x_1,y_1,\dots,x_n,y_n$ subject to the following relations:
\begin{center}
	$y_jy_i=\lambda_{ji}y_iy_j$, $1\leq i<j\leq n$;
	
	$x_ix_j=\lambda_{ij}x_jx_i$, $1\leq i<j\leq n$;
	
	$x_iy_j=\lambda_{ji}y_jx_i$, $1\leq i<j\leq n$;
	
	$x_jy_i=\lambda_{ij}y_ix_j$, $1\leq i<j\leq n$;
	
	$x_iy_i-q_iy_ix_i=1$, $1\leq i\leq n$.
\end{center}
\end{enumerate}  	
\end{definition}

\begin{remark}
We recall next some remarks on the previous definitions (see \cite{Tang}).
\\
(a) We have copied the definitions as were given in \cite{Tang}, however observe that in (ii) the conditions on $\Lambda$ also means that $\Lambda$ is multiplicatively skew-symmetric.
\\
(b) $A_n^{\overline{q},\Gamma}(K)$ and $\mathcal{A}_n^{\overline{q},\Lambda}(K)$ are non simple algebras.
\\
(c) $Z(A_n^{\overline{q},\Gamma}(K))=K=Z(\mathcal{A}_n^{\overline{q},\Lambda}(K))$ when none of parameters $q_i$ is a root of unity.
\\
(d) When $n=1$ and $q=1$, both $\mathcal{A}_n^{\overline{q},\Lambda}(K)$ and $\mathcal{A}_n^{\overline{q},\Gamma}(K)$ coincides with $A_1(K)$.
\\
(e) When $n=1$, both $\mathcal{A}_n^{\overline{q},\Lambda}(K)$ and $\mathcal{A}_n^{\overline{q},\Gamma}(K)$ coincides with $A_1^q(K)$.
\\
(f) Related with the isomorphism
\begin{center}
$A_n(K)\cong A_1(K)^{\otimes n}:=A_1(K)\otimes \cdots \otimes A_1(K)$
\end{center}
(see \cite{Levandovsky}), in our context we have that if $\lambda_{ij}=1$ for every $1\leq i,j\leq n$, then
\begin{center}
$\mathcal{A}_n^{\overline{q},\Lambda}(K)\cong A_1^{q_1}(K)\otimes \cdots \otimes A_1^{q_n}(K)$.
\end{center}
\end{remark}

Next we present the main results of \cite{Tang} related to the the quantum analogue of the Dixmier conjecture.

\begin{proposition}\label{proposition2.10}
{\rm (a)} The algebra $\mathcal{A}_n^{\overline{q},\Lambda}(K)$ has the following properties:
\begin{enumerate}
	\item[\rm (i)]For every $1\leq i\leq n$, $z_i:=x_iy_i-y_ix_i$ is normal.
	\item[\rm (ii)]Let $\mathcal{Z}$ be the submonoid of $\mathcal{A}_n^{\overline{q},\Lambda}(K)$ generated by the elements $z_1,\dots,z_n$. Then $\mathcal{Z}$ is an Ore set of $A$. If $q_i$ is not a root of unity for any $1\leq i\leq n$, then the localization $\mathcal{A}_n^{\overline{q},\Lambda}(K)_\mathcal{Z}$ is a simple algebra.
	\item[\rm (iii)]If $q_i$ is not a root of unity for any $1\leq i\leq n$, then the set $N$ of all normal elements of $\mathcal{A}_n^{\overline{q},\Lambda}(K)$ is given by
	\begin{center}
		$N=\{az_1^{l_1}\cdots z_n^{l_n}\mid a\in K, l_i\geq 0, 1\leq i\leq n\}$.
	\end{center}
	\item[\rm (iv)] $($\textbf{Quantum analogue of the Dixmier conjecture for} $\boldsymbol{\mathcal{A}_n^{\overline{q},\Lambda}(K)}$$)$ Assume that $q_1^{i_1}\cdots q_n^{i_n}=1$ implies that $i_1=\cdots=i_n=0$. Then every algebra endomorphism of  $\mathcal{A}_n^{\overline{q},\Lambda}(K)_{\mathcal{Z}}$ is an automorphism.
	\end{enumerate}
{\rm (b)} $($\textbf{Quantum analogue of the Dixmier conjecture for} $\boldsymbol{A_n^{\overline{q},\Gamma}(K)}$$)$ Assume that $q_1^{i_1}\cdots q_n^{i_n}=1$ implies that $i_1=\cdots=i_n=0$. Then every algebra endomorphism of  $\boldsymbol{A_n^{\overline{q},\Gamma}(K)}_{\mathcal{Z'}}$ is an automorphism.
\end{proposition}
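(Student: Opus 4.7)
The plan is to handle the four claims of part (a) in order, and then remark that part (b) follows by the same blueprint with the appropriately modified commutation data.

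For (i) I would verify normality of $z_i = x_iy_i - y_ix_i$ by direct computation. From $x_iy_i = q_iy_ix_i + 1$ one rewrites $z_i = (q_i-1)y_ix_i + 1$; applying this rewrite and the defining relation a second time yields $x_iz_i = q_iz_ix_i$ and $z_iy_i = q_iy_iz_i$. For $j \neq i$, the cross relations $x_jx_i = \lambda_{ji}x_ix_j$ and $x_jy_i = \lambda_{ij}y_ix_j$, together with $\lambda_{ij}\lambda_{ji} = 1$, show that $x_j$ commutes past the products $x_iy_i$ and $y_ix_i$, so $x_jz_i = z_ix_j$; likewise $y_jz_i = z_iy_j$. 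Hence $z_i \mathcal{A}_n^{\overline{q},\Lambda}(K) = \mathcal{A}_n^{\overline{q},\Lambda}(K) z_i$.

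For (ii), each $z_i$ is normal and regular (the algebra is a domain, being a skew $PBW$ extension), so the submonoid $\mathcal{Z}$ they generate, whose elements pairwise commute by the computation in (i), is an Ore set. To prove simplicity of $\mathcal{A}_n^{\overline{q},\Lambda}(K)_{\mathcal{Z}}$ when no $q_i$ is a root of unity, I would take a nonzero two-sided ideal $I$, clear denominators to choose $0\neq a \in I \cap \mathcal{A}_n^{\overline{q},\Lambda}(K)$ of minimal total degree in the $PBW$ basis, and bracket $a$ alternately with $x_i$ and $y_i$. The non-root-of-unity hypothesis forces these iterated commutators to strictly decrease degree, eventually producing a nonzero element of $\mathcal{Z}$, which is invertible in the localization, so $I = \mathcal{A}_n^{\overline{q},\Lambda}(K)_\mathcal{Z}$.

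For (iii), given a nonzero normal $n$, write $nx_i = \alpha_i x_i n$ and $ny_i = \beta_i y_i n$ and expand $n$ in the $PBW$ basis $y_1^{a_1}\cdots y_n^{a_n} x_1^{b_1}\cdots x_n^{b_n}$. Comparing leading monomials produces relations of the form $\prod_j q_j^{\epsilon_{ij}} = 1$ with exponents determined by the multidegree; the hypothesis $q_1^{i_1}\cdots q_n^{i_n}=1 \Rightarrow i_1=\cdots=i_n=0$ forces these multidegrees, and hence the leading monomial of $n$, to match exactly that of a suitable $z_1^{l_1}\cdots z_n^{l_n}$; induction on degree removes the leading term and finishes.

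Part (iv) is the heart of the proposition. Given an endomorphism $f$ of $\mathcal{A}_n^{\overline{q},\Lambda}(K)_\mathcal{Z}$, each $f(z_i)$ is normal and invertible, so by the Laurent analogue of (iii) in the localization one has $f(z_i) = c_i\, z_1^{l_{i1}}\cdots z_n^{l_{in}}$ with $c_i \in K^*$ and $l_{ij} \in \mathbb{Z}$. Applying $f$ to the relations $x_iz_i = q_i z_i x_i$ and matching the scalar factors, the multiplicative independence of $q_1,\ldots,q_n$ forces $[l_{ij}]$ to be a permutation matrix; post-composing with the corresponding diagonal/permutation automorphism reduces to the case $l_{ij} = \delta_{ij}$. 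The remaining quasi-commutation relations then pin $f(x_i)$ and $f(y_i)$ down to $K^*$-multiples of $x_i$ and $y_i$ modulo normal factors, exhibiting the generators in $\mathrm{Im}(f)$. Surjectivity of $f$ follows, and injectivity is automatic by simplicity of $\mathcal{A}_n^{\overline{q},\Lambda}(K)_\mathcal{Z}$, so $f$ is an automorphism. The main obstacle is precisely this final rigidity step: extracting control over the images of $x_i, y_i$ from the sole knowledge of $f$ on the distinguished normal elements $z_i$, which is where the multiplicative independence hypothesis on $\overline{q}$ is indispensable. Part (b) follows by the identical strategy, with the analogous normal elements and the modified cross-relations of $A_n^{\overline{q},\Gamma}(K)$.
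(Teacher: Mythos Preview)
Your treatment of (i) matches the paper's: both verify normality of $z_i$ by direct computation against each generator, and your rewriting $z_i=(q_i-1)y_ix_i+1$ is a perfectly good shortcut to the same identities $z_ix_i=q_i^{-1}x_iz_i$, $z_iy_i=q_iy_iz_i$, $z_ix_j=x_jz_i$, $z_iy_j=y_jz_i$ for $j\neq i$.

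For (ii)--(iv) and (b), however, the routes diverge sharply. The paper does \emph{not} argue any of these directly: (ii) is obtained from the Akhavizadegan--Jordan isomorphism $\mathcal{A}_n^{\overline{q},\Lambda}(K)_{\mathcal{Z}}\cong A_n^{\overline{q},\Gamma}(K)_{\mathcal{Z}'}$ together with Jordan's simplicity theorem for the latter; (iii) and (iv) are simply cited from Tang; and (b) falls out of (a)(iv) via that same isomorphism, rather than by rerunning the whole argument with the $\Gamma$-relations. Your direct approach is closer in spirit to what those cited papers actually do, and for (ii) your minimal-degree commutator argument is a standard and viable strategy. But two points deserve care. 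First, in (iii) you invoke the multiplicative-independence hypothesis $q_1^{i_1}\cdots q_n^{i_n}=1\Rightarrow i_k=0$, whereas the statement of (iii) only assumes that no single $q_i$ is a root of unity; the leading-term comparison in fact produces relations of the form $q_i^{k}=1$ for individual $i$ (the cross-commutations contribute only $\lambda$-factors, not $q$-factors), so the weaker hypothesis suffices and you should track this. Second, and more seriously, in (iv) you correctly flag the rigidity step---passing from $f(z_i)=c_iz_i$ to control of $f(x_i),f(y_i)$---as the main obstacle, but your sketch does not resolve it: saying the relations ``pin $f(x_i)$ and $f(y_i)$ down to $K^*$-multiples of $x_i$ and $y_i$ modulo normal factors'' is precisely what requires proof, and the reduction via a permutation automorphism is not generally available unless the $q_i$ coincide. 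The paper sidesteps this by citing Tang; if you want a self-contained argument you will need to supply that step in full. Finally, note that for (b) the isomorphism shortcut is genuinely more economical than redoing the analysis for $A_n^{\overline{q},\Gamma}(K)$.
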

\begin{proof}
(i) We will prove that $z_i\mathcal{A}_n^{\overline{q},\Lambda}(K)\subseteq \mathcal{A}_n^{\overline{q},\Lambda}(K)z_i$. Similarly can be proved that $\mathcal{A}_n^{\overline{q},\Lambda}(K)z_i\subseteq z_i\mathcal{A}_n^{\overline{q},\Lambda}(K)$.

For $a\in K$, it is clear that $z_ia=az_i$. Now,
\begin{center}
$z_ix_i=(x_iy_i-y_ix_i)x_i=x_iy_ix_i-y_ix_i^2=x_i(q_i^{-1}x_iy_i-q_i^{-1})-(q_i^{-1}x_iy_i-q_i^{-1})x_i=q_i^{-1}x_i(x_iy_i-1)-q_i^{-1}(x_iy_i-1)x_i=
q_i^{-1}x_i(x_iy_i-1)-q_i^{-1}x_iy_ix_i+q_i^{-1}x_i=q_i^{-1}x_i(x_iy_i-y_ix_i)=q_i^{-1}x_iz_i\in \mathcal{A}_n^{\overline{q},\Lambda}(K)z_i$;

\smallskip

for $i<j$, $z_ix_j=(x_iy_i-y_ix_i)x_j=x_iy_ix_j-y_ix_ix_j=x_i\lambda_{ji}x_jy_i-y_i\lambda_{ij}x_jx_i=\lambda_{ji}x_ix_jy_i-\lambda_{ij}y_ix_jx_i=
\lambda_{ji}\lambda_{ij}x_jx_iy_i-\lambda_{ij}\lambda_{ji}x_jy_ix_i=x_jz_i\in \mathcal{A}_n^{\overline{q},\Lambda}(K)z_i$;

\smallskip

for $i>j$, $z_ix_j=(x_iy_i-y_ix_i)x_j=x_iy_ix_j-y_ix_ix_j=x_i\lambda_{ji}x_jy_i-y_i\lambda_{ij}x_jx_i=\lambda_{ji}x_ix_jy_i-\lambda_{ij}y_ix_jx_i=
\lambda_{ji}\lambda_{ij}x_jx_iy_i-\lambda_{ij}\lambda_{ji}x_jy_ix_i=x_jz_i\in \mathcal{A}_n^{\overline{q},\Lambda}(K)z_i$;

\smallskip

$z_iy_i=(x_iy_i-y_ix_i)y_i=x_iy_i^2-y_ix_iy_i=q_iy_ix_iy_i-y_iq_iy_ix_i=q_iy_iz_i\in \mathcal{A}_n^{\overline{q},\Lambda}(K)z_i$;

\smallskip

for $i<j$, $z_iy_j=(x_iy_i-y_ix_i)y_j=x_iy_iy_j-y_ix_iy_j=x_i\lambda_{ij}y_jy_i-y_i\lambda_{ji}y_jx_i=\lambda_{ij}\lambda_{ji}y_jx_iy_i-\lambda_{ji}\lambda_{ij}y_jy_ix_i=y_jz_i\in \mathcal{A}_n^{\overline{q},\Lambda}(K)z_i$;

\smallskip

for $i>j$, $z_iy_j=(x_iy_i-y_ix_i)y_j=x_iy_iy_j-y_ix_iy_j=x_i\lambda_{ij}y_jy_i-y_i\lambda_{ji}y_jx_i=\lambda_{ij}\lambda_{ji}y_jx_iy_i-\lambda_{ji}\lambda_{ij}y_jy_ix_i=y_jz_i\in \mathcal{A}_n^{\overline{q},\Lambda}(K)z_i$.
\end{center}
From these computations we get that if $p\in \mathcal{A}_n^{\overline{q},\Lambda}(K)$, then $z_ip\in \mathcal{A}_n^{\overline{q},\Lambda}(K)z_i$, so $z_i\mathcal{A}_n^{\overline{q},\Lambda}(K)\subseteq \mathcal{A}_n^{\overline{q},\Lambda}(K)z_i$.

(ii) The proof of this part is due to M. Akhavizadegan and D. A. Jordan. In fact, for every $1\leq i\leq n$, let $z_i':=x_iy_i-y_ix_i\in A_n^{\overline{q},\Gamma}(K)$ and let $\mathcal{Z}'$ the submonoid of $A_n^{\overline{q},\Gamma}(K)$ generated by $z_1',\dots,z_n'$. In \cite{Jordan} was proved the algebra isomorphism $\mathcal{A}_n^{\overline{q},\Lambda}(K)_{\mathcal{Z}}\cong A_n^{\overline{q},\Gamma}(K)_{\mathcal{Z}'}$. Theorem 3.2 in \cite{Jordan2} says that $A_n^{\overline{q},\Gamma}(K)_{\mathcal{Z}'}$ is a simple algebra when $q_i$ is not a root of unity for any $1\leq i\leq n$. Thus, $\mathcal{A}_n^{\overline{q},\Lambda}(K)_\mathcal{Z}$ is simple.

(iii) For the proof of this part see Corollary 1.1 of \cite{Tang}.

(iv) See the proof of Theorem 3.1 of \cite{Tang}.

(b) This follows from (a)-(iv) and the isomorphism in the proof of (a)-(ii).
\end{proof}

\section{Dixmier algebras and rings}

As was observed in the previous section, in the mathematical literature the Dixmier conjecture has been investigated for algebras over fields, both for simple algebras and for non-simple algebras. In the simple case every endomorphism is a monomorphism. In this section we will study the Dixmier problem for algebras over commutative rings, in particular, for $\mathbb{Z}$-algebras, i.e., for rings. For this, we introduce next two classes of algebras and rings induced by the Dixmier problem. A more general situation that includes algebras and rings is also defined. We will use this general notion in the next section.

\subsection{Definitions and examples}\label{subsection3.1}

\begin{definition}\label{definition3.1}
Let $R$ and $A$ be rings such that $A$ is a left $R$-module. We say that $A$ is \textbf{\textit{weakly Dixmier $(\boldsymbol{WD})$ with respect to $\boldsymbol{R}$}}, if every $R$-linear ring monomorphism $\phi:A\to A$ is an automorphism. $A$ is \textbf{\textit{Dixmier $(\boldsymbol{D})$ with respect to $\boldsymbol{R}$}}, if every $R$-linear ring endomorphism $\phi$ of $A$ is an automorphism. In particular, if $R=Z$ is a commutative ring and $A$ is an $Z$-algebra, we say that $A$ is a \textbf{\textit{weakly Dixmier algebra}} if every algebra monomorphism $\phi:A\to A$ is an automorphism. $A$ is a \textbf{\textit{Dixmier algebra}} if every algebra endomorphism $\phi $ of $A$ is an automorphism. If $R=\mathbb{Z}$ we say that $A$ is a \textbf{weakly Dixmier ring} and a \textbf{Dixmier ring}, respectively.
\end{definition}

It is clear that

\begin{center}
$D\Rightarrow WD$.
\end{center}

Next we present some trivial examples and counterxamples of $D$ and $WD$ algebras and rings.
\begin{example}
	(i) Any finite ring is $WD$, however, the following example shows a finite ring that is not $D$: Consider the ring of \textbf{\textit{upper triangular matrices}} over $\mathbb{Z}_2$
	\begin{center}
		$T_2(\mathbb{Z}_2):=\{\begin{bmatrix} a & b\\
		0 & c
		\end{bmatrix}\mid a,b,c\in \mathbb{Z}_2\}$
	\end{center}
and the ring endomorphism given by
	\begin{align*}
T_2(\mathbb{Z}_2) & \xrightarrow{\phi} T_2(\mathbb{Z}_2)\\
\begin{bmatrix} a & b\\
0 & c
\end{bmatrix} & \mapsto \begin{bmatrix} a & 0\\
0 & c
\end{bmatrix}.
	\end{align*}
Clearly $\phi$ is not injective and also not surjective.
	
	(ii) The ring $\mathbb{Z}$ of integers is $D$ since the only ring endomorphism of $\mathbb{Z}$ is the identical homomorphism; the same is true for the ring $\mathbb{Z}_n$ of integers modulo $n\geq 2$, the field $\mathbb{Q}$ of rational numbers and the field $\mathbb{R}$ of real numbers.
	
	(iii) The field $\mathbb{C}$ of complex numbers is not a $WD$ ring. In fact, it is well-known that there are ring monomorphisms(=endomorphisms) of $\mathbb{C}$ that are not surjective, see \cite{Yale}, Section 6. However, considering $\mathbb{C}$ as a $\mathbb{C}$-algebra, $\mathbb{C}$ is $D$ since the only algebra endomorphism of $\mathbb{C}$ is the identical homomorphism. In addition, Theorem 3 of \cite{Yale} says that $\mathbb{C}$ as $\mathbb{R}$-algebra is $D$ since the only $\mathbb{R}$-algebra endomorphisms of $\mathbb{C}$ are the identical homomorphism and the complex conjugation.
	
	(iv) The division ring $\mathbb{H}$ of quaternions is not a $WD$ ring: By (iii), let $\phi:\mathbb{C}\to \mathbb{C}$ be a ring monomorphism of $\mathbb{C}$ that is not surjective, then $\phi$ induces the non surjective ring monomorphism $\widetilde{\phi}$ given by
	\begin{align*}
	\mathbb{H} & \xrightarrow{\widetilde{\phi}} \mathbb{H}\\
	\begin{bmatrix} z & w\\
	-\overline{w} & \overline{z}
	\end{bmatrix} & \mapsto \begin{bmatrix} \phi(z) & \phi(w)\\
	-\overline{\phi(w)} & \overline{\phi(z)}
	\end{bmatrix}.
	\end{align*}
	
	(v) Let $R$ and $A$ be rings such that $A$ is a left $R$-module. Observe that
	\begin{center}
	if $A$ is a $D$ ($WD$) ring, then $A$ is a $D$ ($WD$) with respect to $R$,
	\end{center}
	
	but as we just noticed in (iii), the converse is not true.
	
	(vi) As was observed in Subsection \ref{subsection1.1}, if $char(K)=p>0$, then $A_1(K)$ is not a $WD$ algebra, and hence, $A_1(K)$ is not a $WD$ ring. The same is true for $A_1^q(\mathbb{C})$, with $q\neq 1$.
	
	(vii) The field $\mathbb{C}$ of complex numbers shows that the following well-known classes of rings, in general, are not Dixmier (for a precise definition of these rings see \cite{Lam1}, \cite{McConnell} or also \cite{Lezama2}): artinian, noetherian, simple, semisimple, local, semilocal, prime, semiprime, regular, von Neumann regular, Goldie, $FBN$, hereditary, semihereditary, perfect, semiperfect, and $PI$.   
	\end{example}

Next we examine the Dixmier condition for the most elementary algebraic constructions: Let $A$ be a $Z$-algebra that is $D$ ($WD$). Are $D$ ($WD$) the following algebras?
\begin{enumerate}
\item[(i)] The quoient algebra $A/I$, where $I$ is a proper two-sided ideal of $A$.
\item[(ii)] $B$ a proper subalgebra of $A$.
\item[(iii)] $C$ a proper extension of $A$.
\item[(iv)] The polynomial algebra $A[t]$.
\item[(v)] The free ring $A\{X\}$, where $X\neq \emptyset$ is an alphabet. The group algebra $A[G]$, where $G$ is a group.
\item[(vi)] The algebra of series $A[[t]]$.
\item[(vii)] The Ore extension $A[x;\sigma, \delta]$ ($\sigma$ is an algebra endomorphism of $A$ and $\delta$ is a $\boldsymbol{\sigma}$-\textbf{\textit{derivation}} of $A$, i.e., $\delta(a+b)=\delta(a)+\delta(b)$, $\delta(z\cdot a)=z\cdot \delta(a)$, $ \delta(ab)=\sigma(a)\delta(b)+\delta(a)b$, for $a,b\in A$ and $z\in Z$, \cite{Ore}).
\item[(viii)] The algebra $M_n(A)$ of square matrices over $A$, $n\geq 2$.
\item[(ix)] $A\times B$, where $B$ is another $D$ ($WD$) algebra.
\item[(x)] $A\otimes_Z B$, where $B$ is another $D$ ($WD$) algebra.
\item[(xi)] The localization $A_S:=AS^{-1}$, where $S$ is a right Ore subset of $A$.
\end{enumerate}

\begin{example}\label{example3.3}
In this example we will answer the above questions.

(i) False. As was noticed in Subsection \ref{subsection1.1}, $\mathbb{I}_1$ is a $D$ algebra, but in the remark of page 239 of \cite{Bavula1} was proved that there exits a monomorphism of the quotient algebra $\mathbb{I}_1/F$ that is not an automorphism, hence, $\mathbb{I}_1/F$ is not a $WD$ algebra, where $F$ is the only non trivial two-sided ideal of $\mathbb{I}_1$ (for a precise definition of $F$ see page 241 of \cite{Bavula1}).

(ii) False. We noticed in Subsection \ref{subsection1.1} that if $q$ is not a root of unity, then the simple localization $A_1^q(\mathbb{C})_\mathcal{Z}$ of $A_1^q(\mathbb{C})$ is a $D$ algebra, but $A_1^q(\mathbb{C})\subset A_1^q(\mathbb{C})_\mathcal{Z}$  and $A_1^q(\mathbb{C})$ is not $WD$.

(iii) False. $\mathbb{Z}$ is $D$, but $\mathbb{Z}\subset \mathbb{Z}[t]$ and $\mathbb{Z}[t]$ is not $WD$: the monomorphism $\mathbb{Z}[t]\to \mathbb{Z}[t]$ defined by $t\mapsto t^2$ is not an automorphism. Another counterexample, but $\mathbb{C}$-algebras, is $\mathbb{C}\subset A_1^q(\mathbb{C})$, with $q\neq 1$.

(iv) False. The first counterexample of (iii) applies. More general, observe that for any algebra $A$, $A[t]$ is not $WD$.

(v) False. $A$ is a $Z$-algebra that is not $D$ since the endomorphism defined by $p(X)\mapsto p(0)$ is not surjective. Now, let $G$ be an abelian group. Notice that for any algebra $A$, $A[G]$ is not $WD$: the monomorphism $A[G]\to A[G]$ defined by $\sum a_g\cdot g\mapsto \sum a_g\cdot g^{2}$ is not an automorphism (recall that $A[G]$ is a free left $A$-module with basis $G$ and $a_g=0$ for almost every $g\in G$). 

(vi) False. $\mathbb{Z}$ is $D$, but $\mathbb{Z}\subset \mathbb{Z}[[t]]$ and $\mathbb{Z}[[t]]$ is not $WD$: the monomorphism $\mathbb{Z}[[t]]\to \mathbb{Z}[[t]]$ defined by $\sum a_it^i\mapsto \sum a_it^{2i}$ is not an automorphism. In general, observe that for any algebra $A$, $A[[t]]$ is not $WD$.

(vii) False. $\mathbb{Z}$ is $D$, but $\mathbb{Z}[x;\sigma,\delta]$ is not $D$ ($WD$), where $\sigma=i_\mathbb{Z}$ and $\delta=0$. Thus, the same example of (iii) applies in this case. A less trivial counterexample can be constructed from the results in \cite{Lam}. Let $K$ be a division ring and $\phi:K[x;\sigma,\delta]\to K[x;\sigma,\delta]$ be a ring homomorphism; recall that $\phi$ is uniquely determined by a polynomial $p:=\phi(x)\in K[x;\sigma,\delta]$ such that $pa=\sigma(a)p+\delta(a)$, for every $a\in K$; moreover, $\phi(a):=a$ for every $a\in K$ (universal property of Ore extensions, see \cite{Ore}, \cite{McConnell}, or also \cite{Lezama-sigmaPBW} and \cite{Lezama2}). If $p\notin K$, then $\phi$ is injective: Let $q:=q_0+q_1x+\cdots+q_mx^m\in K[x;\sigma,\delta]$ such that $\phi(q)=0=q_0+q_1\phi(x)+\cdots+q_m\phi(x)^m=q_0+q_1p+\cdots+q_mp^m$; if $m=0$, then $q=0$; let $m\geq 1$ and let $p:=p_0+p_1x+\cdots+p_rx^r$, with $r\geq 1$, $p_r\neq 0$, then considering the leader term of $q_mp^m$ we get that $q_mp_r\sigma^r(p_r)\sigma^{2r}(p_r)\cdots\sigma^{(m-1)r}(p_r)=0$, but since $K$ is a division ring, then $\sigma$ is injective, so $q_m=0$. Thus, the leader term of $q$ is zero, i.e., $q=0$. Now, $Im(\phi)$ is the subring $K[p]$ of $K[x;\sigma,\delta]$ generated by $K$ and $p$. In general, $K[p]\neq K[x;\sigma,\delta]$ as the following example shows: Let $K=\mathbb{C}$, $\sigma(a):=\overline{a}$, for $a\in \mathbb{C}$, $\delta:=0$ and $p:=x^3$. Observe that $pa=\sigma(a)p+\delta(a)$, for every $a\in \mathbb{C}$. Then, $\phi:\mathbb{C}[x;\sigma,\delta]\to \mathbb{C}[x;\sigma,\delta]$ given by $\phi(x):=p$, $\phi(a):=a$, $a\in \mathbb{C}$, is an $\mathbb{R}$-algebra monomorphism. Notice that $Im(\phi)\neq \mathbb{C}[x;\sigma,\delta]$ since $x\notin Im(\phi)$. Thus, $\mathbb{C}$ is a Dixmier $\mathbb{R}$-algebra, but $\mathbb{C}[x;\sigma,\delta]$ is an $\mathbb{R}$-algebra that is not $WD$.

(viii) See Theorem  \ref{proposition3.4} below.

(ix) False. $\mathbb{Z}$ is $D$ but $\mathbb{Z}\times \mathbb{Z}$ is not $D$: The map $\phi: \mathbb{Z}\times \mathbb{Z}\to \mathbb{Z}\times \mathbb{Z}$ given by $\phi[(n,m)]:=(n,n)$ is an endomorphism of $ \mathbb{Z}\times \mathbb{Z}$ that is not an automorphism. However, if the product of two algebras is $D$ ($WD$), then each factor is $D$ ($WD$), see Theorem \ref{proposition3.5} below.

(x) See Theorem \ref{theorem3.6} and Remark \ref{reamrk3.9} below.

(xi) With respect to the localization construction we can make the following remarks. As was observed in Subsection \ref{subsection1.1}, there exists a localization of the first Weyl algebra $A_1(K)$ that is not $D$. So, since $DC_1$ is true (see \cite{Zheglov}), then the answer to (xi) is False. On the other hand, the quantized Weyl algebra $A_1^q(\mathbb{C})$ is not $WD$ (whence not $D$), but if $q$ is not a root of unity, then $A_1^q(\mathbb{C})_\mathcal{Z}$ is $D$ ($WD$). Thus, localizations of algebras can or not give examples of $D$ ($WD$) algebras, even if the algebras are or not $D$ ($WD$). For a final remark to this construction see Theorem \ref{proposition3.7} below.

\end{example}

\subsection{Some elementary results on Dixmier rings and algebras}\label{subsection3.2}

Related to some of the algebraic constructions considered in the previous subsection, we have the following easy facts.

\begin{theorem}\label{proposition3.4}
Let $R$ and $A$ be rings such that $A$ is a left $R$-module, and let $n\geq 1$. If $M_n(A)$ is $D$ $(WD)$ with respect to $R$, then $A$ is $D$ $(WD)$ with respect to $R$.
\end{theorem}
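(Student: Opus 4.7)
The plan is to lift a given $R$-linear ring endomorphism (respectively, monomorphism) $\phi:A\to A$ to an $R$-linear ring endomorphism (respectively, monomorphism) $\widetilde{\phi}:M_n(A)\to M_n(A)$, apply the hypothesis to conclude that $\widetilde{\phi}$ is an automorphism, and then descend back to obtain that $\phi$ is an automorphism. Throughout I assume the natural left $R$-module structure on $M_n(A)$ given entrywise, $r\cdot [a_{ij}]:=[r\cdot a_{ij}]$.

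First I would define $\widetilde{\phi}:M_n(A)\to M_n(A)$ entrywise, $\widetilde{\phi}([a_{ij}]):=[\phi(a_{ij})]$. A direct check (using that $\phi$ is additive, multiplicative, unital, and $R$-linear) shows that $\widetilde{\phi}$ is an $R$-linear ring homomorphism; the verification that $\widetilde{\phi}$ preserves the matrix product reduces to the fact that $\phi$ preserves sums of products of entries. Moreover, if $\phi$ is injective, then so is $\widetilde{\phi}$: if $[\phi(a_{ij})]=0$, then $\phi(a_{ij})=0$ for all $i,j$, whence $a_{ij}=0$ for all $i,j$.

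Next, if $M_n(A)$ is Dixmier (respectively, weakly Dixmier) with respect to $R$, the hypothesis applied to $\widetilde{\phi}$ gives that $\widetilde{\phi}$ is an automorphism of $M_n(A)$. To conclude that $\phi$ itself is an automorphism, it remains to check surjectivity (injectivity, in the $WD$ case, is part of the assumption on $\phi$; in the $D$ case we will recover it from the fact that $\widetilde{\phi}$ is bijective, since $\phi$ embeds into $\widetilde{\phi}$ via the corner $E_{11}AE_{11}\cong A$). For surjectivity, pick any $a\in A$ and consider the matrix $aE_{11}\in M_n(A)$ having $a$ in position $(1,1)$ and zeros elsewhere. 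Since $\widetilde{\phi}$ is surjective, there exists $B=[b_{ij}]\in M_n(A)$ with $\widetilde{\phi}(B)=aE_{11}$; comparing the $(1,1)$-entry gives $\phi(b_{11})=a$, so $\phi$ is surjective.

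The argument works uniformly for both cases: in the $WD$ case we start from an $R$-linear ring monomorphism $\phi$, the lift $\widetilde{\phi}$ is then a monomorphism by the observation above, and the hypothesis upgrades it to an automorphism; in the $D$ case we start from an arbitrary $R$-linear ring endomorphism. I do not foresee a genuine obstacle here: the only step that requires a small moment of care is that the $R$-module structure on $M_n(A)$ compatible with the claim is the entrywise one, but this is the only natural choice given that the statement speaks of $R$-linearity of ring maps on $M_n(A)$, and under it $\widetilde{\phi}$ is manifestly $R$-linear.
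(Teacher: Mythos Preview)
Your proof is correct and uses the same key construction as the paper---the entrywise lift $\widetilde{\phi}([a_{ij}])=[\phi(a_{ij})]$. The paper phrases the argument as a one-line contrapositive, whereas you argue directly and spell out the descent (surjectivity via the $(1,1)$-corner and injectivity via the embedding $a\mapsto aE_{11}$), but the underlying idea is identical.
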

\begin{proof}
Assume that $A$ is not $D$, then $M_n(A)$ is not $D$ since every $R$-linear ring endomorphism $\phi $ of $A$ induces an $R$-linear ring endomorphism $\Phi$ of $M_n(A)$ given by $[x_{ij}]\mapsto [\phi(x_{ij})]$. Similarly for $WD$.
\end{proof}

With respect to the converse of the previous theorem we have the following easy and partial result.

\begin{theorem}\label{theorem3.5}
Let $n\geq 1$ and $A$ be a ring that satisfies the following conditions:
\begin{enumerate}
\item[\rm (i)]$A$ is $D$ $(WD)$.
\item[\rm (ii)]$A$ is simple.
\item[\rm (iii)]There exist in $A$ elements $e_{ij}, 1\leq i,j\leq n$, such that
$$1=e_{11}+\cdots+e_{nn}$$
and
\begin{center}
	$e_{ij}e_{lk}=\left\{
	\begin{array}{cc}
	0, & \text{if }j\neq l\text{ \ } \\
	e_{ik}, & \text{if }j=l.
	\end{array}%
	\right. $
\end{center}
Then $M_n(A)$ is $D$ $(WD)$.
\end{enumerate} 
\end{theorem}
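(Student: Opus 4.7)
The plan is to reduce any ring endomorphism $\Phi$ of $M_n(A)$ to an entry-wise endomorphism via an inner automorphism, and then to apply the Dixmier hypothesis on $A$. First, because $A$ is simple, so is $M_n(A)$ (its two-sided ideals are exactly the $M_n(I)$ for two-sided ideals $I$ of $A$), and consequently any unital ring endomorphism $\Phi$ has trivial kernel. So $\Phi$ is automatically injective, which collapses the $D$ and $WD$ cases into one.

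Let $\{E_{ij}\}$ denote the standard matrix units of $M_n(A)$, and view the matrix units $\{e_{ij}\}$ of $A$ as the diagonal matrices $e_{ij}I\in M_n(A)$. A direct calculation shows that $e_{ij}I$ commutes with every $E_{kl}$, so the products $\{e_{ij}E_{kl}\}$ form a complete system of $n^2$ matrix units summing to $I$, realizing the identification $M_n(A)\cong M_{n^2}(B)$ with $B=e_{11}Ae_{11}$. Applying $\Phi$ produces another such $n^2\times n^2$ matrix unit system, and I would then seek a unit $U\in M_n(A)^{*}$ with $U\Phi(E_{ij})U^{-1}=E_{ij}$ for all $i,j$. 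A natural candidate is $U:=\sum_i E_{i1}\Phi(E_{1i})$, which readily satisfies the intertwining identity $U\Phi(E_{jk})=E_{jk}U$. Invertibility, however, is subtle: the naive partner $V:=\sum_i \Phi(E_{i1})E_{1i}$ only yields $UV=(F_{11})_{11}I$, where $(F_{11})_{11}$ denotes the $(1,1)$-entry of $\Phi(E_{11})$, so $U$ must be corrected using the inner matrix unit images $\Phi(e_{ij}I)$ and the simplicity of $A$.

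Assume the conjugating unit $U$ has been produced. Set $\Psi(X):=U\Phi(X)U^{-1}$; then $\Psi$ is an $R$-linear ring endomorphism of $M_n(A)$ fixing every $E_{ij}$. The centralizer of $\{E_{ij}\}$ in $M_n(A)$ is exactly $\{aI:a\in A\}\cong A$ (a short matrix computation: commuting with every $E_{ij}$ forces a matrix to be scalar diagonal). Hence $\Psi$ preserves this centralizer and so restricts to a ring endomorphism $\phi:A\to A$ given by $\Psi(aI)=\phi(a)I$; by multiplicativity $\Psi([a_{ij}])=[\phi(a_{ij})]$ acts entry-wise. Hypothesis (i) forces $\phi$ to be an automorphism of $A$, so $\Psi$ is an automorphism of $M_n(A)$, and therefore so is $\Phi=U^{-1}\Psi(\cdot)U$. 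The $WD$ case runs identically once injectivity of $\Phi$ is given.

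The main obstacle is the construction of the conjugating unit $U$. This amounts to a Skolem--Noether-type assertion---that any two complete systems of $n$ matrix units summing to $1$ in $M_n(A)$ are conjugate by an inner automorphism---which can fail for matrix rings over arbitrary base rings. The joint use of the simplicity of $A$ (hypothesis (ii), ensuring that all ``maximal-rank'' idempotents behave uniformly) and the internal matrix structure of $A$ coming from (iii) (which supplies the second commuting matrix-unit system $\{e_{ij}I\}$ inside $M_n(A)$, and hence enough room to correct the invertibility defect of the naive $U$) is precisely what makes the construction of $U$ succeed.
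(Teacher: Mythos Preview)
Your proposal has a genuine gap at exactly the point you flag: the construction of the conjugating unit $U$. You observe correctly that the naive candidate $U=\sum_i E_{i1}\Phi(E_{1i})$ satisfies the intertwining relations but only gives $UV=(F_{11})_{11}\,I$, and you then \emph{assume} the existence of a corrected $U$ without carrying out the construction. The phrase ``use the inner matrix unit images $\Phi(e_{ij}I)$ and simplicity'' is not a proof: simplicity of $A$ says every nonzero two-sided ideal is all of $A$, but it does not by itself manufacture a unit with the required conjugation property, and Skolem--Noether type statements genuinely fail over many base rings. Since everything after that point rests on $U$, the argument as written does not establish the theorem.

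The paper proceeds along an entirely different line and never seeks a conjugating unit. Hypothesis (iii) is used to build a ring homomorphism $\overline{g}:M_n(A)\to A$, $[f_{ij}]\mapsto\sum_{i,j}e_{ij}f_{ij}$, which retracts the diagonal embedding $d:A\to M_n(A)$ (so $\overline{g}\circ d=i_A$). Simplicity of $M_n(A)$ makes $\overline{g}$ injective. Given an endomorphism $\Phi$ of $M_n(A)$, one forms $\Phi_A:=\overline{g}\circ\Phi\circ d:A\to A$; by (i) this is an automorphism. Surjectivity of $\Phi$ is then immediate: for $F\in M_n(A)$ pick $a\in A$ with $\Phi_A(a)=\overline{g}(F)$, so $\overline{g}(\Phi(d(a)))=\overline{g}(F)$, and injectivity of $\overline{g}$ gives $\Phi(d(a))=F$. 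Thus the matrix units $e_{ij}$ in $A$ are used to \emph{split} the inclusion $d$, not to conjugate $\Phi$; this bypasses the Skolem--Noether obstruction you ran into.
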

\begin{proof}
We divide the proof in two steps.

\textit{Step 1}. \textit{Universal property of ring of matrices}. Let $A$ be a ring and $n\geq 1$. Let $A_0$ be a ring that satisfies the following conditions: 
\begin{enumerate}
	\item[\rm (a)]There exists a ring homomorphism $g:A\to A_0$.
	\item[\rm (b)]There exist in $A_0$ elements $h_{ij}$, $1\leq i,j\leq
	n$, such that:
	\begin{enumerate}
		\item[\rm (a)]$1=h_{11}+\cdots+h_{nn}$.
		\item[\rm (b)]For any $x,y\in A$,
		\begin{center}
			$h_{ij}g(x)h_{lk}g(y)=\left\{
			\begin{array}{cc}
			0, & \text{if }j\neq l\text{ \ } \\
			h_{ik}g(x)g(y), & \text{if }j=l.
			\end{array}%
			\right. $
		\end{center}
	\end{enumerate}
\end{enumerate}
Then, there exists an unique ring homomorphism 
$\overline{g}:M_n(A)\to A_0$ such that $\overline{g}d=g$ and
$\overline{g}(E_{ij})=h_{ij}$, $1\leq i,j\leq n$, with $d:A\to
M_n(A)$ the ring homorphism defined by 
$d(x):=E_{11}x+\cdots+E_{nn}x$, where the $E_{ij}$ are the elementary matrices of $M_n(A)$:
\[
\begin{diagram}
\node{} \node{A} \arrow{s,t}{g} \arrow{e,t}{d} \node{M_n(A)} \arrow{sw,r,..}{\overline{g}}\\
\node{} \node{A_0}
\end{diagram}
\]
Moreover, if $g$ is surjective, then $\overline{g}$ is surjective.

In fact, any element $F=[f_{ij}]\in M_n(A)$ can represented an unique way as $F=\sum_{i,j}^n E_{ij}f_{ij}$, so
we define $\overline{g}(F):=\sum_{i,j}^n h_{ij}g(f_{ij})$. It is clear that 
$\overline{g}$ is additive, so, by (a),
$\overline{g}(E)=h_{11}+\cdots+h_{nn}=1$. From (b), and since $\overline{g}$ is additive and multiplicative, we get that $\overline{g}$ is multiplicative.
Now, let $x\in A$, then
$\overline{g}d(x)=\overline{g}(E_{11}x+\cdots+E_{nn}x)=h_{11}g(x)+\cdots+h_{nn}g(x)=(h_{11}+\cdots+h_{nn})g(x)=g(x)$,
i.e., $\overline{g}d=g$. Let $h:M_n(A)\to
A_0$ be another homomorphism such that $hd=g$ and $h(E_{ij})=h_{ij}$, $1\leq
i,j\leq n$, then
$h(E_{ij}f_{ij})=h(E_{ij}(E_{11}f_{ij}+\cdots+E_{nn}f_{ij}))=h(E_{ij})h(E_{11}f_{ij}+\cdots+E_{nn}f_{ij})=
h(E_{ij})hd(f_{ij})=h_{ij}g(f_{ij})=\overline{g}(E_{ij}f_{ij})$,
whence $h(F)=\overline{g}(F)$, i.e., $h=\overline{g}$.

Finally, given $z\in A_0$, there exists $x\in A$ such that $g(x)=z$,
whence $z=\overline{g}(d(x))$, i.e., $\overline{g}$ is surjective.

\textit{Step 2}. Let $\Phi:M_n(A)\to M_n(A)$ be a ring homomorphism and the $e_{ij}$ as in (iii). From (ii), $\Phi$ is injective. We have to prove that $\Phi$ is surjective. From \textit{Step 1}, with $g=i_A$ and $h_{ij}=e_{ij}$, there exists a ring homomorphism $\overline{g}:M_n(A)\to A$ such that $\overline{g}\circ d=i_A$ and $\overline{g}(E_{ij})=e_{ij}$, $1\leq i,j\leq n$:
\[
\begin{diagram}
\node{} \node{A} \arrow{s,t}{i_A} \arrow{e,t}{d} \node{M_n(A)} \arrow{sw,r,..}{\overline{g}}\\
\node{} \node{A}
\end{diagram}
\] 
We define $\Phi_A:=\overline{g}\circ \Phi\circ d$. Then, $\Phi_A$ is an endomorphism of $A$, and by (i), $\Phi_A$ is bijective. Let $F\in M_n(A)$, then $\overline{g}(F)\in A$ and there exists $a\in A$ such that $\Phi_A(a)=\overline{g}(F)$. Thereby, $\overline{g}(F)=\Phi_A(a)=\overline{g}\circ \Phi\circ d(a)=\overline{g}(\Phi\circ d(a))$, but from (ii), $\overline{g}$ is injective, so $\Phi (d(a))=F$. Thus, $\Phi$ is surjective.  
\end{proof} 

\begin{corollary}
Let $n\geq 1$ and $A$ be as in Theorem \ref{theorem3.5}. Then, $M_{2n}(A)$ is $D$.  
\end{corollary}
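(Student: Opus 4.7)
My plan is to apply Theorem~\ref{theorem3.5} twice: a first application to $A$ itself to obtain that $M_n(A)$ is $D$, and a second application, this time to $M_n(A)$ with parameter $2$, which, combined with the standard ring isomorphism $M_{2n}(A)\cong M_2(M_n(A))$, delivers the desired conclusion.

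To invoke Theorem~\ref{theorem3.5} the second time I must verify its three hypotheses for $M_n(A)$ with size $2$: (i) $M_n(A)$ is $D$, which is precisely the output of the first application; (ii) $M_n(A)$ is simple, since the ring of $n\times n$ matrices over a simple ring is simple; and (iii) $M_n(A)$ contains a system of $2\times 2$ matrix units. Conditions (i) and (ii) are immediate; the bulk of the work goes into (iii).

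For (iii) I would exploit the two commuting families of matrix units inside $M_n(A)$: the standard matrix units $E_{ij}$, $1\le i,j\le n$, coming from the matrix-ring structure, and the diagonally lifted matrix units $\tilde{e}_{kl}:=\operatorname{diag}(e_{kl},\ldots,e_{kl})$ obtained from the $e_{ij}$'s of $A$ via the embedding $d\colon A\to M_n(A)$. A direct computation shows that the products $F_{(i,k),(j,l)}:=E_{ij}\,\tilde{e}_{kl}$ form a full $n^2\times n^2$ system of matrix units in $M_n(A)$, with $\sum_{(i,k)}F_{(i,k),(i,k)}=1$. Choosing a partition of the index set $\{(i,k):1\le i,k\le n\}$ into two halves $I_1,I_2$ of equal size together with a bijection $\sigma\colon I_1\to I_2$, and setting $f_{11}:=\sum_{(i,k)\in I_1}F_{(i,k),(i,k)}$, $f_{22}:=\sum_{(i,k)\in I_2}F_{(i,k),(i,k)}$, $f_{12}:=\sum_{(i,k)\in I_1}F_{(i,k),\sigma(i,k)}$, and $f_{21}:=\sum_{(i,k)\in I_1}F_{\sigma(i,k),(i,k)}$, the matrix-unit relations of the $F$'s immediately give $f_{11}+f_{22}=1$ and $f_{ij}f_{kl}=\delta_{jk}f_{il}$, as required.

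With (i)--(iii) in hand, Theorem~\ref{theorem3.5} applied to $M_n(A)$ with parameter $2$ yields that $M_2(M_n(A))\cong M_{2n}(A)$ is $D$. The main technical obstacle I anticipate is the equal-size partition step in~(iii), which is straightforward when $n^2$ is even but requires an extra device when $n$ is odd; a natural route would be to pass to the corner ring $B:=e_{11}Ae_{11}$ (which inherits $D$-ness and simplicity from $A$ by Theorem~\ref{proposition3.4}) and exploit whatever further matrix-unit structure $B$ happens to carry, or to invoke centralizer arguments for the center $Z(M_{2n}(A))=Z(A)$ when $A$ has a well-behaved center.
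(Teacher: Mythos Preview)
Your plan is exactly the paper's: apply Theorem~\ref{theorem3.5} to $A$ (obtaining that $M_n(A)$ is $D$), use the isomorphism $M_{2n}(A)\cong M_2(M_n(A))$, and then apply Theorem~\ref{theorem3.5} a second time, now to $M_n(A)$ with parameter~$2$. The paper compresses all of this into a single sentence and does not verify hypothesis~(iii) for the second application.

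You have gone further and put your finger on a real obstruction that the paper glosses over. The second application needs a $2\times 2$ system of matrix units inside $M_n(A)$, and your $n^2\times n^2$ family $F_{(i,k),(j,l)}=E_{ij}\,\tilde{e}_{kl}$ can be lumped into such a system only when $n^2$ is even. The difficulty for odd~$n$ is not a technicality one can massage away: take $A=M_3(\mathbb{Q})$ with its standard $3\times 3$ matrix units (then $A$ is simple and $D$, since any ring endomorphism fixes the center $\mathbb{Q}$ and is therefore $\mathbb{Q}$-linear and bijective by a dimension count); here $M_n(A)\cong M_9(\mathbb{Q})$ admits \emph{no} $2\times 2$ system of matrix units whatsoever, because $M_9(\mathbb{Q})\cong M_2(C)$ would force $\dim_{\mathbb{Q}}C=81/4$. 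Your fallback of passing to $B=e_{11}Ae_{11}$ does not help either---in this example $B=\mathbb{Q}$, which has no nontrivial idempotents---and the ``centralizer argument'' is left entirely unspecified. So your proposal is at least as detailed as the paper's, and has the merit of having exposed the weak point; but neither your workarounds nor the paper's one-line proof actually supply hypothesis~(iii) when $n$ is odd.
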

\begin{proof}
This follows from the previous theorem and the isomorphism $M_{2n}(A)\cong M_2(M_n(A))$ defined by
$M_2(M_n(A)) \xrightarrow{\Psi} M_{2n}(A)$, $\Psi(F)=\Psi(\begin{bmatrix}F_{11} & F_{12}\\ F_{21} & F_{22}\end{bmatrix})=\begin{bmatrix}F_{11}^{(1)}\cdots F_{11}^{(n)} &
F_{12}^{(1)}\cdots F_{12}^{(n)} \\
F_{21}^{(1)}\cdots F_{21}^{(n)} & F_{22}^{(1)}\cdots F_{22}^{(n)}\end{bmatrix}$, where $F_{ij}^{(k)}$ is the $k$-th column of $F_{ij}\in M_n(A), 1\leq k\leq n, \ 1\leq i,j\leq 2$.
\end{proof} 

\begin{theorem}\label{proposition3.5}
	Let $R$, $A$ and $B$ be rings such that $A$ and $B$ are left $R$-modules. If $A\times B$ is $D$ $(WD)$ with respect to $R$, then $A$ and $B$ are $D$ $(WD)$ with respect to $R$.
\end{theorem}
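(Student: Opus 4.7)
The plan is to argue by contrapositive: I will show that if either factor $A$ or $B$ fails to be $D$ (respectively $WD$) with respect to $R$, then the same failure propagates to the product $A\times B$. By symmetry it suffices to treat $A$.

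Concretely, suppose $\phi\colon A\to A$ is an $R$-linear ring endomorphism that is not an automorphism. I define
\begin{equation*}
\Phi\colon A\times B\longrightarrow A\times B,\qquad \Phi(a,b):=(\phi(a),b).
\end{equation*}
The key verifications are routine: $\Phi$ is additive and $R$-linear because $\phi$ is, $\Phi$ preserves the identity since $\Phi(1,1)=(\phi(1),1)=(1,1)$, and $\Phi$ is multiplicative because products in $A\times B$ are componentwise and $\phi$ is multiplicative. Hence $\Phi$ is an $R$-linear ring endomorphism of $A\times B$.

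Next I observe that $\Phi$ inherits the defect of $\phi$. If $\phi$ fails to be surjective, pick $a_0\in A\setminus\mathrm{Im}(\phi)$; then $(a_0,0)\notin\mathrm{Im}(\Phi)$, so $\Phi$ is not surjective. If $\phi$ fails to be injective, pick $0\neq a_0\in\ker(\phi)$; then $(a_0,0)\neq 0$ lies in $\ker(\Phi)$. Either way, $\Phi$ is not an automorphism, contradicting the assumption that $A\times B$ is $D$ with respect to $R$. This proves the $D$ case.

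For the $WD$ case, the same construction applies, but I now start from an $R$-linear ring monomorphism $\phi\colon A\to A$ that is not surjective. Then $\Phi$ is still a ring homomorphism, and it is injective because $\Phi(a,b)=(0,0)$ forces $b=0$ and $\phi(a)=0$, hence $a=0$. However $\Phi$ is not surjective for the same reason as before, so it witnesses that $A\times B$ is not $WD$ with respect to $R$. The argument for $B$ is identical, using the companion endomorphism $(a,b)\mapsto(a,\psi(b))$. There is no real obstacle here; the only point to watch is preservation of the identity and $R$-linearity, both of which are immediate from the componentwise structure of $A\times B$.
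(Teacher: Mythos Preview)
Your proof is correct and follows essentially the same approach as the paper: both argue by contrapositive using the componentwise endomorphism $(a,b)\mapsto(\phi(a),b)$ to lift a non-automorphism of $A$ to one of $A\times B$. You simply spell out the routine verifications (multiplicativity, preservation of the identity, the injective/surjective failures) that the paper leaves implicit.
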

\begin{proof}
	If $A$ is not $D$, then $A\times B$ is not $D$ since every $R$-linear ring endomorphism $\phi $ of $A$ induces an $R$-linear ring endomorphism $\widetilde{\phi}$ of $A\times B$ given by $(a,b)\mapsto (\phi(a),b)$. Similarly for $B$ and for $WD$.
	\end{proof}
The converse of the previous theorem is not true as we observed in (ix) of Example \ref{example3.3}.

\begin{theorem}\label{theorem3.6}
Let $Z$ be a commutative domain and $A,B$ be $Z$-algebras such that both are free $Z$-modules. Then,
\begin{enumerate}
\item[\rm (i)] If $X$ is a $Z$-basis of $A$ and $Y$ is a $Z$-basis of $B$, then $X\otimes Y:=\{x\otimes y | x\in X, y\in Y\}$ is a $Z$-basis of $A\otimes_Z B$.
\item[\rm (ii)] Assume that $1_A\in X$ and $1_B\in Y$. If $A\otimes_Z B$ is $D$ $(WD)$, then $A$ and $B$ are $D$ $(WD)$.
\end{enumerate}
\end{theorem}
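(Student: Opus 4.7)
For part (i), I would deduce this directly from the freeness hypotheses. Fix the $Z$-module isomorphisms $A \cong Z^{(X)}$ and $B \cong Z^{(Y)}$ provided by the bases $X$ and $Y$. Since the tensor product of $Z$-modules commutes with arbitrary direct sums in each variable, $A \otimes_Z B \cong Z^{(X)} \otimes_Z Z^{(Y)} \cong Z^{(X \times Y)}$, and chasing through the canonical isomorphisms one sees that $(x,y)$ corresponds to $x \otimes y$. This simultaneously gives spanning and $Z$-linear independence of $X \otimes Y$. The commutative-domain hypothesis is not actually used here.

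For part (ii), let $\phi$ be a $Z$-algebra endomorphism of $A$ (respectively a monomorphism in the $WD$ case), and set $\Phi := \phi \otimes \mathrm{id}_B$. Because multiplication in $A \otimes_Z B$ is componentwise on pure tensors, $\Phi$ is a $Z$-algebra endomorphism of $A \otimes_Z B$. In the $WD$ case I first need $\Phi$ to be injective: writing $c \in A \otimes_Z B$ uniquely as $c = \sum_{y \in Y} a_y \otimes y$, the identity $\Phi(c) = \sum_{y} \phi(a_y) \otimes y$ together with the $Z$-linear independence of $X \otimes Y$ from (i) reduces $\Phi(c) = 0$ to $\phi(a_y) = 0$ for every $y$, and injectivity of $\phi$ forces $c = 0$. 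In either situation the hypothesis on $A \otimes_Z B$ promotes $\Phi$ to an automorphism.

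The heart of the argument is then surjectivity of $\phi$, which is where the hypothesis $1_B \in Y$ enters. Given $a \in A$, surjectivity of $\Phi$ produces some $c = \sum_{y \in Y} a_y \otimes y$ with $\Phi(c) = a \otimes 1_B$. Expanding $\phi(a_y) = \sum_{x \in X} \mu_{x,y}\, x$ and $a = \sum_{x \in X} \lambda_x\, x$ in the basis $X$ and comparing coefficients in the basis $X \otimes Y$ of $A \otimes_Z B$, I read off $\mu_{x,1_B} = \lambda_x$ for all $x$ (and $\mu_{x,y} = 0$ for $y \neq 1_B$), which says precisely $\phi(a_{1_B}) = a$. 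For the $D$ case I also need injectivity of $\phi$: if $\phi(a) = 0$ then $\Phi(a \otimes 1_B) = 0$, so $a \otimes 1_B = 0$ by injectivity of $\Phi$, and part (i) applied to the linearly independent family $\{x \otimes 1_B : x \in X\} \subset X \otimes Y$ yields $a = 0$.

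A symmetric argument, using $1_A \in X$ and the endomorphism $\mathrm{id}_A \otimes \psi$ for a given endomorphism (or monomorphism) $\psi$ of $B$, shows that $B$ is $D$ (respectively $WD$). I do not expect any genuine obstacle; the only delicate point is the bookkeeping in the basis expansion, and this is what makes the hypotheses $1_A \in X$ and $1_B \in Y$ essential, since they are exactly what realize $A$ and $B$ as $Z$-linear direct summands of $A \otimes_Z B$ via $a \mapsto a \otimes 1_B$ and $b \mapsto 1_A \otimes b$.
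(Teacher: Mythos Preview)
Your proof is correct and follows the same overall strategy as the paper: construct $\Phi=\phi\otimes\mathrm{id}_B$, use the Dixmier hypothesis to make it an automorphism, and read off surjectivity and injectivity of $\phi$ from the basis $X\otimes Y$.

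Where you differ is in the bookkeeping, and your version is tighter. For injectivity of $\phi$, the paper first proves a separate ``Step~1'' asserting that $a\otimes b=0$ forces $a=0$ or $b=0$; this is where the hypothesis that $Z$ be a \emph{domain} is actually invoked (to get $z_{i_1}z'_{j_1}\neq 0$). You bypass this entirely by noting that $\{x\otimes 1_B:x\in X\}$ is already a linearly independent subset of the basis from part~(i), so $a\otimes 1_B=0$ immediately gives $a=0$. Likewise, for surjectivity the paper carries out a longer explicit expansion before invoking the left $A$-module freeness of $A\otimes_Z B$ on $\{1_A\otimes y\}$, whereas you group by $y$ from the start and compare coefficients in one step. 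The upshot is that your argument never uses the domain hypothesis at all (as you observe for part~(i), and it remains unused in part~(ii)), so you have in effect proved the theorem for any commutative ring $Z$.
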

\begin{proof}
(i) This fact it is well-known (see \cite{Lezama1} and \cite{Lezama2}), anyway we will include the proof for completeness. We have to prove that for every $Z$-module $N$ and every map
	$\psi :$ $X\otimes Y$ $\longrightarrow $ $N$,
there exists an unique $Z$-homomorphism
	$\overline{\psi }:$ $A\otimes_Z B$ $\longrightarrow N$
such that $\overline{\psi }\left( x\otimes y\right) =\psi \left( x\otimes y\right)$, for every $x\otimes y\in X\otimes Y$ (\cite{Lezama1}). For this, we apply the universal property of the tensor product (\cite{Lezama2}), thus, let $H':A\times B\to N$ be the bilinear map, (i.e., $H'$ is linear in each argument and $Z$-balanced), defined by $H'(a,b):=\sum_{i=1,j=1}^{k,m}z_iz_j'\psi(x_i\otimes y_j)$, where $a=\sum_{i=1}^kz_ix_i$ and $b=\sum_{j=1}^mz_j'y_j$ are the unique representations of $a$ and $b$ in the $Z$-bases of $A$ and $B$, respectively, whit $z_i,z_j'\in Z$ and $x_i\in X, y_j\in Y$. Then, there exists an unique $Z$-homomorphism $H:A\otimes_Z B\to N$ such that $H\circ t=H'$, where $H(a\otimes b):=H'(a,b)$ and $t:A\times B\to A\otimes_Z B$ with $t(a,b):=a\otimes b$. We define $\overline{\psi }:=H$, and observe that $\overline{\psi }(x\otimes y)=H(x\otimes y)=H\circ t(x,y)=H'(x,y)=\psi (x\otimes y)$, for every $x\otimes y\in X\otimes Y$. The uniqueness of $\overline{\psi }$ follows from the uniqueness of $H$. This completes the proof of (i).

(ii) \textit{Step 1}. Let $a\in A$ and $b\in B$. Then, $a\otimes b=0$ if and only if $a=0$ or $b=0$. In fact, it is clear that if $a=0$ or $b=0$, then $a\otimes b=0$. Now, assume that $a\otimes b=0$ with $a\neq 0$ and $b\neq 0$. From (i), there exist $x_1,\dots,x_k\in X$ and $y_1,\dots,y_m\in Y$ such that $a=z_1x_1+\cdots+z_kx_k$ and $b=z_1'y_1+\cdots+z_m'y_m$, for some $z_i,z_j'\in Z$, with $1\leq i\leq k$, $1\leq j\leq m$. Since $a\neq 0$ and $b\neq 0$, there exist $0\neq z_{i_1}\in \{z_1,\dots,z_k\}$ and $0\neq z_{j_1}'\in \{z_1',\dots,z_m'\}$. So, $0=a\otimes b=\sum_{i=1,j=1}^{k,m}z_iz_j'(x_i\otimes y_j)$, with $z_{i_1}z_{j_1}'\neq 0$, and this contradices (i).

\textit{Step 2}. Let  $\phi$ be an endomorphism of $A$. We have to prove that $\phi$ is bijective.

$\phi$ is injective: Let $a\in A$ such that $\phi(a)=0$. Then, for the endomorphism $\phi\otimes i_B:A\otimes_Z B\to A\otimes_Z B$, we have $(\phi\otimes i_B)(a\otimes 1_B)=\phi(a)\otimes 1_B=0\otimes 1_B=0$. But since $A\otimes_Z B$ is $D$, then $\phi\otimes i_B$ is bijective, so $a\otimes 1_B=0$, and from \textit{Step 1}, $a=0$. Thus, $\phi$ is injective.

$\phi$ is surjective: Let $a\in A$, since $A\otimes_Z B$ is $D$, then
\begin{center}
$a\otimes 1_B=(\phi\otimes i_B)(a_1\otimes b_1+\cdots+a_l\otimes b_l)$, with $a_i\in A$ and $b_i\in B$, for $1\leq i\leq l$.
\end{center}
Whence, $a\otimes 1_B=\phi(a_1)\otimes b_1+\cdots+\phi(a_l)\otimes b_l$. Let $a_i=\sum_{r=1}^{k_i}z_{ir}x_{ir}$ and $b_i=\sum_{s=1}^{m_i}z_{is}'y_{is}$, with $x_{i1},\dots,x_{ik_i}\in X$, $y_{i1},\dots,y_{im_i}\in Y$ and $z_{i1},\dots,z_{ik_i}, z_{i1}',\dots,z_{im_i}'\in Z$. Therefore,
\begin{center}
$a\otimes 1_B=\sum_{i=1}^l\phi(\sum_{r=1}^{k_i}z_{ir}x_{ir})\otimes (\sum_{s=1}^{m_i}z_{is}'y_{is})=\sum_{i=1}^l[\sum_{r=1}^{k_i}z_{ir}\phi(x_{ir})]\otimes (\sum_{s=1}^{m_i}z_{is}'y_{is})=\sum_{i=1}^l[\sum_{r=1,s=1}^{k_i,m_i}z_{ir}z_{is}'(\phi(x_{ir})\otimes y_{is})]=\sum_{i=1}^l(\sum_{r=1,s=1}^{k_i,m_i}z_{ir}z_{is}')(\phi(x_{ir})\otimes y_{is})=[\sum_{r=1}^{k_1}\phi(z_{1r}z_{11}'x_{1r})]\otimes y_{11}+\cdots+[\sum_{r=1}^{k_1}\phi(z_{1r}z_{1m_1}'x_{1r})]\otimes y_{1m_1}+\cdots+[\sum_{r=1}^{k_l}\phi(z_{lr}z_{l1}'x_{lr})]\otimes y_{l1}+\cdots+[\sum_{r=1}^{k_l}\phi(z_{lr}z_{lm_l}'x_{lr})]\otimes y_{lm_l}=\phi[\sum_{r=1}^{k_1}(z_{1r}z_{11}'x_{1r})]\otimes y_{11}+\cdots+\phi[\sum_{r=1}^{k_1}(z_{1r}z_{1m_1}'x_{1r})]\otimes y_{1m_1}+\cdots+\phi[\sum_{r=1}^{k_l}(z_{lr}z_{l1}'x_{lr})]\otimes y_{l1}+\cdots+\phi[\sum_{r=1}^{k_l}\phi(z_{lr}z_{lm_l}x_{lr})]\otimes y_{lm_l}$.
\end{center}
Consider in $Im(\phi)\subseteq A$ the following elements:
\begin{center}
$u_{11}:=\phi[\sum_{r=1}^{k_1}(z_{1r}z_{11}'x_{1r})],\dots,u_{1m_1}:=\phi[\sum_{r=1}^{k_1}(z_{1r}z_{1m_1}'x_{1r})],$

$\vdots$

$u_{l1}:=\phi[\sum_{r=1}^{k_l}(z_{lr}z_{l1}'x_{lr})],\dots,u_{lm_l}:=\phi[\sum_{r=1}^{k_l}\phi(z_{lr}z_{lm_l}x_{lr})]$.
\end{center}
Now, recall that $A\otimes_Z B$ is an $A-B$-bimodule, moreover $A\otimes_Z B$ is a free left $A$-module with basis $1_A\otimes Y:=\{1_A\otimes y| y\in Y\}$ (in fact, see \cite{Lezama2}, $A\otimes_Z B\cong A\otimes_Z Z^{(Y)}\cong (A\otimes _Z Z)^{(Y)}\cong A^{(Y)}$, and in this $A$-isomorfism $u_{11}\otimes y_{11}+\cdots+u_{lm_l}\otimes y_{lm_l}\mapsto (\dots, 0,u_{11},0\dots,0,u_{lm_l},0,\dots,)$; similarly, $A\otimes_Z B$ is a free  right-$B$-module with basis $\{X\otimes 1_B\}$). Thus,

\begin{center}
$a\cdot (1_A\otimes 1_B)=u_{11}\cdot (1_A\otimes y_{11})+\cdots +u_{1m_1}\cdot (1_A\otimes y_{1m_1})+\cdots +u_{l1}\cdot (1_A\otimes y_{l1}) +\cdots+u_{lm_l}\cdot (1_A\otimes y_{lm_l})$,
\end{center}
and considering the different $y^{,}s$, we can write
\begin{center}
	$a\cdot (1_A\otimes 1_B)=u_1\cdot (1_A\otimes y_1)+\cdots+u_w\cdot (1_A\otimes y_w)$, with $u_1,\dots ,u_w\in Im(\phi)$ and $y_1,\dots,y_w\in Y$.
\end{center}
Since $1_B\in Y$, then we have two possibilities: either $1_B\notin \{y_1,\dots, y_w\}$ or $1_B\in \{y_1,\dots, y_w\}$; in the first case $a=0$ and in the second case there exists exactly one basis element $y_j$ in $\{y_1,\dots,y_w\}$ such that $y_j=1_B$; in the first case $a\in Im(\phi)$, and in the second case we can assume that $j=1$ and then
\begin{center}
	$(a-u_1)\cdot (1_A\otimes 1_B)=u_2\cdot (1_A\otimes y_2)+\cdots+u_w\cdot (1_A\otimes y_w)$,
\end{center}
whence $a=u_1\in Im(\phi)$. Thereby, $\phi$ is surjective.

Thus, $A$ is $D$. Similarly for $B$ and for $WD$.
\end{proof}	
\begin{theorem}\label{proposition3.7}
Let $Z$ be a commutative ring and $A$ be a $Z$-algebra. If $A$ is a domain and $S$ is a right Ore subset of $A$ such that $AS^{-1}$ is $D$ $(WD)$, then every endomorphism $\phi$ of $A$, with $\phi(S)\subseteq S$, is injective.  	
\end{theorem}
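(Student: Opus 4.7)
The plan is to lift $\phi$ to a $Z$-linear ring endomorphism of the localization $AS^{-1}$ via the universal property of Ore localization, and then exploit the Dixmier hypothesis on $AS^{-1}$ to force injectivity of $\phi$. First I would observe that the canonical localization map $\iota\colon A\to AS^{-1}$ is injective: because $A$ is a domain and $0\notin S$ (otherwise $AS^{-1}$ would be the zero ring, which is trivially neither a useful $D$ nor $WD$ setting), each $s\in S$ is a non-zero-divisor of $A$, so $\iota$ embeds $A$ into $AS^{-1}$.

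Next I would use the hypothesis $\phi(S)\subseteq S$ to extend $\phi$. For every $s\in S$, the image $\iota(\phi(s))=\phi(s)/1$ is a unit of $AS^{-1}$, since $\phi(s)\in S$. Hence the composition $\iota\circ\phi\colon A\to AS^{-1}$ sends $S$ into the units of $AS^{-1}$, and by the universal property of Ore localization there is a unique ring homomorphism
\[
\widetilde{\phi}\colon AS^{-1}\longrightarrow AS^{-1},\qquad \widetilde{\phi}(as^{-1}):=\phi(a)\phi(s)^{-1},
\]
satisfying $\widetilde{\phi}\circ\iota=\iota\circ\phi$. The $Z$-linearity of $\widetilde{\phi}$ follows from the $Z$-linearity of $\phi$ and $\iota$.

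With $\widetilde{\phi}$ in hand, the Dixmier hypothesis on $AS^{-1}$ applies: in the $D$ case every $Z$-linear ring endomorphism of $AS^{-1}$ is an automorphism, so $\widetilde{\phi}$ is in particular injective; combined with the injectivity of $\iota$ and the relation $\widetilde{\phi}\circ\iota=\iota\circ\phi$, this forces $\iota\circ\phi$ to be injective, whence $\phi$ itself is injective.

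The main obstacle lies in the $WD$ clause. A direct computation shows that $\widetilde{\phi}(as^{-1})=0$ forces $\phi(a)=0$, and conversely $\phi(a)=0$ gives a nonzero element $a/1\in\ker(\widetilde{\phi})$ (again using that $A$ embeds in $AS^{-1}$). Thus $\widetilde{\phi}$ is injective if and only if $\phi$ is, so one cannot naively feed $\widetilde{\phi}$ into the $WD$ hypothesis without begging the question. The expected resolution is to treat $(WD)$ in parallel with $D$: the same extension $\widetilde{\phi}$ is constructed, and whenever it is a monomorphism the $WD$ property upgrades it to an automorphism and the same descent through $\iota$ yields injectivity of $\phi$.
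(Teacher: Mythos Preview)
For the $D$ case your argument is correct and is exactly what the paper does: extend $\phi$ to $\widetilde{\phi}$ on $AS^{-1}$ via the universal property (the paper writes $\widetilde{\phi}(a/s)=\phi(a)/\phi(s)$ directly), note that the Dixmier hypothesis forces $\widetilde{\phi}$ to be an automorphism (hence injective), and descend injectivity to $\phi$ through the embedding $\iota$. Your treatment is in fact more detailed than the paper's, which is only three lines.

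Your diagnosis of the $WD$ clause is also accurate: the equivalence $\ker\phi=0 \Longleftrightarrow \ker\widetilde{\phi}=0$ means one cannot invoke the $WD$ hypothesis on $\widetilde{\phi}$ without already assuming what is to be proved. However, your ``expected resolution'' is not a resolution---it merely restates the circularity (``whenever it is a monomorphism the $WD$ property upgrades it to an automorphism'' tells us nothing about whether $\widetilde{\phi}$ \emph{is} a monomorphism). The paper's own proof is no more careful on this point: it simply writes ``since $AS^{-1}$ is $D$ ($WD$), then $\phi$ is injective'' without explaining how the $WD$ hypothesis, which concerns only \emph{monomorphisms}, is ever triggered. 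So you have matched the paper exactly, gap included: the $D$ case is complete, while the $WD$ parenthetical is asserted rather than proved in both the paper and your proposal.
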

\begin{proof}
$\phi$ induces an endomorphism $\widetilde{\phi}$ of $AS^{-1}$ given by $\widetilde{\phi}(\frac{a}{s}):=\frac{\phi(a)}{\phi(s)}$, for $\frac{a}{s}\in AS^{-1}$. Notice that if $\phi$ is not injective, then $\widetilde{\phi}$ is not injective. Thus, since $AS^{-1}$ is $D$ ($WD$), then $\phi$ is injective.	    	
\end{proof}

Another classical algebraic construction is the graded ring $Gr(A)$ associated to a $\mathbb{N}$-filtered ring $A$ (see \cite{McConnell}, \cite{Oystaeyen}, \cite{Nastasescu2} or also \cite{Lezama2} for the general theory of graded rings, algebras and modules). The notions introduced in Definition \ref{definition3.1} can be formulated for the graded and filtered cases changing $\phi$ for graded and filtered homomorphisms, respectively. For the filtered-graded construction we have the following result.

\begin{theorem}\label{proposition3.8}
Let $R$ and $A$ be $\mathbb{N}$-filtered rings such that $A$ is a $\mathbb{N}$-filtered left $R$-module. If the graded ring $Gr(A)$ is graded Dixmier with respect to $Gr(R)$, then $A$ is filtered Dixmier with respect to $R$.
\end{theorem}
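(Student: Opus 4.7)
The plan is to run the standard graded-to-filtered transfer argument, using the hypothesis on $Gr(A)$ to bootstrap bijectivity of an arbitrary filtered $R$-linear ring endomorphism of $A$. Let $\phi:A\to A$ be such a filtered $R$-linear ring endomorphism, so $\phi(F_nA)\subseteq F_nA$ for every $n\geq 0$. The first step is to check that the induced map
\begin{center}
$Gr(\phi):Gr(A)\to Gr(A)$, \quad $\overline{a}\in F_nA/F_{n-1}A\mapsto \overline{\phi(a)}\in F_nA/F_{n-1}A$,
\end{center}
is a well-defined graded ring endomorphism of $Gr(A)$, and that it is $Gr(R)$-linear; both facts are routine because $\phi$ preserves the filtration and is $R$-linear, and because $R$ filters $A$ compatibly. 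By the hypothesis that $Gr(A)$ is graded Dixmier with respect to $Gr(R)$, the map $Gr(\phi)$ is a graded automorphism of $Gr(A)$; in particular each homogeneous component $Gr(\phi)_n:F_nA/F_{n-1}A\to F_nA/F_{n-1}A$ is bijective.

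For injectivity of $\phi$, suppose $\phi(a)=0$ with $a\neq 0$. Since the filtration is exhaustive and $F_{-1}A:=0$, there is a smallest integer $n\geq 0$ with $a\in F_nA$, and hence $\overline{a}\neq 0$ in $F_nA/F_{n-1}A$. Then $Gr(\phi)_n(\overline{a})=\overline{\phi(a)}=\overline{0}$, contradicting the injectivity of $Gr(\phi)_n$. Hence $\ker(\phi)=0$.

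For surjectivity, I would argue by a finite descent on the filtration level. Given $a\in A$, choose $n$ with $a\in F_nA$. By surjectivity of $Gr(\phi)_n$, pick $c_n\in F_nA$ with $\overline{\phi(c_n)}=\overline{a}$ in $F_nA/F_{n-1}A$, so that $a-\phi(c_n)\in F_{n-1}A$. Iterating, choose $c_{n-1}\in F_{n-1}A$ with $a-\phi(c_n)-\phi(c_{n-1})\in F_{n-2}A$, and so on down to $c_0\in F_0A$ with $a-\phi(c_n+c_{n-1}+\cdots+c_0)\in F_{-1}A=0$. Setting $b:=c_0+c_1+\cdots+c_n\in A$ gives $\phi(b)=a$, so $\phi$ is surjective. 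Combined with the previous step, $\phi$ is a filtered $R$-linear automorphism of $A$, proving that $A$ is filtered Dixmier with respect to $R$.

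The argument has no real obstacle: the content is the classical principle that a filtered morphism inducing an isomorphism on associated gradeds is itself an isomorphism, valid here because the $\mathbb{N}$-filtration is exhaustive with $F_{-1}A=0$ (so every element has a well-defined minimal filtration degree, and the descent in the surjectivity step terminates in finitely many stages). The mildly delicate point one must keep track of is that the ring-endomorphism, $R$-linearity, and graded structure all transfer cleanly from $\phi$ to $Gr(\phi)$, so that the graded Dixmier hypothesis is actually applicable to $Gr(\phi)$.
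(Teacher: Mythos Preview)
Your proof is correct and follows exactly the same approach as the paper: pass to the associated graded, apply the graded Dixmier hypothesis to $Gr(\phi)$, and conclude that $\phi$ is an automorphism. The paper simply invokes the classical fact that a filtered morphism with bijective associated graded is itself bijective, whereas you spell out that implication explicitly via the minimal-degree injectivity argument and the finite descent for surjectivity.
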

\begin{proof}
Let $\phi:A\to A$ be an $R$-linear ring homomorphism that is filtered as homomorphism of filtered $R$-modules and filtered as homomorphism of filtered rings. $\phi$ induces  $Gr(\phi):Gr(A)\to Gr(A)$ that is a $Gr(R)$-linear graded ring homomorphism. By the hypothesis, $Gr(\phi)$ is an automorphism, hence $\phi$ is an automorphism.
\end{proof}

Another easy remark for the graded Dixmier condition is as follows. There are graded algebras over fields for which all algebra automorphisms are graded, i.e., $Aut_{Gr}(A)=Aut(A)$ (see \cite{ZhangJ4}). This is the case for the \textbf{\textit{quantum plane}} $K_q[x,y]$, when $q$ is not a root of unity; recall that $K_q[x,y]$ is defined by the relation $yx=qxy$, with $q\in K^*$  (for the automorphisms of the quantum plane see \cite{Artamonov5}, Theorem 4). Another example is $\mathcal{A}_n^{\overline{q},\Lambda}(K)$
(see Theorem 2.1 in \cite{Tang} for the automorphisms of $\mathcal{A}_n^{\overline{q},\Lambda}(K)$ when none of $q_1,\dots,q_n$ is a root of unity). Recall that for $n=1$, $\mathcal{A}_n^{\overline{q},\Lambda}(K)=A_1^q(K)$. For such algebras the Dixmier condition implies the graded one.

\begin{theorem}\label{proposition3.9}
	Let $K$ be a field and $A$ be a graded $K$-algebra such that  $Aut_{Gr}(A)=Aut(A)$. If $A$ is Dixmier, then $A$ is graded Dixmier. In addition, if $End_{Gr}(A)=End(A)$, then $A$ is Dixmier if and only if $A$ is graded Dixmier.	
\end{theorem}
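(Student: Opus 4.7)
The plan is to observe that both implications reduce to straightforward chases of the hypotheses, since the Dixmier conditions (graded and ungraded) differ only in the class of morphisms to which surjectivity is required to apply.

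First I would prove the implication "$A$ is Dixmier $\Rightarrow$ $A$ is graded Dixmier". I let $\phi:A\to A$ be a graded $K$-algebra endomorphism; I must show $\phi$ is a graded automorphism, i.e., a bijective graded algebra homomorphism whose inverse is also graded. Forgetting the grading, $\phi$ is in particular an algebra endomorphism of $A$, and since $A$ is Dixmier, $\phi\in Aut(A)$. Invoking the hypothesis $Aut_{Gr}(A)=Aut(A)$, $\phi$ lies in $Aut_{Gr}(A)$, so $\phi$ is a graded automorphism. This shows $A$ is graded Dixmier.

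Next I would establish the equivalence under the extra assumption $End_{Gr}(A)=End(A)$. The forward direction has just been proved (it did not require the extra hypothesis). For the converse, assume $A$ is graded Dixmier and let $\phi:A\to A$ be an arbitrary algebra endomorphism. By $End_{Gr}(A)=End(A)$, $\phi$ is automatically graded. Graded Dixmier then yields that $\phi$ is a graded automorphism, and in particular an algebra automorphism. Hence $A$ is Dixmier.

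The argument is essentially formal; the only substantive content lies in the hypotheses $Aut_{Gr}(A)=Aut(A)$ and $End_{Gr}(A)=End(A)$, and there is no real obstacle in the proof itself. The genuine work (which the paper delegates to the cited references for $K_q[x,y]$ and $\mathcal{A}_n^{\overline{q},\Lambda}(K)$) is verifying these hypotheses for concrete graded algebras, but that lies outside what this proposition asks us to prove.
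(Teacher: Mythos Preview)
Your proof is correct and follows essentially the same approach as the paper: take a graded endomorphism, view it as an ungraded one, apply the Dixmier hypothesis to get an automorphism, and then use $Aut_{Gr}(A)=Aut(A)$ to conclude it is a graded automorphism. The paper dismisses the second assertion as ``trivial'' from the first, whereas you spell out the converse via $End_{Gr}(A)=End(A)$, but the argument is the same.
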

\begin{proof}
	Let $\phi:A\to A$ be a graded algebra endomorphism of $A$, then $\phi$ is an algebra endomorphism, but since $A$ is Dixmier, then $\phi$ is an automorphism of $K$-algebras, so, by the hypothesis, $\phi$ is an automorphism of graded algebras. This means that $A$ is graded Dixmier. From this, the second assertion is trivial.
\end{proof}

\begin{remark}\label{reamrk3.9}
(i) With respect to the converse of (ii) in Theorem \ref{theorem3.6}, let $Z,A,X,B$ and $Y$ be as in Theorem \ref{theorem3.6}, with $1_A\in X$ and $1_B\in Y$. If $A$ and $B$ are $D$ $(WD)$, then $A\otimes_Z B$ is $D$ $(WD)$?

(ii) For $A_1(K)$ we have $Z=K$, $A=B=A_1(K)$, $1\in X=Y={\rm Mon}(A)$, with ${\rm Mon}(A)$ as in Definition \ref{gpbwextension}, so, if the answer to (i) is yes, we get from the result of Zheglov (see \cite{Zheglov}) that $A_1(K)\otimes A_1(K)\cong A_2(K)$ is $D$. Taking now $A=A_2(K)$ and $B=A_1(K)$, with $X={\rm Mon}(A)$ and $Y={\rm Mon}(B)$, we get that $A_3(K)$ is $D$. By induction, the Generalized Dixmier Conjecture is also true since $A_n(K)\cong A_1(K)^{\otimes n}:=A_1(K)\otimes \cdots \otimes A_1(K)$, and hence, from Proposition \ref{proposition1.7}, the Jacobian Conjecture is true.

(iii) If the answer to (i) is yes, then the conjecture of Bavula in \cite{Bavula3} is true since $\mathbb{I}_n\cong \mathbb{I}^{\otimes n}:=\mathbb{I}_1\otimes \cdots \otimes \mathbb{I}_1$ and in this case $Z=K$, $A=B=\mathbb{I}_1$ and $1\in X=Y={\rm Mon}(A)$, where ${\rm Mon}(A)$ is as in Definition \ref{gpbwextension}.
\end{remark}

We conclude this section with an easy proposition related to the converse of (ii) in Theorem \ref{theorem3.6}.

\begin{proposition}
Let $K$ be a field, $A$ and $B$ be simple $K$-algebras such that $Z(A)=K$ or $Z(B)=K$. Let $X$ be a $K$-basis of $A$ and $Y$ be a $K$-basis of $Y$ such that $1_A\in X$ and $1_B\in Y$. Assume that $A$ and $B$ are $D$. Then, if $\Phi$ is an endomorphism of $A\otimes_K B$ such that $\Phi(A\otimes 1_B)\subseteq A\otimes 1_B$  and $\Phi(1_A\otimes B)\subseteq 1_A\otimes B$, then $\Phi$ is an automorphism.
\end{proposition} 
\begin{proof}
According to Corollary III.1.8 in \cite{Artin}, $A\otimes_K B$ is a simple $K$-algebra, so $\Phi$ is injective. Assume that $\Phi$ is not surjective, then there exists $x\otimes y\in X\otimes Y$ such that $x\otimes y\notin Im(\Phi)$ (recall that $X\otimes Y$ is a $K$-basis of $A\otimes_K B$, Theorem \ref{theorem3.6}). Therefore, $x\otimes 1_B\notin Im(\Phi)$ or $1_A\otimes y\notin Im(\Phi)$. Since $A$ is $D$ and $A\otimes 1_B\cong A$, then the condition $\Phi(A\otimes 1_B)\subseteq A\otimes 1_B$ induces an automorphism $\Phi_A: A\otimes 1_B\to A\otimes 1_B$, where $\Phi_A$ is the restriction of $\Phi$ to $A\otimes 1_B$. Thus, there exists $a\otimes 1_B\in A\otimes 1_B$ such that $x\otimes 1_B=\Phi_A(a\otimes 1_B)=
\Phi(a\otimes 1_B)$, i.e., $x\otimes 1_B\in Im(\Phi)$, a contradiction. Similarly for $1_A\otimes y\notin Im(\Phi)$.
\end{proof}

\section{The Dixmier problem for skew $PBW$ extensions}\label{Section4}

In this section we discuss the famous Dixmier question for skew $PBW$ extensions introduced by Gallego and Lezama in \cite{LezamaGallego} (see also \cite{Lezama-sigmaPBW}). We start recalling the definition and some key properties of this class of noncommutative rings of polynomial type. A complete study of the skew $PBW$ extensions can be found in \cite{Lezama-sigmaPBW}, including its Gröbner bases and applications to noncommutative algebraic geometry.

\subsection{Skew $PBW$ extensions}

We start recalling some basic facts about the skew $PBW$ extensions.

\begin{definition}[\cite{Lezama-sigmaPBW}, Chapter 1]\label{gpbwextension}
	Let $R$ and $A$ be rings. We say that $A$ is a \textit{\textbf{skew $PBW$
			extension of $R$}} $($also called a $\sigma-PBW$ extension of
	$R$$)$ if the following conditions hold:
	\begin{enumerate}
		\item[\rm (i)]$R\subseteq A$.
		\item[\rm (ii)]There exist finitely many elements $x_1,\dots ,x_n\in A$ such $A$ is an $R$-free left module with basis
		\begin{center}
			${\rm Mon}(A):= \{x^{\alpha}=x_1^{\alpha_1}\cdots
			x_n^{\alpha_n}\mid \alpha=(\alpha_1,\dots ,\alpha_n)\in
			\mathbb{N}^n\}$, with $\mathbb{N}:=\{0,1,2,\dots\}$.
		\end{center}
		In this case we say that $A$ is a \textbf{ring of left polynomial type} over $R$ with respect to
		$\{x_1,\dots,x_n\}$. The set ${\rm Mon}(A)$ is called the set of \textbf{standard monomials} of
		$A$.
		\item[\rm (iii)]For every $1\leq i\leq n$ and $r\in R-\{0\}$ there exists $c_{i,r}\in R-\{0\}$ such that
		\begin{equation}\label{sigmadefinicion1}
		x_ir-c_{i,r}x_i\in R.
		\end{equation}
		\item[\rm (iv)]For every $1\leq i,j\leq n$ there exists $c_{i,j}\in R-\{0\}$ such that
		\begin{equation}\label{sigmadefinicion2}
		x_jx_i-c_{i,j}x_ix_j\in R+Rx_1+\cdots +Rx_n.
		\end{equation}
		Under these conditions we will write $A:=\sigma(R)\langle
		x_1,\dots ,x_n\rangle$.
	\end{enumerate}
\end{definition}

\begin{remark}\label{notesondefsigampbw}
	(i) In general, for $i\neq j$ the elements $x_i$ and $x_j$ do not commute. Since $\mathrm{Mon}(A)$ is an
	$R$-basis for $A$, in the above definition the elements $c_{i,r}$ and $c_{i,j}$ are unique. Note
	that for $i=j$, $c_{i,i}=1$: in fact, $x_i^2-c_{i,i}x_i^2=r_0+r_1x_1+\cdots+r_nx_n$, with $r_k\in
	R$ for $0\leq k\leq n$, hence $r_k=0$ and $c_{i,i}=1$.
	
	(ii) If $r=0$, then we define $c_{i,0}=0$: indeed, $0=x_i0=c_{i,0}x_i+r'$, with $r'\in R$, but
	since $\mathrm{Mon}(A)$ is an $R$-basis, then $r'=0$ and $c_{i,0}=0$.
	
	(iii) Condition (iv) in Definition \ref{gpbwextension} is equivalent to the following: for every
	$1\leq i<j\leq n$ there exists left invertible $c_{i,j}\in R$ such that
	\begin{equation}\label{equation1.1.4}
	x_jx_i-c_{i,j}x_ix_j\in R+Rx_1+\cdots +Rx_n.
	\end{equation}
	In fact, from (\ref{sigmadefinicion2}) there exist $c_{j,i},c_{i,j}\in R$ such that
	$x_ix_j-c_{j,i}x_jx_i\in R+Rx_1+\cdots +Rx_n$ and $x_jx_i-c_{i,j}x_ix_j\in R+Rx_1+\cdots +Rx_n$,
	but since $\mathrm{Mon}(A)$ is an $R$-basis then $1=c_{j,i}c_{i,j}$, whence, for every $1\leq i<j\leq n$,
	$c_{i,j}$ is left invertible. Conversely, assuming that $c_{i,j}$ is left invertible for $1\leq
	i<j\leq n$, let $c_{i,j}'\in R$ such that $c_{i,j}'c_{i,j}=1$, so from (\ref{equation1.1.4}),
	$x_ix_j-c_{i,j}'x_jx_i\in R+Rx_1+\cdots +Rx_n$, hence $c_{j,i}:=c_{i,j}'\neq 0$ and condition (iv) in
	Definition \ref{gpbwextension} holds.
	
	(iv) The elements of $\mathrm{Mon}(A)$ will also be denoted by capital letters, thus, $x^\alpha\in \mathrm{Mon}(A)$
	will be represented also as $X$ if it is not important to highlight the exponents
	$\alpha_1,\dots,\alpha_n$ in $x^{\alpha}$.
	
	(v) Each element $f\in A-\{0\}$ has a unique representation in the form $f=c_1X_1+\cdots+c_tX_t$,
	with $c_i\in R-\{0\}$ and $X_i\in \mathrm{Mon}(A)$, $1\leq i\leq t$.
\end{remark}

Associated to a skew $PBW$ extension $A=\sigma(R)\langle x_1,\dots
,x_n\rangle$ there are $n$ injective endomorphisms
$\sigma_1,\dots,\sigma_n$ of $R$ and $\sigma_i$-derivations, as
the following proposition shows.

\begin{proposition}[\cite{Lezama-sigmaPBW}, Chapter 1]\label{sigmadefinition}
	Let $A$ be a skew $PBW$ extension of $R$. Then, for every $1\leq
	i\leq n$, there exist an injective ring endomorphism
	$\sigma_i:R\rightarrow R$ and a $\sigma_i$-derivation
	$\delta_i:R\rightarrow R$ such that
	\begin{center}
		$x_ir=\sigma_i(r)x_i+\delta_i(r)$,
	\end{center}
	for each $r\in R$.
\end{proposition}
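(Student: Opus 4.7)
The plan is to extract $\sigma_i$ and $\delta_i$ directly from condition (iii) of Definition \ref{gpbwextension} by writing, for each $r \in R$, the unique decomposition $x_i r = c_{i,r} x_i + d_{i,r}$ with $c_{i,r}, d_{i,r} \in R$ (taking $c_{i,0} := 0 =: d_{i,0}$ by Remark \ref{notesondefsigampbw}(ii)), and then defining $\sigma_i(r) := c_{i,r}$ and $\delta_i(r) := d_{i,r}$. Since $\mathrm{Mon}(A)$ is an $R$-basis for $A$, the pair $(c_{i,r}, d_{i,r})$ is unique, so $\sigma_i$ and $\delta_i$ are well-defined functions $R \to R$, and by construction the relation $x_i r = \sigma_i(r) x_i + \delta_i(r)$ holds for every $r \in R$.

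The next step is to verify that $\sigma_i$ is a ring homomorphism and $\delta_i$ is a $\sigma_i$-derivation. For additivity, I would compute $x_i(r+s)$ in two ways, once via the defining relation for $r+s$ and once by expanding $x_i r + x_i s$; comparing coefficients in the basis $\{1, x_i\} \subset \mathrm{Mon}(A)$ (after absorbing the constant terms into $R$) gives $\sigma_i(r+s) = \sigma_i(r) + \sigma_i(s)$ and $\delta_i(r+s) = \delta_i(r) + \delta_i(s)$. For multiplicativity, I would apply the relation twice to $x_i(rs) = (x_i r) s$, getting
\[
x_i(rs) = \sigma_i(r)\sigma_i(s) x_i + \sigma_i(r)\delta_i(s) + \delta_i(r) s,
\]
and once again the uniqueness of the representation in $\mathrm{Mon}(A)$ yields $\sigma_i(rs) = \sigma_i(r)\sigma_i(s)$ together with the Leibniz rule $\delta_i(rs) = \sigma_i(r)\delta_i(s) + \delta_i(r) s$. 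Taking $r = 1$ in $x_i \cdot 1 = x_i$ forces $\sigma_i(1) = 1$ and $\delta_i(1) = 0$.

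Finally, for injectivity of $\sigma_i$, I would invoke condition (iii) of Definition \ref{gpbwextension} directly: it guarantees that $c_{i,r} \neq 0$ whenever $r \neq 0$, so $\ker(\sigma_i) = 0$. The $\sigma_i$-derivation axiom for $\delta_i$ on the scalar action of $Z$ is automatic because $R$ itself is the scalar ring in this formulation (no external commutative ring is involved in the universal property needed here).

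The only subtle point, and essentially the one place where one must be careful, is invoking the uniqueness of the basis representation in the correct way: each identity above produces two elements of $R + Rx_i$, and one must argue that the $x_i$-coefficient and the constant term match separately. This is immediate from ${\rm Mon}(A)$ being a free $R$-basis, but it is the crux that makes the functions $\sigma_i$ and $\delta_i$ well-defined and forces them to be homomorphic and derivation-like, respectively.
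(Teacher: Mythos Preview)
Your argument is correct and is precisely the standard proof: extract $\sigma_i$ and $\delta_i$ from the unique basis decomposition of $x_ir$, then use uniqueness of coefficients in $\mathrm{Mon}(A)$ to verify the ring-endomorphism and $\sigma_i$-derivation axioms, with injectivity read off from the hypothesis $c_{i,r}\neq 0$ for $r\neq 0$ in Definition~\ref{gpbwextension}(iii). The paper itself does not supply a proof here---it simply cites \cite{Lezama-sigmaPBW}, Chapter~1---so there is no alternative approach to compare against; your write-up is exactly the argument one would expect to find in that reference.
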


Two remarkable particular cases of skew $PBW$ extensions are recalled next.

\begin{definition}[\cite{Lezama-sigmaPBW}, Chapter 1]\label{sigmapbwderivationtype}
	Let $A$ be a skew $PBW$ extension.
	\begin{enumerate}
		\item[\rm (a)]
		$A$ is \textbf{quasi-commutative}\index{quasi-commutative} if conditions {\rm(}iii{\rm)} and {\rm(}iv{\rm)} in Definition
		\ref{gpbwextension} are replaced by
		\begin{enumerate}
			\item[\rm ($iii'$)]For every $1\leq i\leq n$ and $r\in R-\{0\}$ there exists a $c_{i,r}\in R-\{0\}$ such that
			\begin{equation}
			x_ir=c_{i,r}x_i.
			\end{equation}
			\item[\rm ($iv'$)]For every $1\leq i,j\leq n$ there exists $c_{i,j}\in R-\{0\}$ such that
			\begin{equation}
			x_jx_i=c_{i,j}x_ix_j.
			\end{equation}
		\end{enumerate}
		\item[\rm (b)]$A$ is \textbf{bijective}\index{bijective} if $\sigma_i$ is bijective for
		every $1\leq i\leq n$ and $c_{i,j}$ is invertible for any $1\leq i,j\leq n$.
	\end{enumerate}
\end{definition}

If $A$ is quasi-commutative, then $\delta_k=0$ for every $1\leq k\leq n$ and $p_{\alpha,r},p_{\alpha, \beta}=0$ in Proposition \ref{coefficientes}.

Many important algebras and rings coming from mathematical physics
are particular examples of skew $PBW$ extensions: Habitual ring of
	polynomials in several variables, \textbf{Weyl algebras}, enveloping
algebras of finite dimensional Lie algebras, algebra of
$q$-differential operators, many important types of Ore algebras,
algebras of diffusion type, additive and multiplicative analogues
of the Weyl algebra, dispin algebra $\mathcal{U}(osp(1,2))$,
quantum algebra $\mathcal{U}'(so(3,K))$, Woronowicz algebra
$\mathcal{W}_{\nu}(\mathfrak{sl}(2,K))$, Manin algebra
$\mathcal{O}_q(M_2(K))$, coordinate algebra of the quantum group
$SL_q(2)$, $q$-Heisenberg algebra \textbf{H}$_n(q)$, Hayashi
algebra $W_q(J)$, differential operators on a quantum space
$D_{\textbf{q}}(S_{\textbf{q}})$, Witten's deformation of
$\mathcal{U}(\mathfrak{sl}(2,K))$, \textbf{quantum Weyl algebra of Maltsiniotis}, multiparameter Weyl algebra
$A_n^{Q,\Gamma}(K)$, quantum symplectic space
$\mathcal{O}_q(\mathfrak{sp}(K^{2n}))$, some quadratic algebras in
3 variables, some 3-dimensional skew polynomial algebras,
particular types of Sklyanin algebras, homogenized enveloping
algebra $\mathcal{A}(\mathcal{G})$, Sridharan enveloping algebra
of 3-dimensional Lie algebra $\mathcal{G}$, among many others. For
a precise definition of any of these rings and algebras see \cite{LezamaGallego},
\cite{lezamareyes1} and \cite{Lezama-sigmaPBW}. The skew $PBW$ has been intensively studied in the last years (see \cite{Lezama-sigmaPBW}).

Next we will fix some notation and a monomial order in $A$ (see \cite{Lezama-sigmaPBW}, Chapter 1).

\begin{definition}[\cite{Lezama-sigmaPBW}, Chapter 1]\label{1.1.6}
	Let $A$ be a skew $PBW$ extension of $R$ with endomorphisms $\sigma_i$ as in Proposition
	\ref{sigmadefinition}, $1\leq i\leq n$.
	\begin{enumerate}
		\item[\rm (i)]For $\alpha=(\alpha_1,\dots,\alpha_n)\in \mathbb{N}^n$,
		$\boldsymbol{\sigma^{\alpha}}:=\sigma_1^{\alpha_1}\cdots \sigma_n^{\alpha_n}$,
		$\boldsymbol{|\alpha|}:=\alpha_1+\cdots+\alpha_n$. If $\beta=(\beta_1,\dots,\beta_n)\in \mathbb{N}^n$, then
		$\boldsymbol{\alpha+\beta}:=(\alpha_1+\beta_1,\dots,\alpha_n+\beta_n)$.
		\item[\rm (ii)]For $X=x^{\alpha}\in \mathrm{Mon}(A)$,
		$\boldsymbol{\exp(X)}:=\alpha$ and $\boldsymbol{\deg(X)}:=|\alpha|$.
		\item[\rm (iii)]Let $0\neq f\in A$. If $t(f)$ is the finite
		set of \textbf{\textit{terms}} that conform $f$, i.e., if $f=c_1X_1+\cdots +c_tX_t$, with $X_i\in \mathrm{Mon}(A)$ and $c_i\in
		R-\{0\}$, then $\boldsymbol{t(f)}:=\{c_1X_1,\dots,c_tX_t\}$.
		\item[\rm (iv)]Let $f$ be as in {\rm(iii)}, then $\boldsymbol{\deg(f)}:=\max\{\deg(X_i)\}_{i=1}^t.$
	\end{enumerate}
\end{definition}

In $\mathrm{Mon}(A)$ we define
\begin{center}
	$x^{\alpha}\succeq x^{\beta}\Longleftrightarrow
	\begin{cases}
	x^{\alpha}=x^{\beta}\\
	\text{or} & \\
	x^{\alpha}\neq x^{\beta}\, \text{but} \, |\alpha|> |\beta| & \\
	\text{or} & \\
	x^{\alpha}\neq x^{\beta},|\alpha|=|\beta|\, \text{but $\exists$ $i$ with} &
	\alpha_1=\beta_1,\dots,\alpha_{i-1}=\beta_{i-1},\alpha_i>\beta_i.
	\end{cases}$
\end{center}
It is clear that this is a total order on $\mathrm{Mon}(A)$, called \textit{\textbf{deglex}} order. If
$x^{\alpha}\succeq x^{\beta}$ but $x^{\alpha}\neq x^{\beta}$, we write $x^{\alpha}\succ x^{\beta}$.
Each element $f\in A-\{0\}$ can be represented in a unique way as $f=c_1x^{\alpha_1}+\cdots
+c_tx^{\alpha_t}$, with $c_i\in R-\{0\}$, $1\leq i\leq t$, and $x^{\alpha_1}\succ \cdots \succ
x^{\alpha_t}$. We say that $x^{\alpha_1}$ is the \textit{\textbf{leading monomial}} of $f$ and we write
$lm(f):=x^{\alpha_1}$; $c_1$ is the \textit{\textbf{leading coefficient}} of $f$, $lc(f):=c_1$, and
$c_1x^{\alpha_1}$ is the \textit{\textbf{leading term}} of $f$ denoted by $lt(f):=c_1x^{\alpha_1}$. We say that $f$ is \textit{\textbf{monic}} if $lc(f):=1$. If $f=0$,
we define $lm(0):=0$, $lc(0):=0$, $lt(0):=0$, and we set $X\succ 0$ for any $X\in \mathrm{Mon}(A)$. We observe that
\begin{center}
	$x^{\alpha}\succ x^{\beta}\Rightarrow lm(x^{\gamma}x^{\alpha}x^{\lambda})\succ
	lm(x^{\gamma}x^{\beta}x^{\lambda})$, for every $x^{\gamma},x^{\lambda}\in \mathrm{Mon}(A)$.
\end{center}

The next proposition complements Definition \ref{gpbwextension}.

\begin{proposition}[\cite{Lezama-sigmaPBW}, Chapter 1]\label{coefficientes}
	Let $A$ be a ring of a left polynomial type over $R$ w.r.t.\ $\{x_1,\dots,x_n\}$. $A$ is a skew
	$PBW$ extension of $R$ if and only if the following conditions hold:
	\begin{enumerate}
		\item[\rm (a)]For every $x^{\alpha}\in \mathrm{Mon}(A)$ and every $0\neq
		r\in R$ there exist unique elements $r_{\alpha}:=\sigma^{\alpha}(r)\in R-\{0\}$ and $p_{\alpha
			,r}\in A$ such that
		\begin{equation}\label{611}
		x^{\alpha}r=r_{\alpha}x^{\alpha}+p_{\alpha , r},
		\end{equation}
		where $p_{\alpha ,r}=0$ or $\deg(p_{\alpha ,r})<|\alpha|$ if $p_{\alpha , r}\neq 0$. Moreover, if
		$r$ is left invertible, then $r_\alpha$ is left invertible.
		
		\item[\rm (b)]For every $x^{\alpha},x^{\beta}\in \mathrm{Mon}(A)$ there
		exist unique elements $c_{\alpha,\beta}\in R$ and $p_{\alpha,\beta}\in A$ such that
		\begin{equation}\label{612}
		x^{\alpha}x^{\beta}=c_{\alpha,\beta}x^{\alpha+\beta}+p_{\alpha,\beta},
		\end{equation}
		where $c_{\alpha,\beta}$ is left invertible, $p_{\alpha,\beta}=0$ or
		$\deg(p_{\alpha,\beta})<|\alpha+\beta|$ if $p_{\alpha,\beta}\neq 0$.
	\end{enumerate}
\end{proposition}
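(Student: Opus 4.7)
The plan is to establish the equivalence by proving each implication separately, with both directions resting on inductive arguments that exploit the $R$-basis structure of $\mathrm{Mon}(A)$ to handle uniqueness.

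For the forward direction ($A$ a skew $PBW$ extension implies (a) and (b)), I would prove (a) by induction on $|\alpha|$. The base case $|\alpha|=0$ is trivial with $r_\alpha=r$ and $p_{\alpha,r}=0$, and the case $|\alpha|=1$ reduces directly to Proposition \ref{sigmadefinition}: $x_i r=\sigma_i(r)x_i+\delta_i(r)$, where $\delta_i(r)\in R$ has degree $0<1$, $\sigma_i(r)\neq 0$ by injectivity of $\sigma_i$, and left invertibility of $\sigma_i(r)$ follows from $\sigma_i$ being a ring homomorphism. For the inductive step, factor $x^\alpha=x_i x^{\alpha'}$ with $|\alpha'|=|\alpha|-1$, apply the hypothesis to get $x^{\alpha'}r=\sigma^{\alpha'}(r)x^{\alpha'}+p_{\alpha',r}$, then apply the base case to $x_i\sigma^{\alpha'}(r)$ and, term by term, to $x_i$ acting on each monomial coefficient of $p_{\alpha',r}$. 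Tracking degrees shows everything after the leading term $\sigma_i(\sigma^{\alpha'}(r))x^\alpha=\sigma^\alpha(r)x^\alpha$ has degree strictly less than $|\alpha|$. Uniqueness of the decomposition is forced by $\mathrm{Mon}(A)$ being an $R$-basis.

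For (b), I would induct on $|\alpha|+|\beta|$. The cases where one of the exponents is zero are immediate, and the case $|\alpha|=|\beta|=1$ comes from Remark \ref{notesondefsigampbw}(iii), which guarantees that in $x_j x_i = c_{i,j}x_i x_j + (\text{lower order})$ the constant $c_{i,j}$ is left invertible (this also handles $i=j$ where $c_{i,i}=1$). For the inductive step, split $x^\alpha = x^{\alpha_1}x^{\alpha_2}$ with both factors of smaller degree (when $|\alpha|\geq 2$), write $x^\alpha x^\beta = x^{\alpha_1}(x^{\alpha_2}x^\beta)$, apply the inductive hypothesis to $x^{\alpha_2}x^\beta$, then distribute $x^{\alpha_1}$ across the resulting expression using (a) to move scalars past $x^{\alpha_1}$ and the inductive hypothesis again to reassemble the leading monomial. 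The leading coefficient $c_{\alpha,\beta}$ emerges as a product of left invertible elements arising at each stage, hence is itself left invertible.

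For the reverse direction, Definition \ref{gpbwextension} conditions (i) and (ii) are part of the hypothesis that $A$ is a ring of left polynomial type. Condition (iii) follows from (a) specialized to $\alpha=e_i$: writing $x_i r=\sigma_i(r)x_i+p_{e_i,r}$ with $\deg(p_{e_i,r})<1$ forces $p_{e_i,r}\in R$, so $c_{i,r}:=\sigma_i(r)\in R-\{0\}$ and $x_i r - c_{i,r}x_i\in R$. Condition (iv) follows from (b) with $\alpha=e_j$, $\beta=e_i$: the relation $x_j x_i = c_{e_j,e_i}x^{e_j+e_i}+p_{e_j,e_i}$ with $\deg(p_{e_j,e_i})<2$ places $p_{e_j,e_i}$ in $R+Rx_1+\cdots+Rx_n$, and since $x^{e_j+e_i}$ equals $x_i x_j$ (for $i<j$) or $x_j x_i$ (for $j<i$), one rewrites the identity as required, using the left invertibility of $c_{e_j,e_i}$ to switch sides when needed.

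The main obstacle I expect is the bookkeeping in the inductive step of (b): one must simultaneously show that the leading coefficient remains left invertible and that all remainder terms genuinely drop in total degree, which requires careful interleaving of (a) with the inductive hypothesis when redistributing factors across the product $x^{\alpha_1}(x^{\alpha_2}x^\beta)$. Uniqueness throughout both parts is the easiest portion, being an immediate consequence of the freeness of $\mathrm{Mon}(A)$ as an $R$-basis.
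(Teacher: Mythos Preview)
The paper does not actually prove this proposition: it is stated with the citation \cite{Lezama-sigmaPBW}, Chapter 1, and no proof environment follows. Your inductive strategy is the standard one and is essentially what appears in that reference, so there is nothing to compare against in the present paper.

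One point worth tightening in your sketch: when you write ``split $x^{\alpha}=x^{\alpha_1}x^{\alpha_2}$'' in the inductive step of (b), you need this to be a literal equality in $A$, not merely an equality modulo lower-degree terms. This is guaranteed only if you peel off the leftmost variable, i.e.\ take $x^{\alpha_1}=x_i$ where $i$ is the first index with $\alpha_i>0$, so that $x_i\cdot x^{\alpha-e_i}=x^{\alpha}$ on the nose. With that choice the recursion closes: after applying the inductive hypothesis to $x^{\alpha-e_i}x^{\beta}$ and using (a) to pass the resulting scalar across $x_i$, you are left with $x_i\cdot x^{(\alpha-e_i)+\beta}$, which is the already-established $|\alpha|=1$ case (itself handled by an inner induction on $|\beta|$ using condition (iv) and Remark \ref{notesondefsigampbw}(iii)). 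If instead you split $x^{\alpha}$ arbitrarily, the factor $x^{\alpha_1}x^{\alpha_2+\beta}$ has the same total degree $|\alpha|+|\beta|$ as the original product and the induction does not terminate. You correctly flagged this bookkeeping as the main obstacle; the fix is simply to be explicit about always factoring from the left.
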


For the investigation of the Dixmier and the weak Dixmier conditions for the skew $PBW$ extensions, it will be very important the universal property that we present next.

If $A=\sigma(R)\langle x_1,\dots,x_n\rangle$ is a skew $PBW$
extension of the ring $R$, then, as was observed in Proposition
\ref{sigmadefinition}, $A$ induces injective endomorphisms
$\sigma_k:R\to R$ and $\sigma_k$-derivations $\delta_k:R\to R$,
$1\leq k\leq n$. Moreover, from the Definition
\ref{gpbwextension} and Remark \ref{notesondefsigampbw}, there exists a unique finite set of constants
$c_{ij}, d_{ij}, a_{ij}^{(k)}\in R$, $c_{ij}\neq 0$, such that
\begin{equation}\label{equation1.2.1}
x_jx_i=c_{ij}x_ix_j+a_{ij}^{(1)}x_1+\cdots+a_{ij}^{(n)}x_n+d_{ij},
\ \text{for every}\  1\leq i<j\leq n.
\end{equation}

\begin{definition}[\cite{Lezama-sigmaPBW}, Chapter 1]\label{definition1.2.1}
	Let $A=\sigma(R)\langle x_1,\dots,x_n\rangle$ be a skew $PBW$ extension. $R$, $n$,
	$\sigma_k,\delta_k, c_{i,j}$, $d_{ij}, a_{ij}^{(k)}$, with $1\leq i<j\leq n$, $1\leq k\leq n$,
	defined as before, are called the \textbf{parameters} of $A$.
\end{definition}

\begin{proposition}[\textbf{Universal property}; \cite{Lezama-sigmaPBW}, Chapter 1]\label{122}\index{universal property}
	Let $A=\sigma(R)\langle x_1,\dots,x_n\rangle$ be a skew $PBW$ extension with parameters $R, n,
	\sigma_k,\delta_k, c_{i,j}, d_{ij}, a_{ij}^{(k)}$, $1\leq i<j\leq n$, $1\leq k\leq n$. Let $B$ be a
	ring with a homomorphism $\varphi:R\to B$ and elements $y_1,\dots,y_n\in B$ such that
	\begin{enumerate}
		\item[\rm (i)]$y_k\varphi(r)=\varphi(\sigma_k(r))y_k+\varphi(\delta_k(r))$, for every $r\in R$, $1\leq k\leq
		n$.
		\item[\rm (ii)]$y_jy_i=\varphi(c_{i,j})y_iy_j+\varphi(a_{ij}^{(1)})y_1+\cdots
		+\varphi(a_{ij}^{(n)})y_n+\varphi(d_{ij})$, $1\leq i<j\leq n$.
	\end{enumerate}
	Then, there exists a unique ring homomorphism $\widetilde{\varphi}:A\to B$ such that
	$\widetilde{\varphi}\iota=\varphi$ and $\widetilde{\varphi}(x_i)=y_i$, where $\iota$ is the
	inclusion of $R$ in $A$, $1\leq i\leq n$.
\end{proposition}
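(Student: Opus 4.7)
The plan is to exploit the freeness of $A$ as a left $R$-module to define $\widetilde{\varphi}$ on the basis $\mathrm{Mon}(A)$ and then verify that it is a ring homomorphism. Uniqueness is immediate: any ring homomorphism $A\to B$ that extends $\varphi$ and sends $x_i\mapsto y_i$ is determined on $R\cup\{x_1,\dots,x_n\}$, and this set generates $A$ as a ring. For existence, set $y^\alpha:=y_1^{\alpha_1}\cdots y_n^{\alpha_n}\in B$ and define $\widetilde{\varphi}:A\to B$ as the unique additive map satisfying $\widetilde{\varphi}(rx^\alpha):=\varphi(r)y^\alpha$; this is well defined because $\mathrm{Mon}(A)$ is a free left $R$-basis of $A$. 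Then $\widetilde{\varphi}|_R=\varphi$, $\widetilde{\varphi}(1)=1$ and $\widetilde{\varphi}(x_i)=y_i$ hold automatically, so the only nontrivial point is multiplicativity.

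I would handle multiplicativity in two steps via inductions on degree. First, I would establish the auxiliary identity $\widetilde{\varphi}(x^\alpha\cdot r)=y^\alpha\varphi(r)$ for every $r\in R$ and every $\alpha\in\mathbb{N}^n$ by induction on $|\alpha|$. The case $|\alpha|=1$ is essentially hypothesis (i): from Proposition \ref{sigmadefinition} we have $x_k r=\sigma_k(r)x_k+\delta_k(r)$, hence $\widetilde{\varphi}(x_k r)=\varphi(\sigma_k(r))y_k+\varphi(\delta_k(r))$, and (i) says this equals $y_k\varphi(r)$. For the inductive step, take $i$ to be the smallest index with $\alpha_i\geq 1$, write $x^\alpha=x_i\,x^{\alpha-e_i}$, and use the expansion $x^{\alpha-e_i}\cdot r=\sigma^{\alpha-e_i}(r)x^{\alpha-e_i}+p_{\alpha-e_i,r}$ from Proposition \ref{coefficientes} (with $\deg p_{\alpha-e_i,r}<|\alpha|-1$); applying $x_i$ on the left and pushing it past the coefficient via the relation once more reduces the computation of $\widetilde{\varphi}(x^\alpha r)$ to strictly smaller degrees, where the induction hypothesis and the base case apply term by term.

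The main step, and principal obstacle, is the second sublemma $\widetilde{\varphi}(x^\alpha x^\beta)=y^\alpha y^\beta$ for all $\alpha,\beta\in\mathbb{N}^n$, proved by induction on $|\alpha|+|\beta|$. The difficulty is that $x^\alpha x^\beta$ is in general \emph{not} a standard monomial and must be straightened into $\mathrm{Mon}(A)$ via repeated application of the relation $x_jx_i=c_{i,j}x_ix_j+a_{ij}^{(1)}x_1+\cdots+a_{ij}^{(n)}x_n+d_{ij}$ from (\ref{equation1.2.1}) for $i<j$, each application producing lower-degree correction terms that must be tracked carefully. The crucial observation is that hypothesis (ii) imposes \emph{precisely} the analogous relation on the $y_j$'s in $B$, while hypothesis (i) supplies the parallel commutation of the $y_k$'s with scalars; consequently, every straightening step performed on $x^\alpha x^\beta$ in $A$ is mirrored, after applying $\widetilde{\varphi}$, by exactly the same straightening step on $y^\alpha y^\beta$ in $B$. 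The first sublemma and the inductive hypothesis then absorb the lower-degree remainders produced in each step. Combining the two sublemmas with additivity gives $\widetilde{\varphi}((rx^\alpha)(sx^\beta))=\varphi(r)\,\widetilde{\varphi}(x^\alpha s)\,y^\beta=\varphi(r)y^\alpha\varphi(s)y^\beta=\widetilde{\varphi}(rx^\alpha)\widetilde{\varphi}(sx^\beta)$ for arbitrary basis products, hence $\widetilde{\varphi}$ is a ring homomorphism, completing the proof.
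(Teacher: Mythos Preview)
The paper does not include a proof of this proposition; it is stated with a citation to \cite{Lezama-sigmaPBW}, Chapter~1, and then used without further comment. Your approach---defining $\widetilde{\varphi}$ on the free left $R$-basis $\mathrm{Mon}(A)$ and then verifying multiplicativity by induction on degree, using that hypotheses (i) and (ii) replicate in $B$ exactly the straightening relations of $A$---is the natural and standard one, and your sketch is essentially correct.

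One point in your first sublemma deserves to be made explicit. Your choice of $i$ as the \emph{smallest} index with $\alpha_i\geq 1$ is not merely convenient but essential: it guarantees that every monomial $x^\gamma$ appearing in the standard-form expansion of $x^{\alpha-e_i}\cdot r$ (obtained by pushing $r$ leftwards using only the relations $x_k s=\sigma_k(s)x_k+\delta_k(s)$) satisfies $\gamma_j=0$ for $j<i$. Consequently $x_i\,x^\gamma=x^{\gamma+e_i}$ is already a standard monomial in $A$, and $y_i\,y^\gamma=y^{\gamma+e_i}$ holds in $B$ by associativity alone, with no appeal to hypothesis (ii). This is precisely what allows your first sublemma to be established prior to, and independently of, the second. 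Had $i$ been chosen arbitrarily, the product $x_i x^\gamma$ would in general require reordering via (\ref{equation1.2.1}), and the two sublemmas would become entangled into a single simultaneous induction. With this observation in place, the rest of your argument goes through as written.
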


\begin{proposition}[\textbf{Hilbert's basis theorem for skew $PBW$ extensions}; \cite{Lezama-sigmaPBW}, Theorem 3.1.5]\label{1.3.4}
Let $A$ be a bijective skew $PBW$ extension of $R$. If $R$ is a left {\rm(}right{\rm)} noetherian
ring then $A$ is also a left {\rm(}right{\rm)} noetherian ring.
\end{proposition}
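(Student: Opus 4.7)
The plan is to exploit the natural positive filtration of $A$ by total degree of standard monomials, reduce the question to the associated graded ring, and then reduce the graded ring to an iterated Ore extension where the classical Hilbert basis theorem applies. Define
\[
F_mA := \sum_{|\alpha|\leq m} R\,x^{\alpha}, \qquad m\geq 0,
\]
with $F_{-1}A=0$. By Proposition \ref{coefficientes} (equations (\ref{611}) and (\ref{612})), $F_mA\cdot F_\ell A\subseteq F_{m+\ell}A$ and $A=\bigcup_m F_mA$, so this is an exhaustive positive ring filtration. Form the associated graded ring $\operatorname{gr}(A) := \bigoplus_{m\geq 0} F_mA/F_{m-1}A$.

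First I would check that $\operatorname{gr}(A)$ is itself a skew $PBW$ extension of $R$, and is quasi-commutative (in the sense of Definition \ref{sigmapbwderivationtype}). Indeed, using Proposition \ref{sigmadefinition} and (\ref{equation1.2.1}), the relations $x_ir = \sigma_i(r)x_i + \delta_i(r)$ and $x_jx_i = c_{ij}x_ix_j + \sum_k a_{ij}^{(k)}x_k + d_{ij}$ reduce in $\operatorname{gr}(A)$ to $x_i r = \sigma_i(r)x_i$ and $x_jx_i = c_{ij}x_ix_j$, because all correction terms lie in strictly lower filtration degree and thus vanish in the relevant graded piece. Bijectivity of $A$ transfers to bijectivity of $\operatorname{gr}(A)$.

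Second, I would realize the quasi-commutative bijective skew $PBW$ extension $\operatorname{gr}(A)$ as an iterated Ore extension
\[
\operatorname{gr}(A)\;\cong\; R[x_1;\theta_1][x_2;\theta_2]\cdots[x_n;\theta_n],
\]
where each $\theta_k$ is an \emph{automorphism} of the previous ring (with zero derivation). The endomorphism $\theta_k$ extends $\sigma_k$ from $R$ to the lower extension by setting $\theta_k(x_i) := c_{i,k}^{-1}\,x_i$ for $i<k$; the universal property (Proposition \ref{122}) ensures these extensions are well defined and ring homomorphisms, and bijectivity of $A$ (invertibility of $\sigma_k$ and of $c_{i,k}$) guarantees each $\theta_k$ is actually an automorphism.

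Third, I would invoke the classical Hilbert basis theorem for Ore extensions: if $S$ is left (right) noetherian and $\theta$ is an automorphism of $S$, then $S[x;\theta]$ is left (right) noetherian. Iterating $n$ times starting from $R$ shows that $\operatorname{gr}(A)$ is left (right) noetherian whenever $R$ is. Finally, I would apply the standard transfer principle for filtered rings (see, e.g., McConnell--Robson): if $A$ is a positively filtered ring whose associated graded ring is left (right) noetherian, then $A$ itself is left (right) noetherian. The argument takes a one-sided ideal $I\subseteq A$, considers the symbol ideal $\operatorname{gr}(I)\subseteq\operatorname{gr}(A)$, picks finitely many generators and lifts them to elements $f_1,\dots,f_s\in I$, then proves by induction on degree in the filtration that these elements generate $I$.

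The main obstacle is the second step: verifying carefully that the quasi-commutative graded ring genuinely equals the iterated Ore extension, since this requires that each $\theta_k$ extend compatibly to an automorphism of the $(k-1)$-th iterate. This is precisely where the bijectivity hypothesis of the extension is essential — without it, the $c_{i,j}$ need not be invertible, the $\sigma_i$ need not be surjective, and the Ore extensions in the tower would involve non-automorphism endomorphisms, for which the classical Hilbert basis theorem may fail.
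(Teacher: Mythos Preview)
The paper does not give its own proof of this proposition; it merely states the result and cites \cite{Lezama-sigmaPBW}, Theorem 3.1.5, so there is nothing to compare against directly. Your outline is correct and is in fact the standard argument used in that reference: pass to the associated graded ring for the total-degree filtration, observe that $\operatorname{gr}(A)$ is a quasi-commutative bijective skew $PBW$ extension (this is exactly the content of \cite{Lezama-sigmaPBW}, Theorem 1.3.4), identify it with an iterated Ore extension of automorphism type, apply the classical Hilbert basis theorem for such Ore extensions, and lift noetherianity back to $A$ via the filtered--graded transfer. Your remark that bijectivity is precisely what makes the $\theta_k$ automorphisms (so that the Ore-extension Hilbert basis theorem applies at each stage) is also the crux of the argument in the cited source.
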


\begin{proposition}[\cite{Lezama-sigmaPBW}, Proposition 3.2.1]\label{1.1.10}
Let $A$ be a skew $PBW$ extension of $R$. If $R$ is a domain, then $A$ is a domain.
\end{proposition}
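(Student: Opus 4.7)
The plan is to show that $A$ has no zero divisors by identifying a nonzero leading term in the product $fg$ of any two nonzero elements $f,g\in A$, exploiting the deglex order on $\mathrm{Mon}(A)$ and the structural Proposition \ref{coefficientes}. First, I would write $f$ and $g$ in their unique standard representations $f=c_1x^{\alpha_1}+\cdots+c_sx^{\alpha_s}$ and $g=d_1x^{\beta_1}+\cdots+d_tx^{\beta_t}$ with $c_i,d_j\in R-\{0\}$ and monomials strictly decreasing in the deglex order, so that $lt(f)=c_1x^{\alpha_1}$ and $lt(g)=d_1x^{\beta_1}$.

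Next, I would expand $fg=\sum_{i,j}c_ix^{\alpha_i}d_jx^{\beta_j}$ and analyze each summand. Applying (\ref{611}) yields $x^{\alpha_i}d_j=\sigma^{\alpha_i}(d_j)x^{\alpha_i}+p_{\alpha_i,d_j}$ with $p_{\alpha_i,d_j}=0$ or $\deg(p_{\alpha_i,d_j})<|\alpha_i|$, and applying (\ref{612}) then gives
\[
c_ix^{\alpha_i}d_jx^{\beta_j}=c_i\sigma^{\alpha_i}(d_j)c_{\alpha_i,\beta_j}\,x^{\alpha_i+\beta_j}+(\text{terms of lower degree than }|\alpha_i|+|\beta_j|).
\]
In particular, the highest-degree contribution from the $(1,1)$ summand is $c_1\sigma^{\alpha_1}(d_1)c_{\alpha_1,\beta_1}\,x^{\alpha_1+\beta_1}$.

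I would then argue that $x^{\alpha_1+\beta_1}$ is the deglex-leading monomial of $fg$. Using the compatibility of deglex with the skew multiplication recorded in the excerpt, namely $x^\alpha\succ x^\beta\Rightarrow lm(x^\gamma x^\alpha x^\lambda)\succ lm(x^\gamma x^\beta x^\lambda)$, together with the degree-dropping remainders from (\ref{611}) and (\ref{612}), every other contribution to $fg$ has monomial strictly $\prec x^{\alpha_1+\beta_1}$; in particular no cancellation can affect the coefficient $c_1\sigma^{\alpha_1}(d_1)c_{\alpha_1,\beta_1}$. Finally, this coefficient is nonzero: $c_1\neq 0$ and $d_1\neq 0$ by choice; $\sigma^{\alpha_1}(d_1)=\sigma_1^{\alpha_{11}}\cdots\sigma_n^{\alpha_{1n}}(d_1)\neq 0$ since each $\sigma_k$ is injective by Proposition \ref{sigmadefinition}; $c_{\alpha_1,\beta_1}\neq 0$ because it is left invertible by Proposition \ref{coefficientes}(b); and the product of these three nonzero elements of $R$ is nonzero because $R$ is a domain. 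Hence $lt(fg)=c_1\sigma^{\alpha_1}(d_1)c_{\alpha_1,\beta_1}\,x^{\alpha_1+\beta_1}\neq 0$, so $fg\neq 0$.

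The main obstacle I anticipate is the bookkeeping in step three: one must check carefully that after expanding all $s\cdot t$ cross products and all the lower-order remainders $p_{\alpha_i,d_j}$ and $p_{\alpha_i,\beta_j}$, no term can reach or exceed the monomial $x^{\alpha_1+\beta_1}$. This reduces to the monotonicity of $+$ with respect to deglex combined with the explicit bounds $\deg(p_{\alpha_i,d_j})<|\alpha_i|$ and $\deg(p_{\alpha_i,\beta_j})<|\alpha_i+\beta_j|$, but it is the only nontrivial point; once this is established, the injectivity of the $\sigma_k$, left invertibility of $c_{\alpha_1,\beta_1}$, and the hypothesis that $R$ is a domain finish the proof.
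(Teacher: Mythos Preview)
Your argument is correct and is exactly the standard proof (the one given in the cited reference \cite{Lezama-sigmaPBW}): identify the leading term of $fg$ in the deglex order as $c_1\sigma^{\alpha_1}(d_1)c_{\alpha_1,\beta_1}x^{\alpha_1+\beta_1}$ and observe that this coefficient is nonzero because $R$ is a domain, the $\sigma_k$ are injective, and $c_{\alpha_1,\beta_1}$ is left invertible. The bookkeeping you flag in the last paragraph is indeed the only point requiring care, and your outline handles it correctly via the degree bounds in Proposition~\ref{coefficientes} and the monotonicity of deglex under addition.
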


\begin{proposition}[\cite{Lezama-sigmaPBW}, Corollary 3.2.2]\label{corollary3.2.2}
Let $A$ be a skew $PBW$ extension of $R$. If $R$ is a domain, then $A^*=R^*$.
\end{proposition}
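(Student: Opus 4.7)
The inclusion $R^*\subseteq A^*$ is immediate, since units of $R$ keep their inverses in the overring $A$. The real content is the reverse inclusion, so suppose $f\in A^*$ with inverse $g\in A$. Using the standard-form representation from Remark~\ref{notesondefsigampbw}(v), write
\begin{equation*}
f=c_1x^{\alpha_1}+\cdots+c_sx^{\alpha_s},\qquad g=d_1x^{\beta_1}+\cdots+d_tx^{\beta_t},
\end{equation*}
with the monomials listed in strictly decreasing deglex order. My plan is to compute $lm(fg)$ explicitly, using the two identities in Proposition~\ref{coefficientes}, and then to force $\alpha_1=\beta_1=0$ from $fg=1$.

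First I would expand the $(1,1)$-product:
\begin{equation*}
c_1x^{\alpha_1}\cdot d_1x^{\beta_1}=c_1\sigma^{\alpha_1}(d_1)c_{\alpha_1,\beta_1}\,x^{\alpha_1+\beta_1}+(\text{terms of strictly lower deglex}),
\end{equation*}
where the first equality in Proposition~\ref{coefficientes}(a) pushes $d_1$ past $x^{\alpha_1}$ (producing a remainder of degree $<|\alpha_1|$) and (b) collects $x^{\alpha_1}x^{\beta_1}$ into its normal form (with remainder of degree $<|\alpha_1+\beta_1|$). All three factors in the leading coefficient are nonzero: $c_1\ne 0$ by definition, $\sigma^{\alpha_1}(d_1)\ne 0$ because each $\sigma_k$ is injective (Proposition~\ref{sigmadefinition}) so $\sigma^{\alpha_1}$ is injective and $d_1\ne 0$, and $c_{\alpha_1,\beta_1}$ is left invertible by Proposition~\ref{coefficientes}(b), hence nonzero. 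Since $R$ is a domain, the coefficient $c_1\sigma^{\alpha_1}(d_1)c_{\alpha_1,\beta_1}$ is nonzero.

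The key step, and the only piece requiring a small argument, is to rule out cancellation from the other $(i,j)$ pairs. I would show that deglex is \emph{strictly monotone under addition of exponents}: if $x^{\alpha_1}\succ x^{\alpha_i}$ and $x^{\beta_1}\succeq x^{\beta_j}$ (with $(i,j)\ne (1,1)$), then $x^{\alpha_1+\beta_1}\succ x^{\alpha_i+\beta_j}$. The total-degree part of deglex is obvious; for the pure-lex part (when $|\alpha_1|=|\alpha_i|$ and $|\beta_1|=|\beta_j|$) one checks by locating the first coordinate where either $\alpha_1$ differs from $\alpha_i$ or $\beta_1$ differs from $\beta_j$ and comparing componentwise. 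Consequently every other product $c_ix^{\alpha_i}\cdot d_jx^{\beta_j}$ contributes only monomials strictly below $x^{\alpha_1+\beta_1}$ in deglex, so there is no cancellation of the leading term.

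Thus $lm(fg)=x^{\alpha_1+\beta_1}$. But $fg=1$, so $lm(fg)=x^{(0,\dots,0)}$, forcing $\alpha_1+\beta_1=0$, i.e.\ $\alpha_1=\beta_1=0$. Because $\alpha_1$ is the deglex-maximal exponent appearing in $f$, this makes every $\alpha_i=0$, so $f=c_1\in R$; similarly $g=d_1\in R$. Then $c_1d_1=fg=1$ in $R$, hence $f=c_1\in R^*$, completing the proof that $A^*\subseteq R^*$. The main obstacle is the monotonicity-of-deglex-under-addition lemma; once it is in hand the argument is a direct degree/leading-coefficient computation built from Proposition~\ref{coefficientes}.
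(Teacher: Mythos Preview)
The paper does not supply its own proof of this proposition; it is simply quoted from \cite{Lezama-sigmaPBW}, Corollary 3.2.2, with no argument given. Your proof is correct and is exactly the standard leading-term argument one expects here---in fact it is the same computation that proves Proposition~\ref{1.1.10} (that $A$ is a domain), which is why the source records the present statement as a corollary of that proposition.

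One small simplification: the ``monotonicity-of-deglex-under-addition'' step you flag as the main obstacle is already recorded in the paper immediately after the definition of deglex, namely the observation
\[
x^{\alpha}\succ x^{\beta}\ \Longrightarrow\ lm(x^{\gamma}x^{\alpha}x^{\lambda})\succ lm(x^{\gamma}x^{\beta}x^{\lambda})\quad\text{for all }x^{\gamma},x^{\lambda}\in\mathrm{Mon}(A).
\]
Since $lm(x^{\alpha}x^{\beta})=x^{\alpha+\beta}$ by Proposition~\ref{coefficientes}(b), this gives exactly the inequality $x^{\alpha_i+\beta_j}\prec x^{\alpha_1+\beta_1}$ you need (apply it once on each side), so you can cite it rather than re-derive it.
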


\begin{remark}\label{reamrk3.12}
	We remark that the Gröbner theory of left ideals and modules of skew $PBW$ extensions and some of its important applications in homological algebra have been developed in \cite{Lezama-sigmaPBW} and implemented in \texttt{MAPLE} in \cite{Fajardo2} and \cite{Fajardo3}. This implementation is based
	on the library \textbf{SPBWE} specialized for working with bijective skew $PBW$
	extensions. The library has utilities to calculate Gröbner bases, and
	it includes some functions that compute the module of syzygies,
	free resolutions and left inverses of matrices, among other things.	
\end{remark}

\subsection{Main results}\label{subsection4.2}

In this subsection we present the main results of the present work. We start with a theorem induced by Proposition \ref{proposition2.2} and that gives a characterizations of the Dixmier condition (Definition \ref{definition3.1}) for skew $PBW$ extensions that are simple rings.

\begin{theorem}\label{theorem4.15}
Let $A=\sigma(R)\langle x_1,\dots,x_n\rangle$ be a skew $PBW$ extension with parameters $R, n,
\sigma_k,\delta_k, c_{i,j}$, $d_{ij}, a_{ij}^{(k)}$, $1\leq i<j\leq n$, $1\leq k\leq n$. If $A$ is a simple ring, then the following conditions are equivalent:
	\begin{enumerate}
		\item[\rm (i)]$A$ is $D$ with respect to $R$.
		\item[\rm (ii)]If $y_1,\dots,y_n\in A$ are such that
		\begin{enumerate}
			\item[\rm (a)]$y_k r=\sigma_k(r)y_k+\delta_k(r)$, for every $r\in R$, $1\leq k\leq n$,
			\item[\rm (b)]$y_jy_i=c_{i,j}y_iy_j+a_{ij}^{(1)}y_1+\cdots
			+a_{ij}^{(n)}y_n+d_{ij}$, $1\leq i<j\leq n$,
		\end{enumerate}
			\end{enumerate}
		then the subring generated by $R$ and $y_1,\dots,y_n$ coincides with $A$.
\end{theorem}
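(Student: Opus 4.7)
The plan is to mimic the structure of Proposition \ref{proposition2.2}, using the universal property of skew $PBW$ extensions (Proposition \ref{122}) in place of the elementary generator-relation presentation of $A_1(K)$, and using simplicity of $A$ to upgrade surjectivity into bijectivity.

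First, for the direction (i)$\Rightarrow$(ii), I would take $y_1,\dots,y_n\in A$ satisfying the relations (a) and (b). Applying the universal property with $B:=A$ and $\varphi:=\iota_R$ the inclusion $R\hookrightarrow A$, conditions (a) and (b) are exactly the hypotheses needed to obtain a unique ring homomorphism $\widetilde{\varphi}:A\to A$ with $\widetilde{\varphi}|_R = \mathrm{id}_R$ and $\widetilde{\varphi}(x_i)=y_i$. In particular $\widetilde{\varphi}$ is $R$-linear. By hypothesis (i), $\widetilde{\varphi}$ is an automorphism, so $\mathrm{Im}(\widetilde{\varphi})=A$. But $\mathrm{Im}(\widetilde{\varphi})$ is precisely the subring of $A$ generated by $R$ and $y_1,\dots,y_n$, which therefore coincides with $A$.

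For the converse (ii)$\Rightarrow$(i), I would start with an arbitrary $R$-linear ring endomorphism $\phi:A\to A$ and set $y_i:=\phi(x_i)$ for $1\leq i\leq n$. Since $\phi$ is an $R$-linear ring homomorphism, applying $\phi$ to the defining relations (\ref{sigmadefinicion1}) and (\ref{equation1.2.1}) of $A$ (i.e.\ $x_k r = \sigma_k(r)x_k+\delta_k(r)$ and $x_jx_i = c_{i,j}x_ix_j+\sum_k a_{ij}^{(k)}x_k+d_{ij}$) and using that $\phi$ fixes $R$ pointwise, one sees that $y_1,\dots,y_n$ satisfy conditions (a) and (b). Invoking (ii), the subring generated by $R$ and $y_1,\dots,y_n$ is all of $A$; since that subring is exactly $\mathrm{Im}(\phi)$, we get surjectivity. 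For injectivity, observe that $\ker(\phi)$ is a proper two-sided ideal of $A$ (proper because $\phi(1)=1$), and simplicity of $A$ forces $\ker(\phi)=0$. Hence $\phi$ is bijective.

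The main obstacle is really just bookkeeping: verifying that the defining relations of $A$ transport to the relations (a),(b) under an $R$-linear ring endomorphism, and being careful that ``$R$-linear'' together with the ring-homomorphism property is exactly what makes the universal property applicable with $\varphi=\iota_R$. No Gr\"obner-basis or analytical input is needed; the statement is essentially a translation of the Dixmier condition into a generator-relation statement, made possible by the universal property of skew $PBW$ extensions and by the simplicity assumption, which replaces the usual ad hoc verification that a nonzero ring endomorphism of a simple algebra is injective.
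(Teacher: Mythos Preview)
Your proof is correct and follows essentially the same approach as the paper's: both directions use the universal property (Proposition \ref{122}) to identify $R$-linear ring endomorphisms of $A$ with tuples $(y_1,\dots,y_n)$ satisfying (a) and (b), and then invoke simplicity to pass from surjectivity to bijectivity. If anything, you are slightly more explicit than the paper in the (ii)$\Rightarrow$(i) direction, where you spell out why $y_i:=\phi(x_i)$ satisfy (a) and (b); the paper takes this for granted.
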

\begin{proof}
$\rm (i)\Rightarrow \rm (ii)$:
Let $y_1,\dots,y_n\in A$ that satisfy (a) and (b), then from Theorem \ref{122}, $\phi: A\to A$ given by $\phi(x_i):=y_i$, $1\leq i\leq n$, and $\phi(r):=r$, for every $r\in R$, is a well-defined $R$-linear ring endomorphism of $A$ , so $\phi$ is an automorphism, in particular, $Im(\phi)=A$, but $Im(\phi)$ is the subring of $A$ generated by $\phi(R)=R$ and $\phi(x_i)$, $1\leq i\leq n$, i.e., $R$ and $y_1,\dots,y_n$ generate $A$.

$\rm (ii)\Rightarrow \rm (i)$: Let $\phi: A\to A$ be an $R$-linear ring endomorphism. Let $y_i:=\phi(x_i)$, for $1\leq i\leq n$, then $Im(\phi)$ is generated by $R$ and $y_1,\dots,y_n$, so, by the hypothesis, $\phi$ is surjective. But since $A$ is simple, then $\ker(\phi)=0$ and hence $\phi$ is an automorphism.
\end{proof}

The simplicity of noncommutative rings of polynomial type has been investigated in several papers (see for example \cite{Jordan3}, \cite{Lam}, \cite{Silvestrov} and the references therein). In the context of the present paper the most remarkable example of simple skew $PBW$ extension is the first Weyl algebra $A_1(K)$ when $char(K)=0$. In the next theorem we give another interesting example that covers the first Weyl algebra $A_1(K)$.

\begin{theorem}\label{theorem4.16}
	Let $A=\sigma(R)\langle x_1,\dots,x_n\rangle$ be a skew $PBW$ extension with parameters $R, n,
	\sigma_k,\delta_k, c_{i,j}$, $d_{ij}, a_{ij}^{(k)}$, $1\leq i<j\leq n$, $1\leq k\leq n$, that satisfy the following conditions:
	\begin{enumerate}
		\item[\rm (i)]$R=K$ is a field of characteristic zero.
		\item[\rm (ii)]$n\geq 2$.
		\item[\rm (iii)]$\sigma_k=i_K$ and $\delta_k=0$ for $1\leq k\leq n$.
		\item[\rm (iv)]$c_{ij}=1$, $a_{ij}^{(k)}=0$ and $d_{ij}\neq 0$ for all $1\leq i<j\leq n$ and all $1\leq k\leq n$.
			\end{enumerate}
Then,
\begin{enumerate}
\item[\rm (a)]$A$ is a noetherian domain.
\item[\rm (b)]$A^*=K^*$.
\item[\rm (c)]$Z(A)=K$ and $A$ is cancellative.
\item[\rm (d)]$A$ is simple.
\end{enumerate}
\end{theorem}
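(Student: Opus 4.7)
My plan is to leverage that, under hypotheses (i)--(iv), the defining relations collapse to $x_jx_i = x_ix_j + d_{ij}$ for $1\le i<j\le n$ with each $d_{ij}\in K^{\ast}$, so every commutator of generators is a nonzero central scalar. Thus $A$ is a bijective skew $PBW$ extension carrying the total-degree filtration $F_d = \bigoplus_{|\alpha|\le d}Kx^{\alpha}$, whose associated graded ring is the commutative polynomial algebra $\mathrm{gr}(A)\cong K[x_1,\dots,x_n]$. This deformation picture drives all four claims.

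Parts (a) and (b) are essentially immediate. Because $K$ is a noetherian field and $A$ is bijective (each $\sigma_k=i_K$ is a bijection, each $c_{ij}=1$ is invertible), the Hilbert basis theorem for skew $PBW$ extensions, Proposition \ref{1.3.4}, delivers noetherianness, Proposition \ref{1.1.10} yields the domain property, and Corollary \ref{corollary3.2.2} gives $A^{\ast}=K^{\ast}$.

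The core work is (c), which in turn implies (d). I would introduce the inner derivations $\partial^A_i := [x_i,-]\colon A\to A$. Since each $[x_i,x_j]$ equals a scalar $e_{ij}\in K$ (namely $e_{ij}=-d_{ij}$ for $i<j$), a short induction on monomial degree gives $\partial^A_i(F_d)\subseteq F_{d-1}$, and the induced symbol map on $\mathrm{gr}(A)$ is the constant-coefficient derivation $D_i = \sum_{j}e_{ij}\,\partial/\partial x_j$ on $K[x_1,\dots,x_n]$. For $f\in Z(A)$ of minimal positive degree $d$, the principal symbol $\bar f$ is homogeneous of degree $d$ and satisfies $D_i(\bar f)=0$ for every $i$, i.e.\ $E\cdot\nabla\bar f=0$ where $E=(e_{ij})$ is the antisymmetric structure matrix. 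When $E$ is invertible, $\nabla\bar f=0$, and since $\mathrm{char}(K)=0$ this forces $\bar f\in K$, contradicting $d\ge 1$; thus $Z(A)=K$. Cancellativity follows from centrality via the Bell--Zhang-style criteria referenced in the paper. With (c) in hand, (d) drops out quickly: for any nonzero two-sided ideal $I$, pick $f\in I$ of minimal degree; each $\partial^A_i(f)\in I$ has strictly smaller degree, so by minimality $\partial^A_i(f)=0$ for all $i$, hence $f\in Z(A)=K$, and then $f\in K^{\ast}\subseteq A^{\ast}$ by (b) forces $I=A$.

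The main obstacle is that $E$ is an $n\times n$ antisymmetric matrix, and therefore \emph{singular whenever $n$ is odd}. In that case $\ker E\ne 0$, and any $v\in\ker E$ yields $g = \sum_{j}v_jx_j$ which commutes with every $x_i$ and so sits in $Z(A)\setminus K$; correspondingly $gA$ is a proper two-sided ideal obstructing simplicity. The symbol-level argument as written cannot conclude for odd $n$, and either a more refined descent through subleading symbols must be carried out or an additional hypothesis on the $d_{ij}$ must be invoked. For even $n$ the Pfaffian $\mathrm{Pf}(E)$ is nonzero under generic $d_{ij}$ and the outline above runs intact; I would expect the bulk of the technical effort to concentrate on reconciling the odd case with the stated conclusion.
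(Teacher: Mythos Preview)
Your proposal for (a) and (b) matches the paper exactly. For (c) and (d) your filtered/graded-symbol approach is different from the paper's: the paper carries out a lengthy direct case analysis on the leading monomial of a putative central element under the deglex order, and for (d) iterates $f\mapsto x_kf-fx_k$ to drop the leading monomial until one lands in $Z(A)$. Your route via the inner derivations $\partial^A_i$ and the antisymmetric structure matrix $E$ is more conceptual and dramatically shorter; it also makes transparent exactly when the argument works, namely when $E$ is invertible.

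Your ``main obstacle'' is not a defect of your outline but a genuine counterexample to the theorem as stated. For odd $n$ the matrix $E$ is always singular, and any $0\ne v\in\ker E$ produces $g=\sum_j v_j x_j\in Z(A)\setminus K$; then $gA$ is a proper two-sided ideal (since $g\notin K=A^{\ast}$), so both (c) and (d) fail. Concretely, for $n=3$ with $d_{12}=d_{13}=d_{23}=1$ the element $x_1-x_2+x_3$ is central. The paper's proof breaks precisely at this point: in its Case~1.1.1 it reduces to a linear central $f$, passes to $f^2$, and asserts that the coefficient of $x_{r_1}$ in $[x_{i_1},f^2]$ is $2c_2^2 d_{i_1r_1}$. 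This ignores the cross terms of $f^2$; in fact $[x_{i_1},f^2]=2\,[x_{i_1},f]\cdot f$ because $[x_{i_1},f]\in K$, and this scalar already vanishes by centrality of $f$, so the $f^2$ step yields no new information. Note also that even for even $n$ your argument (correctly) requires the Pfaffian $d_{12}d_{34}-d_{13}d_{24}+d_{14}d_{23}$ (and its higher analogues) to be nonzero; with, say, $d_{12}=d_{34}=1$, $d_{13}=d_{24}=2$, $d_{14}=1$, $d_{23}=3$ one again gets a linear central element. So there is no ``more refined descent'' to carry out: the statement needs the extra hypothesis $\det E\ne 0$, and under that hypothesis your proof goes through as written.
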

\begin{proof}
Notice first that condition (iii) means that $A$ is a $K$-algebra, so the coefficients of $K$ commute with the variables $x_1,\dots, x_n$. Thus, $A$ is the $K$-algebra generated by $x_1,\dots, x_n$ subject to relations
\begin{center}
$x_jx_i=x_ix_j+d_{ij}$, for all $1\leq i<j\leq n$, with $d_{ij}\neq 0$.
\end{center}
In the proof we will consider the deglex order in $\mathrm{Mon}(A)$.

(a) Since $K$ is a noetherian domain, this follows from Propositions \ref{1.3.4} and \ref{1.1.10}.

(b) Since $K$ is a domain, this follows from Proposition \ref{corollary3.2.2}.

(c) $Z(A)=K$: Let $0\neq f=c_1X_1+\cdots +c_tX_t\in Z(A)$, with $X_l\in \mathrm{Mon}(A)$, $c_l\in
K^*$ for $1\leq l\leq t$ and $X_1\succ \cdots \succ X_t$. We will show that $t=1$ and $X_1=1$. If so, then $Z(A)\subseteq K\subseteq Z(A)$, i.e., $Z(A)=K$. We divide the proof in three steps.

Step 1. Observe first that if $1\neq X=x_{i_1}^{\alpha _1}\cdots x_{i_k}^{\alpha_k}\in {\rm Mon}(A)$, with $k\geq 2$,  $\alpha_j\geq 1$, $1\leq j\leq k$ and $i_1<i_2<\cdots<i_k$, then a direct computation, using the defining relations of $A$, shows that
\begin{center}
	$Xx_{i_1}=x_{i_1}^{\alpha_1+1}x_{i_2}^{\alpha_2}\cdots x_{i_k}^{\alpha_k}+\alpha_2d_{i_1i_2}x_{i_1}^{\alpha_1}x_{i_2}^{\alpha_2-1}x_{i_3}^{\alpha_3}\cdots x_{i_{k-1}}^{\alpha_{k-1}}x_{i_k}^{\alpha_{k}}+\cdots+\alpha_{k-1}d_{i_1i_{k-1}}x_{i_1}^{\alpha_1}x_{i_2}^{\alpha_2}x_{i_3}^{\alpha_3}\cdots x_{i_{k-1}}^{\alpha_{k-1}-1}x_{i_k}^{\alpha_{k}}+\alpha_kd_{i_1i_k}x_{i_1}^{\alpha_1}x_{i_2}^{\alpha_2}x_{i_3}^{\alpha_3}\cdots x_{i_{k-1}}^{\alpha_{k-1}}x_{i_k}^{\alpha_{k}-1}$.
\end{center}

Step 2. Assume that $t\geq 2$. Then $X_1\neq 1$. We will consider the two possible cases for $X_1$.

Case 1. $X_1$ involves only one variable. Let $X_1=x_{i_1}^{\alpha_1}$, with $\alpha_1\geq 1$. If $X_2=1$, then $t=2$ and $f=c_1x_{i_1}^{\alpha_1}+c_2$. By (ii), there exists $i_2\neq i_1$ and, without lost of generality, we can assume that $i_1<i_2$. Then, using the relations of $A$ we have
\begin{center}
$x_{i_2}f=c_1x_{i_1}^{\alpha_1}x_{i_2}+c_1\alpha_1d_{i_1i_2}x_{i_1}^{\alpha_1-1}+c_2x_{i_2}=fx_{i_2}=c_1x_{i_1}^{\alpha_1}x_{i_2}+c_2x_{i_2}$.
\end{center}
From this we get that $c_1\alpha_1d_{i_1i_2}x_{i_1}^{\alpha_1-1}=0$. By (i), $\alpha_1=0$, a contradiction. Thus, $X_2\neq 1$. For $X_2$ two cases arise.

Case 1.1. Assume first that $X_2$ involves only one variable. Let $X_2=x_{r_1}^{\beta_1}$, with $\beta_1\geq 1$. Thus, $f=c_1x_{i_1}^{\alpha_1}+c_2x_{r_1}^{\beta_1}+c_3X_3+\cdots+c_tX_t$. Since $X_1\succ X_2$, then $i_1\leq r_1$. Two cases arise.

Case 1.1.1. Assume first that $i_1<r_1$, then
\begin{center}
	$0=x_{i_1}f-fx_{i_1}=c_1x_{i_1}^{\alpha_1+1}+c_2x_{i_1}x_{r_1}^{\beta_1}+c_3x_{i_1}X_3+\cdots+c_tx_{i_1}X_t-(c_1x_{i_1}^{\alpha_1+1}+
	c_2x_{i_1}x_{r_1}^{\beta_1}+c_2\beta_1d_{i_1r_1}x_{r_1}^{\beta_{1}-1}+c_3X_3x_{i_1}+\cdots+c_tX_tx_{i_1})$.
\end{center}
Assume that $\beta_1\geq 2$. After cancelling the similar terms we get that the greatest monomial is $x_{r_1}^{\beta_{1}-1}$, so $c_2\beta_1d_{i_1r_1}=0$. By (i), $\beta_1=0$, a contradiction. So, $\beta_1=1$ and hence each of the other monomials $X_3,X_4,\dots,X_t$ of $f$ involves only one variable, thus $f$ has one of the following forms:
\begin{center}
	
$f=c_1x_{i_1}^{\alpha_1}+c_2x_{r_1}+c_3x_{j_3}+\cdots+c_{t-1}x_{j_{t-1}}+c_{t}$, or,

$f=c_1x_{i_1}^{\alpha_1}+c_2x_{r_1}+c_3x_{j_3}+\cdots+c_{t}x_{j_t}$,
\end{center}
where $r_1<j_3<\cdots<j_t$. If $\alpha_1\geq 2$, then from $0=x_{r_1}f-fx_{r_1}$ we get that $c_1\alpha_1d_{i_1r_1}=0$, so by (i), $\alpha_1=0$, a contradiction.
Then, $\alpha_1=1$ and $f$ is a linear polynomial:
\begin{center}
	
	$f=c_1x_{i_1}+c_2x_{r_1}+c_3x_{j_3}+\cdots+c_{t-1}x_{j_{t-1}}+c_{t}$, or,
	
	$f=c_1x_{i_1}+c_2x_{r_1}+c_3x_{j_3}+\cdots+c_{t}x_{j_t}$.
\end{center}
In both cases consider $f^2\in Z(A)$, from (a), $f^2\neq 0$. Then,
\begin{center}
$0=x_if^2-f^2x_{i_1}=x_{i_1}(c_1x_{i_1}^2+c_2x_{i_r}^2+c_3x_{j_3}^2+\cdots)-(c_1x_{i_1}^2+c_2x_{i_r}^2+c_3x_{j_3}^2+\cdots)x_{i_1}$.
\end{center}
After cancelling the similar terms and reducing we get that $2c_2^2d_{i_1r_1}x_{r_1}=0$, so $2c_2^2d_{i_1r_1}=0$, and by (i), $c_2^2d_{i_1r_1}=0$, a contradiction.

Case 1.1.2. Now assume that $i_1=r_1$. Then $f=c_1x_{i_1}^{\alpha_1}+c_2x_{i_1}^{\beta_1}+c_3X_3+\cdots+c_tX_t$, whence $\alpha_1>\beta_1$. Since $\beta_1\geq 1$, then $\alpha_1\geq 2$. By (ii), there exists $i_2\neq i_1$ and, without lost of generality, we can assume that $i_1<i_2$. Then
\begin{center}
	$0=x_{i_2}f-fx_{i_2}=c_1x_{i_1}^{\alpha_1}x_{i_2}+c_1\alpha_1d_{i_1i_2}x_{i_1}^{\alpha_1-1}+c_2x_{i_1}^{\beta_1}x_{i_2}+c_2\beta_1d_{i_1i_2}x_{i_1}^{\beta_1-1}+c_3x_{i_2}X_3+\cdots+c_tx_{i_2}X_t-(c_1x_{i_1}^{\alpha_1}x_{i_2}+c_2x_{i_1}^{\beta_1}x_{i_2}+c_3X_3x_{i_2}+\cdots+c_tX_tx_{i_2})$.
\end{center}
After cancelling the similar terms we get that the greatest monomial is $x_{i_1}^{\alpha_1-1}$, so $c_1\alpha_1d_{i_1i_2}=0$. By (i), $\alpha_1=0$, a contradiction.

Case 1.2. $X_2$ involves $l\geq 2$ variables. Let $X_2=x_{r_1}^{\beta_1}\cdots x_{r_l}^{\beta_l}$, with $l\geq 2$, $\beta_u\geq 1$, $1\leq u\leq l$, and $r_1<r_2<\cdots<r_l$. Since $X_1\succ X_2$, then $i_1\leq r_1$. We have
\begin{center}
	$0=x_{i_1}f-fx_{i_1}=c_1x_{i_1}^{\alpha_1+1}+c_2x_{i_1}x_{r_1}^{\beta_1}\cdots x_{r_l}^{\beta_l}+c_3x_{i_1}X_3+\cdots+c_tx_{i_1}X_t-(c_1x_{i_1}^{\alpha_1+1}+
	c_2x_{i_1}x_{r_1}^{\beta_1}\cdots x_{r_l}^{\beta_l}+c_2\beta_2d_{i_1r_2}x_{r_1}^{\beta_1}x_{r_2}^{\beta_2-1}x_{r_3}^{\beta_3}\cdots x_{r_{l-1}}^{\beta_{l-1}}x_{r_l}^{\beta_{l}}+\cdots+c_2\beta_{l-1}d_{i_1r_{l-1}}x_{r_1}^{\beta_1}x_{r_2}^{\beta_2}x_{r_3}^{\beta_3}\cdots x_{r_{l-1}}^{\beta_{l-1}-1}x_{i_l}^{\beta_{l}}+c_2\beta_ld_{i_1r_l}x_{r_1}^{\beta_1}x_{r_2}^{\beta_2}x_{r_3}^{\beta_3}\cdots x_{r_{l-1}}^{\beta_{l-1}}x_{r_l}^{\beta_{l}-1}+c_3X_3x_{i_1}+\cdots+c_tX_tx_{i_1})$.
\end{center}
After cancelling the similar terms we get that the greatest monomial is $x_{r_1}^{\beta_1}x_{r_2}^{\beta_2}x_{r_3}^{\beta_3}\cdots x_{r_{l-1}}^{\beta_{l-1}}x_{r_l}^{\beta_{l}-1}$, so $c_2\beta_ld_{i_1r_l}=0$. By (i), $\beta_l=0$, a contradiction.

Case 2. $X_1$ involves $k\geq 2$ variables. Let $X_1=x_{i_1}^{\alpha _1}\cdots x_{i_k}^{\alpha_k}$, with $k\geq 2$,  $\alpha_j\geq 1$, $1\leq j\leq k$, $i_1<i_2<\cdots<i_k$. If $X_2=1$, then $t=2$ and from $x_{i_1}f=fx_{i_1}$ and the computation of step 1 we get that $c_1\alpha_kd_{i_1i_k}=0$. By (i), $\alpha_k=0$, a contradiction. Thus, $X_2\neq 1$. For $X_2$ two cases arise.

Case 2.1. Assume first that $X_2$ involves only one variable. Let $X_2=x_{r_1}^{\beta_1}$, with $\beta_1\geq 1$. Thus, $f=c_1x_{i_1}^{\alpha _1}\cdots x_{i_k}^{\alpha_k}+c_2x_{r_1}^{\beta_1}+c_3X_3+\cdots+c_tX_t$. Since $X_1\succ X_2$, then $i_1\leq r_1$. Assume first that $i_1<r_1$, then
\begin{center}
	$0=x_{i_1}f-fx_{i_1}=c_1x_{i_1}^{\alpha_1+1}x_{i_2}^{\alpha_2}\cdots x_{i_k}^{\alpha_k}+c_2x_{i_1}x_{r_1}^{\beta_1}+c_3x_{i_1}X_3+\cdots+c_tx_{i_1}X_t-(c_1x_{i_1}^{\alpha_1+1}x_{i_2}^{\alpha_2}\cdots x_{i_k}^{\alpha_k}+c_1\alpha_2d_{i_1i_2}x_{i_1}^{\alpha_1}x_{i_2}^{\alpha_2-1}x_{i_3}^{\alpha_3}\cdots x_{i_{k-1}}^{\alpha_{k-1}}x_{i_k}^{\alpha_{k}}+\cdots+c_1\alpha_{k-1}d_{i_1i_{k-1}}x_{i_1}^{\alpha_1}x_{i_2}^{\alpha_2}x_{i_3}^{\alpha_3}\cdots x_{i_{k-1}}^{\alpha_{k-1}-1}x_{i_k}^{\alpha_{k}}+c_1\alpha_kd_{i_1i_k}x_{i_1}^{\alpha_1}x_{i_2}^{\alpha_2}x_{i_3}^{\alpha_3}\cdots x_{i_{k-1}}^{\alpha_{k-1}}x_{i_k}^{\alpha_{k}-1}+c_2x_{i_1}x_{r_1}^{\beta_1}+c_2\beta_1d_{i_1r_1}x_{r_1}^{\beta_{1}-1}
	+c_3X_3x_{i_1}+\cdots+c_tX_tx_{i_1})$.
\end{center}
After cancelling the similar terms we get that the greatest monomial is $x_{i_1}^{\alpha_1}x_{i_2}^{\alpha_2}x_{i_3}^{\alpha_3}\cdots x_{i_{k-1}}^{\alpha_{k-1}}x_{i_k}^{\alpha_{k}-1}$, so $c_1\alpha_kd_{i_1i_k}=0$. By (i), $\alpha_k=0$, a contradiction. Suppose that $i_1=r_1$, then $f=c_1x_{i_1}^{\alpha _1}\cdots x_{i_k}^{\alpha_k}+c_2x_{i_1}^{\beta_1}+c_3X_3+\cdots+c_tX_t$ and
\begin{center}
	$0=x_{i_1}f-fx_{i_1}=c_1x_{i_1}^{\alpha_1+1}x_{i_2}^{\alpha_2}\cdots x_{i_k}^{\alpha_k}+c_2x_{i_1}^{\beta_1+1}+c_3x_{i_1}X_3+\cdots+c_tx_{i_1}X_t-(c_1x_{i_1}^{\alpha_1+1}x_{i_2}^{\alpha_2}\cdots x_{i_k}^{\alpha_k}+c_1\alpha_2d_{i_1i_2}x_{i_1}^{\alpha_1}x_{i_2}^{\alpha_2-1}x_{i_3}^{\alpha_3}\cdots x_{i_{k-1}}^{\alpha_{k-1}}x_{i_k}^{\alpha_{k}}+\cdots+c_1\alpha_{k-1}d_{i_1i_{k-1}}x_{i_1}^{\alpha_1}x_{i_2}^{\alpha_2}x_{i_3}^{\alpha_3}\cdots x_{i_{k-1}}^{\alpha_{k-1}-1}x_{i_k}^{\alpha_{k}}+c_1\alpha_kd_{i_1i_k}x_{i_1}^{\alpha_1}x_{i_2}^{\alpha_2}x_{i_3}^{\alpha_3}\cdots x_{i_{k-1}}^{\alpha_{k-1}}x_{i_k}^{\alpha_{k}-1}+c_2x_{i_1}^{\beta_1+1}
	+c_3X_3x_{i_1}+\cdots+c_tX_tx_{i_1})$.
\end{center}
After cancelling the similar terms we get again that the greatest monomial is $x_{i_1}^{\alpha_1}x_{i_2}^{\alpha_2}x_{i_3}^{\alpha_3}\cdots x_{i_{k-1}}^{\alpha_{k-1}}x_{i_k}^{\alpha_{k}-1}$, so $c_1\alpha_kd_{i_1i_k}=0$. By (i), $\alpha_k=0$, a contradiction.

Case 2.2. $X_2$ involves $l\geq 2$ variables. Let $X_2=x_{r_1}^{\beta_1}\cdots x_{r_l}^{\beta_l}$, with $l\geq 2$, $\beta_u\geq 1$, $1\leq u\leq l$, and $r_1<r_2<\cdots<r_l$. Since $X_1\succ X_2$, then $i_1\leq r_1$. We have
\begin{center}
$0=x_{i_1}f-fx_{i_1}=c_1x_{i_1}^{\alpha_1+1}x_{i_2}^{\alpha_2}\cdots x_{i_k}^{\alpha_k}+c_2x_{i_1}x_{r_1}^{\beta_1}\cdots x_{r_l}^{\beta_l}+c_3x_{i_1}X_3+\cdots+c_tx_{i_1}X_t-(c_1x_{i_1}^{\alpha_1+1}x_{i_2}^{\alpha_2}\cdots x_{i_k}^{\alpha_k}+c_1\alpha_2d_{i_1i_2}x_{i_1}^{\alpha_1}x_{i_2}^{\alpha_2-1}x_{i_3}^{\alpha_3}\cdots x_{i_{k-1}}^{\alpha_{k-1}}x_{i_k}^{\alpha_{k}}+\cdots+c_1\alpha_{k-1}d_{i_1i_{k-1}}x_{i_1}^{\alpha_1}x_{i_2}^{\alpha_2}x_{i_3}^{\alpha_3}\cdots x_{i_{k-1}}^{\alpha_{k-1}-1}x_{i_k}^{\alpha_{k}}+c_1\alpha_kd_{i_1i_k}x_{i_1}^{\alpha_1}x_{i_2}^{\alpha_2}x_{i_3}^{\alpha_3}\cdots x_{i_{k-1}}^{\alpha_{k-1}}x_{i_k}^{\alpha_{k}-1}+
c_2x_{i_1}x_{r_1}^{\beta_1}\cdots x_{r_l}^{\beta_l}+c_2\beta_2d_{i_1r_2}x_{r_1}^{\beta_1}x_{r_2}^{\beta_2-1}x_{r_3}^{\beta_3}\cdots x_{r_{l-1}}^{\beta_{l-1}}x_{r_l}^{\beta_{l}}+\cdots+c_2\beta_{l-1}d_{i_1r_{l-1}}x_{r_1}^{\beta_1}x_{r_2}^{\beta_2}x_{r_3}^{\beta_3}\cdots x_{r_{l-1}}^{\beta_{l-1}-1}x_{i_l}^{\beta_{l}}+c_2\beta_ld_{i_1r_l}x_{r_1}^{\beta_1}x_{r_2}^{\beta_2}x_{r_3}^{\beta_3}\cdots x_{r_{l-1}}^{\beta_{l-1}}x_{r_l}^{\beta_{l}-1}+c_3X_3x_{i_1}+\cdots+c_tX_tx_{i_1})$.
\end{center}
After cancelling the similar terms we get that the greatest monomial is $x_{i_1}^{\alpha_1}x_{i_2}^{\alpha_2}x_{i_3}^{\alpha_3}\cdots x_{i_{k-1}}^{\alpha_{k-1}}x_{i_k}^{\alpha_{k}-1}$, so $c_1\alpha_kd_{i_1i_k}=0$. By (i), $\alpha_k=0$, a contradiction.

Step 3. Thus, $t=1$. We will show that $X_1=1$. Contrary, assume that $f=c_1X_1$, with $X_1\neq 1$. Let $X_1=x_{i_1}^{\alpha _1}\cdots x_{i_k}^{\alpha_k}$, with $k\geq 1$,  $\alpha_j\geq 1$, $1\leq j\leq k$, and $i_1<i_2<\cdots<i_k$. Assume that $k\geq 2$, then from $x_{i_1}f-fx_{i_1}=0$ a direct computation as in step 2 shows that
\begin{center}
$0=x_{i_1}f-fx_{i_1}=c_1x_{i_1}^{\alpha_1+1}x_{i_2}^{\alpha_2}\cdots x_{i_k}^{\alpha_k}-(c_1x_{i_1}^{\alpha_1+1}x_{i_2}^{\alpha_2}\cdots x_{i_k}^{\alpha_k}+c_1\alpha_2d_{i_1i_2}x_{i_1}^{\alpha_1}x_{i_2}^{\alpha_2-1}x_{i_3}^{\alpha_3}\cdots x_{i_{k-1}}^{\alpha_{k-1}}x_{i_k}^{\alpha_{k}}+\cdots+c_1\alpha_{k-1}d_{i_1i_{k-1}}x_{i_1}^{\alpha_1}x_{i_2}^{\alpha_2}x_{i_3}^{\alpha_3}\cdots x_{i_{k-1}}^{\alpha_{k-1}-1}x_{i_k}^{\alpha_{k}}+c_1\alpha_kd_{i_1i_k}x_{i_1}^{\alpha_1}x_{i_2}^{\alpha_2}x_{i_3}^{\alpha_3}\cdots x_{i_{k-1}}^{\alpha_{k-1}}x_{i_k}^{\alpha_{k}-1})$,
\end{center}
and this implies that $c_1\alpha_kd_{i_1i_k}=0$. By (i), $\alpha_k=0$, a contradiction. Thus, $k=1$ and $f=c_1x_{i_1}^{\alpha_1}$. By (ii), there exists $i_2\neq i_1$, without lost of generality we can assume that $i_1<i_2$. Then, since $x_{i_2}f=fx_{i_2}$, we get that $c_1\alpha_1d_{i_1i_2}x_{i_1}^{\alpha_1-1}=0$, so $\alpha_1=0$, a contradiction. This proves that $X_1=1$ and completes the proof of (a).

The cancellation property of $A$ follows from a result due and J.J. Zhang and Jason Bell that says that if $K$ is a field and $A$ is a $K$-algebra with trivial center, then $A$ is cancellative (see \cite{BellZhang}, Proposition 1.3).

(d) Let $I\neq 0$ be a two-sided ideal of $A$, we have to show that $I=A$. Let $0\neq f=c_1X_1+\cdots +c_tX_t\in I$ as in (a), i.e., $X_l\in \mathrm{Mon}(A)$, $c_l\in
K^*$ for $1\leq l\leq t$ and $X_1\succ \cdots \succ X_t$. If $x_kf-fx_k=0$ for every $1\leq k\leq n$, then $f\in Z(A)=K$, so $f\in K^*$, whence $I=A$. Thus, assume that there exists $1\leq k\leq n$ such that $x_kf-fx_k\neq 0$. Observe that \begin{center}
	$x_kf-fx_k=c_1x_kX_1+\cdots+ c_tx_kX_t-c_1X_1x_k-\cdots -c_tX_tx_k=(c_1x_kX_1-c_1X_1x_k)+\cdots+(c_tx_kX_t-c_tX_tx_k)$,
\end{center}
in addition, for any $1\leq l\leq t$,
\begin{center}
	$lm(c_lx_kX_l)=x_1^{\alpha_{1,l}}\cdots x_{k-1}^{\alpha_{k-1,l}}x_k^{\alpha_{k,l}+1}x_{k+1}^{\alpha_{k+1,l}}\cdots x_n^{\alpha_{n,l}}=lm(c_lX_lx_k)$, where $X_l:=x_1^{\alpha_{1,l}}\cdots x_n^{\alpha_{n,l}}$,
\end{center}
moreover, if $c'X'$ is any other term of $c_lx_kX_l-c_lX_lx_k$, then $X_l\succ X'$. Hence, $lm(f)\succ lm(x_kf-fx_k)$. We can repeat the previous reasoning for $f':=x_kf-fx_k\in I$, $f'':=x_{k'}f'-f'x_{k'}\in I$, etc., with $lm(f)\succ lm(f')\succ lm(f'')\succ \cdots $. From this we can conclude that $I=A$.

\end{proof}

Our next results are focused on the skew $PBW$ extension introduced in Theorem \ref{theorem4.16}.

\begin{definition}\label{definition4.21}
Let $K$ be a field of characteristic $0$ and $n\geq 2$. The $K$-algebra generated by $x_1,\dots, x_n$ subject to relations
\begin{center}
	$x_jx_i=x_ix_j+d_{ij}$, for all $1\leq i<j\leq n$, with $d_{ij}\neq 0$,
\end{center}
will be denoted by $\mathcal{CSD}_n(K)$.  	
\end{definition}

The next corollary of Theorem \ref{theorem4.16} describes the normal elements of the algebra $\mathcal{CSD}_n(K)$. A more general result can be proved first.

\begin{proposition}
	Let $A=\sigma(R)\langle x_1,\dots,x_n\rangle$ be a skew $PBW$ extension of a ring $R$ that satisfies the following conditions:
	\begin{enumerate}
		\item[\rm (i)]$R$ is a domain.
		\item[\rm (ii)]$\sigma_k=i_K$ and $\delta_k=0$ for $1\leq k\leq n$.
		\item[\rm (iii)]$A$ is simple.
	\end{enumerate}
	Then, the set of all normal elements of $A$ coincides with $R^*\cup\{0\}$.
\end{proposition}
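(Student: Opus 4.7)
The plan is to leverage simplicity together with the fact that $A$ is a domain with $A^*=R^*$, reducing the claim to the observation that every nonzero normal element must be a unit of $A$.

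First I would verify the easy containment $R^*\cup\{0\}\subseteq N$, where $N$ denotes the normal elements. The hypothesis $\sigma_k=i_R$ and $\delta_k=0$ forces $x_k r=r x_k$ for every $r\in R$ and every $1\leq k\leq n$, so $R\subseteq Z(A)$. Any $r\in R^*$ is therefore central and invertible, hence $rA=Ar$, and trivially $0$ is normal.

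For the reverse inclusion, let $0\neq a\in A$ be normal. Then $aA=Aa$ is simultaneously a right and a left ideal, so it is a two-sided ideal of $A$. Because $R$ is a domain, Proposition \ref{1.1.10} gives that $A$ is a domain, so $aA\neq 0$; by hypothesis (iii), $A$ is simple, whence $aA=A=Aa$. Thus there exist $b,c\in A$ with $ab=1=ca$. In a domain any element having both a right and a left inverse is a unit (and $b=c$), so $a\in A^*$. Finally, Corollary \ref{corollary3.2.2} yields $A^*=R^*$, and therefore $a\in R^*$.

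The argument is essentially a direct assembly of the structural results already cited in the excerpt, so I do not anticipate a serious obstacle; the only point that requires attention is checking that the hypotheses of Propositions \ref{1.1.10} and \ref{corollary3.2.2} are in force (they are, since $R$ is a domain by (i)) and that the triviality of the $\sigma_k$ and $\delta_k$ is genuinely used only to ensure $R\subseteq Z(A)$ for the easy inclusion.
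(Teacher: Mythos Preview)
Your argument is correct and follows essentially the same route as the paper: a nonzero normal element $a$ yields a nonzero two-sided ideal $aA=Aa$, simplicity forces $aA=A$, hence $a\in A^*=R^*$ by Corollary~\ref{corollary3.2.2}. One small imprecision in the easy direction: the claim $R\subseteq Z(A)$ presupposes that $R$ is commutative, which is not part of the hypotheses (``domain'' here is in the noncommutative sense); the paper instead observes that for $a\in R^*$ one has $ax_i=x_ia$ by (ii) and $aR=Ra$ because $a$ is a unit of $R$, which together give $aA=Aa$. Alternatively, you could simply note that any unit of $A$ is automatically normal.
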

\begin{proof}
	Let $a\neq 0$ be a normal element of $A$. Since $A$ is a simple ring, then the two-sided ideal of $A$ generated by $a$ coincides with $A$, whence $1=p_1aq_1+\cdots+p_taq_t$, with $p_i,q_i\in A$, $1\leq i\leq t$. As $a$ is normal, there exist $p_1',q_1'\dots, p_t',q_t'\in A$ such that $1=ap_1'q_1+\cdots+ap_t'q_t=a(p_1'q_1+\cdots+p_t'q_t)=p_1q_1'a+\cdots+p_tq_t'a=(p_1q_1'+\cdots+p_tq_t')a$, i.e., $a$ is invertible. From (i) and Proposition \ref{corollary3.2.2} we get that $a\in R^*$. Conversely, let $a\in R^*$, from (ii), $ax_i=x_ia$ for $1\leq i\leq n$, moreover $aR=Ra$, so $a$ is normal.
\end{proof}

\begin{corollary}
The set of all normal elements of $\mathcal{CSD}_n(K)$ coincides with $K$.
\end{corollary}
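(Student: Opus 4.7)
The plan is to apply the preceding proposition directly, since $\mathcal{CSD}_n(K)$ is essentially the archetypal example meeting its hypotheses. Concretely, I would first verify that $\mathcal{CSD}_n(K)$ is a skew $PBW$ extension of the form $\sigma(K)\langle x_1,\dots,x_n\rangle$ with $R = K$, and that its parameters match conditions (i)--(iii) of the proposition: the base ring $K$ is a (commutative) domain; the relations $x_j x_i = x_i x_j + d_{ij}$ together with the fact that scalars commute with the $x_k$ force $\sigma_k = i_K$ and $\delta_k = 0$ for every $k$; and simplicity of $\mathcal{CSD}_n(K)$ is already part (d) of Theorem \ref{theorem4.16}.

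With the hypotheses checked, the proposition gives that the set of normal elements of $\mathcal{CSD}_n(K)$ equals $R^* \cup \{0\} = K^* \cup \{0\}$. Since $K$ is a field, $K^* \cup \{0\} = K$, which is precisely the assertion of the corollary.

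There is essentially no obstacle: the corollary is a one-line specialization of the previous proposition to the case $R = K$. The only thing worth stating explicitly in the written proof is the identification of the parameters of $\mathcal{CSD}_n(K)$ with those required by the proposition (namely, trivial endomorphisms and derivations on the coefficient ring), together with the remark that for a field the units together with zero recover the whole field.
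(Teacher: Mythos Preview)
Your proposal is correct and matches the paper's own proof, which simply says the corollary follows from Theorem \ref{theorem4.16} and the preceding proposition. You have spelled out exactly the verification the paper leaves implicit: that $R=K$ is a domain, that $\sigma_k=i_K$ and $\delta_k=0$, that simplicity comes from Theorem \ref{theorem4.16}(d), and that $K^{*}\cup\{0\}=K$ because $K$ is a field.
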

\begin{proof}
The corollary follows from Theorem \ref{theorem4.16} and the previous proposition.
\end{proof}

\begin{remark}
Many ring-theoretic, homological and computational properties of skew $PBW$ extensions have been investigated in the last two decades (see \cite{Lezama-sigmaPBW}). These properties can be applied in particular to $\mathcal{CSD}_n(K)$. For example, for the \textit{Gelfand-Kirillov dimension} (see \cite{Krause}) we have that ${\rm GKdim}(\mathcal{CSD}_n(K))=n$ (\cite{Lezama-sigmaPBW}, Theorem 7.4.1); the \textit{generalized Hilbert series} of $\mathcal{CSD}_n(K)$ is $\frac{1}{(1-t)^n}$ (\cite{Lezama-sigmaPBW}, Theorem 18.2.3); $\mathcal{CSD}_n(K)$ is a \textit{semi-graded Artin-Schelter regular algebra} (\cite{Lezama-sigmaPBW}, Definition 19.4.1 and Theorem 19.4.12); the left (right) ideals and modules over $\mathcal{CSD}_n(K)$ have Gröbner bases that can be computed effectively with the \texttt{MAPLE} library \textbf{SPBWE} developed in \cite{Fajardo2}, \cite{Fajardo3} and \cite{Lezama-sigmaPBW}.
\end{remark}

\section{Matrix and computational approach to Dixmier problem}\label{section5}
In this section we present a matrix-constructive and computational approach to Dixmier problem. We compute some nontrivial automporphisms of $A_1(K)$ and $\mathcal{CSD}_n(K)$. Recall that $char(K)=0$.

\subsection{Dixmier polynomials and its matrix characterization}\label{subsection5.1}

It is clear that any endomorphism of $A_1(K)$ is given by $t\rightarrow p$ and $x\rightarrow q$, where $p,q\in A_1(K)$ satisfy $qp=pq+1$. Reciprocally, any couple of polynomials $p,q\in A_1(K)$ satisfying $qp=pq+1$ defines an endomorphism of  $A_1(K)$, $t\rightarrow p$ and $x\rightarrow q$. We asked to \textrm{ChatGPT} for a such couple of non trivial polynomials $p$ and $q$, but all answers (attempts) were wrong, for example, some couples given by \textrm{ChatGPT} were $p:=t^2+2t+1$, $q:=x+t$; $p:=t^2+t$, $q:=x+1$;
$p:=t$, $q:=t^2+x$ and $p:=t^2$, $q:=t+x$. In all cases we compute manully that $qp\neq pq+1$. It is curios that \textrm{ChatGPT} ignore the paper (\cite{Dixmier}) and did not give for example the couple $p:=t$, $q:=t+x$ (clearly $qp=pq+1$). The endomorphism $\alpha:A_1(K)\to A_1(K)$ given by $\alpha(t)=:t$, $\alpha(x)=:t+x$ is moreover bijective since $t,x\in Im(\alpha)$. In fact, $\alpha(t)=t$ and $\alpha(x-t)=x$. Thus, $ \alpha$ is bijective since $A_1(K)$ is simple. In general, the automorphisms of $A_1(K)$ are well known. Dixmier in \cite{Dixmier} proved that $Aut(A_1(K))$ is generated by two types of automorphisms: $\Phi_{n,\lambda}(t):=t+\lambda x^n$, $\Phi_{n,\lambda}(x):=x$ and $\Phi_{n,\lambda}'(t):=t$, $\Phi_{n,\lambda}'(x):=\lambda t^n+x$, with $\lambda \in K$ and $n\geq 0$. Observe that $\alpha $ corresponds to $\Phi_{1,1}'$.

\begin{remark}
Using the \texttt{MAPLE} library \textbf{SPBWE}, or more exactly, the \texttt{DixmierAutomorphismFactor} function of the \texttt{DixmierProblem} package of \textbf{SPBWE} (see Subsection \ref{DixmierAutomorphismFactor} below), was proved that the automorphism $\Upsilon_\mu$ of $A_1(K)$ (added in \cite{Levandovsky} to the system of generators of $Aut_K(A_1(K))$) defined by $t\mapsto \mu t$, $x\mapsto \frac{1}{\mu}x$, with $\mu\in K^*$, can be factorized in the following way:
\begin{center}
$\Upsilon_\mu=\Phi_{1,\mu-\mu^2}'\circ \Phi_{1,-\frac{1}{\mu}}\circ \Phi_{1,\mu-1}'\circ \Phi_{1,1}$.
\end{center}
This identity can be verified by direct computation: 
\begin{center}
	$\Phi_{1,\mu-\mu^2}'\circ \Phi_{1,-\frac{1}{\mu}}\circ \Phi_{1,\mu-1}'\circ \Phi_{1,1}(t)=\Phi_{1,\mu-\mu^2}'\circ \Phi_{1,-\frac{1}{\mu}}\circ \Phi_{1,\mu-1}'(t+x)=\Phi_{1,\mu-\mu^2}'\circ \Phi_{1,-\frac{1}{\mu}}(\mu t+x)=\Phi_{1,\mu-\mu^2}'(\mu t)=\mu t$, and $\Phi_{1,\mu-\mu^2}'\circ \Phi_{1,-\frac{1}{\mu}}\circ \Phi_{1,\mu-1}'\circ \Phi_{1,1}(x)=\Phi_{1,\mu-\mu^2}'\circ \Phi_{1,-\frac{1}{\mu}}\circ \Phi_{1,\mu-1}'(x)=\Phi_{1,\mu-\mu^2}'\circ \Phi_{1,-\frac{1}{\mu}}((\mu -1)t+x)=
	\Phi_{1,\mu-\mu^2}'((\mu -1)t+\frac{1}{\mu}x)=\frac{1}{\mu}x$.
\end{center}
\end{remark}

\begin{definition}
Let $p,q\in A_1(K)$. We say that $p$ and $q$ are \textbf{\textit{Dixmier polynomials}} if $qp=pq+1$.
\end{definition}

Compare the Dixmier polynomials with the $DC$-pairs of \cite{Zheglov} and also with the $cv$-polynomials studied in \cite{Lam}, \cite{Lam2} and \cite{Ramirez}.

\begin{corollary}\label{corollary5.2}
Let $\alpha:A_1(K)\to A_1(K)$ be a map. Then, $\alpha$ is an endomorphism of $A_1(K)$ if and only if there exist Dixmier polynomials $p,q\in A_1(K)$ such that $\alpha(t):=p$ and $\alpha(x):=q$.
\end{corollary}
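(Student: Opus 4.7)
\medskip

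\noindent\textbf{Plan of proof.} The statement is essentially a reformulation of the universal property of $A_1(K)$ viewed as a skew $PBW$ extension $\sigma(K)\langle t,x\rangle$, so the plan is to verify both implications directly, with no calculation of any substance.

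\smallskip

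For the ``only if'' direction, the plan is to assume that $\alpha$ is a $K$-algebra endomorphism of $A_1(K)$, set $p:=\alpha(t)$ and $q:=\alpha(x)$, and apply $\alpha$ to the defining relation $xt-tx=1$ of $A_1(K)$. Since $\alpha$ is a ring homomorphism fixing $K$, this yields
\[
qp-pq=\alpha(x)\alpha(t)-\alpha(t)\alpha(x)=\alpha(xt-tx)=\alpha(1)=1,
\]
so $p,q$ are Dixmier polynomials.

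\smallskip

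For the ``if'' direction, the plan is to invoke the universal property of skew $PBW$ extensions (Proposition \ref{122}) applied to $A:=A_1(K)=\sigma(K)\langle t,x\rangle$, whose parameters are $R=K$, $n=2$, $\sigma_k=i_K$, $\delta_k=0$ for $k=1,2$, $c_{1,2}=1$ and $d_{1,2}=1$. Taking $B:=A_1(K)$, $\varphi:K\hookrightarrow A_1(K)$ the inclusion, $y_1:=p$ and $y_2:=q$, the hypotheses (i) and (ii) of Proposition \ref{122} reduce respectively to $p,q$ being $K$-linear (automatic, since $A_1(K)$ is a $K$-algebra and the $\sigma_k$ are trivial) and to the single Dixmier relation $qp=pq+1$, which holds by assumption. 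The proposition then delivers a unique ring homomorphism $\alpha:A_1(K)\to A_1(K)$ fixing $K$ and sending $t\mapsto p$, $x\mapsto q$, as required.

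\smallskip

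There is no real obstacle: the only thing worth highlighting is that one must check that the instance of Proposition \ref{122} is correctly set up, i.e.\ that $A_1(K)$ is recognised as the skew $PBW$ extension with the parameters listed above and that conditions (i)--(ii) of that proposition collapse to the Dixmier relation. Once this is noted, the proof is a two-line verification in each direction.
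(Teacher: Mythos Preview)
Your proof is correct and takes essentially the same approach as the paper. The paper invokes the presentation $A_1(K)\cong K\{t,x\}/\langle xt-tx-1\rangle$ directly, while you route the ``if'' direction through Proposition~\ref{122} (the universal property for skew $PBW$ extensions); both are instances of the same universal-property argument, and your verification that conditions (i)--(ii) of Proposition~\ref{122} collapse to the Dixmier relation is accurate.
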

\begin{proof}
This follows from $A_1(K)\cong K\{t,x\}/I$, where $K\{t,x\}$ is the free $K$-algebra in the alphabet $\{t,x\}$ and $I$ is the two-sided ideal of $K\{t,x\}$ generated by $xt-tx-1$ (see \cite{Lezama2} for free $K$-algebras).
\end{proof}

\begin{corollary}\label{corollary5.3}
Let $\alpha:A_1(K)\to A_1(K)$ be a map. Then, $\alpha$ is an automorphism of $A_1(K)$ if and only if there exist Dixmier polynomials $p,q\in A_1(K)$ such that $\alpha(t):=p$ and $\alpha(x):=q$.
\end{corollary}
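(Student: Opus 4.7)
The plan is to derive Corollary~\ref{corollary5.3} as an immediate consequence of Corollary~\ref{corollary5.2} combined with Zheglov's recent resolution of the Dixmier conjecture (Theorem~\ref{Theorem1.3}). Since the two corollaries differ only by replacing ``endomorphism'' with ``automorphism'', and Theorem~\ref{Theorem1.3} asserts precisely that $End_K(A_1(K)) = Aut_K(A_1(K))$ when $char(K) = 0$, the desired biconditional is essentially a reformulation.

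More precisely, I would argue both implications as follows. For the forward direction, suppose $\alpha \in Aut_K(A_1(K))$. Then in particular $\alpha$ is a $K$-algebra endomorphism of $A_1(K)$, so Corollary~\ref{corollary5.2} produces Dixmier polynomials $p := \alpha(t)$ and $q := \alpha(x)$ with $qp = pq + 1$. For the converse, suppose $p, q \in A_1(K)$ are Dixmier polynomials and that $\alpha$ is the map defined by $\alpha(t) := p$, $\alpha(x) := q$. By Corollary~\ref{corollary5.2}, $\alpha$ extends uniquely to a $K$-algebra endomorphism of $A_1(K)$. Invoking Theorem~\ref{Theorem1.3}, which is Zheglov's proof of the Dixmier conjecture in characteristic zero, every such endomorphism is automatically an automorphism, so $\alpha \in Aut_K(A_1(K))$.

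There is no genuine obstacle here: the entire content of the corollary is that, thanks to Zheglov's theorem, the characterization of endomorphisms via Dixmier polynomials given in Corollary~\ref{corollary5.2} is simultaneously a characterization of automorphisms. The statement is thus best understood as a translation of Theorem~\ref{Theorem1.3} into the language of Dixmier polynomials, preparing the ground for the matrix-computational approach developed in the remainder of Section~\ref{section5}, where concrete examples of such pairs $(p,q)$ and the associated automorphisms are constructed using the \texttt{MAPLE} library \textbf{SPBWE}.
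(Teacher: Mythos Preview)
Your proof is correct and follows exactly the same approach as the paper, which states in one line that the result is a direct consequence of Corollary~\ref{corollary5.2} and Theorem~\ref{Theorem1.3}. You have simply spelled out both implications explicitly, which is fine.
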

\begin{proof}
This is a direct consequence of Corollary \ref{corollary5.2}  and Theorem \ref{Theorem1.3}.
\end{proof}

We want to give a matrix characterization of Dixmier polynomials. For $n \geq 1$, without loss of generality (completing with zero summands), we can assume that
\begin{center}
	$p=p_0(t)+p_1(t)x+p_2(t)x^2+\cdots+p_n(t)x^n$ and $q=q_0(t)+q_1(t)x+q_2(t)x^2+\cdots+q_n(t)x^n$,
\end{center}
where $p_i(t),q_i(t)\in K[t]$, for $0\leq i\leq n$.

Next we will consider some particular cases of Dixmier polynomials.
\begin{proposition}\label{proposition5.3}
	Assume that $p:=p_0(t)+p_1(t)x$ and $q:=q_0(t)+q_1(t)x$, with $p_0(t),p_1(t),q_0(t),q_1(t)\in K[t]$. If $p$ and $q$ are Dixmier polynomials, i.e., $qp=pq+1$, then
\begin{center}
$q_1(t)p_0'(t)-p_1(t)q_0'(t)=1$,

$q_1(t)p_1'(t)-p_1(t)q_1'(t)=0$.
\end{center}
i.e.,
\begin{equation}\label{equ5.1}
\det \begin{bmatrix}q_1(t) & p_1(t)\\
q_0'(t) & p_0'(t)\end{bmatrix}=1
\end{equation}
and
\begin{equation}\label{equ5.2}
	\det \begin{bmatrix}q_1(t) & p_1(t)\\
	q_1'(t) & p_1'(t)\end{bmatrix}=0.
\end{equation}
Conversely, let $p_0(t),p_1(t),q_0(t),q_1(t)\in K[t]$ be polynomials such that satify the previous equalities, then $p:=p_0(t)+p_1(t)x$ and $q:=q_0(t)+q_1(t)x$ are Dixmier polynomials, and hence, $p$ and $q$ define an automorphism $\alpha$ of $A_1(K)$, $\alpha(t):=p$ and $\alpha(x):=q$.
\end{proposition}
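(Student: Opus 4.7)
The plan is a straightforward degree-by-degree expansion in $A_1(K)$, based on the single commutation identity $xf(t)=f(t)x+f'(t)$ for $f(t)\in K[t]$ (which follows by induction from $xt=tx+1$). I would expand both $pq$ and $qp$ into the left $K[t]$-basis $\{1,x,x^2\}$ of $K[t]+K[t]x+K[t]x^2$ and then equate $qp-pq$ with $1$ coefficient-by-coefficient.

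Explicitly, applying the commutation rule to $xq_0$ and to $xq_1$ inside $pq=(p_0+p_1x)(q_0+q_1x)$, and symmetrically to $xp_0$ and $xp_1$ inside $qp=(q_0+q_1x)(p_0+p_1x)$, would produce
\[
pq=(p_0q_0+p_1q_0')+(p_0q_1+p_1q_0+p_1q_1')x+p_1q_1x^2,
\]
\[
qp=(q_0p_0+q_1p_0')+(q_0p_1+q_1p_0+q_1p_1')x+q_1p_1x^2.
\]
Since $K[t]$ is commutative, subtraction cancels both the $x^2$-terms and the ``derivative-free'' products, leaving
\[
qp-pq=(q_1p_0'-p_1q_0')+(q_1p_1'-p_1q_1')x.
\]
The uniqueness of representation in the $K[t]$-basis $\{1,x\}$ then converts $qp=pq+1$ into the two displayed determinantal identities, and the converse is the same computation read backward.

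For the final assertion, once the two identities are assumed, the computation above gives $qp=pq+1$, so by Corollary \ref{corollary5.2} the assignments $\alpha(t):=p$ and $\alpha(x):=q$ extend to a well-defined endomorphism $\alpha$ of $A_1(K)$; Corollary \ref{corollary5.3}, which rests on Zheglov's Theorem \ref{Theorem1.3}, then upgrades $\alpha$ to an automorphism.

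The only delicate point is pure bookkeeping: the summands $p_1q_1'x$ and $q_1p_1'x$ arise from a double application of the commutation rule inside $p_1(xq_1)x$ and $q_1(xp_1)x$, and they are exactly what produces the second determinantal identity. A naive expansion in which $x$ is treated as central would miss them, and this is the spot where the argument most easily goes wrong.
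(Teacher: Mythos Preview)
Your proposal is correct and follows exactly the paper's approach: the paper's proof simply says to compute $qp$ and $pq$ using the rule $xa(t)=a(t)x+a'(t)$, compare similar terms, and invoke Corollary~\ref{corollary5.3}. Your explicit expansion and coefficient comparison is precisely this computation carried out in full, and your appeal to Corollaries~\ref{corollary5.2} and~\ref{corollary5.3} for the automorphism claim matches the paper's.
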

\begin{proof}
We first compute $qp$ and $pq$ in $A_1(K)$ (using the rule $xa(t)=a(t)x+a'(t)$, with $a(t)\in K[t])$, and then we compare the similar terms. This and Corollary \ref{corollary5.3} prove the proposition.	
\end{proof}

Without using Corollary \ref{corollary5.3}, and hence, without using Theorem \ref{Theorem1.3}, we can prove the following result.
\begin{proposition}\label{proposition5.4}
	If $p$ and $q$ has the form $p:=bt+ax$ and $q:=dt+cx$, with $a,b,c,d\in K$, and they are Dixmier polynomials,  then the induced endomorphism $\alpha$ is an automorphism of $A_1(K)$:
	\begin{center}
		$\begin{bmatrix}		p\\
		q
		\end{bmatrix}=\begin{bmatrix} b & a\\ d   & c\end{bmatrix}\begin{bmatrix}
		t\\
		x
		\end{bmatrix}$
		and $\begin{bmatrix}
		t\\
		x
		\end{bmatrix}=\begin{bmatrix}b & a\\
		d & c
		\end{bmatrix}^{-1}\begin{bmatrix}
		p\\
		q
		\end{bmatrix}$.
	\end{center}
\end{proposition}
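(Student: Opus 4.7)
The plan is to reduce the statement to a direct computation of the Dixmier condition on linear polynomials, followed by an elementary linear-algebraic inversion, and finally an appeal to the simplicity of $A_1(K)$.

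First I would substitute $p=bt+ax$ and $q=dt+cx$ into $qp-pq$ and expand using only the defining relation $xt=tx+1$. All the quadratic terms cancel pairwise, and the only surviving contribution comes from one occurrence of $xt-tx=1$ coming from the cross term $cb\,xt$ versus $ad\,xt$. The Dixmier condition $qp=pq+1$ then collapses to the scalar identity $bc-ad=1$, that is,
\[
\det\begin{bmatrix} b & a\\ d & c\end{bmatrix}=1.
\]
This is the heart of the argument; the rest is formal.

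Second, knowing $p,q$ satisfy $qp=pq+1$, Corollary \ref{corollary5.2} (equivalently, the universal property of $K\{t,x\}$ modulo $xt-tx-1$) guarantees that the assignment $\alpha(t):=p$, $\alpha(x):=q$ extends to a well-defined $K$-algebra endomorphism of $A_1(K)$. Since $bc-ad=1$, the matrix is invertible with integer-type inverse $\begin{bmatrix} c & -a\\ -d & b\end{bmatrix}$, giving the explicit relations
\[
t=cp-aq=c\,\alpha(t)-a\,\alpha(x)=\alpha(ct-ax),\qquad x=-dp+bq=\alpha(-dt+bx).
\]
Hence $t,x\in\mathrm{Im}(\alpha)$, so $\mathrm{Im}(\alpha)=A_1(K)$ and $\alpha$ is surjective.

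Third, for injectivity, $\ker(\alpha)$ is a two-sided ideal of $A_1(K)$ and contains neither $1$ (since $\alpha(1)=1$); by Theorem \ref{theorem4.16}(d) (or by the classical fact that $A_1(K)$ is simple when $\mathrm{char}(K)=0$, which is the case $n=1$ of that theorem in a degenerate sense, or simply by the well-known simplicity of $A_1(K)$), it follows that $\ker(\alpha)=0$. Therefore $\alpha$ is bijective. Note that this argument bypasses Theorem \ref{Theorem1.3} entirely; the only nontrivial input is the simplicity of $A_1(K)$, and the main (and essentially only) obstacle is the short commutator computation yielding $bc-ad=1$, which is purely mechanical.
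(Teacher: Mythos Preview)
Your proof is correct and follows essentially the same route as the paper: establish $bc-ad=1$ from the Dixmier condition, invert the coefficient matrix to recover $t$ and $x$ in the image, and conclude injectivity from the simplicity of $A_1(K)$. The only cosmetic difference is that the paper derives $\det\begin{bmatrix} b & a\\ d & c\end{bmatrix}=1$ by specializing the determinant identities of Proposition~\ref{proposition5.3} and row-swapping, whereas you compute the commutator $qp-pq$ directly; your remark about ``the case $n=1$'' of Theorem~\ref{theorem4.16} is slightly off (the paper identifies $A_1(K)=\mathcal{CSD}_2(K)$, i.e.\ $n=2$), but your fallback to the well-known simplicity of $A_1(K)$ is exactly what the paper invokes.
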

\begin{proof}
From (\ref{equ5.1}), (\ref{equ5.2}) and Corollary \ref{corollary5.2}, $p$ and $q$ define an endomorphism $\alpha$ of $A_1(K)$. Morover, since

\begin{center}
$1=\det \begin{bmatrix}q_1(t) & p_1(t)\\
q_0'(t) & p_0'(t)\end{bmatrix}=\det \begin{bmatrix}c & a\\
d & b\end{bmatrix}=-\det \begin{bmatrix}d & b\\
c & a\end{bmatrix}=-\det \begin{bmatrix}d & b\\
c & a\end{bmatrix}^T=-\det \begin{bmatrix}d & c\\
b & a\end{bmatrix}=\det \begin{bmatrix}b & a\\
d & c\end{bmatrix}$,	
\end{center}
then $\begin{bmatrix}b & a\\
d & c\end{bmatrix}$ is an invertible matrix of $K[t]$. Whence, $\alpha$ is surjective, but since $A_1(K)$ is simple (see also Definition \ref{definition4.21}, Theorem \ref{theorem4.16} and recall that $A_1(K)=\mathcal{CSD}_2(K)$, with $d_{12}=1$), then $\alpha$ is bijective.
\end{proof}

\begin{example}Let $p=3t+4x$ and $q=2t+3x$, so $p_0(t)=3t$, $p_1(t)=4$ and $q_0(t)=2t$, $q_1(t)=3$, whence
\begin{center}
$\det \begin{bmatrix}3 & 4\\
2 & 3\end{bmatrix}=1$ and
$\det \begin{bmatrix}3 & 4\\
	0 & 0\end{bmatrix}=0$,
\end{center}
i.e., $\alpha$ defined by $\alpha (t)=p$ $\alpha (x)=q$ is an automorphism of $A_1(K)$. More specifically,
\begin{center}
	$\begin{bmatrix}
	p\\
	q
	\end{bmatrix}=\begin{bmatrix}3 & 4\\
	2 & 3
	\end{bmatrix}\begin{bmatrix}
	t\\
	x
	\end{bmatrix}$, thus $\begin{bmatrix}
	t\\
	x
	\end{bmatrix}=\begin{bmatrix}3 & 4\\
	2 & 3
	\end{bmatrix}^{-1}\begin{bmatrix}
	p\\
	q
	\end{bmatrix}=\begin{bmatrix}3 & -4\\
	-2 & 3
	\end{bmatrix}\begin{bmatrix}
	p\\
	q
	\end{bmatrix}$.
\end{center}
 Then, $t=3p-4q$ and $x=-2p+3q$ and therefore $t=3\alpha(t)-4\alpha(x)=
\alpha(3t-4x)$ and $x=-2\alpha(t)+3\alpha(x)=\alpha(-2t+3x)$. Thus, $\alpha$ is surjective, and hence bijective.
\end{example}

The particular case considered in Proposition \ref{proposition5.3} can be extended in the following way.
\begin{proposition}\label{proposition5.6}
	Let $p:=p_0(t)+p_1(t)x+p_2(t)x^2$ and $q:=q_0(t)+q_1(t)x+q_2(t)x^2$,
	with $p_0(t),p_1(t),p_2(t)$, $q_0(t),q_1(t),q_2(t)\in K[t]$. Then, $p$ and $q$ are Dixmier polynomials if and only if the following $4$ identities hold, and hence, $p$ and $q$ define an automorphism $\alpha$ of $A_1(K)$, $\alpha(t):=p$ and $\alpha(x):=q$:
	\begin{equation}
	\det \begin{bmatrix}q_1(t) & p_1(t)\\
	q_0'(t) & p_0'(t)\end{bmatrix}+\det \begin{bmatrix}q_2(t) & p_2(t)\\
	q_0''(t) & p_0''(t)\end{bmatrix}=1
	\end{equation}
	\begin{equation}
	\det \begin{bmatrix}q_1(t) & p_1(t)\\
	q_1'(t) & p_1'(t)\end{bmatrix}+2\det \begin{bmatrix}q_2(t) & p_2(t)\\
	q_0'(t) & p_0'(t)
	\end{bmatrix}+
	\det \begin{bmatrix}q_2(t) & p_2(t)\\
	q_1''(t) & p_1''(t)\end{bmatrix}=0
	\end{equation}
	\begin{equation}
	\det \begin{bmatrix}q_1(t) & p_1(t)\\
	q_2'(t) & p_2'(t)\end{bmatrix}+2\det \begin{bmatrix}q_2(t) & p_2(t)\\
	q_1'(t) & p_1'(t)\end{bmatrix}+\det \begin{bmatrix}q_2(t) & p_2(t)\\
	q_2''(t) & p_2''(t)\end{bmatrix}=0
	\end{equation}
	\begin{equation}
	\det \begin{bmatrix}q_2(t) & p_2(t)\\
	q_2'(t) & p_2'(t)\end{bmatrix}=0,
	\end{equation}
\end{proposition}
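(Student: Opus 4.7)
The plan is to imitate the computation already sketched for the linear case (Proposition \ref{proposition5.3}), but now with $p$ and $q$ of degree two in $x$. First I would record the general commutation rule in $A_1(K)$
\begin{equation*}
x^{k}a(t)=\sum_{j=0}^{k}\binom{k}{j}a^{(j)}(t)\,x^{k-j},\qquad a(t)\in K[t],
\end{equation*}
which is an immediate induction on $k$ from $xa(t)=a(t)x+a'(t)$. Only the cases $k=1$ and $k=2$ are needed here.

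Next, expand
\begin{equation*}
qp=\bigl(q_0+q_1x+q_2x^{2}\bigr)\bigl(p_0+p_1x+p_2x^{2}\bigr)
\end{equation*}
by distributing and pushing every $x^{k}$ past the $p_j(t)$ to the right using the rule above; do the same for $pq$. After collecting terms by powers of $x$, the leading coefficient in $x^{4}$ of $qp-pq$ is $q_2p_2-p_2q_2=0$ automatically, so only the coefficients in degrees $0,1,2,3$ can contribute. A direct but bookkeeping-heavy computation gives
\begin{align*}
[x^{0}](qp-pq)&=(q_1p_0'-p_1q_0')+(q_2p_0''-p_2q_0''),\\
[x^{1}](qp-pq)&=(q_1p_1'-p_1q_1')+2(q_2p_0'-p_2q_0')+(q_2p_1''-p_2q_1''),\\
[x^{2}](qp-pq)&=(q_1p_2'-p_1q_2')+2(q_2p_1'-p_2q_1')+(q_2p_2''-p_2q_2''),\\
[x^{3}](qp-pq)&=2(q_2p_2'-p_2q_2').
\end{align*}
Each bracket is visibly of the form $\det\begin{bmatrix}q_i & p_i\\ q_j^{(r)} & p_j^{(r)}\end{bmatrix}$, so rewriting produces exactly the four determinantal identities in the statement (the factor $2$ in the third identity comes from the $x^{3}$-relation after dividing by $2$, which is harmless since $\mathrm{char}(K)=0$).

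Equating $qp-pq$ with the constant polynomial $1$ term by term in $x$ therefore gives the four identities, and conversely the identities force $qp-pq=1$; this proves the equivalence. For the final clause, once $p$ and $q$ are Dixmier polynomials, Corollary \ref{corollary5.3} (which rests on Theorem \ref{Theorem1.3}) supplies the automorphism $\alpha$ of $A_1(K)$ determined by $\alpha(t):=p$ and $\alpha(x):=q$.

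The only real obstacle is arithmetic bookkeeping: one must be careful with the six terms coming from $q_2x^{2}\cdot(p_0+p_1x+p_2x^{2})$ (which produce first and second derivatives via $\binom{2}{1}=2$ and $\binom{2}{2}=1$) and with the symmetric six terms from $pq$, to make sure all contributions in degrees $1$ and $2$ are paired correctly into the displayed determinants. There is no conceptual difficulty beyond this calculation.
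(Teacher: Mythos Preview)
Your proposal is correct and follows exactly the approach of the paper: compute $qp$ and $pq$ using the rule $xa(t)=a(t)x+a'(t)$, compare coefficients in the variable $x$, and then invoke Corollary~\ref{corollary5.3} for the automorphism claim. One small slip in your parenthetical: it is the \emph{fourth} identity (the $x^{3}$-coefficient $2(q_2p_2'-p_2q_2')=0$) that requires dividing by $2$ using $\mathrm{char}(K)=0$, not the third; the factor $2$ in the third identity is already present in the $x^{2}$-coefficient as written and needs no division.
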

\begin{proof}
As in the proof of Proposition \ref{proposition5.3}, we first compute $qp$ and $pq$ in $A_1(K)$ (using the rule $xa(t)=a(t)x+a'(t)$, with $a(t)\in K[t])$, and then we compare the similar terms in the variable $x$. From this we obtain the matrix relations of the proposition.	The second assertion follows from Corollary \ref{corollary5.3}.
\end{proof}

Proposition \ref{proposition5.6} can be generalized in the following way.

\begin{proposition}\label{proposition5.7}
Let $p:=p_0(t)+p_1(t)x+p_2(t)x^2+p_3(t)x^3$ and $q:=q_0(t)+q_1(t)x+q_2(t)x^2+q_3(t)x^3$,
with $p_0(t),p_1(t),p_2(t),p_3(t)$, $q_0(t),q_1(t),q_2(t),q_3(t)\in K[t]$. Then, $p$ and $q$ are Dixmier polynomials if and only if the following $6$ identities hold,
and hence, $p$ and $q$ define an automorphism $\alpha$ of $A_1(K)$, $\alpha(t):=p$ and $\alpha(x):=q$:
\tiny{
\begin{equation}
\det \begin{bmatrix}q_1(t) & p_1(t)\\
q_0'(t) & p_0'(t)\end{bmatrix}+\det \begin{bmatrix}q_2(t) & p_2(t)\\
q_0''(t) & p_0''(t)\end{bmatrix}+\det \begin{bmatrix}q_3(t) & p_3(t)\\
q_0'''(t) & p_0'''(t)\end{bmatrix}=1
\end{equation}
\begin{equation}
\det \begin{bmatrix}q_1(t) & p_1(t)\\
q_1'(t) & p_1'(t)\end{bmatrix}+2\det \begin{bmatrix}q_2(t) & p_2(t)\\
q_0'(t) & p_0'(t)
\end{bmatrix}+
\det \begin{bmatrix}q_2(t) & p_2(t)\\
q_1''(t) & p_1''(t)\end{bmatrix}+3\det \begin{bmatrix}q_3(t) & p_3(t)\\
q_0''(t) & p_0''(t)+
\end{bmatrix}+\det \begin{bmatrix}q_3(t) & p_3(t)\\
q_1'''(t) & p_1'''(t)
\end{bmatrix}=0
\end{equation}
\begin{equation}
\det \begin{bmatrix}q_1(t) & p_1(t)\\
q_2'(t) & p_2'(t)\end{bmatrix}+2\det \begin{bmatrix}q_2(t) & p_2(t)\\
q_1'(t) & p_1'(t)\end{bmatrix}+\det \begin{bmatrix}q_2(t) & p_2(t)\\
q_2''(t) & p_2''(t)\end{bmatrix}+3\det\begin{bmatrix}q_3(t) & p_3(t)\\
q_0'(t) & p_0'(t)\end{bmatrix}+3\det\begin{bmatrix}q_3(t) & p_3(t)\\
q_1''(t) & p_1''(t)\end{bmatrix}+\det\begin{bmatrix}q_3(t) & p_3(t)\\
q_2'''(t) & p_2'''(t)\end{bmatrix}=0\end{equation}
\begin{equation}
\det\begin{bmatrix}q_1(t) & p_1(t)\\
q_3'(t) & p_3'(t)\end{bmatrix}+2\det\begin{bmatrix}q_2(t) & p_2(t)\\
q_2'(t) & p_2'(t)\end{bmatrix}+\det\begin{bmatrix}q_2(t) & p_2(t)\\
q_3''(t) & p_3''(t)\end{bmatrix}+3\det\begin{bmatrix}q_3(t) & p_3(t)\\
q_1'(t) & p_1'(t)\end{bmatrix}+3\det\begin{bmatrix}q_3(t) & p_3(t)\\
q_2''(t) & p_2''(t)\end{bmatrix}+\det\begin{bmatrix}q_3(t) & p_3(t)\\
q_3'''(t) & p_3'''(t)\end{bmatrix}=0
\end{equation}
\begin{equation}
2\det \begin{bmatrix}q_2(t) & p_2(t)\\
q_3'(t) & p_3'(t)\end{bmatrix}+3\det \begin{bmatrix}q_3(t) & p_3(t)\\
q_2'(t) & p_2'(t)\end{bmatrix}+3\det \begin{bmatrix}q_3(t) & p_3(t)\\
q_3''(t) & p_3''(t)\end{bmatrix}=0
\end{equation}
\begin{equation}
\det \begin{bmatrix}
q_3(t) & p_3(t)\\
q_3'(t) & p_3'(t)
\end{bmatrix}=0.
\end{equation}}
\end{proposition}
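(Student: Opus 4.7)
The plan is to expand both $qp$ and $pq$ explicitly in $A_1(K)$ using the basic commutation rule and then compare coefficients in the variable $x$, generalizing the strategy already used in Propositions \ref{proposition5.3} and \ref{proposition5.6}. First I would establish, by an easy induction on $k$ from $xa(t) = a(t)x + a'(t)$, the Leibniz-type identity
\begin{equation*}
x^{k}\, a(t) \;=\; \sum_{r=0}^{k} \binom{k}{r}\, a^{(r)}(t)\, x^{k-r}, \qquad a(t)\in K[t],\ k\ge 0.
\end{equation*}
Applying this to each product $q_i(t)x^i\, p_j(t)x^j$ and $p_j(t)x^j\, q_i(t)x^i$ for $0\le i,j\le 3$, the coefficient of $x^{i+j-r}$ in $q_ix^ip_jx^j - p_jx^jq_ix^i$ equals $\binom{i}{r}q_i p_j^{(r)}$ whenever $0\le r\le i$, minus $\binom{j}{r}p_j q_i^{(r)}$ whenever $0\le r\le j$.

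The next step is to collect the coefficient of $x^{k}$ in $qp-pq$ for each $k=0,1,\dots,6$. Setting $r:=i+j-k$, the $r=0$ contributions cancel because polynomials in $K[t]$ commute; in particular the coefficient of $x^{6}$ receives only the $(i,j)=(3,3),\ r=0$ contribution and is automatically zero, which explains why only six equations appear rather than seven. For $k\in\{0,1,2,3,4,5\}$, the contributions from pairs $(i,j)$ and $(j,i)$ together with the diagonal pair $(i,i)$ regroup into $2\times 2$ determinants of the form $\det\begin{bmatrix} q_a & p_a\\ q_b^{(r)} & p_b^{(r)}\end{bmatrix}$, while the binomial coefficients $\binom{i}{r}$ provide the integer prefactors. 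A direct case-by-case check, guided by the lower-degree pattern already verified in Propositions \ref{proposition5.3} and \ref{proposition5.6}, shows that these six groupings coincide precisely with the determinantal sums displayed in the statement of the proposition.

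The equation $qp=pq+1$ is then equivalent to the constant coefficient of $qp-pq$ being $1$ and all coefficients of $x,\dots,x^{5}$ being zero, which is exactly the system of six identities of the proposition. For the final assertion, once $p$ and $q$ are Dixmier polynomials Corollary \ref{corollary5.2} provides a well-defined endomorphism $\alpha$ of $A_1(K)$ with $\alpha(t)=p$ and $\alpha(x)=q$, and Corollary \ref{corollary5.3} (via Theorem \ref{Theorem1.3}) then ensures that $\alpha$ is an automorphism. The main obstacle is purely combinatorial bookkeeping: for each fixed $k$ one must identify the support $\{(i,j,r):0\le r\le\min(i,j),\ i+j-r=k\}$ together with the ``one-sided'' contributions where only $r\le i$ or only $r\le j$ holds, and then verify that these asymmetric terms combine with the symmetric ones so that the integer prefactors $1,2,3$ appearing in the displayed equations come out correctly from the binomial coefficients $\binom{i}{r}$.
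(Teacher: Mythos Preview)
Your proposal is correct and follows essentially the same approach as the paper: the paper's proof simply says ``The proof is as in Proposition \ref{proposition5.6},'' which in turn refers back to computing $qp$ and $pq$ via the rule $xa(t)=a(t)x+a'(t)$, comparing coefficients of the powers of $x$, and then invoking Corollary \ref{corollary5.3} for the automorphism claim. Your write-up is more explicit---you spell out the Leibniz-type identity for $x^k a(t)$, explain why the $x^6$ coefficient vanishes automatically, and describe the bookkeeping that produces the determinantal groupings---but the underlying method is identical.
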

\begin{proof}
The proof is as in Proposition \ref{proposition5.6}.
\end{proof}

One more generalization before the general case $n\geq 1$.

\begin{proposition}\label{proposition5.8}
	Let
	\begin{center}
	$p:=p_0(t)+p_1(t)x+p_2(t)x^2+p_3(t)x^3+p_4(t)x^4$ and $q:=q_0(t)+q_1(t)x+q_2(t)x^2+q_3(t)x^3+q_4(t)x^4$,
	\end{center}
	with $p_i(t), q_i(t)\in K[t], 0\leq i\leq 4$. Then, $p$ and $q$ are Dixmier polynomials if and only if the following $8$ identities hold, and hence, $p$ and $q$ define an automorphism $\alpha$ of $A_1(K)$, $\alpha(t):=p$ and $\alpha(x):=q$:
	\tiny
	\begin{equation*}
	\det \begin{bmatrix}q_1(t) & p_1(t)\\
	q_0'(t) & p_0'(t)\end{bmatrix}+\det \begin{bmatrix}q_2(t) & p_2(t)\\
	q_0''(t) & p_0''(t)\end{bmatrix}+\det \begin{bmatrix}q_3(t) & p_3(t)\\
	q_0'''(t) & p_0'''(t)\end{bmatrix}+
	\end{equation*}
	\begin{equation*}
	+\det \begin{bmatrix}
	q_4(t) & p_4(t)\\
	q_0''''(t) & p_0''''(t)
	\end{bmatrix}=1;
	\end{equation*}
	\begin{equation*}
	\det \begin{bmatrix}q_1(t) & p_1(t)\\
	q_1'(t) & p_1'(t)\end{bmatrix}+2\det \begin{bmatrix}q_2(t) & p_2(t)\\
	q_0'(t) & p_0'(t)
	\end{bmatrix}+
	\det \begin{bmatrix}q_2(t) & p_2(t)\\
	q_1''(t) & p_1''(t)\end{bmatrix}+3\det \begin{bmatrix}q_3(t) & p_3(t)\\
	q_0''(t) & p_0''(t)+
	\end{bmatrix}+\det \begin{bmatrix}q_3(t) & p_3(t)\\
	q_1'''(t) & p_1'''(t)
	\end{bmatrix}+
	\end{equation*}
	\begin{equation*}
	+4\det \begin{bmatrix}q_4(t) & p_4(t)\\
	q_0'''(t) & p_0'''(t)\end{bmatrix}+\det \begin{bmatrix}q_4(t) & p_4(t)\\
	q_1''''(t) & p_1''''(t)\end{bmatrix}=0;
	\end{equation*}
	\begin{equation*}
	\det \begin{bmatrix}q_1(t) & p_1(t)\\
	q_2'(t) & p_2'(t)\end{bmatrix}+2\det \begin{bmatrix}q_2(t) & p_2(t)\\
	q_1'(t) & p_1'(t)\end{bmatrix}+\det \begin{bmatrix}q_2(t) & p_2(t)\\
	q_2''(t) & p_2''(t)\end{bmatrix}+3\det\begin{bmatrix}q_3(t) & p_3(t)\\
	q_0'(t) & p_0'(t)\end{bmatrix}+3\det\begin{bmatrix}q_3(t) & p_3(t)\\
	q_1''(t) & p_1''(t)\end{bmatrix}+\det\begin{bmatrix}q_3(t) & p_3(t)\\
	q_2'''(t) & p_2'''(t)\end{bmatrix}+
	\end{equation*}
	\begin{equation*}
	+6\det \begin{bmatrix}q_4(t) & p_4(t)\\
	q_0''(t) & p_0''(t)\end{bmatrix}+4\det \begin{bmatrix}q_4(t) & p_4(t)\\
	q_1'''(t) & p_1'''(t)\end{bmatrix}+\det \begin{bmatrix}q_4(t) & p_4(t)\\
	q_2''''(t) & p_2''''(t)\end{bmatrix}=0;
	\end{equation*}
	\begin{equation*}
	\det\begin{bmatrix}q_1(t) & p_1(t)\\
	q_3'(t) & p_3'(t)\end{bmatrix}+2\det\begin{bmatrix}q_2(t) & p_2(t)\\
	q_2'(t) & p_2'(t)\end{bmatrix}+\det\begin{bmatrix}q_2(t) & p_2(t)\\
	q_3''(t) & p_3''(t)\end{bmatrix}+3\det\begin{bmatrix}q_3(t) & p_3(t)\\
	q_1'(t) & p_1'(t)\end{bmatrix}+3\det\begin{bmatrix}q_3(t) & p_3(t)\\
	q_2''(t) & p_2''(t)\end{bmatrix}+\det\begin{bmatrix}q_3(t) & p_3(t)\\
	q_3'''(t) & p_3'''(t)\end{bmatrix}+
	\end{equation*}
	\begin{equation*}
	+4 \det \begin{bmatrix}q_4(t) & p_4(t)\\
	q_0'(t) & p_0'(t)\end{bmatrix}+6 \det \begin{bmatrix}q_4(t) & p_4(t)\\
	q_1''(t) & p_1''(t)
	\end{bmatrix}+4 \det \begin{bmatrix}q_4(t) & p_4(t)\\
	q_2'''(t) & p_2'''(t)\end{bmatrix}+\det \begin{bmatrix}q_4(t) & p_4(t)\\
	q_3''''(t) & p_3''''(t)\end{bmatrix}=0;
	\end{equation*}
	\begin{equation*}
	2\det \begin{bmatrix}q_2(t) & p_2(t)\\
	q_3'(t) & p_3'(t)\end{bmatrix}+3\det \begin{bmatrix}q_3(t) & p_3(t)\\
	q_2'(t) & p_2'(t)\end{bmatrix}+3\det \begin{bmatrix}q_3(t) & p_3(t)\\
	q_3''(t) & p_3''(t)\end{bmatrix}+
	\end{equation*}
	\begin{equation*}
	+\det\begin{bmatrix}
	q_1(t) & p_1(t)\\
	q_4'(t) & p_4'(t)
	\end{bmatrix}+\det \begin{bmatrix}q_2(t) & p_2(t)\\
	q_4''(t) & p_4''(t)
	\end{bmatrix}+\det \begin{bmatrix}
	q_3(t) & p_3(t)\\
	q_4'''(t) & p_4'''(t)
	\end{bmatrix}+
	\end{equation*}
	\begin{equation*}
	+4\det \begin{bmatrix}
	q_4(t) & p_4(t)\\
	q_1'(t) & p_1'(t)
	\end{bmatrix}+6\det \begin{bmatrix}
	q_4(t) & p_4(t)\\
	q_2''(t) & p_2''(t)
	\end{bmatrix}+4\det \begin{bmatrix}
	q_4(t) & p_4(t)\\
	q_3'''(t) & p_3'''(t)
	\end{bmatrix}+
	\det \begin{bmatrix}
	q_4(t) & p_4(t)\\
	q_4''''(t) & p_4''''(t)
	\end{bmatrix}=0;
	\end{equation*}
	\begin{equation*}
	3\det \begin{bmatrix}
	q_3(t) & p_3(t)\\
	q_3'(t) & p_3'(t)
	\end{bmatrix}+
	\end{equation*}
	\begin{equation*}
	+2\det \begin{bmatrix}
	q_2(t) & p_2(t)\\
	q_4'(t) & p_4'(t)
	\end{bmatrix}+3\det \begin{bmatrix}
	q_3(t) & p_3(t)\\
	q_4''(t) & p_4''(t)
	\end{bmatrix}+4 \det \begin{bmatrix}
	q_4(t) & p_4(t)\\
	q_2'(t) & p_2'(t)
	\end{bmatrix}+6 \det \begin{bmatrix}
	q_4(t) & p_4(t)\\
	q_3''(t) & p_3''(t)
	\end{bmatrix}+4\det \begin{bmatrix}
	q_4(t) & p_4(t)\\
	q_4'''(t) & p_4'''(t)
	\end{bmatrix}=0;
	\end{equation*}
	\begin{equation*}
	3\det \begin{bmatrix}q_3(t) & p_3(t)\\
	q_4'(t) & p_4'(t)
	\end{bmatrix}+4 \det\begin{bmatrix}
	q_4(t) & p_4(t)\\
	q_3'(t) & p_3'(t)
	\end{bmatrix}+6\det \begin{bmatrix}
	q_4(t) & p_4(t)\\
	q_4''(t) & p_4''(t)
	\end{bmatrix}=0;
	\end{equation*}
	\begin{equation*}
	\det \begin{bmatrix}
	q_4(t) & p_4(t)\\
	q_4'(t) & p_4'(t)
	\end{bmatrix}=0.
	\end{equation*}
\end{proposition}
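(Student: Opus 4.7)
The plan is to follow the same two-step method that already proved Propositions \ref{proposition5.3}, \ref{proposition5.6}, and \ref{proposition5.7}, now with $p$ and $q$ of degree $4$ in $x$. The computation is longer but conceptually identical.

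First, I would put both products $qp$ and $pq$ into canonical form $\sum_m a_m(t)\,x^m$ by moving each $x$ past the coefficients to the right using the iterated commutation rule
$$x^j f(t)=\sum_{k=0}^{j}\binom{j}{k}f^{(k)}(t)\,x^{j-k}, \qquad f(t)\in K[t],$$
which follows by induction from $xf(t)=f(t)x+f'(t)$. Applying this to each term $q_j(t)x^j\cdot p_i(t)x^i$ of $qp$ and to $p_i(t)x^i\cdot q_j(t)x^j$ of $pq$ yields explicit expansions in the $K$-basis $\mathrm{Mon}(A_1(K))$.

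Next, I would form $qp-pq$ and collect the coefficient of $x^m$ for each $m$. The leading coefficient (that of $x^{8}$) cancels automatically: both $qp$ and $pq$ contribute $q_4(t)p_4(t)\,x^8$ with no derivative correction at that highest power, and the commutativity of $K[t]$ kills the difference. This leaves exactly $8$ coefficients to equate, corresponding to $x^0,x^1,\dots,x^{7}$; the condition $qp=pq+1$ is equivalent to the coefficient of $x^0$ being $1$ and all others being $0$. In each such coefficient the terms naturally pair as
$$\binom{j}{k}\bigl[q_j(t)p_i^{(k)}(t)-p_j(t)q_i^{(k)}(t)\bigr]=\binom{j}{k}\,\det\begin{bmatrix}q_j(t) & p_j(t)\\ q_i^{(k)}(t) & p_i^{(k)}(t)\end{bmatrix},$$
which, once collected according to the constraint $j-k+i=m$ (respectively $i-k+j=m$), yields precisely the eight displayed identities. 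Each step of the reduction is reversible, so the eight identities are equivalent to $qp-pq=1$, i.e.\ to $p,q$ being Dixmier polynomials.

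Having characterized Dixmier polynomials, the existence of an algebra endomorphism $\alpha$ of $A_1(K)$ with $\alpha(t)=p$ and $\alpha(x)=q$ follows from Corollary \ref{corollary5.2}, and Corollary \ref{corollary5.3} (which invokes Zheglov's Theorem \ref{Theorem1.3}) upgrades $\alpha$ to an automorphism. The main obstacle is purely combinatorial bookkeeping: correctly tracking the binomial coefficients $\binom{j}{k}$, the differentiation orders $k$, and the signs so that the eight coefficients of $x^0,\dots,x^7$ align exactly with the displayed sums of determinants. No new phenomenon arises over the degree-$3$ case of Proposition \ref{proposition5.7}; only the number of terms grows, which is why the statement is so long.
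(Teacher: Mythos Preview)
Your proposal is correct and follows exactly the paper's approach: the paper's proof simply says ``The proof is as in Proposition \ref{proposition5.6},'' which in turn computes $qp$ and $pq$ via the rule $xa(t)=a(t)x+a'(t)$, compares coefficients in $x$, and then invokes Corollary \ref{corollary5.3}. You have supplied more explicit detail (the iterated formula $x^jf=\sum_k\binom{j}{k}f^{(k)}x^{j-k}$ and the determinant pairing), but the method is identical.
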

\begin{proof}
	The proof is as in Proposition \ref{proposition5.6}.
\end{proof}
The previous propositions induced the following theorem that gives a matrix characterization of Dixmier polynomials, and hence, a matrix approach to Dixmier problem.

\begin{theorem}\label{theorem5.9a}
	Let $n \geq 1$ and
	\begin{center}
		$p=p_0(t)+p_1(t)x+p_2(t)x^2+\cdots+p_n(t)x^n, q=q_0(t)+q_1(t)x+q_2(t)x^2+\cdots+q_n(t)x^n \in A_1(K)$.
	\end{center}
	$p$ and $q$ are Dixmier polynomials if and only if the following $2n$ identities hold, and hence, $p$ and $q$ define an automorphism $\alpha$ of $A_1(K)$, $\alpha(t):=p$ and $\alpha(x):=q$.:
	\tiny
	\begin{equation*}
	\det \begin{bmatrix}q_1(t) & p_1(t)\\
	q_0'(t) & p_0'(t)\end{bmatrix}+\det \begin{bmatrix}q_2(t) & p_2(t)\\
	q_0''(t) & p_0''(t)\end{bmatrix}+\det \begin{bmatrix}q_3(t) & p_3(t)\\
	q_0'''(t) & p_0'''(t)\end{bmatrix}+\cdots +\det\begin{bmatrix}q_n(t) & p_n(t)\\
	q_0^{(n)}(t) & p_0^{(n)}(t)\end{bmatrix}=1;
	\end{equation*}
	\begin{equation*}
	\det \begin{bmatrix}q_1(t) & p_1(t)\\
	q_1'(t) & p_1'(t)\end{bmatrix}+2\det \begin{bmatrix}q_2(t) & p_2(t)\\
	q_0'(t) & p_0'(t)
	\end{bmatrix}+
	\det \begin{bmatrix}q_2(t) & p_2(t)\\
	q_1''(t) & p_1''(t)\end{bmatrix}+\cdots+(n-1)\det \begin{bmatrix}q_{n-1}(t) & p_{n-1}(t)\\
	q_0^{(n-2)}(t) & p_0^{(n-2)}(t)+
	\end{bmatrix}+\det \begin{bmatrix}q_{n-1}(t) & p_{n-1}(t)\\
	q_1^{(n-1)}(t) & p_1^{(n-1)}(t)
	\end{bmatrix}+
	\end{equation*}
	\begin{equation*}
	+n\det \begin{bmatrix}q_n(t) & p_n(t)\\
	q_0^{(n-1)}(t) & p_0^{(n-1)}(t)\end{bmatrix}+\det \begin{bmatrix}q_n(t) & p_n(t)\\
	q_1^{(n)}(t) & p_1^{(n)}(t)\end{bmatrix}=0;
	\end{equation*}
	\begin{equation*}
	\det \begin{bmatrix}q_1(t) & p_1(t)\\
	q_2'(t) & p_2'(t)\end{bmatrix}+2\det \begin{bmatrix}q_2(t) & p_2(t)\\
	q_1'(t) & p_1'(t)\end{bmatrix}+\det \begin{bmatrix}q_2(t) & p_2(t)\\
	q_2''(t) & p_2''(t)\end{bmatrix}+
	\end{equation*}
	\begin{equation*}
	3\det\begin{bmatrix}q_3(t) & p_3(t)\\
	q_0'(t) & p_0'(t)\end{bmatrix}+3\det\begin{bmatrix}q_3(t) & p_3(t)\\
	q_1''(t) & p_1''(t)\end{bmatrix}+\det\begin{bmatrix}q_3(t) & p_3(t)\\
	q_2'''(t) & p_2'''(t)\end{bmatrix}+\cdots+
	\end{equation*}
	\begin{equation*}
	(n-1)\det\begin{bmatrix}q_{n-1}(t) & p_{n-1}(t)\\
	q_0^{(n-3)}(t) & p_0^{(n-3)}(t)\end{bmatrix}+(n-1)\det\begin{bmatrix}q_{(n-1)}(t) & p_{(n-1)}(t)\\
	q_1^{(n-2)}(t) & p_1^{(n-2)}(t)\end{bmatrix}+\det\begin{bmatrix}q_{n-1}(t) & p_{n-1}(t)\\
	q_2^{(n-1)}(t) & p_2^{(n-1)}(t)\end{bmatrix}+
	\end{equation*}
	\begin{equation*}
	+\frac{n(n-1)}{2}\det \begin{bmatrix}q_n(t) & p_n(t)\\
		q_0^{(n-2)}(t) & p_0^{(n-2)}(t)\end{bmatrix}+n\det \begin{bmatrix}q_n(t) & p_n(t)\\
		q_1^{(n-1)}(t) & p_1^{(n-1)}(t)\end{bmatrix}+\det \begin{bmatrix}q_n(t) & p_n(t)\\
		q_2^{(n)}(t) & p_2^{(n)}(t)\end{bmatrix}=0;
	\end{equation*}
	\begin{equation*}
	\vdots
	\end{equation*}
	\begin{equation*}
	(n-1)\det \begin{bmatrix}q_{n-1}(t) & p_{n-1}(t)\\
	q_n'(t) & p_n'(t)\end{bmatrix}+n\det \begin{bmatrix}q_n(t) & p_n(t)\\
	q_{n-1}'(t) & p_{n-1}'(t)\end{bmatrix}+\frac{n(n-1)}{2}\det \begin{bmatrix}q_n(t) & p_n(t)\\
	q_n''(t) & p_n''(t)\end{bmatrix}=0;
	\end{equation*}
	\begin{equation*}
	\det \begin{bmatrix}
	q_n(t) & p_n(t)\\
	q_n'(t) & p_n'(t)
	\end{bmatrix}=0.
	\end{equation*}
\end{theorem}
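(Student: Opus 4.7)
My plan is to reduce the theorem to a direct computation of the commutator $qp-pq$ in $A_1(K)$, using the basic rule $xa(t)=a(t)x+a'(t)$ for $a(t)\in K[t]$. By a straightforward induction on $i$ this extends to the Leibniz-type identity
\begin{equation*}
x^i a(t)=\sum_{k=0}^{i}\binom{i}{k}a^{(k)}(t)\,x^{i-k},
\end{equation*}
which allows me to put each product $q_i(t)x^i\cdot p_j(t)x^j$ into standard form as $\sum_k\binom{i}{k}q_i(t)p_j^{(k)}(t)x^{i+j-k}$, and similarly for $p_i(t)x^i\cdot q_j(t)x^j$. Subtracting then yields the master expansion
\begin{equation*}
qp-pq=\sum_{i,j=0}^{n}\sum_{k=0}^{i}\binom{i}{k}\bigl[q_i(t)p_j^{(k)}(t)-p_i(t)q_j^{(k)}(t)\bigr]x^{i+j-k}.
\end{equation*}

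Next I would collect this by powers of $x$ and rewrite each scalar bracket as a $2\times 2$ determinant,
\begin{equation*}
q_i p_j^{(k)}-p_i q_j^{(k)}=\det\begin{bmatrix}q_i(t) & p_i(t)\\ q_j^{(k)}(t) & p_j^{(k)}(t)\end{bmatrix}.
\end{equation*}
The leading term in $x$ of $qp-pq$ has degree at most $2n-1$ (since the $x^{2n}$-contributions $q_np_n-p_nq_n$ and its mate cancel), so the equation $qp-pq=1$ is equivalent to $2n$ scalar identities in $K[t]$: the coefficient of $x^0$ equals $1$, while the coefficients of $x^{1},\dots,x^{2n-1}$ vanish. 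This is exactly the count of identities displayed in the statement.

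The main bookkeeping task is to verify that the explicit sum of determinants arising as the coefficient of $x^m$ matches, after cancellations, the $m$-th listed identity. The key simplification is that the contributions with $k=0$ are of the form $q_ip_j-p_iq_j$ with $i+j=m$; swapping $i\leftrightarrow j$ flips the sign, so all such terms cancel in pairs (and the $i=j$ term vanishes on its own). Hence only the summands with $k\geq 1$ effectively remain, which is what produces the clean determinantal form given in the theorem. I would check three representative cases in detail ($m=0$, giving the first identity $\sum_{i=1}^{n}\det\bigl[\begin{smallmatrix}q_i & p_i\\ q_0^{(i)} & p_0^{(i)}\end{smallmatrix}\bigr]=1$; $m=2n-1$, giving the last identity $\det\bigl[\begin{smallmatrix}q_n & p_n\\ q_n' & p_n'\end{smallmatrix}\bigr]=0$; and a generic intermediate $m$), and then note that the general case follows by the same index-grouping argument. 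The main obstacle is purely combinatorial: tracking binomial coefficients, indices, and signs to certify that the formal expansion agrees term-by-term with the listed expressions.

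Finally, once the equivalence between ``$p,q$ are Dixmier polynomials'' and ``the $2n$ identities hold'' is established, the last clause of the theorem is immediate: by Corollary \ref{corollary5.3}, any such pair $p,q$ defines an automorphism $\alpha$ of $A_1(K)$ with $\alpha(t)=p$ and $\alpha(x)=q$, which packages the universal property of $A_1(K)$ together with Zheglov's Theorem \ref{Theorem1.3}.
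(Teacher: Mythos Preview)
Your approach is correct and takes a genuinely different route from the paper. The paper proves the small cases $n=1,2,3,4$ separately in Propositions \ref{proposition5.3}--\ref{proposition5.8} and then argues by induction on $n$: writing $p=P+p_n(t)x^n$, $q=Q+q_n(t)x^n$ with $P,Q$ of $x$-degree $\le n-1$, it expands $qp-pq=(QP-PQ)+S$, applies the induction hypothesis to $QP-PQ$, and computes the cross-term $S$ directly. You instead bypass both the base cases and the induction by invoking the Leibniz identity $x^{i}a(t)=\sum_{k}\binom{i}{k}a^{(k)}(t)x^{i-k}$ to write $qp-pq$ in closed form all at once, and then observe that the $k=0$ contributions cancel by antisymmetry in $(i,j)$. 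Your method is more uniform and makes the determinantal structure transparent from the start; the paper's inductive argument, by contrast, exhibits more explicitly how each new identity grows out of the previous ones. One small point worth making explicit in your write-up: at $m=2n-1$ your expansion actually yields $n\cdot\det\bigl[\begin{smallmatrix}q_n & p_n\\ q_n' & p_n'\end{smallmatrix}\bigr]=0$, and it is the standing hypothesis $\operatorname{char}K=0$ that lets you drop the factor $n$ --- the paper notes this explicitly.
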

\begin{proof}
The cases $n=1,2,3,4$ were proved in Propositions \ref{proposition5.3}, \ref{proposition5.6}, \ref{proposition5.7} and \ref{proposition5.8}.

Observe that for the general case $n\geq 4$, we have:

(1) The first identity has $n$ summands.

(2) The second identity has $2n-1$ summnads.

(3) The third identity has $3n-3$ summands.

(4) The fourth identity has $4n-6$ summands.

(5) The fifth identity has $5n-10$ summands.

(k) In general, for $1\leq k \leq n$, the $k$-th identity has $kn-\frac{k(k-1)}{2}$ summands.

For $k=n+1,n+2,n+3\dots, 2n-1,2n$, the $k$-th identity has $\frac{(n+1)n}{2},\frac{n(n-1)}{2},\frac{(n-1)(n-2)}{2}$,$\dots$,

$\frac{(n-(n-3))(n-(n-2))}{2},\frac{(n-(n-2))(n-(n-1))}{2}$ summands, respectively.

(2n-1) Thus, the $(2n-1)$-th identity has $3$ summands.

(2n) The last identity has only $1$ summand.

By induction, we assume the theorem for $n-1$, $n \geq 2$. Let
\begin{center}
	$P:=p_0(t)+p_1(t)x+p_2(t)x^2+\cdots+p_{n-1}(t)x^{n-1}, Q=q_0(t)+q_1(t)x+q_2(t)x^2+\cdots+q_{n-1}(t)x^{n-1}$.
\end{center}
Then, $p=P+p_n(t)x^n$, $q=Q+q_n(t)x^n$ and $qp=[Q+q_n(t)x^n][P+p_n(t)x^n]$.

Assume that $p$ and $q$ are Dixmier polynomials, so $qp=pq+1$, whence $[Q+q_n(t)x^n][P+p_n(t)x^n]=[P+p_n(t)x^n][Q+q_n(t)x^n]+1$. From this we get that
\begin{center}
$QP=PQ+[Pq_n(t)-Qp_n(t)]x^n+[p_n(t)x^nQ-q_n(t)x^nP]+[p_n(t)x^nq_n(t)x^n-q_n(t)x^np_n(t)x^n]+1$
\end{center}
Let $S:=[Pq_n(t)-Qp_n(t)]x^n+[p_n(t)x^nQ-q_n(t)x^nP]+[p_n(t)x^nq_n(t)x^n-q_n(t)x^np_n(t)x^n]$, then $QP=PQ+S+1$. By induction we have that
\tiny
\begin{center}
$QP-PQ=\Biggl (\det \begin{bmatrix}q_1(t) & p_1(t)\\
q_0'(t) & p_0'(t)\end{bmatrix}+\det \begin{bmatrix}q_2(t) & p_2(t)\\
q_0''(t) & p_0''(t)\end{bmatrix}+\det \begin{bmatrix}q_3(t) & p_3(t)\\
q_0'''(t) & p_0'''(t)\end{bmatrix}+\cdots +\det\begin{bmatrix}q_{n-1}(t) & p_{n-1}(t)\\
q_0^{(n-1)}(t) & p_0^{(n-1)}(t)\end{bmatrix}\Biggr )+$
\end{center}
\begin{center}
$\Biggl (\det \begin{bmatrix}q_1(t) & p_1(t)\\
	q_1'(t) & p_1'(t)\end{bmatrix}+2\det \begin{bmatrix}q_2(t) & p_2(t)\\
	q_0'(t) & p_0'(t)
\end{bmatrix}+
\det \begin{bmatrix}q_2(t) & p_2(t)\\
	q_1''(t) & p_1''(t)\end{bmatrix}+\cdots+(n-1)\det \begin{bmatrix}q_{n-1}(t) & p_{n-1}(t)\\
	q_0^{(n-2)}(t) & p_0^{(n-2)}(t)+
\end{bmatrix}+\det \begin{bmatrix}q_{n-1}(t) & p_{n-1}(t)\\
	q_1^{(n-1)}(t) & p_1^{(n-1)}(t)
\end{bmatrix}\Biggr )x+$
\end{center}
\begin{center}
$+\Biggl (\det \begin{bmatrix}q_1(t) & p_1(t)\\
q_2'(t) & p_2'(t)\end{bmatrix}+2\det \begin{bmatrix}q_2(t) & p_2(t)\\
q_1'(t) & p_1'(t)\end{bmatrix}+\det \begin{bmatrix}q_2(t) & p_2(t)\\
q_2''(t) & p_2''(t)\end{bmatrix}+
3\det\begin{bmatrix}q_3(t) & p_3(t)\\
q_0'(t) & p_0'(t)\end{bmatrix}+3\det\begin{bmatrix}q_3(t) & p_3(t)\\
q_1''(t) & p_1''(t)\end{bmatrix}+\det\begin{bmatrix}q_3(t) & p_3(t)\\
q_2'''(t) & p_2'''(t)\end{bmatrix}+\cdots+
(n-1)\det\begin{bmatrix}q_{n-1}(t) & p_{n-1}(t)\\
q_0^{(n-3)}(t) & p_0^{(n-3)}(t)\end{bmatrix}+(n-1)\det\begin{bmatrix}q_{n-1}(t) & p_{n-1}(t)\\
q_1^{(n-2)}(t) & p_1^{(n-2)}(t)\end{bmatrix}+\det\begin{bmatrix}q_{n-1}(t) & p_{n-1}(t)\\
q_2^{(n-1)}(t) & p_2^{(n-1)}(t)\end{bmatrix}\Biggr )x^2+
$
\end{center}
\begin{center}
$\vdots$
\end{center}
\begin{center}
$+\Biggl ((n-2)\det \begin{bmatrix}q_{n-2}(t) & p_{n-2}(t)\\
q_{n-1}'(t) & p_{n-1}'(t)\end{bmatrix}+(n-1)\det \begin{bmatrix}q_{n-1}(t) & p_{n-1}(t)\\
q_{n-2}'(t) & p_{n-2}'(t)\end{bmatrix}+\frac{(n-1)(n-2)}{2}\det \begin{bmatrix}q_{n-1}(t) & p_{n-1}(t)\\
q_{n-1}''(t) & p_{n-1}''(t)\end{bmatrix}\Biggr )x^{2n-4}+$
\end{center}
\begin{center}
$\det \begin{bmatrix}
q_{n-1}(t) & p_{n-1}(t)\\
q_{n-1}'(t) & p_{n-1}'(t)
\end{bmatrix}x^{2n-3}.$
\end{center}
\normalsize
By direct computation we get that
\tiny
\begin{center}
$S=\det\begin{bmatrix}p_n(t) & q_n(t)\\
p_0^{(n)}(t) & q_0^{(n)}(t)\end{bmatrix}+\Biggl (n\det \begin{bmatrix}p_n(t) & q_n(t)\\
p_0^{(n-1)}(t) & q_0^{(n-1)}(t)
\end{bmatrix}+\det \begin{bmatrix}p_n(t) & q_n(t)\\
p_1^{(n)}(t) & q_1^{(n)}(t)\end{bmatrix}\Biggr )x$+
\end{center}
\begin{center}
$\Biggl (\frac{n(n-1)}{2}\det \begin{bmatrix}p_n(t) & q_n(t)\\
p_0^{(n-2)}(t) & q_0^{(n-2)}(t)\end{bmatrix}+n\det \begin{bmatrix}p_n(t) & q_n(t)\\
p_1^{(n-1)}(t) & q_1^{(n-1)}(t)\end{bmatrix}+\det \begin{bmatrix}p_n(t) & q_n(t)\\
p_2^{(n)}(t) & q_2^{(n)}(t)\end{bmatrix}\Biggr )x^2+$
\end{center}
\begin{center}
$\vdots$
\end{center}
\begin{center}
$+
\Biggl ((n-1)\det \begin{bmatrix}q_{n-1}(t) & p_{n-1}(t)\\
q_n'(t) & p_n'(t)\end{bmatrix}+n\det \begin{bmatrix}q_n(t) & p_n(t)\\
q_{n-1}'(t) & p_{n-1}'(t)\end{bmatrix}+\frac{n(n-1)}{2}\det \begin{bmatrix}q_n(t) & p_n(t)\\
q_n''(t) & p_n''(t)\end{bmatrix}\Biggr )x^{2n-2}+$	
\end{center}
\begin{center}
$+n\det \begin{bmatrix}
q_n(t) & p_n(t)\\
q_n'(t) & p_n'(t)
\end{bmatrix}x^{2n-1}.$
\end{center}
\normalsize
Thus, from $QP-PQ-S=1$ and comparing the similar terms in the variable $x$ we get the identities of the theorem. For the last identity, recall that $K$ is a field
of characteristic zero.

Conversely, the matrix relations of the theorem imply that $p$ and $q$ are Dixmier polynomials, and hence, from Corollary \ref{corollary5.3}, $p$ and $q$ define an automorphism $\alpha$ of $A_1(K)$, $\alpha(t):=p$ and $\alpha(x):=q$.
\end{proof}

\subsection{A second matrix characterization of Dixmier problem}

Theorem \ref{theorem5.9a} can be interpreted as a matrix characterization of Dixmier problem. Theorem \ref{theorem5.9a} was proved using Corollary \ref{corollary5.3}, and hence, using Theorem \ref{Theorem1.3}. Using Corollary \ref{corollary5.2} instead of Corollary \ref{corollary5.3}, i.e., without using Theorem \ref{Theorem1.3}, next we present another characterization. This characterization is partial but it is matrix-constructive. Moreover, this second characterization let us to express the variables $t$ and $x$ in terms in the Dixmier polynomials $p$ and $q$. Let $M_2(A_1(K))$ be the ring of square mtrices of size $2\times 2$ over $A_1(K)$ and let $GL_2(A_1(K))$ be the group of invertible matrices of $M_2(A_1(K))$, i.e., $GL_2(A_1(K))=M_2(A_1(K))^*$ ({\it $GL_2(A_1(K))$ is known as the \textbf{general linear group over} $A_1(K)$}).
\begin{theorem}\label{theorem5.9}
	Let $n\geq 1$ and
	\begin{center}
		$p=p_0(t)+p_1(t)x+p_2(t)x^2+\cdots+p_n(t)x^n, q=q_0(t)+q_1(t)x+q_2(t)x^2+\cdots+q_n(t)x^n \in A_1(K)$.
	\end{center}
Assume that $p$ and $q$ satisfy the following conditions:
\begin{enumerate}
\item[\rm (i)] $p$ and $q$ are Dixmier polynomials.
\item[\rm (ii)] $t$ divides $p_0(t)$ and $t$ divides $q_0(t)$, with $p_0(t)=b(t)t$, $q_0(t)=d(t)t$, for some $b(t),d(t)\in K[t]$.
\item[\rm (iii)]$\begin{bmatrix}b(t) & p_1(t)+p_2(t)x+\cdots +p_n(t)x^{n-1}\\
d(t) & q_1(t)+q_2(t)x+\cdots +q_n(t)x^{n-1}\end{bmatrix}\in GL_2(A_1(K))$.
\end{enumerate}
Then $p$ and $q$ define an automorphism of $A_1(K)$.	
\end{theorem}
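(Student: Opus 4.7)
The plan is to prove the three defining properties of an automorphism in sequence. By condition (i) and Corollary \ref{corollary5.2}, the assignment $\alpha(t):=p$, $\alpha(x):=q$ extends uniquely to a $K$-algebra endomorphism $\alpha\colon A_1(K)\to A_1(K)$. Injectivity is automatic: since $\mathrm{char}(K)=0$ the Weyl algebra $A_1(K)$ is simple, and $\ker\alpha$ is a two-sided ideal not containing $\alpha(1)=1$, so it must be zero. The entire task therefore reduces to surjectivity, i.e.\ to showing that $t,x\in\mathrm{Im}(\alpha)$.

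Condition (ii) allows us to factor out the rightmost $t$ and $x$ from $p$ and $q$: setting $P := p_1(t)+p_2(t)x+\cdots+p_n(t)x^{n-1}$ and $Q := q_1(t)+q_2(t)x+\cdots+q_n(t)x^{n-1}$, we obtain $p = b(t)\,t + Px$ and $q = d(t)\,t + Qx$, which assemble into the matrix identity
\[
\begin{bmatrix} p \\ q \end{bmatrix} = M \begin{bmatrix} t \\ x \end{bmatrix}
\]
with $M$ the matrix of (iii). Condition (iii) supplies $N := M^{-1} \in M_2(A_1(K))$, and left-multiplying yields
\[
t = N_{11}p + N_{12}q, \qquad x = N_{21}p + N_{22}q
\]
as equalities in $A_1(K)$. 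At this point $t$ and $x$ are visibly in the left ideal $A_1(K)p + A_1(K)q$.

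The main obstacle is to upgrade this left-ideal membership to the stronger assertion that $t,x\in\mathrm{Im}(\alpha)$, equivalently, that each entry $N_{ij}$ can be taken in the subalgebra generated by $K$, $p$ and $q$. The plan is to exploit the defining relations of $N=M^{-1}$: expanding $MN=NM=I$ yields identities such as $N_{11}b(t)+N_{12}d(t)=1$ and $N_{11}P+N_{12}Q=0$, together with the two symmetric companions. Solving these by elimination using the explicit form of $b(t),d(t),P,Q$ --- which record the coefficient data of $p$ and $q$ --- should allow the $N_{ij}$ to be rewritten as polynomials in $p$ and $q$ alone, placing each of them in $\mathrm{Im}(\alpha)$. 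Once this is done, the displayed identities force $t,x\in\mathrm{Im}(\alpha)$; combined with $K\subseteq \mathrm{Im}(\alpha)$, this gives $\mathrm{Im}(\alpha) \supseteq A_1(K)$, so $\alpha$ is surjective and, together with injectivity, an automorphism of $A_1(K)$.
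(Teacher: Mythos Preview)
Your setup coincides with the paper's: both arguments use condition (i) together with Corollary~\ref{corollary5.2} to get the endomorphism $\alpha$, use simplicity for injectivity, assemble the matrix identity $\begin{bmatrix}p\\q\end{bmatrix}=M\begin{bmatrix}t\\x\end{bmatrix}$ from condition (ii), and then invert via condition (iii) to obtain $\begin{bmatrix}t\\x\end{bmatrix}=M^{-1}\begin{bmatrix}p\\q\end{bmatrix}$. At that point the paper simply writes ``Thus, $\alpha$ defines an automorphism of $A_1(K)$'' and stops.

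You are right to flag the step from the inverted matrix identity to surjectivity as the nontrivial one: the relation $t=N_{11}p+N_{12}q$ only places $t$ in the \emph{left ideal} $A_1(K)p+A_1(K)q$, whereas $\mathrm{Im}(\alpha)$ is the \emph{subalgebra} generated by $p$ and $q$. These need not coincide, since the entries $N_{ij}$ of $M^{-1}$ are a~priori polynomials in $t$ and $x$, not in $p$ and $q$. The paper does not address this point at all; it asserts the conclusion directly from the matrix inversion. So the gap you have isolated is present in the paper's own proof as well.

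However, your proposed remedy does not close it. The relations $NM=MN=I$ give identities among $b(t),d(t),P,Q$ and the $N_{ij}$, but there is no mechanism by which ``solving by elimination'' would rewrite the $N_{ij}$ as noncommutative polynomials in $p$ and $q$ alone: the data $b(t),d(t),P,Q$ are themselves expressions in $t$ and $x$, not in $p$ and $q$, so eliminating among them just returns elements of $A_1(K)$. In concrete examples where the argument does go through (e.g.\ $p=t+\lambda x^{n}$, $q=x$), it works for the special reason that one of $t,x$ is already $q$, so the offending entry $N_{12}=-\lambda x^{n-1}$ lies in $\mathrm{Im}(\alpha)$ automatically; no general elimination produces this. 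As stated, neither your argument nor the paper's avoids appealing to Theorem~\ref{Theorem1.3} at the decisive step, despite the paper's claim to the contrary.
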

\begin{proof}
According to (i), $p$ and $q$ defines an endomorphism of $A_1(K)$, with $\alpha(t):=p$ and $\alpha(x):=q$ (Corollary \ref{corollary5.2}). By (ii) and (iii),
\begin{equation}
\begin{bmatrix}
p\\
q
\end{bmatrix}=\begin{bmatrix}b(t) & p_1(t)+p_2(t)x+\cdots +p_n(t)x^{n-1}\\
d(t) & q_1(t)+q_2(t)x+\cdots +q_n(t)x^{n-1}\end{bmatrix}\begin{bmatrix}
t\\
x
\end{bmatrix},
\end{equation}
so
\begin{equation}\label{equation5.14}
\begin{bmatrix}
	t\\
	x
\end{bmatrix}=\begin{bmatrix}b(t) & p_1(t)+p_2(t)x+\cdots +p_n(t)x^{n-1}\\
	d(t) & q_1(t)+q_2(t)x+\cdots +q_n(t)x^{n-1}\end{bmatrix}^{-1}
\begin{bmatrix}
p\\
q
\end{bmatrix}.
\end{equation}
Thus, $\alpha$ defines an automorphism of $A_1(K)$.
\end{proof}

Observe that the equation (\ref{equation5.14}) let us to express the variables $t$ and $x$ in terms of $p$ and $q$.

\begin{example}
$p:=t+\lambda x^n$ and $q:=x$ satisfy (i)-(iii) of the previous theorem, and $\alpha$ is this case coincides with $\Phi_{n,\lambda}$, for $n\geq 1$. In fact, it is clear that $qp=pq+1$, $p_0(t)=t$, $q_0(t)=0$ and the matrix of (iii) is $\begin{bmatrix}1 & \lambda x^{n-1}\\
0 & 1\end{bmatrix}$ with
$\begin{bmatrix}1 & \lambda x^{n-1}\\
0 & 1\end{bmatrix}^{-1}=\begin{bmatrix}1 & -\lambda x^{n-1}\\
0 & 1\end{bmatrix}$.

In a similar way, $p:=t$ and $q:=\lambda t^n+x$ satisfy (i)-(iii) of the previous theorem, and $\alpha$ is this case coincides with $\Phi_{n,\lambda}'$, for $n\geq 1$ (see the Dixmier automorphisms in (i) of Subsection\ref{subsection5.1}).

These two examples can be generalized as follows:

(i) Let $n\geq 1$, $p=p_0(t)+p_1(t)x+p_2(t)x^2+\cdots+p_n(t)x^n$ and $q=q_0(t)+q_1(t)x+q_2(t)x^2+\cdots+q_n(t)x^n$, with $p_0(t)=t$, $p_1(t),\dots,p_n(t)\in K$, $q_0(t)=0$, $q_1(t)=1$ and $q_2(t)=\cdots q_n(t)=0$. Then $b(t)=1$, $d(t)=0$, $q_k^{(r)}(t)=0$ for $0\leq k\leq n$, $1\leq r\leq n$, and $p_k^{(r)}(t)=0$ for $1\leq k\leq n$, $1\leq r\leq n$. Thus, the $2n$ identities of Theorem \ref{theorem5.9a} hold, i.e., $p$ and $q$ are Dixmier polynomials. The condition (ii) of Theorem \ref{theorem5.9} holds and
\begin{center}
$\begin{bmatrix}b(t) & p_1(t)+p_2(t)x+\cdots +p_n(t)x^{n-1}\\
d(t) & q_1(t)+q_2(t)x+\cdots +q_n(t)x^{n-1}\end{bmatrix}=\begin{bmatrix}1 & p_1(t)+p_2(t)x+\cdots +p_n(t)x^{n-1}\\
0 & 1\end{bmatrix}\in GL_2(A_1(K))$
\end{center}
with
\begin{center}
$\begin{bmatrix}1 & p_1(t)+p_2(t)x+\cdots +p_n(t)x^{n-1}\\
0 & 1\end{bmatrix}^{-1}=\begin{bmatrix}1 & -(p_1(t)+p_2(t)x+\cdots +p_n(t)x^{n-1})\\
0 & 1\end{bmatrix}.$
\end{center}

(ii) Let $n\geq 1$, $p=p_0(t)+p_1(t)x+p_2(t)x^2+\cdots+p_n(t)x^n$ and $q=q_0(t)+q_1(t)x+q_2(t)x^2+\cdots+q_n(t)x^n$, with $p_0(t)=t$, $p_1(t)=\cdots =p_n(t)=0$, $q_0(t)=d(t)t$, for some $d(t)\in K[t]$, $q_1(t)=1$ and $q_2(t)=\cdots q_n(t)=0$. Then $b(t)=1$, $q_k^{(r)}(t)=0$ for $1\leq k\leq n$, $1\leq r\leq n$, and $p_k^{(r)}(t)=0$ for $1\leq k\leq n$, $1\leq r\leq n$. Whence, the $2n$ identities of Theorem \ref{theorem5.9a} hold, in particular,
\begin{center}
$\det \begin{bmatrix}q_1(t) & p_1(t)\\
q_0'(t) & p_0'(t)\end{bmatrix}=\det \begin{bmatrix}1 & 0\\
q_0'(t) & 1\end{bmatrix}=1$.
\end{center}
Thus, $p$ and $q$ are Dixmier polynomials. The condition (ii) of Theorem \ref{theorem5.9} holds and
\begin{center}
	$\begin{bmatrix}b(t) & p_1(t)+p_2(t)x+\cdots +p_n(t)x^{n-1}\\
	d(t) & q_1(t)+q_2(t)x+\cdots +q_n(t)x^{n-1}\end{bmatrix}=\begin{bmatrix}1 & 0\\
	d(t) & 1\end{bmatrix}\in GL_2(A_1(K))$
\end{center}
with
$\begin{bmatrix}1 & 0\\
d(t) & 1\end{bmatrix}^{-1}=\begin{bmatrix}1 & 0\\
-d(t) & 1\end{bmatrix}.$
\end{example}

\begin{remark}
Our Theorem \ref{theorem5.9} is a partial characterization of Dixmier problem: In fact,
$\Phi_{0,\lambda}$, with $\lambda\neq 0$, satisfies (i) but not (ii) and (iii). Similarly, $\Phi_{0,\lambda}'$, with $\lambda\neq 0$, satisfies (i) but not (ii) and (iii). Thus, the Dixmier problem implies (i) but not neccesarily (ii) and (iii) of Theorem \ref{theorem5.9}.
\end{remark}

\begin{remark}
According to Theorem \ref{theorem4.16},  $\mathcal{CSD}_n(K)$ is a noetherian ring, in particular, $A_1(K)=\mathcal{CSD}_2(K)$ is noetherian. Then, for every $k\geq 2$, the ring of square matrices $M_k(\mathcal{CSD}_n(K))$ is noetherian, whence, if $M\in M_k(\mathcal{CSD}_n(K))$ has a left inverse, then $M$ is invertible (Noetherian rings are \textit{Dedekind finite}, see \cite{Lezama-GADF}, page 134). Thus, in order to check the condition (iii) of Theorem \ref{theorem5.9} we only need to verify that the matrix of (iii) has a left inverse. In the computations of the next subsection we will take in account this fact.
\end{remark}

\subsection{Implementation of matrix characterization of Dixmier problem}

Using the computational \textbf{SPBWE} library developed by the first author in \cite{Fajardo2} and \cite{Fajardo3} (see also \cite{Lezama-sigmaPBW} and \cite{Lezama-algebraic}), along with the \textbf{OreTools} package, it becomes possible to compute numerous families of Dixmier polynomials, thereby enabling the construction of nontrivial examples of automorphisms of $A_1(K)$. The \textbf{OreTools} package, already included within Maple software, is specifically designed to perform the basic arithmetic in Ore algebra, with its routines categorized into distinct functions for efficient computation. According to Theorem \ref{theorem5.9a}, all that needs to be computed are the Dixmier polynomials themselves. By applying Theorem \ref{theorem5.9}, additional calculations are required, but which allow expressing the variables $t$ and $x$ in terms of $p$ and $q$.

\begin{center}
	\textbf{The procedure using Theorem \ref{theorem5.9a}}
\end{center}

To construct Dixmier polynomials the following parameters are involved:
\begin{itemize}
	\item $n:=\deg(p)=\deg(q)$. Without loss of generality (completing with zero summands), we can assume that $p$ and $q$ have the same degree $n\geq 1$ in the variable $x$.
	\item $m:=\deg(p_i(t))=\deg(q_i(t))$, $0\leq i\leq n$. We can assume that all coefficients $p_i(t)$ of $p$ and all coefficients $q_i(t)$ of $q$ have the same degree $m\geq 1$ in the variable $t$.
	\item For $0\leq i\leq n$, the coefficients of $p_i(t)$ and $q_i(t)$ in $K$:
	\begin{center}
		$p_i(t)=p_{i,0}+p_{i,1}t+\cdots+p_{i,m-1}t^{m-1}+p_{i,m}t^m$,
		
		$q_i(t)=q_{i,0}+q_{i,1}t+\cdots+q_{i,m-1}t^{m-1}+q_{i,m}t^m$.
	\end{center}
\end{itemize}
We compute $qp$ and $pq$ (using the rule $xa(t)=a(t)x+a'(t)$, with $a(t)\in K[t])$. In order to construct Dixmier polynomials we use the identity $[q,p]=qp-pq=1 $, and then we compare the similar terms in the variable $x$ (see also the proof of Theorem \ref{theorem5.9a}). From this arise a system of equations in the variables
\begin{center}
	$p_{i,0},p_{i,1},\dots,p_{i,m-1},p_{i,m}; q_{i,0},q_{i,1},\dots,q_{i,m-1},q_{i,m}$, $0\leq i\leq n$.
\end{center}
Every solution of the system corresponds to a couple $p,q$ of Dixmier polynomials, and hence, an automorphism $\alpha$ defined by $\alpha(t):=p$ and $\alpha(x):=q$.

\begin{center}
	\textbf{The procedure using Theorem \ref{theorem5.9}}
\end{center}

\textit{Step 1}. According to (i) of Theorem \ref{theorem5.9}, we have first to construct Dixmier polynomials. For this computation, the following parameters are involved:
\begin{itemize}
	\item $n:=\deg(p)=\deg(q)$. Without loss of generality (completing with zero summands), we can assume that $p$ and $q$ have the same degree $n\geq 1$ in the variable $x$.
	\item $m:=\deg(p_i(t))=\deg(q_i(t))$, $0\leq i\leq n$. We can assume that all coefficients $p_i(t)$ of $p$ and all coefficients $q_i(t)$ of $q$ have the same degree $m\geq 1$ in the variable $t$.
	\item Coefficients of $p_0(t)$ in $K$, $p_0(t)=p_{0,1}t+\cdots+p_{0,m-1}t^{m-1}+p_{0,m}t^m$ (recall that $t$ divides $p_0(t)$).
	\item Coefficients of $q_0(t)$ in $K$, $q_0(t)=q_{0,1}t+\cdots+q_{0,m-1}t^{m-1}+q_{0,m}t^m$ (recall that $t$ divides $q_0(t)$).
	\item For $1\leq i\leq n$, the coefficients of $p_i(t)$ and $q_i(t)$ in $K$:
	\begin{center}
		$p_i(t)=p_{i,0}+p_{i,1}t+\cdots+p_{i,m-1}t^{m-1}+p_{i,m}t^m$,
		
		$q_i(t)=q_{i,0}+q_{i,1}t+\cdots+q_{i,m-1}t^{m-1}+q_{i,m}t^m$.
	\end{center}
\end{itemize}
We compute $qp$ and $pq$ (using the rule $xa(t)=a(t)x+a'(t)$, with $a(t)\in K[t])$. In order to construct Dixmier polynomials we use the identity $[q,p]=qp-pq=1 $, and then we compare the similar terms in the variable $x$ (see also the proof of Theorem \ref{theorem5.9a}). From this arise a system of equations in the variables
\begin{center}
	$p_{i,0},p_{i,1},\dots,p_{i,m-1},p_{i,m}; q_{i,0},q_{i,1},\dots,q_{i,m-1},q_{i,m}$, $0\leq i\leq n$.
\end{center}
Every solution of the system corresponds to a couple $p,q$ of Dixmier polynomials.

\textit{Step 2}. For every couple of Dixmier polynomials we have to define the matrix of (iii) in Theorem \ref{theorem5.9} and check if it has inverse (=left inverse). If so, for every couple we have constructed an automorphism $\alpha$ defined by $\alpha(t):=p$ and $\alpha(x):=q$.

\begin{center}
\textbf{The implementation of procedure using Theorem \ref{theorem5.9}}
\end{center}

Using the computational package \textbf{SPBWE}, it is possible to compute many families of polynomials satisfying the conditions of Theorem \ref{theorem5.9}, and hence, to construct nontrivial automorphisms of $A_1(K).$

The \textbf{SPBWE} library, implemented for \texttt{MAPLE}, enables computational work with bijective skew $PBW$ extensions. In particular, it supports the development of Gröbner theory for left ideals and modules of skew $PBW$ extensions, along with some of its significant applications in homological algebra. The library includes utilities for computing Gröbner bases and provides functions to calculate syzygy modules, free resolutions, and left inverses of matrices, among other tools. \textbf{SPBWE} consists of the following packages:
\begin{center}
\texttt{RingTools, SPBWETools, SPBWEGrobner}, \texttt{SPBWERings}, and \texttt{DixmierProblem}.
\end{center}

\noindent
\textbf{RingTools}: Defines the structure of the coefficient ring $R$ of a skew $PBW$ extension $A$.
\\
\textbf{SPBWETools}: A collection of functions inherent to extensions, allowing users to define the structure of $A$. These functions are particularly useful for working with noncommutative rings of polynomial type.
\\
\textbf{SPBWEGrobner}: A package containing functions where the main routine is the Buchberger algorithm for computing Gröbner bases.
\\
\textbf{SPBWERings}: This package contains predefined concrete examples of skew $PBW$ extensions within the library. Notably, it includes the first Weyl algebra $A_1(K).$
\\
\textbf{DixmierProblem}: This is a comprehensive package that offers a suite of functions pertinent to the study of the Dixmier problem in noncommutative algebra. It includes the \texttt{DixmierPolynomials} function, which generates families of Dixmier polynomial pairs, and its skew analogue, \texttt{skewDixmierPolynomials}, that constructs polynomials $p_1, \ldots, p_n \in \mathcal{CSD}_n(K)$ satisfying $[p_j, p_i] = d_{ij}\in K^*$. In addition, the package provides the \texttt{DixmierAutomorphism} function to generate Dixmier automorphisms on \(A_1(K)\), along with utilities for obtaining the inverse of an automorphism (via \texttt{inverseAutomorphism} and its skew version, \texttt{skewDixmierAutomorphism}) and for performing operations such as composition and other transformations. Central to the package is the \texttt{DixmierAutomorphismFactor} function, which decomposes complex Dixmier endomorphisms (=automorphisms, Theorem \ref{Theorem1.3}) into elementary automorphisms, specifically, $\Phi_{n,\lambda}$ and $\Psi_{n,\lambda}$, thereby, providing a constructive framework that supports the validation of the Dixmier problem by reducing intricate algebraic objects into more tractable components. This integrated environment not only facilitates rigorous investigations into algebraic structures, differential equations, and computational models but also streamlines the process of testing and refining theoretical conjectures. Below we illustrate the use of the functions \texttt{DixmierPolynomials} and \texttt{DixmierAutomorphism}.

\subsubsection*{\texttt{DixmierPolynomials}}

The \texttt{DixmierPolynomials} function is a fundamental component of the \texttt{DixmierProblem} package in Maple. It exploits the combined functionalities of the \texttt{OreTools} and \texttt{SPBWE} libraries, which together implement the predefined structure of $A_1(K).$ This algebraic framework is first initialized and assigned to a variable, thereby, establishing the foundation for all subsequent computational procedures.

The function requires two essential inputs, positive integers $n$ and $m$, and supports several optional parameters. In particular, the Boolean parameter \texttt{view} determines whether intermediate computations are displayed. When \texttt{view=false}, these intermediate results are suppressed. Conversely, if \texttt{view=true} is set, an additional option, \texttt{outputMode}, becomes active. Here \texttt{outputMode} accepts the string values \texttt{"finalResults"} or \texttt{"detailedResults"}. With \texttt{outputMode="finalResults"}, only the final results are displayed, along with comprehensive information on the families of Dixmier polynomial pairs computed. In contrast, \texttt{"detailedResults"} provides a full account of all intermediate calculations alongside the final outcomes. By default, the function is executed with \texttt{view=false}; if \texttt{view=true} is enabled, the default for \texttt{outputMode} is \texttt{"finalResults"}.

Furthermore, in select cases, the function also outputs the matrix corresponding to condition (iii) from Theorem \ref{theorem5.9}, together with its inverse (=left inverse).

The function generates two polynomials, $p$ and $q$, defined as:
\[
p=\sum_{i=0}^{n}\left(\sum_{j=0}^{m}p_{i,j}t^j\right)x^i, \quad
q=\sum_{i=0}^{n}\left(\sum_{j=0}^{m}q_{i,j}t^j\right)x^i,
\]
where $p_{i,j}$ and $q_{i,j}$ are symbolic variables within Maple. The function computes the commutator of these polynomials, $[q,p]=qp-pq$, and solves the equation $[q,p]=1.$ The solution process employs the Maple command \texttt{solve}, which outputs all possible solutions, organized into distinct cases. Finally, each resulting polynomial is expressed in terms of the parameters $\lambda_i$ within the field $K.$

\subsubsection*{Tests with \texttt{DixmierPolynomials} function}

To illustrate the functionality and versatility of the \texttt{DixmierPolynomials} function within the
\linebreak \texttt{DixmierProblem} package, we present a series of practical examples. These examples are designed to showcase the computational capabilities of the function, including polynomial generation, symbolic manipulation, and the resolution of equations involving commutators in the Weyl algebra.

By exploring these cases, readers gain a deeper understanding of the potential applications of the \texttt{DixmierPolynomials} function in both algebraic and symbolic computations. Additionally, the examples highlight specific configurations such as toggling the optional parameter \texttt{view} between \texttt{true} and \texttt{false} to tailor the output to user preferences. Through this hands-on approach, we demonstrate not only the theoretical significance of the function but also its practical utility in mathematical modeling and problem-solving.

Below, we delve into some illustrative examples to offer a comprehensive perspective on how\linebreak \texttt{DixmierPolynomials} can be effectively employed in diverse computational scenarios.

\begin{example}
	Next we present an application of the \texttt{DixmierPolynomial} function. As an initial step, we load the \texttt{DixmierProblem} package in Maple by executing the command \texttt{with(DixmierProblem):}, which enables access to its functionalities. Subsequently, we execute the command
	\begin{center}
		\texttt{DixmierPolynomial(2,\,2,\,view=true,\,outputMode="detailedResults")}
	\end{center}
	to obtain the resulting output. We note that setting the options \texttt{view=true} and \texttt{outputMode=}\linebreak \texttt{"detailedResults"} instructs the function to display all intermediate computational steps. Moreover, within the output, the greek letters denote elements of the field $K.$
	
	\smallskip
	
	\noindent
	\verb"Generated polynomials:"
	\begin{align*}
	p &= p_{0,1}t+p_{0,2}t^2+\left(p_{1,0}+p_{1,1}t+p_{1,2}t^2\right)x+\left(p_{2,0}+p_{2,1}t+p_{2,2}t^2\right)
	x^2\\
	q &= q_{0,1}t+q_{0,2}t^2+\left(q_{1,0}+q_{1,1}t+q_{1,2}t^2\right)x+\left(q_{2,0}+q_{2,1}t+q_{2,2}t^2\right)
	x^2
	\end{align*}
	
	\noindent
	\verb"The commutator [q,p] is:"
	\[
	e_0+e_1x+e_2x^2+e_3x^3,
	\]
	\texttt{where:}
	
	$e_0=q_{1,0}p_{0,1}+2q_{2,0}p_{0,2}-q_{0,1}p_{1,0}-2q_{0,2}p_{2,0}+((2q_{2,1}+2q_{1,0})p_{0,2}+ (-2p_{1,0}-2p_{2,1})q_{0,2}+q_{1,1}p_{0,1}-q_{0,1}p_{1,1})t+((2q_{2,2}+2q_{1,1})p_{0,2}+(-2p_{1,1}-2p_{2, 2})q_{0,2}+q_{1,2}p_{0,1}-q_{0,1}p_{1,2})t^2+(2q_{1,2}p_{0,2}-2q_{0,2}p_{1,2})t^3,$
	
	$e_1=(-2q_{1,2}-2q_{0,1})p_{2,0}+(2p_{0,1}+2p_{1,2})q_{2,0}-q_{1,1}p_{1,0}+q_{1,0}p_{1,1}+((2q_{2, 1}+2q_{1,0})p_{1,2}+(-2q_{1,2}-2q_{0,1})p_{2,1}+2q_{2,1}p_{0,1}-2q_{1,2}p_{1,0}+4q_{2,0}p_{0,2}-4q_{0, 2}p_{2,0})t+((q_{1,1}+2q_{2,2})p_{1,2}+(-2q_{1,2}-2q_{0,1})p_{2,2}+2q_{2,2}p_{0,1}-q_{1,2}p_{1,1}+4q_{2, 1}p_{0,2}-4q_{0,2}p_{2,1})t^2+(4q_{2,2}p_{0,2}-4q_{0,2}p_{2,2})t^3,$
	
	$e_2=(-2q_{1,1}-2q_{2,2})p_{2,0}+(2p_{1,1}+2p_{2,2})q_{2,0}-q_{2,1}p_{1,0}+q_{1,0}p_{2,1}+ ((-2q_{2,2}-q_{1,1})p_{2,1}+(2q_{2,1}+2q_{1,0})p_{2,2}-2q_{2,2}p_{1,0}-4q_{1,2}p_{2,0}+q_{2,1}p_{1,1}+4q_{2, 0}p_{1,2})t+(3q_{2,1}p_{1,2}-3q_{1,2}p_{2,1})t^2+(2q_{2,2}p_{1,2}-2q_{1,2}p_{2,2})t^3,$
	
	$e_3=-2q_{2,1}p_{2,0}+2q_{2,0}p_{2,1}+(-4q_{2,2}p_{2,0}+4q_{2,0}p_{2,2})t+(-2q_{2,2}p_{2,1}+2q_{2, 1}p_{2,2})t^2.$
	
	
	\noindent
	\verb"The equation [q,p]=1 is solved using the following system of equations:"
	\begin{gather*}
	p_{0,1}q_{1,0}+2p_{0,2}q_{2,0}-p_{1,0}q_{0,1}-2p_{2,0}q_{0,2}=1,\\
	(-2p_{2,1}-2p_{1,0})q_{0,2}+(2q_{2,1}+2q_{1,0})p_{0,2}-p_{1,1}q_{0,1}+p_{0,1}q_{1,1}=0,\\
	(-2p_{2,2}-2p_{1,1})q_{0,2}+(2q_{2,2}+2q_{1,1})p_{0,2}-p_{1,2}q_{0,1}+p_{0,1}q_{1,2}=0,\\
	-2p_{1,2}q_{0,2}+2p_{0,2}q_{1,2}=0,\\
	(-2q_{1,2}-2q_{0,1})p_{2,0}+(2p_{1,2}+2p_{0,1})q_{2,0}-p_{1,0}q_{1,1}+p_{1,1}q_{1,0}=0,\\
	(-2q_{1,2}-2q_{0,1})p_{2,1}+(2q_{2,1}+2q_{1,0})p_{1,2}-4p_{2,0}q_{0,2}-2p_{1,0}q_{1,2}+
	4p_{0,2}q_{2,0}+2p_{0,1}q_{2,1}=0,\\
	(-2q_{1,2}-2q_{0,1})p_{2,2}+(2q_{2,2}+q_{1,1})p_{1,2}-4p_{2,1}q_{0,2}-p_{1,1}q_{1,2}+
	4p_{0,2}q_{2,1}+2p_{0,1}q_{2,2}=0,\\
	-4p_{2,2}q_{0,2}+4p_{0,2}q_{2,2}=0,\\
	(-2q_{2,2}-2q_{1,1})p_{2,0}+(2p_{2,2}+2p_{1,1})q_{2,0}-p_{1,0}q_{2,1}+p_{2,1}q_{1,0}=0,\\
	(2q_{2,1}+2q_{1,0})p_{2,2}+(-2q_{2,2}-q_{1,1})p_{2,1}-4p_{2,0}q_{1,2}+4p_{1,2}q_{2,0}+
	p_{1,1}q_{2,1}-2p_{1,0}q_{2,2}=0,\\
	-3p_{2,1}q_{1,2}+3p_{1,2}q_{2,1}=0,\\
	-2p_{2,2}q_{1,2}+2p_{1,2}q_{2,2}=0,\\
	-2p_{2,0}q_{2,1}+ 2p_{2,1}q_{2,0}=0,\\
	4p_{2,2}q_{2,0}-4p_{2,0}q_{2,2}=0,\\
	2p_{2,2}q_{2,1}-2p_{2,1}q_{2,2}=0.
	\end{gather*}
	
	\noindent
	\verb"The equation [q,p]=1 is solved in the following cases:"
	
	\bigskip
	
	\noindent
	\texttt{----- Case 1 -----}
	
	\smallskip
	
	\noindent
	\texttt{Solution case details:}
	
	\smallskip
	
	\noindent
	$p_{0,1}=p_{0,1},p_{0,2}=p_{0,2},p_{1,0}=p_{1,0},
	p_{1,1}=\frac{2p_{0,2}(\alpha+1)}{\alpha p_{1,0}},p_{1,2}=0,p_{2,0}=p_{2,0},p_{2,1}=0,p_{2,2}=0,
	q_{0,1}=\alpha,q_{0,2}=0,q_{1,0}=\frac{p_{1,0}\alpha+1}{p_{0,1}},q_{1,1}=0,q_{1,2}=0,q_{2,0}=0, q_{2,1}=0,q_{2,2}=0,$ where $\alpha$ is a root of the polynomial
	$\left(p_{0,1}^2p_{2,0}-p_{0,2}p_{1,0}^2\right)Z^2-2p_{1,0}p_{0,2}Z-p_{0,2}.$
	
	\smallskip
	
	\noindent
	\texttt{After introducing new parameters lambda, the solution is:}
	
	\smallskip
	
	\noindent
	$p_{0,1}=\lambda_{0},p_{0,2}=\lambda_{1},p_{1,0}=\lambda_{2},p_{1,1}=\frac{2\lambda_{1}(1+ \lambda_{2}\lambda_{4})}{\lambda_{4}\lambda_{0}},p_{1,2}=0,p_{2,0}=\lambda_{3},p_{2,1}=0,p_{2,2}=0,
	q_{0,1}=\lambda_{4},q_{0,2}=0,q_{1,0}=\frac{1+\lambda_{2}\lambda_{4}}{\lambda_{0}},q_{1,1}=0,q_{1,2}=0,
	q_{2,0}=0,q_{2,1}=0, q_{2,2}=0.$
	
	\smallskip
	
	\noindent
	\texttt{The Dixmier polynomials are:}
	\begin{align*}
	p &= \lambda_0t+\lambda_1t^2+\left(\lambda_2+\frac{2\lambda_5}{\lambda_0\lambda_4}t\right)x+
	\frac{(1+\lambda_2\lambda_4)\lambda_5}{\lambda_0^2\lambda_4^2}x^2\\
	q &= \lambda_4t+\frac{1+\lambda_2\lambda_4}{\lambda_0}x
	\end{align*}
	\texttt{where:}
	\[
	\lambda_5=\lambda_1(1+\lambda_2\lambda_4)
	\]
	\texttt{Under condition (iii) of Theorem} \ref{theorem5.9}, \texttt{the associated matrix is:}
	\[
	\begin{bmatrix}
	\lambda_0+\lambda_1t & \lambda_2+\frac{2}{\lambda_4}t+\frac{\lambda_2}{(\lambda_0-\lambda_1)\lambda_4}x\\
	\lambda_4 & \frac{1+\lambda_2\lambda_4}{\lambda_0}
	\end{bmatrix}
	\]
	\texttt{The existence of its inverse is not guaranteed, but the automorphism property holds as per Theorem \ref{theorem5.9a}.}
	
	\bigskip
	
	\bigskip
	
	\noindent
	\texttt{----- Case 2 -----}
	
	\smallskip
	
	\noindent
	\texttt{Solution case details:}
	
	\smallskip
	
	\noindent
	$p_{0,1}=p_{0,1},p_{0,2}=0,p_{1,0}=p_{1,0},p_{1,1}=0,p_{1,2}=0,p_{2,0}=p_{2,0},p_{2,1}=0,p_{2,2}= 0,q_{0,1}=q_{0,1},q_{0,2}=0,q_{1,0}=\frac{p_{1,0}q_{0,1}+1}{p_{0,1}},q_{1,1}=0,q_{1,2}=0,
	q_{2,0}=\frac{q_{0,1}p_{2,0}}{p_{0,1}},q_{2,1}=0,q_{2,2}=0.$
	
	\smallskip
	
	\noindent
	\texttt{After introducing new parameters lambda, the solution is:}
	
	\smallskip
	
	\noindent
	$p_{0,1}=\lambda_0,p_{0,2}=0,p_{1,0}=\lambda_1,p_{1,1}=0,p_{1,2}=0,p_{2,0}=\lambda_2,p_{2,1}=0,
	p_{2,2}=0,q_{0,1}=\lambda_3,q_{0,2}=0,q_{1,0}=\frac{1+\lambda_1\lambda_3}{\lambda_0},q_{1,1}=0,
	q_{1,2}=0,q_{2,0}=\frac{\lambda_3\lambda_2}{\lambda_0},q_{2,1}=0,q_{2,2}=0.$
	
	\smallskip
	
	\noindent
	\texttt{The Dixmier polynomials are:}
	\begin{align*}
	p &= \lambda_0t+\lambda_1x+\lambda_2x^2\\
	q &= \lambda_3t+\frac{1+\lambda_1\lambda_3}{\lambda_0}x+\frac{\lambda_2\lambda_3}{\lambda_0}x^2
	\end{align*}
	\texttt{Under condition (iii) of Theorem} \ref{theorem5.9}, \texttt{the associated matrix is:}
	\[
	\begin{bmatrix}
	\lambda_0 & \lambda_1+\lambda_2x\\
	\lambda_3 & \frac{1+\lambda_1\lambda_3}{\lambda_0}+\frac{\lambda_2\lambda_3}{\lambda_0}x
	\end{bmatrix}
	\]
	\texttt{Its inverse is:}
	\[
	\begin{bmatrix}
	\frac{1+\lambda_1\lambda_3}{\lambda_0}+\frac{\lambda_2\lambda_3}{\lambda_0}x & -\lambda_1-\lambda_2x\\
	-\lambda_3 & \lambda_0
	\end{bmatrix}
	\]
	\texttt{Thus, the polynomials satisfy Theorem \ref{theorem5.9} and define an automorphism of A\_1(K).}
	
	\bigskip
	
	\noindent
	\texttt{----- Case 3 -----}
	
	\smallskip
	
	\noindent
	\texttt{Solution case details:}
	
	\smallskip
	
	\noindent
	$p_{0,1}=p_{0,1},p_{0,2}=p_{0,2},p_{1,0}=p_{1,0},p_{1,1}=0,p_{1,2}=0,p_{2,0}=0,p_{2,1}=0,p_{2,2}= 0,q_{0,1}=\linebreak\frac{p_{0,1}p_{1,0}q_{0,2}-p_{0,2}}{p_{0,2}p_{1,0}},q_{0,2}=q_{0,2},
	q_{1,0}=\frac{p_{1,0}q_{0,2}}{p_{0, 2}},q_{1,1}=0,q_{1,2}=0,q_{2,0}=0,q_{2,1}=0,q_{2,2}=0.$
	
	\smallskip
	
	\noindent
	\texttt{After introducing new parameters lambda, the solution is:}
	
	\smallskip
	
	\noindent
	$p_{0,1}=\lambda_0,p_{0,2}=\lambda_1,p_{1,0}=\lambda_2,p_{1,1}=0,p_{1,2}=0,p_{2,0}=0,p_{2,1}=0,p_{2,2}=0, q_{0,1}=\frac{-\lambda_1+\lambda_0\lambda_2\lambda_3}{\lambda_1\lambda_2},q_{0,2}=\lambda_3,q_{1,0}= \frac{\lambda_2\lambda_3}{\lambda_1},q_{1,1}=0,q_{1,2}=0,q_{2,0}=0,q_{2,1}=0,q_{2,2}=0.$
	
	\smallskip
	
	\noindent
	\texttt{The Dixmier polynomials are:}
	\begin{align*}
	p &= \lambda_0t+\lambda_1t^2+\lambda_2x\\
	q &= \frac{-\lambda_1+\lambda_0\lambda_2\lambda_3}{\lambda_1\lambda_2}t+\lambda_3t^2+
	\frac{\lambda_2\lambda_3}{\lambda_1}x
	\end{align*}
	\texttt{Under condition (iii) of Theorem} \ref{theorem5.9}, \texttt{the associated matrix is:}
	\[
	\begin{bmatrix}
	\lambda_0+\lambda_1t & \lambda_2\\
	\frac{-\lambda_1+\lambda_0\lambda_2\lambda_3}{\lambda_1\lambda_2}+\lambda_3t &
	\frac{\lambda_2\lambda_3}{\lambda_1}
	\end{bmatrix}
	\]
	\texttt{Its inverse is:}
	\[
	\begin{bmatrix}
	\frac{\lambda_2\lambda_3}{\lambda_1} & -\lambda_2\\
	\frac{\lambda_1-\lambda_0\lambda_2\lambda_3}{\lambda_1\lambda_2}-\lambda_3t &
	\lambda_0+\lambda_1t
	\end{bmatrix}
	\]
	\texttt{Thus, the polynomials satisfy Theorem \ref{theorem5.9} and define an automorphism of A\_1(K).}

	\noindent
	\texttt{----- Case 4 -----}
	
	\smallskip
	
	\noindent
	\texttt{Solution case details:}
	
	\smallskip
	
	\noindent
	$p_{0,1}=p_{0,1},p_{0,2}=0,p_{1,0}=p_{1,0},p_{1,1}=0,p_{1,2}=0,p_{2,0}=0,p_{2,1}=0,p_{2,2}=0, q_{0,1}=q_{0,1},q_{0,2}=q_{0,2},q_{1,0}=\frac{p_{1,0}q_{0,1}+1}{p_{0,1}},
	q_{1,1}=\frac{2q_{0,2}p_{1,0}}{p_{0,1}},q_{1,2}=0,q_{2,0}=\frac{p_{1,0}^2q_{0,2}}{p_{0,1}^2},q_{2,1}=0,
	q_{2,2}=0.$
	
	\smallskip
	
	\noindent
	\texttt{After introducing new parameters lambda, the solution is:}
	
	\smallskip
	
	\noindent
	$p_{0,1}=\lambda_{0},p_{0,2}=0,p_{1,0}=\lambda_{1},p_{1,1}=0,p_{1,2}=0,p_{2,0}=0,p_{2,1}=0,p_{2,2}=0,
	q_{0,1}=\lambda_{2},q_{0,2}=\lambda_{3},q_{1,0}=\frac{1+\lambda_{1}\lambda_{2}}{\lambda_{0}},
	q_{1,1}=\frac{2\lambda_{1}\lambda_{3}}{\lambda_{0}},q_{1,2}=0,q_{2,0}=\frac{\lambda_{1}^2\lambda_{3}}
	{\lambda_{0}^2},q_{2,1}=0,q_{2,2}=0.$
	
	\smallskip
	
	\noindent
	\texttt{The Dixmier polynomials are:}
	\begin{align*}
	p &= \lambda_0t + \lambda_1x\\
	q &= \lambda_2t + \lambda_3t^2+\left(\frac{1+\lambda_1\lambda_2}{\lambda_0}+\frac{2\lambda_1\lambda_3}{\lambda_0}t\right)x+ \frac{\lambda_1^2\lambda_3}{\lambda_0^2}x^2
	\end{align*}
	\texttt{Under condition (iii) of Theorem} \ref{theorem5.9}, \texttt{the associated matrix is:}
	\[
	\begin{bmatrix}
	\lambda_0 & \lambda_1\\
	\lambda_2+\lambda_3t & \frac{1+\lambda_1\lambda_2}{\lambda_0}+\frac{2\lambda_1\lambda_3}{\lambda_0}t+
	\frac{\lambda_1^2\lambda_3}{\lambda_0^2}x
	\end{bmatrix}
	\]
	\texttt{The existence of its inverse is not guaranteed, but the automorphism property holds as per Theorem \ref{theorem5.9a}.}
\end{example}

In the following examples we present another application of the \texttt{DixmierPolynomial} function which is executed using the command \texttt{DixmierPolynomial(n, m, view=true)}, or equivalently,\linebreak \texttt{DixmierPolynomial(n, m, view=true, outputMode="finalResults")}. This command produces the results shown below. We note that when the \texttt{view=true} option is employed, only the final families of Dixmier polynomial pairs are displayed at the end of the computations. In the output, greek letters denote elements of $K$.

\begin{example}
	When applying the statement \texttt{DixmierPolynomial(1,\,9,\,view=true)}, the resulting output computed by the function is as follows:
	
	\medskip
	
	\noindent
	\texttt{----- Case 1 -----}
	
	\smallskip
	
	\noindent
	\texttt{The Dixmier polynomials are:}
	\begin{align*}
	p &=
	\frac{1+\lambda_0\lambda_1}{\lambda_{10}}t+\frac{\lambda_0\lambda_2}{\lambda_{10}}t^2+\frac{\lambda_0 \lambda_3}{\lambda_{10}}t^3+\frac{\lambda_0\lambda_4}{\lambda_{10}}t^4+\frac{\lambda_0 \lambda_5}{\lambda_{10}}t^5+\frac{\lambda_0\lambda_6}{\lambda_{10}}t^6+\frac{\lambda_0 \lambda_7}{\lambda_{10}}t^7+\frac{\lambda_0\lambda_8}{\lambda_{10}}t^8+\frac{\lambda_0 \lambda_9}{\lambda_{10}}t^9+\lambda_0x\\
	q &= \lambda_1t+\lambda_2t^2+\lambda_3t^3+\lambda_4t^4+\lambda_5t^5+\lambda_6t^6+\lambda_7t^7+\lambda_8t^8+ \lambda_9t^9+\lambda_{10}x
	\end{align*}
	\texttt{Under condition (iii) of Theorem} \ref{theorem5.9}, \texttt{the associated matrix is:}
	\[
	\begin{bmatrix}
	\frac{1+\lambda_0\lambda_1}{\lambda_{10}}+\frac{\lambda_0\lambda_2}{\lambda_{10}}t+\frac{\lambda_0 \lambda_3}{\lambda_{10}}t^2+\frac{\lambda_0\lambda_4}{\lambda_{10}}t^3+\frac{\lambda_0\lambda_5 }{\lambda_{10}}t^4+\frac{\lambda_0\lambda_6}{\lambda_{10}}t^5+\frac{\lambda_0\lambda_7}{\lambda_{10}}t^6+ \frac{\lambda_0\lambda_8}{\lambda_{10}}t^7+\frac{\lambda_0\lambda_9}{\lambda_{10}}t^8&\lambda_0\\
	\lambda_1+\lambda_2t+\lambda_3t^2+\lambda_4t^3+\lambda_5t^4+\lambda_6t^5+\lambda_7t^6+\lambda_8t^7+ \lambda_9t^8&\lambda_{10}
	\end{bmatrix}
	\]\texttt{Its inverse is:}
	\[
	\begin{bmatrix}
	\lambda_{10}&-\lambda_0\\
	h(t) &\frac{1+\lambda_0\lambda_1}{\lambda_{10}}+\frac{\lambda_0\lambda_2}{\lambda_{10}}t+ \frac{\lambda_0\lambda_3}{\lambda_{10}}t^2+\frac{\lambda_0\lambda_4}{\lambda_{10}}t^3+\frac{\lambda_0 \lambda_5}{\lambda_{10}}t^4+\frac{\lambda_0\lambda_6}{\lambda_{10}}t^5+\frac{\lambda_0 \lambda_7}{\lambda_{10}}t^6+\frac{\lambda_0\lambda_8}{\lambda_{10}}t^7+\frac{\lambda_0 \lambda_9}{\lambda_{10}}t^8
	\end{bmatrix}
	\]
	with $h(t)=-\lambda_9t^8-\lambda_8t^7-\lambda_7t^6-\lambda_6t^5-\lambda_5t^4-\lambda_4t^3-\lambda_3t^2-\lambda_2t- \lambda_1.$\newline
	\texttt{Thus, the polynomials satisfy Theorem \ref{theorem5.9} and define an automorphism of A\_1(K).}
	
	\medskip
	
	\noindent
	\texttt{----- Case 2 -----}
	
	\smallskip
	
	\noindent
	\texttt{The Dixmier polynomials are:}
	\begin{align*}
	p&=\lambda_0t+\lambda_1t^2+\lambda_2t^3+\lambda_3t^4+\lambda_4t^5+\lambda_5t^6+\lambda_6t^7+\lambda_7t^8+ \lambda_8t^9-\frac{1}{\lambda_9}x\\
	q&=\lambda_9t
	\end{align*}
	\texttt{Under condition (iii) of Theorem} \ref{theorem5.9}, \texttt{the associated matrix is:}
	\[
	\begin{bmatrix}
	\lambda_0+\lambda_1t+\lambda_2t^2+\lambda_3t^3+\lambda_4t^4+\lambda_5t^5+\lambda_6t^6+\lambda_7t^7+ \lambda_8t^8 & -\frac{1}{\lambda_9}\\
	\lambda_9 & 0
	\end{bmatrix}
	\]
	\texttt{Its inverse is:}
	\[
	\begin{bmatrix}
	0 & \frac{1}{\lambda_9}\\
	-\lambda_9 & \lambda_0+\lambda_1t+\lambda_2t^2+\lambda_3t^3+\lambda_4t^4+\lambda_5t^5+\lambda_6t^6+ \lambda_7t^7+\lambda_8t^8
	\end{bmatrix}
	\]
	\texttt{Thus, the polynomials satisfy Theorem \ref{theorem5.9} and define an automorphism of A\_1(K).}
\end{example}
\begin{example}
	When applying the statement \texttt{DixmierPolynomial(2,\,3,\,view=true)}, the resulting output computed by the function is as follows:
	
	\medskip
	
	\noindent
	\texttt{----- Case 1 -----}
	
	\smallskip
	
	\noindent
	\texttt{The Dixmier polynomials are:}
	\noindent
	\begin{align*}
	p&=\frac{\lambda_5+(\lambda_6-\lambda_4)\lambda_1}{\lambda_2\lambda_5}t+\frac{\lambda_3^2\lambda_6}{4 \lambda_2\lambda_4\lambda_5}t^2+\left(\lambda_0+\frac{\lambda_3\lambda_6}{\lambda_2\lambda_5}t\right)x+ \frac{\lambda_4\lambda_6}{\lambda_2\lambda_5}x^2\\
	q&=\lambda_1t+\frac{\lambda_3^2}{4\lambda_4}t^2+(\lambda_2+\lambda_3t)x+\lambda_4x^2
	\end{align*}
	\texttt{where:}
	\[
	\lambda_5=-\frac{\lambda_2\lambda_3}{2}+\lambda_1\lambda_4,\quad\lambda_6=\lambda_0\lambda_5+\lambda_4
	\]
	\texttt{Under condition (iii) of Theorem} \ref{theorem5.9}, \texttt{the associated matrix is:}
	\[
	\begin{bmatrix}
	\frac{1+\lambda_0\lambda_1}{\lambda_2}+\frac{(\lambda_4+\lambda_0^2)\lambda_3^2}{4\lambda_0 \lambda_2\lambda_4}t & \lambda_0+\frac{\lambda_3\lambda_7}{\lambda_0\lambda_2}t+\frac{\lambda_4 \lambda_7}{\lambda_0\lambda_2}x\\
	\lambda_1+\frac{\lambda_3^2}{4\lambda_4}t&\lambda_2+\lambda_3t+\lambda_4x
	\end{bmatrix}
	\]
	\texttt{where:}
	\[
	\lambda_7=\lambda_4+\lambda_0^2
	\]
	\texttt{The existence of its inverse is not guaranteed, but the automorphism property holds as per Theorem \ref{theorem5.9a}.}
	
	\medskip
	
	\noindent
	\texttt{----- Case 2 -----}
	
	\smallskip
	
	\noindent
	\texttt{The Dixmier polynomials are:}
	\begin{align*}
	p&=
	\frac{-\lambda_2 + 2 \lambda_0 \lambda_1^2}{2 \lambda_1 \lambda_3} t + \frac{\lambda_0 \lambda_2^2}{4 \lambda_3^2} t^2 + \left(-\frac{1}{\lambda_1} + \frac{\lambda_0 \lambda_2}{\lambda_3} t \right) x + \lambda_0 x^2\\
	q &= \lambda_1 t + \frac{\lambda_2^2}{4 \lambda_3} t^2 + \lambda_2 t x + \lambda_3 x^2
	\end{align*}
	\texttt{Under condition (iii) of Theorem} \ref{theorem5.9}, \texttt{the associated matrix is:}
	\[
	\begin{bmatrix}
	\frac{-2\lambda_2+4\lambda_0\lambda_1^2}{4\lambda_1\lambda_3}+\frac{\lambda_0\lambda_2^2}{4\lambda_3^2}t& -\frac{1}{\lambda_1}+\frac{\lambda_0\lambda_2}{\lambda_3}t+\lambda_0x\\
	\lambda_1+\frac{\lambda_2^2}{4\lambda_3}t&\lambda_2t+\lambda_3x
	\end{bmatrix}
	\]
	\texttt{The existence of its inverse is not guaranteed, but the automorphism property holds as per Theorem \ref{theorem5.9a}.}
	
	\medskip
	
	\noindent
	\texttt{----- Case 3 -----}
	
	\smallskip
	
	\noindent
	\texttt{The Dixmier polynomials are:}
	\begin{align*}
	p &= \frac{\lambda_0\lambda_1}{\lambda_3}t+\frac{-\lambda_3+\lambda_0\lambda_1\lambda_2}{\lambda_1 \lambda_3}x+\lambda_0x^2\\
	q &= \lambda_1t+\lambda_2x+\lambda_3x^2
	\end{align*}
	\texttt{Under condition (iii) of Theorem} \ref{theorem5.9}, \texttt{the associated matrix is:}
	\[
	\begin{bmatrix}
	\frac{\lambda_0\lambda_1}{\lambda_3}&\frac{-\lambda_3+\lambda_0\lambda_1\lambda_2}{\lambda_1\lambda_3}+ \lambda_0x\\
	\lambda_1&\lambda_2+\lambda_3x
	\end{bmatrix}
	\]
	\texttt{Its inverse is:}
	\[
	\begin{bmatrix}
	\lambda_2+\lambda_3x&-\frac{-\lambda_3+\lambda_0\lambda_1\lambda_2}{\lambda_1\lambda_3}-\lambda_0x\\
	-\lambda_1&\frac{\lambda_0\lambda_1}{\lambda_3}
	\end{bmatrix}
	\]
	\texttt{Thus, the polynomials satisfy Theorem \ref{theorem5.9} and define an automorphism of A\_1(K).}
	
	\medskip
	
	\noindent
	\texttt{----- Case 4 -----}
	
	\smallskip
	
	\noindent
	\texttt{The Dixmier polynomials are:}
	\begin{align*}
	p &= \frac{\lambda_3+\lambda_0\lambda_1\lambda_4}{\lambda_3\lambda_4}t+\frac{\lambda_0\lambda_2}{\lambda_3} t^2+\lambda_0t^3+\frac{\lambda_0\lambda_4}{\lambda_3}x\\
	q &= \lambda_1t+\lambda_2t^2+\lambda_3t^3+\lambda_4x
	\end{align*}
	\texttt{Under condition (iii) of Theorem} \ref{theorem5.9}, \texttt{the associated matrix is:}
	\[
	\begin{bmatrix}
	\frac{\lambda_3+\lambda_0\lambda_1\lambda_4}{\lambda_3\lambda_4}+\frac{\lambda_0\lambda_2}{\lambda_3}t+ \lambda_0t^2 & \frac{\lambda_0\lambda_4}{\lambda_3}\\
	\lambda_1+\lambda_2t+\lambda_3t^2 & \lambda_4
	\end{bmatrix}
	\]
	\texttt{Its inverse is:}
	\[
	\begin{bmatrix}
	\lambda_4 & -\frac{\lambda_0\lambda_4}{\lambda_3}\\
	-\lambda_1-\lambda_2t-\lambda_3t^2 & \frac{\lambda_3+\lambda_0\lambda_1\lambda_4}{\lambda_3\lambda_4}+ \frac{\lambda_0\lambda_2}{\lambda_3}t+\lambda_0t^2
	\end{bmatrix}
	\]
	\texttt{Thus, the polynomials satisfy Theorem \ref{theorem5.9} and define an automorphism of A\_1(K).}
	
	\medskip
	
	\noindent
	\texttt{----- Case 5 -----}
	
	\smallskip
	
	\noindent
	\texttt{The Dixmier polynomials are:}
	\begin{align*}
	p &= \frac{\lambda_2+\lambda_0\lambda_1\lambda_3}{\lambda_2\lambda_3}t+\lambda_0t^2+\frac{\lambda_0 \lambda_3}{\lambda_2}x\\
	q &= \lambda_1t+\lambda_2t^2+\lambda_3x
	\end{align*}
	\texttt{Under condition (iii) of Theorem} \ref{theorem5.9}, \texttt{the associated matrix is:}
	\[
	\begin{bmatrix}
	\frac{\lambda_2+\lambda_0\lambda_1\lambda_3}{\lambda_2\lambda_3}+\lambda_0t&\frac{\lambda_0 \lambda_3}{\lambda_2}\\
	\lambda_1+\lambda_2t&\lambda_3
	\end{bmatrix}
	\]
	\texttt{Its inverse is:}
	\[
	\begin{bmatrix}
	\lambda_3&-\frac{\lambda_0\lambda_3}{\lambda_2}\\
	-\lambda_1-\lambda_2t&\frac{\lambda_2+\lambda_0\lambda_1\lambda_3}{\lambda_2\lambda_3}+\lambda_0t
	\end{bmatrix}
	\]
	\texttt{Thus, the polynomials satisfy Theorem \ref{theorem5.9} and define an automorphism of A\_1(K).}
	
	\medskip
	
	\noindent
	\texttt{----- Case 6 -----}
	
	\smallskip
	
	\noindent
	\texttt{The Dixmier polynomials are:}
	\begin{align*}
	p &= \lambda_0t+\lambda_1t^2+\lambda_2t^3-\frac{1}{\lambda_3}x\\
	q &= \lambda_3t
	\end{align*}
	\texttt{Under condition (iii) of Theorem} \ref{theorem5.9}, \texttt{the associated matrix is:}
	\[
	\begin{bmatrix}
	\lambda_0 + \lambda_1t+\lambda_2t^2 & -\frac{1}{\lambda_3}\\
	\lambda_3 & 0
	\end{bmatrix}
	\]
	\texttt{Its inverse is:}
	\[
	\begin{bmatrix}
	0 & \frac{1}{\lambda_3} \\
	-\lambda_3 & \lambda_0+\lambda_1t+\lambda_2 t^2
	\end{bmatrix}
	\]
	\texttt{Thus, the polynomials satisfy Theorem \ref{theorem5.9} and define an automorphism of A\_1(K).}
	
	\medskip
	
	\noindent
	\texttt{----- Case 7 -----}
	
	\smallskip
	
	\noindent
	\texttt{The Dixmier polynomials are:}
	\begin{align*}
	p &= \frac{1 + \lambda_0 \lambda_2}{\lambda_3} t + \frac{\lambda_1 \lambda_2^2}{\lambda_3^2} t^2 + \left(\lambda_0 + \frac{2 \lambda_1 \lambda_2}{\lambda_3} t \right) x + \lambda_1 x^2\\
	q &= \lambda_2t+\lambda_3x
	\end{align*}
	\texttt{Under condition (iii) of Theorem} \ref{theorem5.9}, \texttt{the associated matrix is:}
	\[
	\begin{bmatrix}
	\frac{\lambda_3+\lambda_0\lambda_2\lambda_3}{\lambda_3^2}+\frac{\lambda_1\lambda_2^2}{\lambda_3^2}t& \lambda_0+\frac{2\lambda_1\lambda_2}{\lambda_3}t+\lambda_1x\\
	\lambda_2&\lambda_3
	\end{bmatrix}
	\]
	\texttt{The existence of its inverse is not guaranteed, but the automorphism property holds as per Theorem \ref{theorem5.9a}.}
	
	\medskip
	
	\noindent
	\texttt{----- Case 8 -----}
	
	\smallskip
	
	\noindent
	\texttt{The Dixmier polynomials are:}
	\begin{align*}
	p &= \lambda_0t+\lambda_1t^2+\lambda_2x\\
	q &= -\frac{1}{\lambda_2}t
	\end{align*}
	\texttt{Under condition (iii) of Theorem} \ref{theorem5.9}, \texttt{the associated matrix is:}
	\[
	\begin{bmatrix}
	\lambda_0 + \lambda_1 t & \lambda_2 \\
	-\frac{1}{\lambda_2} & 0
	\end{bmatrix}
	\]
	\texttt{Its inverse is:}
	\[
	\begin{bmatrix}
	0 &-\lambda_2\\
	\frac{1}{\lambda_2} & \lambda_0+\lambda_1t
	\end{bmatrix}
	\]
	\texttt{Thus, the polynomials satisfy Theorem \ref{theorem5.9} and define an automorphism of A\_1(K).}
\end{example}
\begin{example}
	When applying the statement \texttt{DixmierPolynomial(3,\,3,\,view=true)}, the resulting output computed by the function is as follows:
	
	\medskip
	
	\noindent
	\texttt{----- Case 1 -----}
	
	\smallskip
	
	\noindent
	\texttt{The Dixmier polynomials are:}
	\begin{align*}
	p &= \frac{\lambda_{6}+(\lambda_{7}-\lambda_{4})\lambda_{1}}{\lambda_{3}\lambda_{6}}t+
	\frac{9\lambda_{2}^2\lambda_{5}\lambda_{7}}{\lambda_{3}\lambda_{4}^2\lambda_{6}}t^2+
	\frac{\lambda_{2}\lambda_{7}}{\lambda_{3}\lambda_{6}}t^3+
	\left(\lambda_{0}+\frac{6\lambda_{2}\lambda_{5}\lambda_{7}}{\lambda_{3}\lambda_{4}\lambda_{6}}t+
	\frac{\lambda_{4}\lambda_{7}}{\lambda_{3}\lambda_{6}}t^2\right)x+\\ &\quad
	\left(\frac{\lambda_{5}\lambda_{7}}{\lambda_{3}\lambda_{6}}+
	\frac{\lambda_{4}^2\lambda_{7}}{(\lambda_{1}\lambda_{4}-\lambda_{6})\lambda_{6}}t\right)x^2+
	\frac{\lambda_{4}^3\lambda_{7}}{9(\lambda_{1}\lambda_{4}-\lambda_{6})\lambda_{2}\lambda_{6}}x^3\\
	q &= \lambda_{1}t+\frac{9\lambda_{2}^2\lambda_{5}}{\lambda_{4}^2}t^2+\lambda_{2}t^3+
	\left(\lambda_{3}+\frac{6\lambda_{2}\lambda_{5}}{\lambda_{4}}t+\lambda_{4}t^2\right)x+
	\left(\lambda_{5}+\frac{\lambda_{4}^2}{3\lambda_{2}}t\right)x^2+
	\frac{\lambda_{4}^3}{27\lambda_{2}^2}x^3
	\end{align*}
	\texttt{where:}
	\[
	\lambda_6=\lambda_1\lambda_4-3\lambda_2\lambda_3,\quad\lambda_7=\lambda_0\lambda_6+\lambda_4
	\]
	\texttt{Under condition (iii) of Theorem} \ref{theorem5.9}, \texttt{the associated matrix is:}
	\[
	\begin{bmatrix}
	\frac{\lambda_{0}\lambda_{1}+1}{\lambda_{3}}+\frac{9\lambda_{2}\lambda_{5}\lambda_{8}}{\lambda_{0} \lambda_{3}\lambda_{4}^2}t+\frac{\lambda_{8}}{\lambda_{0}\lambda_{3}}t^2 & h(t)\\
	\lambda_{1}+\frac{9\lambda_{2}^2\lambda_{5}}{\lambda_{4}^2}t+\lambda_{2}t^2 &
	\lambda_{3}+\frac{6\lambda_{2}\lambda_{5}}{\lambda_{4}}t+\lambda_{4}t^2+
	\left(\lambda_{5}+\frac{\lambda_{4}^2}{3\lambda_{2}}t\right)x+\frac{\lambda_{4}^3}{27\lambda_{2}^2}x^2
	\end{bmatrix}
	\]
	with $h(t)=\lambda_{0}+\frac{6\lambda_{2}\lambda_{5}\lambda_{9}}{\lambda_{0}\lambda_{3}\lambda_{4}}t+
	\frac{\lambda_{4}\lambda_{9}}{\lambda_{0}\lambda_{3}}t^2+
	\left(\frac{\lambda_{5}\lambda_{9}}{\lambda_{0}\lambda_{3}}+\frac{\lambda_{4}^2\lambda_{9}}{\lambda_{0} \lambda_{1}\lambda_{4}+\lambda_{4}-\lambda_{9}}t\right)x+
	\frac{\lambda_{4}^3\lambda_{9}}{9(\lambda_{0}\lambda_{1}\lambda_{4}+\lambda_{4}-\lambda_{9})\lambda_{2}} x^2$
	\texttt{ and where:}
	\[
	\lambda_8=\lambda_2(\lambda_0^2+\lambda_4),\quad\lambda_9=\lambda_0^2+\lambda_4
	\]
	\noindent
	\texttt{The existence of its inverse is not guaranteed, but the automorphism property holds as per Theorem \ref{theorem5.9a}.}
	
	\medskip
	
	\noindent
	\texttt{----- Case 2 -----}
	
	\smallskip
	
	\noindent
	\texttt{The Dixmier polynomials are:}
	\begin{align*}
	p &= \frac{(3\lambda_{0}^2\lambda_{2}-\lambda_{1})}{\lambda_{0}\lambda_{3}}t+
	\frac{9\lambda_{1}\lambda_{2}^2\lambda_{4}}{\lambda_{3}^3}t^2+\frac{\lambda_{1}\lambda_{2}}{\lambda_{3}}
	t^3+\left(\lambda_{0}+\frac{6\lambda_{1}\lambda_{2}\lambda_{4}}{\lambda_{3}^2}t+\lambda_{1}t^2\right)x+\\
	&\quad \left(\frac{\lambda_{1}\lambda_{4}}{\lambda_{3}}+\frac{\lambda_{1}\lambda_{3}}
	{3\lambda_{2}}t\right)x^2+\frac{\lambda_{1}\lambda_{3}^2}{27\lambda_{2}^2}x^3\\
	q &= -\frac{1}{\lambda_{0}}t+\frac{9\lambda_{2}^2\lambda_{4}}{\lambda_{3}^2}t^2+\lambda_{2}t^3+
	\left(\frac{6\lambda_{2}\lambda_{4}}{\lambda_{3}}t+\lambda_{3}t^2\right)x+	\left(\lambda_{4}+\frac{\lambda_{3}^2}{3\lambda_{2}}t\right)x^2+\frac{\lambda_{3}^3}{27\lambda_{2}^2}x^3
	\end{align*}
	\texttt{Under condition (iii) of Theorem} \ref{theorem5.9}, \texttt{the associated matrix is:}
	\[
	\begin{bmatrix}
	\frac{3\lambda_{0}^2\lambda_{2}-\lambda_{1}}{\lambda_{0}\lambda_{3}}+\frac{9\lambda_{1}\lambda_{2}^2 \lambda_{4}}{\lambda_{3}^3}t+\frac{\lambda_{1}\lambda_{2}}{\lambda_{3}}t^2&
	\lambda_{0}+\frac{6\lambda_{1}\lambda_{2}\lambda_{4}}{\lambda_{3}^2}t+\lambda_{1}t^2+
	\left(\frac{\lambda_{1}\lambda_{4}}{\lambda_{3}}+\frac{\lambda_{1}\lambda_{3}}{3\lambda_{2}}t\right)x +\frac{\lambda_{1}\lambda_{3}^2}{27\lambda_{2}^2}x^2\\
	-\frac{1}{\lambda_{0}}+\frac{9\lambda_{2}^2\lambda_{4}}{\lambda_{3}^2}t+\lambda_{2}t^2&
	\frac{6\lambda_{2}\lambda_{4}}{\lambda_{3}}t+\lambda_{3}t^2+
	\left(\lambda_{4}+\frac{\lambda_{3}^2}{3\lambda_{2}}t\right)x+
	\frac{\lambda_{3}^3}{27\lambda_{2}^2}x^2
	\end{bmatrix}
	\]
	\texttt{The existence of its inverse is not guaranteed, but the automorphism property holds as per Theorem \ref{theorem5.9a}.}
	
	\medskip
	
	\noindent
	\texttt{----- Case 3 -----}
	
	\smallskip
	
	\noindent
	\texttt{The Dixmier polynomials are:}
	\begin{align*}
	p &= \frac{\lambda_{0}\lambda_{1}\lambda_{4}+\lambda_{3}}{\lambda_{3}\lambda_{4}}t+\frac{\lambda_{0} \lambda_{2}}{\lambda_{3}}t^2+\lambda_{0}t^3+\frac{\lambda_{0}\lambda_{4}}{\lambda_{3}}x\\
	q &= \lambda_{1}t+\lambda_{2}t^2+\lambda_{3}t^3+\lambda_{4}x
	\end{align*}
	\texttt{Under condition (iii) of Theorem} \ref{theorem5.9}, \texttt{the associated matrix is:}
	\[
	\begin{bmatrix}
	\frac{\lambda_{0}\lambda_{1}\lambda_{4}+\lambda_{3}}{\lambda_{3}\lambda_{4}}+\frac{\lambda_{0} \lambda_{2}}{\lambda_{3}}t+\lambda_{0}t^2 & \frac{\lambda_{0}\lambda_{4}}{\lambda_{3}}\\
	\lambda_{1}+\lambda_{2}t+\lambda_{3}t^2 & \lambda_{4}
	\end{bmatrix}
	\]
	\texttt{Its inverse is:}
	\[
	\begin{bmatrix}
	\lambda_{4} & -\frac{\lambda_{0}\lambda_{4}}{\lambda_{3}}\\
	-\lambda_{1}-\lambda_{2}t-\lambda_{3}t^2 & \frac{\lambda_{0}\lambda_{1}\lambda_{4}+\lambda_{3}}{\lambda_{3}\lambda_{4}}+\frac{\lambda_{0}\lambda_{2}}
	{\lambda_{3}}t+\lambda_{0}t^2
	\end{bmatrix}
	\]
	\texttt{Thus, the polynomials satisfy Theorem \ref{theorem5.9} and define an automorphism of A\_1(K).}
	
	\medskip
	
	\noindent
	\texttt{----- Case 4 -----}
	
	\smallskip
	
	\noindent
	\texttt{The Dixmier polynomials are:}
	\begin{align*}
	p &= \frac{\lambda_{0}\lambda_{1}}{\lambda_{4}}t+\frac{\lambda_{0}\lambda_{1}\lambda_{2}- \lambda_{4}}{\lambda_{1}\lambda_{4}}x+\frac{\lambda_{0}\lambda_{3}}{\lambda_{4}}x^2+\lambda_{0}x^3\\
	q &= \lambda_{1}t+\lambda_{2}x+\lambda_{3}x^2+\lambda_{4}x^3
	\end{align*}
	\texttt{Under condition (iii) of Theorem} \ref{theorem5.9}, \texttt{the associated matrix is:}
	\[
	\begin{bmatrix}
	\frac{\lambda_{0}\lambda_{1}}{\lambda_{4}} & \frac{\lambda_{0}\lambda_{1}\lambda_{2}- \lambda_{4}}{\lambda_{1}\lambda_{4}}+\frac{\lambda_{0}\lambda_{3}}{\lambda_{4}}x+\lambda_{0}x^2\\
	\lambda_{1} & \lambda_{2}+\lambda_{3}x+\lambda_{4}x^2
	\end{bmatrix}
	\]
	\texttt{Its inverse is:}
	\[
	\begin{bmatrix}
	\lambda_{2}+\lambda_{3}x+\lambda_{4}x^2 & -\frac{\lambda_{0}\lambda_{1}\lambda_{2}-\lambda_{4}}{\lambda_{1} \lambda_{4}}-\frac{\lambda_{0}\lambda_{3}}{\lambda_{4}}x-\lambda_{0}x^2\\
	-\lambda_{1} & \frac{\lambda_{0}\lambda_{1}}{\lambda_{4}}
	\end{bmatrix}
	\]
	\texttt{Thus, the polynomials satisfy Theorem \ref{theorem5.9} and define an automorphism of A\_1(K).}
	
	\medskip
	
	\noindent
	\texttt{----- Case 5 -----}
	
	\smallskip
	
	\noindent
	\texttt{The Dixmier polynomials are:}
	\begin{align*}
	p &= \frac{\lambda_{5}+(\lambda_{6}-\lambda_{4})\lambda_{1}}{\lambda_{3}\lambda_{5}}t+
	\frac{\lambda_{2}\lambda_{6}}{\lambda_{3}\lambda_{5}}t^2+
	\left(\lambda_{0}+\frac{\lambda_{4}\lambda_{6}}{\lambda_{3}\lambda_{5}}t\right)x+
	\frac{\lambda_{4}^2\lambda_{6}}{2\lambda_{1}\lambda_{4}\lambda_{5}-2\lambda_{5}^2}x^2\\
	q &= \lambda_{1}t+\lambda_{2}t^2+\left(\lambda_{3}+\lambda_{4}t\right)x+\frac{\lambda_{4}^2}{4\lambda_{2}} x^2
	\end{align*}
	\texttt{where:}
	\[
	\lambda_{5}=\lambda_{1}\lambda_{4}-2\lambda_{2}\lambda_{3},\quad
	\lambda_{6}=\lambda_{0}\lambda_{5}+\lambda_{4}
	\]
	\texttt{Under condition (iii) of Theorem} \ref{theorem5.9}, \texttt{the associated matrix is:}
	\[
	\begin{bmatrix}
	\frac{\lambda_{0}\lambda_{1}+1}{\lambda_{3}}+\frac{(\lambda_{0}^2+\lambda_{4})\lambda_{2}}{\lambda_{0} \lambda_{3}}t & \lambda_{0}+\frac{\lambda_{7}}{\lambda_{0}\lambda_{3}}t+
	\frac{\lambda_{4}\lambda_{7}}{2\lambda_{0} \lambda_{1}\lambda_{4}-2\lambda_{0}^2}x\\
	\lambda_{1}+\lambda_{2}t & \lambda_{3}+\lambda_{4}t+\frac{\lambda_{4}^2}{4\lambda_{2}}x
	\end{bmatrix}
	\]
	\texttt{where:}
	\[
	\lambda_{7}=\lambda_{4}\left(\lambda_{0}^2+\lambda_{4}\right)
	\]
	\texttt{The existence of its inverse is not guaranteed, but the automorphism property holds as per Theorem \ref{theorem5.9a}.}
	
	
	\noindent
	\texttt{----- Case 6 -----}
	
	\smallskip
	
	\noindent
	\texttt{The Dixmier polynomials are:}
	\begin{align*}
	p &= \frac{\lambda_{0}\lambda_{1}^2-2\lambda_{2}}{\lambda_{1}\lambda_{3}}t+\frac{\lambda_{0} \lambda_{2}}{\lambda_{3}}t^2+\left(-\frac{1}{\lambda_{1}}+\lambda_{0}t\right)x+\frac{\lambda_{0} \lambda_{3}}{4\lambda_{2}}x^2\\
	q &= \lambda_{1}t+\lambda_{2}t^2+\lambda_{3}tx+\frac{\lambda_{3}^2}{4\lambda_{2}}x^2
	\end{align*}
	\texttt{Under condition (iii) of Theorem} \ref{theorem5.9}, \texttt{the associated matrix is:}
	\[
	\begin{bmatrix}
	\frac{\lambda_{0}\lambda_{1}^2-2\lambda_{2}}{\lambda_{1}\lambda_{3}}+\frac{\lambda_{0} \lambda_{2}}{\lambda_{3}}t & -\frac{1}{\lambda_{1}}+\lambda_{0}t+
	\frac{\lambda_{0}\lambda_{3}}{4\lambda_{2}}x\\
	\lambda_{1}+\lambda_{2}t&\lambda_{3}t+\frac{\lambda_{3}^2}{4\lambda_{2}}x
	\end{bmatrix}
	\]
	\texttt{The existence of its inverse is not guaranteed, but the automorphism property holds as per Theorem \ref{theorem5.9a}.}
	
	\medskip
	
	\noindent
	\texttt{----- Case 7 -----}
	
	\smallskip
	
	\noindent
	\texttt{The Dixmier polynomials are:}
	\begin{align*}
	p &= \frac{\lambda_{0}\lambda_{1}\lambda_{3}+\lambda_{2}}{\lambda_{2}\lambda_{3}}t+\lambda_{0}t^2 +\frac{\lambda_{0}\lambda_{3}}{\lambda_{2}}x\\
	q &= \lambda_{2}t^2+\lambda_{1}t+\lambda_{3}x
	\end{align*}
	\texttt{Under condition (iii) of Theorem} \ref{theorem5.9}, \texttt{the associated matrix is:}
	\[
	\begin{bmatrix}
	\frac{\lambda_{0}\lambda_{1}\lambda_{3}+\lambda_{2}}{\lambda_{2}\lambda_{3}}+\lambda_{0}t & \frac{\lambda_{0}\lambda_{3}}{\lambda_{2}}\\
	\lambda_{1}+\lambda_{2}t & \lambda_{3}
	\end{bmatrix}
	\]
	\texttt{Its inverse is:}
	\[
	\begin{bmatrix}
	\lambda_{3} & -\frac{\lambda_{0}\lambda_{3}}{\lambda_{2}}\\
	-\lambda_{1}-\lambda_{2}t & \frac{\lambda_{0}\lambda_{1}\lambda_{3}+\lambda_{2}}{\lambda_{2}\lambda_{3}}+ \lambda_{0}t
	\end{bmatrix}
	\]
	\texttt{Thus, the polynomials satisfy Theorem \ref{theorem5.9} and define an automorphism of A\_1(K).}
	
	\medskip
	
	\noindent
	\texttt{----- Case 8 -----}
	
	\smallskip
	
	\noindent
	\texttt{The Dixmier polynomials are:}
	\begin{align*}
	p &= \frac{\lambda_{0}\lambda_{1}}{\lambda_{3}}t+\frac{\lambda_{0}\lambda_{1}\lambda_{2}- \lambda_{3}}{\lambda_{1}\lambda_{3}}x+\lambda_{0}x^2\\
	q &= \lambda_{1}t+\lambda_{2}x+\lambda_{3}x^2
	\end{align*}
	\texttt{Under condition (iii) of Theorem} \ref{theorem5.9}, \texttt{the associated matrix is:}
	\[
	\begin{bmatrix}
	\frac{\lambda_{0}\lambda_{1}}{\lambda_{3}} & \frac{\lambda_{0}\lambda_{1}\lambda_{2}- \lambda_{3}}{\lambda_{1}\lambda_{3}}+\lambda_{0}x\\
	\lambda_{1} & \lambda_{2}+\lambda_{3}x
	\end{bmatrix}
	\]
	\texttt{Its inverse is:}
	\[
	\begin{bmatrix}
	\lambda_{2}+\lambda_{3}x & -\frac{\lambda_{0}\lambda_{1}\lambda_{2}-\lambda_{3}}{\lambda_{1}\lambda_{3}}- \lambda_{0}x\\
	-\lambda_{1} & \frac{\lambda_{0}\lambda_{1}}{\lambda_{3}}
	\end{bmatrix}
	\]
	\texttt{Thus, the polynomials satisfy Theorem \ref{theorem5.9} and define an automorphism of A\_1(K).}

	\noindent
	\texttt{----- Case 9 -----}
	
	\smallskip
	
	\noindent
	\texttt{The Dixmier polynomials are:}
	\begin{align*}
	p &=\frac{\lambda_{0}\lambda_{3}+1}{\lambda_{4}}t+\frac{\lambda_{1}\lambda_{3}^2}{\lambda_{4}^2}t^2+
	\frac{\lambda_{2}\lambda_{3}^3}{\lambda_{4}^3}t^3+\left(\lambda_{0}+\frac{2\lambda_{1}\lambda_{3}}
	{\lambda_{4}}t+\frac{3\lambda_{2} \lambda_{3}^2}{\lambda_{4}^2}t^2\right)x+
	\left(\lambda_{1}+\frac{3\lambda_{2}\lambda_{3}}{\lambda_{4}}t\right)x^2+\lambda_{2}x^3\\
	q &= \lambda_3t+\lambda_4x
	\end{align*}
	\texttt{Under condition (iii) of Theorem} \ref{theorem5.9}, \texttt{the associated matrix is:}
	\[
	\begin{bmatrix}
	\frac{\lambda_{4}^2+\lambda_{0}\lambda_{3}\lambda_{4}^2}{\lambda_{4}^3}+\frac{\lambda_{1} \lambda_{3}^2}{\lambda_{4}^2}t+\frac{\lambda_{2}\lambda_{3}^3}{\lambda_{4}^3}t^2&
	\lambda_{0}+\frac{2\lambda_{1}\lambda_{3}}{\lambda_{4}}t+\frac{3\lambda_{2}\lambda_{3}^2}{\lambda_{4}^2} t^2+\left(\lambda_{1}+\frac{3\lambda_{2}\lambda_{3}}{\lambda_{4}}t\right)x+\lambda_{2}x^2\\
	\lambda_{3}&\lambda_{4}
	\end{bmatrix}
	\]
	\texttt{The existence of its inverse is not guaranteed, but the automorphism property holds as per Theorem \ref{theorem5.9a}.}
	
	\medskip
	
	\noindent
	\texttt{----- Case 10 -----}
	
	\smallskip
	
	\noindent
	\texttt{The Dixmier polynomials are:}
	\begin{align*}
	p &= \lambda_{0}t+\lambda_{1}t^2+\lambda_{2}t^3-\frac{1}{\lambda_{3}}x\\
	q &= \lambda_3t
	\end{align*}
	\texttt{Under condition (iii) of Theorem} \ref{theorem5.9}, \texttt{the associated matrix is:}
	\[
	\begin{bmatrix}
	\lambda_{0}+\lambda_{1}t+\lambda_{2}t^2 & -\frac{1}{\lambda_{3}}\\
	\lambda_{3} & 0
	\end{bmatrix}
	\]
	\texttt{Its inverse is:}
	\[
	\begin{bmatrix}
	0 & \frac{1}{\lambda_{3}}\\
	-\lambda_{3} & \lambda_{0}+\lambda_{1}t+\lambda_{2}t^2
	\end{bmatrix}
	\]
	\texttt{Thus, the polynomials satisfy Theorem \ref{theorem5.9} and define an automorphism of A\_1(K).}
\end{example}
\begin{example}
	When applying the statement \texttt{DixmierPolynomial(4,\,3,\,view=true)}, the resulting output computed by the function is as follows:
	
	\medskip
	
	\noindent
	\texttt{----- Case 1 -----}
	
	\smallskip
	
	\noindent
	\texttt{The Dixmier polynomials are:}
	\begin{align*}
	p &= \lambda_{0}t+\lambda_{1}t^2+\lambda_{2}t^3+	\left(\lambda_{3}-\frac{2\lambda_{1}\lambda_{7}}{\lambda_{0}\lambda_{6}}t-\frac{3\lambda_{2}\lambda_{7} }{\lambda_{0}\lambda_{6}}t^2\right)x+
	\left(\frac{\lambda_{1}\lambda_{7}^2}{\lambda_{0}^2\lambda_{6}^2}+\frac{3\lambda_{2}\lambda_{7}^2 }{\lambda_{0}^2\lambda_{6}^2}t\right)x^2-
	\frac{\lambda_{2}\lambda_{7}^3}{\lambda_{0}^3\lambda_{6}^3}x^3\\
	q &= \lambda_{4}t+\frac{\lambda_{1}\lambda_{5}}{\lambda_{2}}t^2+\lambda_{5}t^3+
	\left(\frac{\lambda_{5}^2+\lambda_{8}}{\lambda_{0}\lambda_{5}}+\frac{2\lambda_{1}\lambda_{8}}{(-1+ \lambda_{5})\lambda_{0}^2}t+\frac{3\lambda_{8}}{\lambda_{0}\lambda_{4}}t^2\right)x\\ &\quad+ \left(\frac{(\lambda_{3}\lambda_{4}-\lambda_{5}+1)\lambda_{1}\lambda_{8}}{(-1+\lambda_{5})\lambda_{0}^3 \lambda_{4}}+\frac{3(\lambda_{3}\lambda_{4}-\lambda_{5}+1)\lambda_{8}}{\lambda_{0}^2\lambda_{4}^2}t \right)x^2+\frac{(\lambda_{3}^2\lambda_{4}^2+2\lambda_{3}\lambda_{4}-\lambda_{5}^2-2\lambda_{8}+1) \lambda_{8}}{\lambda_{0}^3\lambda_{4}^3}x^3
	\end{align*}
	\texttt{where:}
	\[
	\lambda_6=\lambda_0\lambda_5-\lambda_2\lambda_4,\quad\lambda_7=-\lambda_3\lambda_6+\lambda_2,\quad \lambda_8=\lambda_5(\lambda_3\lambda_4-\lambda_5+1)
	\]
	\texttt{Under condition (iii) of Theorem} \ref{theorem5.9}, \texttt{the associated matrix is:}
	\[
	\begin{bmatrix}
	\lambda_{0}+\lambda_{1}t+\lambda_{2}t^2 & g(t)\\
	\lambda_{4}+\frac{\lambda_{1}\lambda_{5}}{\lambda_{2}}t+\lambda_{5}t^2 & h(t)
	\end{bmatrix}
	\]
	with
	\begin{align*}
	g(t) &= \lambda_{3}+\frac{2\lambda_{1} \lambda_{9}}{\lambda_{0}\lambda_{4}}t+
	\frac{3(-1+\lambda_{5})\lambda_{9}}{\lambda_{4}^2}t^2+\left(\frac{\lambda_{1}\lambda_{3}\lambda_{9}^2}
	{(-1+\lambda_{9}+\lambda_{5})\lambda_{0}^2\lambda_{4}}+\frac{3(-1+\lambda_{5})\lambda_{3}\lambda_{9}^2}
	{(-1+\lambda_{9}+\lambda_{5})\lambda_{0}\lambda_{4}^2}t\right)x\\
	&\quad +\frac{(-1+\lambda_{5})\lambda_{3}^2
		\lambda_{9}^3}{(-1+\lambda_{9}+\lambda_{5})^2\lambda_{0}^2 \lambda_{4}^2}x^2\\
	h(t) &= \frac{\lambda_{5}^2+\lambda_{2}}{\lambda_{0}\lambda_{5}}+\frac{2\lambda_{1}\lambda_{2}}{(-1+\lambda_{5}) \lambda_{0}^2}t+\frac{3\lambda_{2}}{\lambda_{0}\lambda_{4}}t^2+\left(\frac{(\lambda_{3}\lambda_{4}-
		\lambda_{5}+1)\lambda_{1}\lambda_{2}}{(-1+\lambda_{5})\lambda_{0}^3\lambda_{4}}+
	\frac{3(\lambda_{3}\lambda_{4}-\lambda_{5}+1)\lambda_{2}}{\lambda_{0}^2\lambda_{4}^2}t \right)x\\
	&\quad -\frac{(-\lambda_{3}^2\lambda_{4}^2-2\lambda_{3}\lambda_{4}+\lambda_{5}^2+2\lambda_{2}-1) \lambda_{2}}{\lambda_{0}^3\lambda_{4}^3}x^2
	\end{align*}
	\texttt{where:}
	\[
	\lambda_9=\lambda_3\lambda_4-\lambda_5+1
	\]
	\texttt{The existence of its inverse is not guaranteed, but the automorphism property holds as per Theorem \ref{theorem5.9a}.}
	
	\medskip
	
	\noindent
	\texttt{----- Case 2 -----}
	
	\smallskip
	
	\noindent
	\texttt{The Dixmier polynomials are:}
	\begin{align*}
	p &= \lambda_{0}t^2+\lambda_{1}t^3+\left(-\frac{1}{\lambda_{3}}+\frac{2\lambda_{0}\lambda_{2}}
	{3\lambda_{1}}t+\lambda_{2}t^2\right)x+\left(\frac{\lambda_{0}\lambda_{2}^2}{9\lambda_{1}^2}+
	\frac{\lambda_{2}^2}{3\lambda_{1}}t\right)x^2 +\frac{\lambda_{2}^3}{27\lambda_{1}^2}x^3\\
	q &= \lambda_{3}t+\frac{\lambda_{0}\lambda_{4}}{\lambda_{1}}t^2+\lambda_{4}t^3+\left(\frac{\lambda_{2}
		\lambda_{3}^2-3\lambda_{4}}{3\lambda_{1}\lambda_{3}}+\frac{2\lambda_{0}\lambda_{2}\lambda_{4}}
	{3\lambda_{1}^2}t+\frac{\lambda_{2}\lambda_{4}}{\lambda_{1}}t^2\right)x+\left(\frac{\lambda_{0}
		\lambda_{2}^2\lambda_{4}}{9\lambda_{1}^3}+\frac{\lambda_{2}^2\lambda_{4}}{3 \lambda_{1}^2}t\right)x^2+
	\frac{\lambda_{2}^3\lambda_{4}}{27\lambda_{1}^3}x^3
	\end{align*}
	\texttt{Under condition (iii) of Theorem} \ref{theorem5.9}, \texttt{the associated matrix is:}
	\[
	\begin{bmatrix}
	\lambda_{0}t+\lambda_{1}t^2 &
	-\frac{1}{\lambda_{3}}+\frac{2\lambda_{0}\lambda_{2}}{3\lambda_{1}}t+\lambda_{2}t^2+
	\left(\frac{\lambda_{0}\lambda_{2}^2}{9\lambda_{1}^2}+\frac{\lambda_{2}^2}{3\lambda_{1}}t\right)x+
	\frac{\lambda_{2}^3}{27\lambda_{1}^2}x^2\\
	\lambda_{3}+\frac{\lambda_{0}\lambda_{4}}{\lambda_{1}}t+\lambda_{4}t^2 & h(t)
	\end{bmatrix}
	\]
	where $h(t)=\frac{9\lambda_{1}^2\lambda_{2}\lambda_{3}^2-27\lambda_{1}^2\lambda_{4}}{27\lambda_{1}^3 \lambda_{3}}+\frac{2\lambda_{0}\lambda_{2}\lambda_{4}}{3\lambda_{1}^2}t+\frac{\lambda_{2} \lambda_{4}}{\lambda_{1}}t^2+\left(\frac{\lambda_{0}\lambda_{2}^2\lambda_{4}}{9\lambda_{1}^3}+
	\frac{\lambda_{2}^2\lambda_{4}}{3 \lambda_{1}^2}t\right)x+\frac{\lambda_{2}^3\lambda_{4}}
	{27\lambda_{1}^3}x^2$\newline
	\texttt{The existence of its inverse is not guaranteed, but the automorphism property holds as per Theorem \ref{theorem5.9a}.}
	
	\medskip
	
	\noindent
	\texttt{----- Case 3 -----}
	
	\smallskip
	
	\noindent
	\texttt{The Dixmier polynomials are:}
	\begin{align*}
	p &= \lambda_{0}t+\frac{\lambda_{0}\lambda_{4}}{3\lambda_{3}}x\\
	q &= \lambda_{1}t+\lambda_{2}t^2+\lambda_{3}t^3+\left(\frac{\lambda_{0}\lambda_{1}\lambda_{4}+
		3\lambda_{3}}{3\lambda_{0}\lambda_{3}}+\frac{2\lambda_{2}\lambda_{4}}{3\lambda_{3}}t+\lambda_{4}
	t^2\right)x+\left(\frac{\lambda_{2}\lambda_{4}^2}{9\lambda_{3}^2}+\frac{\lambda_{4}^2}{3\lambda_{3}}
	t\right)x^2 +\frac{\lambda_{4}^3}{27\lambda_{3}^2}x^3
	\end{align*}
	\texttt{Under condition (iii) of Theorem} \ref{theorem5.9}, \texttt{the associated matrix is:}
	\[
	\begin{bmatrix}
	\lambda_{0} & \frac{\lambda_{0}\lambda_{4}}{3\lambda_{3}}\\
	\lambda_{1}+\lambda_{2}t+\lambda_{3}t^2 &
	\frac{27\lambda_{3}^2+9\lambda_{0}\lambda_{1}\lambda_{3}\lambda_{4}}{27\lambda_{0}\lambda_{3}^2}+\frac{2 \lambda_{2}\lambda_{4}}{3\lambda_{3}}t+\lambda_{4}t^2+\left(\frac{\lambda_{2}\lambda_{4}^2}
	{9\lambda_{3}^2}+\frac{\lambda_{4}^2}{3\lambda_{3}}t\right)x +\frac{\lambda_{4}^3}{27\lambda_{3}^2}x^2
	\end{bmatrix}
	\]
	\texttt{The existence of its inverse is not guaranteed, but the automorphism property holds as per Theorem \ref{theorem5.9a}.}
	
	\newpage
	
	\noindent
	\texttt{----- Case 4 -----}
	
	\smallskip
	
	\noindent
	\texttt{The Dixmier polynomials are:}
	\begin{align*}
	p &= \lambda_{0}t+\frac{\lambda_{2}^2}{4\lambda_{3}}t^2+
	\left(\lambda_{1}+\frac{\lambda_{2}\lambda_{7}}{2\lambda_{0}\lambda_{3}\lambda_{6}}t\right)x+
	\left(\frac{8\lambda_{0}^3\lambda_{3}^2\lambda_{6}^2+\lambda_{2}\lambda_{7}^2}{4\lambda_{0}^2\lambda_{2} \lambda_{3}\lambda_{6}^2}+\lambda_{2}t\right)x^2+
	\frac{\lambda_{7}}{\lambda_{0}\lambda_{6}}x^3+\lambda_{3}x^4\\
	q&=\lambda_{4}t+\frac{\lambda_{2}\lambda_{5}}{4\lambda_{3}}t^2+\left(\frac{\lambda_{1}\lambda_{4}+
		\lambda_{0}+1}{\lambda_{0}}-\frac{(-1+\lambda_{5})\lambda_{8}}{2\lambda_{3}\lambda_{4}^2}t\right)x\\
	&\quad +\left(\frac{(-1+(1+\lambda_{0})\lambda_{5}-\lambda_{0}-\lambda_{1}\lambda_{4})\lambda_{8}+(-2+ \lambda_{5})\lambda_{8}^2+8\lambda_{3}^2\lambda_{4}^5}{4(-1+\lambda_{5})\lambda_{0}\lambda_{3} \lambda_{4}^3}+\lambda_{5}t\right)x^2-\frac{\lambda_{8}}{\lambda_{0}\lambda_{4}}x^3+
	\frac{\lambda_{3}\lambda_{5}}{\lambda_{2}}x^4
	\end{align*}
	\texttt{where:}
	\[
	\lambda_{6}=\lambda_{0}\lambda_{5}-\lambda_{2}\lambda_{4},\quad\lambda_{7}=\lambda_{2}\left((-\lambda_{6} -1)\lambda_{2}+\lambda_{1}\lambda_{6}\right),\quad\lambda_{8}=\lambda_{5}\left(\lambda_{0}\lambda_{5}- \lambda_{1}\lambda_{4}-\lambda_{0}+\lambda_{5}-1\right)
	\]
	\texttt{Under condition (iii) of Theorem} \ref{theorem5.9}, \texttt{the associated matrix is:}
	\[
	\begin{bmatrix}
	\lambda_{0}+\frac{\lambda_{2}^2}{4\lambda_{3}}t & g(t)\\
	\lambda_{4}+\frac{\lambda_{2}\lambda_{5}}{4\lambda_{3}}t & h(t)
	\end{bmatrix}
	\]
	with
	\begin{align*}
	g(t) &=
	\lambda_{1}+\frac{((-1-\lambda_{0})\lambda_{2}+\lambda_{0}\lambda_{1})\lambda_{9}^2}{2\lambda_{3}
		\lambda_{4}^2}t+\left(\frac{(1+\lambda_{0})^2\lambda_{9}^5-
		2(1+\lambda_{0})\lambda_{1}\lambda_{4}\lambda_{9}^4+8\lambda_{3}^2\lambda_{4}^5+\lambda_{1}^2
		\lambda_{4}^2\lambda_{9}^3}{4\lambda_{3}\lambda_{4}^4 \lambda_{9}}+\lambda_{2}t\right)x\\
	&\quad-\frac{(\lambda_{0}\lambda_{9}-\lambda_{1}\lambda_{4}+\lambda_{9})\lambda_{9}}{\lambda_{4}^2}x^2+
	\lambda_{3}x^3\\
	h(t) &=\frac{\lambda_{1}\lambda_{4}+\lambda_{0}+1}{\lambda_{0}}+\frac{1-\lambda_{5}}{2\lambda_{3}
		\lambda_{4}^2}\lambda_{2}t+\Bigg(\frac{(-1+(1+\lambda_{0})\lambda_{5}-\lambda_{0}-\lambda_{1}\lambda_{4})
		\lambda_{2}+(-2+\lambda_{5})\lambda_{2}^2+8\lambda_{3}^2\lambda_{4}^5}{4(-1+\lambda_{5})\lambda_{0}
		\lambda_{3} \lambda_{4}^3}\\ &\quad+\lambda_{5}t\Bigg)x-\frac{\lambda_{2}}{\lambda_{0}\lambda_{4}}x^2+
	\frac{\lambda_{3}\lambda_{5}}{\lambda_{2}}x^3
	\end{align*}
	\texttt{and where:}
	\[
	\lambda_9=-1+\lambda_5
	\]
	\texttt{The existence of its inverse is not guaranteed, but the automorphism property holds as per Theorem \ref{theorem5.9a}.}
	
	\medskip
	
	\noindent
	\texttt{----- Case 5 -----}
	
	\smallskip
	
	\noindent
	\texttt{The Dixmier polynomials are:}
	\begin{align*}
	p &= \frac{\lambda_{1}^2}{4\lambda_{2}}t^2+\left(\frac{\lambda_{1}\lambda_{3}-1}{\lambda_{3}}+
	\lambda_{0}t\right)x+\left(\frac{\lambda_{0}^2\lambda_{2}}{\lambda_{1}^2}+\lambda_{1}t\right)x^2+
	\frac{2\lambda_{0}\lambda_{2}}{\lambda_{1}}x^3+\lambda_{2}x^4\\
	q&=\lambda_{3}t+\frac{\lambda_{1}\lambda_{4}}{4\lambda_{2}}t^2+\left(\frac{2\lambda_{0}\lambda_{2}
		\lambda_{3}^2+\lambda_{1}^2\lambda_{3}\lambda_{4}-\lambda_{1}\lambda_{4}}{\lambda_{1}^2\lambda_{3}}+
	\frac{\lambda_{0}\lambda_{4}}{\lambda_{1}}t\right)x+\left(\frac{(\lambda_{0}^2\lambda_{4}+
		2\lambda_{1}^2\lambda_{3})\lambda_{2}}{\lambda_{1}^3}+\lambda_{4}t \right)x^2\\ &\quad+\frac{2\lambda_{0}
		\lambda_{2}\lambda_{4}}{\lambda_{1}^2}x^3+\frac{\lambda_{2}\lambda_{4}}{\lambda_{1}}x^4
	\end{align*}
	\texttt{Under condition (iii) of Theorem} \ref{theorem5.9}, \texttt{the associated matrix is:}
	\[
	\begin{bmatrix}
	\frac{\lambda_{1}^2}{4\lambda_{2}}t &
	\frac{\lambda_{1}^3\lambda_{3}-\lambda_{1}^2}{\lambda_{1}^2\lambda_{3}}+\lambda_{0}t+\left( \frac{\lambda_{0}^2\lambda_{2}}{\lambda_{1}^2}+\lambda_{1}t\right)x+\frac{2\lambda_{0} \lambda_{2}}{\lambda_{1}}x^2+\lambda_{2}x^3\\
	\lambda_{3}+\frac{\lambda_{1}\lambda_{4}}{4\lambda_{2}}t & h(t)
	\end{bmatrix}
	\]
	with
	$h(t)=\frac{2\lambda_{0}\lambda_{1}\lambda_{2}\lambda_{3}^2+\lambda_{1}^3\lambda_{3}\lambda_{4}-
		\lambda_{1}^2 \lambda_{4}}{\lambda_{1}^3\lambda_{3}}+\frac{\lambda_{0}\lambda_{4}}{\lambda_{1}}t+
	\left(\frac{\lambda_{0}^2\lambda_{2}\lambda_{3}\lambda_{4}+2\lambda_{1}^2\lambda_{2}\lambda_{3}^2}	{\lambda_{1}^3\lambda_{3}}+\lambda_{4}t\right)x+\frac{2\lambda_{0}\lambda_{2}\lambda_{4}}{\lambda_{1}^2}
	x^2+\frac{\lambda_{2}\lambda_{4}}{\lambda_{1}}x^3$\newline
	\texttt{The existence of its inverse is not guaranteed, but the automorphism property holds as per Theorem \ref{theorem5.9a}.}
	
	\medskip
	
	\noindent
	\texttt{----- Case 6 -----}
	
	\smallskip
	
	\noindent
	\texttt{The Dixmier polynomials are:}
	\begin{align*}
	p &= \lambda_{0}t+\lambda_{1}t^2+\left(-\frac{2\lambda_{6}t}{\lambda_{0}\lambda_{5}}+\lambda_{2}\right)x+
	\frac{(-\lambda_{2}\lambda_{5}+\lambda_{1})\lambda_{6}}{\lambda_{0}^2\lambda_{5}^2}x^2\\
	q&=\lambda_{3}t+\lambda_{4}t^2+\left(\frac{\lambda_{4}^2+\lambda_{7}}{\lambda_{0}\lambda_{4}}+
	\frac{2\lambda_{7}}{\lambda_{0} \lambda_{3}}t\right)x+\frac{(\lambda_{2}\lambda_{3}-\lambda_{4}+1)
		\lambda_{7}}{\lambda_{0}^2\lambda_{3}^2}x^2
	\end{align*}
	\texttt{where:}
	\[	\lambda_{5}=\lambda_{0}\lambda_{4}-\lambda_{1}\lambda_{3},\quad\lambda_{6}=\lambda_{1}\left(-\lambda_{2} \lambda_{5}+\lambda_{1}\right),\quad\lambda_{7}=\lambda_{4}\left(\lambda_{2}\lambda_{3}-\lambda_{4}+1 \right)
	\]
	\texttt{Under condition (iii) of Theorem} \ref{theorem5.9}, \texttt{the associated matrix is:}
	\[
	\begin{bmatrix}
	\lambda_{0}+\lambda_{1}t & \lambda_{2}+\frac{2\lambda_{8}}{\lambda_{3}^2}t+\frac{(\lambda_{2}\lambda_{3}-\lambda_{4}+1)\lambda_{8}}
	{\lambda_{0}\lambda_{3}^3}x\\
	\lambda_{3}+\lambda_{4}t & \frac{\lambda_{4}^2+\lambda_{2}}{\lambda_{0}\lambda_{4}}+\frac{2\lambda_{2}}{\lambda_{0}\lambda_{3}}t+ \frac{(\lambda_{2}\lambda_{3}-\lambda_{4}+1)\lambda_{2}}{\lambda_{0}^2\lambda_{3}^2}x
	\end{bmatrix}
	\]
	\texttt{where:}
	\[
	\lambda_8=(\lambda_4-1)(\lambda_2\lambda_3-\lambda_4+1)
	\]
	\texttt{The existence of its inverse is not guaranteed, but the automorphism property holds as per Theorem \ref{theorem5.9a}.}
	
	\medskip
	
	\noindent
	\texttt{----- Case 7 -----}
	
	\smallskip
	
	\noindent
	\texttt{The Dixmier polynomials are:}
	\begin{align*}
	p &= \lambda_{0}t^2+\left(-\frac{1}{\lambda_{2}}+\lambda_{1}t\right)x+\frac{\lambda_{1}^2}{4\lambda_{0}} x^2\\
	q &= \lambda_{2}t+\lambda_{3}t^2+\left(\frac{\lambda_{1}\lambda_{2}^2-2\lambda_{3}}{2\lambda_{0}
		\lambda_{2}}+\frac{\lambda_{1} \lambda_{3}}{\lambda_{0}}t\right)x+\frac{\lambda_{1}^2\lambda_{3}}
	{4\lambda_{0}^2}x^2
	\end{align*}
	\texttt{Under condition (iii) of Theorem} \ref{theorem5.9}, \texttt{the associated matrix is:}
	\[
	\begin{bmatrix}
	\lambda_{0}t&-\frac{1}{\lambda_{2}}+\lambda_{1}t+\frac{\lambda_{1}^2}{4\lambda_{0}}x\\
	\lambda_{2}+\lambda_{3}t & \frac{2\lambda_{0}\lambda_{1}\lambda_{2}^2-4\lambda_{0} \lambda_{3}}{4\lambda_{0}^2\lambda_{2}}+\frac{\lambda_{1}\lambda_{3}}{\lambda_{0}}t+
	\frac{\lambda_{1}^2\lambda_{3}}{4\lambda_{0}^2}x
	\end{bmatrix}
	\]
	\texttt{The existence of its inverse is not guaranteed, but the automorphism property holds as per Theorem \ref{theorem5.9a}.}
	
	\medskip
	
	\noindent
	\texttt{----- Case 8 -----}
	
	\smallskip
	
	\noindent
	\texttt{The Dixmier polynomials are:}
	\begin{align*}
	p &= \lambda_{0}t+\lambda_{1}x+\lambda_{2}x^2+\lambda_{3}x^3+\lambda_{4}x^4\\
	q &=\frac{\lambda_{0}\lambda_{5}}{\lambda_{4}}t+\frac{\lambda_{0}\lambda_{1}\lambda_{5}+ \lambda_{4}}{\lambda_{0}\lambda_{4}}x+\frac{\lambda_{2}\lambda_{5}}{\lambda_{4}}x^2+\frac{\lambda_{3} \lambda_{5}}{\lambda_{4}}x^3+\lambda_{5}x^4
	\end{align*}
	\texttt{Under condition (iii) of Theorem} \ref{theorem5.9}, \texttt{the associated matrix is:}
	\[
	\begin{bmatrix}
	\lambda_{0} & \lambda_{1}+\lambda_{2}x+\lambda_{3}x^2+\lambda_{4}x^3\\
	\frac{\lambda_{0}\lambda_{5}}{\lambda_{4}} & \frac{\lambda_{0}\lambda_{1}\lambda_{5}+ \lambda_{4}}{\lambda_{0}\lambda_{4}}+\frac{\lambda_{2}\lambda_{5}}{\lambda_{4}}x+\frac{\lambda_{3} \lambda_{5}}{\lambda_{4}}x^2+\lambda_{5}x^3
	\end{bmatrix}
	\]
	\texttt{Its inverse is:}
	\[
	\begin{bmatrix}
	\frac{\lambda_{0}\lambda_{1}\lambda_{5}+\lambda_{4}}{\lambda_{0}\lambda_{4}}+\frac{\lambda_{2} \lambda_{5}}{\lambda_{4}}x+\frac{\lambda_{3}\lambda_{5}}{\lambda_{4}}x^2+\lambda_{5}x^3 &
	-\lambda_{1}-\lambda_{2}x-\lambda_{3}x^2-\lambda_{4}x^3\\
	-\frac{\lambda_{0}\lambda_{5}}{\lambda_{4}} & \lambda_{0}
	\end{bmatrix}
	\]
	\texttt{Thus, the polynomials satisfy Theorem \ref{theorem5.9} and define an automorphism of A\_1(K).}
	
	\medskip
	
	\noindent
	\texttt{----- Case 9 -----}
	
	\smallskip
	
	\noindent
	\texttt{The Dixmier polynomials are:}
	\begin{align*}
	p &= \lambda_{0}t+\lambda_{1}x+\lambda_{2}x^2+\lambda_{3}x^3\\
	q &= \frac{\lambda_{0}\lambda_{4}}{\lambda_{3}}t+\frac{\lambda_{0}\lambda_{1}\lambda_{4}+ \lambda_{3}}{\lambda_{0}\lambda_{3}}x+\frac{\lambda_{2}\lambda_{4}}{\lambda_{3}}x^2+\lambda_{4}x^3
	\end{align*}
	\texttt{Under condition (iii) of Theorem} \ref{theorem5.9}, \texttt{the associated matrix is:}
	\[
	\begin{bmatrix}
	\lambda_{0} & \lambda_{1}+\lambda_{2}x+\lambda_{3}x^2\\
	\frac{\lambda_{0}\lambda_{4}}{\lambda_{3}} & \frac{\lambda_{0}\lambda_{1}\lambda_{4}+ \lambda_{3}}{\lambda_{0}\lambda_{3}}+\frac{\lambda_{2}\lambda_{4}}{\lambda_{3}}x+\lambda_{4}x^2
	\end{bmatrix}
	\]
	\texttt{Its inverse is:}
	\[
	\begin{bmatrix}
	\frac{\lambda_{0}\lambda_{1}\lambda_{4}+\lambda_{3}}{\lambda_{0}\lambda_{3}}+\frac{\lambda_{2} \lambda_{4}}{\lambda_{3}}x+\lambda_{4}x^2 & -\lambda_{1}-\lambda_{2}x-\lambda_{3}x^2\\
	-\frac{\lambda_{0}\lambda_{4}}{\lambda_{3}} & \lambda_{0}
	\end{bmatrix}
	\]
	\texttt{Thus, the polynomials satisfy Theorem \ref{theorem5.9} and define an automorphism of A\_1(K).}
	
	\medskip
	
	\noindent
	\texttt{----- Case 10 -----}
	
	\smallskip
	
	\noindent
	\texttt{The Dixmier polynomials are:}
	\begin{align*}
	p &= \lambda_{0}t+\lambda_{1}x+\lambda_{2}x^2\\
	q &= \lambda_{3}t+\lambda_{4}t^2+\left(\frac{\lambda_{1}\lambda_{3}+2\lambda_{2}\lambda_{4}+1}
	{\lambda_{0}}+\frac{2\lambda_{1} \lambda_{4}}{\lambda_{0}}t\right)x+\left(\frac{\lambda_{0}\lambda_{2}		\lambda_{3}+\lambda_{1}^2\lambda_{4}}{\lambda_{0}^2}+\frac{2\lambda_{2}\lambda_{4}}{\lambda_{0}}t\right)
	x^2+\frac{2\lambda_{1}\lambda_{2}\lambda_{4}}{\lambda_{0}^2}x^3\\
	&\quad +\frac{\lambda_{2}^2\lambda_{4}}{\lambda_{0}^2}x^4
	\end{align*}
	\texttt{Under condition (iii) of Theorem} \ref{theorem5.9}, \texttt{the associated matrix is:}
	\[
	\begin{bmatrix}
	\lambda_{0} & \lambda_{1}+\lambda_{2}x\\
	\lambda_{3}+\lambda_{4}t & \frac{\lambda_{1}\lambda_{3}+2\lambda_{2}\lambda_{4}+1}{\lambda_{0}}+\frac{2 \lambda_{1}\lambda_{4}}{\lambda_{0}}t+\left(\frac{\lambda_{0}\lambda_{2}\lambda_{3}+\lambda_{1}^2 \lambda_{4}}{\lambda_{0}^2}+\frac{2\lambda_{2}\lambda_{4}}{\lambda_{0}}t\right)x+\frac{2\lambda_{1} \lambda_{2}\lambda_{4}}{\lambda_{0}^2}x^2+\frac{\lambda_{2}^2\lambda_{4}}{\lambda_{0}^2}x^3
	\end{bmatrix}
	\]
	\texttt{The existence of its inverse is not guaranteed, but the automorphism property holds as per Theorem \ref{theorem5.9a}.}
	
	\medskip
	
	\noindent
	\texttt{----- Case 11 -----}
	
	\smallskip
	
	\noindent
	\texttt{The Dixmier polynomials are:}
	\begin{align*}
	p &= \lambda_0x\\
	q &= -\frac{1}{\lambda_{0}}t+\lambda_{1}x+\lambda_{2}x^2+\lambda_{3}x^3+\lambda_{4}x^4
	\end{align*}
	\texttt{Under condition (iii) of Theorem} \ref{theorem5.9}, \texttt{the associated matrix is:}
	\[
	\begin{bmatrix}
	0 & \lambda_{0}\\
	-\frac{1}{\lambda_{0}} & \lambda_{1}+\lambda_{2}x+\lambda_{3}x^2+\lambda_{4}x^3
	\end{bmatrix}
	\]
	\texttt{Its inverse is:}
	\[
	\begin{bmatrix}
	\lambda_{1}+\lambda_{2}x+\lambda_{3}x^2+\lambda_{4}x^3 & -\lambda_{0}\\
	\frac{1}{\lambda_{0}} & 0
	\end{bmatrix}
	\]
	\texttt{Thus, the polynomials satisfy Theorem \ref{theorem5.9} and define an automorphism of A\_1(K).}
\end{example}

Below we present more general families of Dixmier polynomial pairs in $A_1(K)$, which include some of the families from the previous examples obtained using the \texttt{DixmierPolynomials} function.

\begin{proposition}
	The following families of polynomial pairs in $A_1(K)$ satisfy conditions {\rm(i)-(iii)} of Theorem \ref{theorem5.9}.
	\begin{align*}
	& p=\alpha t+\tfrac{1}{\lambda}(\alpha f(x)-1)x,
	&& q=\lambda t+f(x)x,
	&& \alpha, \lambda \in K, \ \lambda \neq 0, \ f(x) \in K[x], \tag{\texttt{Type I}} \\[0.5em]
	& p=-f(t)t-\lambda x,
	&& q=\tfrac{1}{\lambda}(\alpha f(t)+1)t+\alpha x,
	&& \alpha, \lambda \in K, \ \lambda \neq 0, \ f(t) \in K[t], \tag{\texttt{Type II}} \\[0.5em]
	& p=\lambda t+g(x)x,
	&& q=\tfrac{1}{\lambda}x,
	&& \lambda \in K, \ \lambda \neq 0, \ g(x) \in K[x], \tag{\texttt{Type III}} \\[0.5em]
	& p=\tfrac{1}{\lambda}t,
	&& q=g(t)t+\lambda x,
	&& \lambda \in K, \ \lambda \neq 0, \ g(t) \in K[t], \tag{\texttt{Type IV}}
	\end{align*}
\end{proposition}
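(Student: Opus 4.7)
The plan is to verify conditions (i), (ii), and (iii) of Theorem \ref{theorem5.9} for each of the four families separately. The unifying observation that makes all four cases tractable is that in every family, both $p$ and $q$ decompose as (a linear term in one of the variables $t, x$) plus (a polynomial in the \emph{other} variable only), so every commutator that appears collapses via the two basic identities
\begin{equation*}
[x, a(t)] = a'(t), \qquad [g(x), t] = g'(x), \qquad a(t)\in K[t],\; g(x)\in K[x],
\end{equation*}
and every matrix in condition (iii) has all its entries inside a single commutative subring of $A_1(K)$ (either $K[t]$ or $K[x]$), so the usual determinant formula produces an honest inverse.

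For condition (i), I would compute $[q,p]$ bilinearly in each type. For Type I, writing $p = \alpha t + P(x)$ with $P(x) = \tfrac{1}{\lambda}(\alpha f(x)-1)x$ and $q = \lambda t + Q(x)$ with $Q(x) = f(x)x$, the only non-zero contributions are $\lambda[t,P(x)] + \alpha[Q(x),t] = -\lambda P'(x) + \alpha Q'(x)$, and a direct expansion of $P'(x), Q'(x)$ gives $1$. For Type II, $p$ and $q$ both have the shape $A(t) + Bx$ with $B\in K$, so $[q,p] = -p_1 q_0'(t) + q_1 p_0'(t)$, and substituting $p_0(t) = -f(t)t$, $q_0(t)=\tfrac{1}{\lambda}(\alpha f(t)+1)t$, the $\alpha(f'(t)t+f(t))$ terms cancel, leaving $1$. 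Types III and IV are immediate: in Type III, $qp - pq = \tfrac{1}{\lambda}x(\lambda t) - (\lambda t)\tfrac{1}{\lambda}x = xt-tx = 1$ (the $g(x)x^2$ terms commute with $\tfrac{1}{\lambda}x$), and Type IV reduces similarly to $[\lambda x, \tfrac{1}{\lambda}t] = 1$.

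For condition (ii), it suffices to read off $p_0(t)$ and $q_0(t)$ directly: in Type I, $p_0(t)=\alpha t$ and $q_0(t)=\lambda t$, giving $b(t)=\alpha, d(t)=\lambda$; in Type II, $p_0(t)=-f(t)t$ and $q_0(t)=\tfrac{1}{\lambda}(\alpha f(t)+1)t$, giving $b(t)=-f(t)$, $d(t)=\tfrac{1}{\lambda}(\alpha f(t)+1)$; in Type III, $p_0(t)=\lambda t$, $q_0(t)=0$, giving $b(t)=\lambda, d(t)=0$; in Type IV, $p_0(t)=\tfrac{1}{\lambda}t$, $q_0(t)=g(t)t$, giving $b(t)=\tfrac{1}{\lambda}, d(t)=g(t)$. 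For condition (iii), expanding $q = \lambda t + f(x)x$ as a polynomial in $x$ with coefficients in $K[t]$ shows that in Type I the sum $q_1(t)+q_2(t)x+\cdots+q_n(t)x^{n-1}$ equals $f(x)$, and similarly $p_1(t)+\cdots+p_n(t)x^{n-1} = \tfrac{1}{\lambda}(\alpha f(x)-1)$; thus the matrix is
\begin{equation*}
\begin{bmatrix} \alpha & \tfrac{1}{\lambda}(\alpha f(x)-1) \\ \lambda & f(x) \end{bmatrix},
\end{equation*}
with determinant $\alpha f(x) - (\alpha f(x)-1) = 1$ in the commutative ring $K[x]$, hence invertible. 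The analogous matrices for Types II, III, IV come out to
\begin{equation*}
\begin{bmatrix} -f(t) & -\lambda \\ \tfrac{\alpha f(t)+1}{\lambda} & \alpha\end{bmatrix}, \quad
\begin{bmatrix} \lambda & g(x) \\ 0 & \tfrac{1}{\lambda}\end{bmatrix}, \quad
\begin{bmatrix} \tfrac{1}{\lambda} & 0 \\ g(t) & \lambda\end{bmatrix},
\end{equation*}
each with determinant $1$ in $K[t]$ or $K[x]$, hence each invertible in $M_2(A_1(K))$.

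The proof is essentially a bookkeeping exercise; there is no serious obstacle. The only mild subtlety is ensuring in Type I (and symmetrically Type III) that when we reorganize $p = \alpha t + \tfrac{1}{\lambda}(\alpha f(x)-1)x$ as a polynomial in $x$ with $K[t]$-coefficients, the coefficients are actually constants in $K$, so that the condition-(iii) matrix lives in $M_2(K[x])$ rather than mixing $t$ and $x$; this is what keeps the commutative determinant computation legitimate. Once that is noted, applying Theorem \ref{theorem5.9} concludes that each family defines an automorphism of $A_1(K)$.
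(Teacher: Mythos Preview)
Your proof is correct and follows essentially the same approach as the paper: the paper also verifies (i)--(iii) case by case, computing $qp-pq$ directly (you package this via commutator bilinearity and the identities $[x,a(t)]=a'(t)$, $[g(x),t]=g'(x)$, which amounts to the same thing) and writing down the same four matrices, for which the paper simply exhibits the explicit inverses rather than invoking the determinant in the commutative subring $K[t]$ or $K[x]$. Your remark that the matrix entries all lie in a commutative subring is exactly what justifies the determinant argument and is a nice clarification the paper leaves implicit.
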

\begin{proof}
	It is clear that each of the pairs satisfies condition {\rm(ii)} of Theorem \ref{theorem5.9}. Let us now verify that each family also satisfies conditions {\rm(i)} and {\rm(iii)}.
	\paragraph{Case \texttt{Type I}} The polynomials $p$ and $q$ are Dixmier polynomials. In fact, since
	\[
	qp=\left(\lambda t+f(x)x\right)\left(\alpha t+\tfrac{\alpha}{\lambda}
	f(x)x-\tfrac{1}{\lambda}x\right)=\alpha\lambda t^2+\alpha f(x)xt+\alpha tf(x)x+
	\tfrac{\alpha}{\lambda}(f(x)x)^2-tx-\tfrac{1}{\lambda}f(x)x^2,
	\]
	and
	\[
	pq=\left(\alpha t+\tfrac{\alpha}{\lambda}f(x)x-\tfrac{1}{\lambda}x\right)\left(\lambda t
	+f(x)x\right)=\alpha\lambda t^2+\alpha f(x)xt-xt+\alpha tf(x)x+\tfrac{\alpha}{\lambda}(f(x)x)^2
	-\tfrac{1}{\lambda}f(x)x^2,
	\]
	it follows that $qp-pq=-tx+xt=-tx+tx+1=1.$
	
	Moreover, the matrix for condition {\rm(iii)} in Theorem 5.2 and its inverse are given respectively by:
	\[
	\begin{bmatrix}
	\alpha & \frac{1}{\lambda}(\alpha f(x)-1)\\
	\lambda & f(x)
	\end{bmatrix},\qquad
	\begin{bmatrix}
	f(x) & -\frac{1}{\lambda}(\alpha f(x)-1)\\
	-\lambda & \alpha
	\end{bmatrix}.
	\]
	\paragraph{Case \texttt{Type II}} The polynomials $p$ and $q$ are Dixmier polynomials. In fact, since
	\[
	qp=\left(\tfrac{\alpha }{\lambda }f(t)t+\tfrac{1}{\lambda }t+\alpha x\right)
	\left(-f(t)t-\lambda x\right)=-\tfrac{\alpha }{\lambda }(f(t)t)^2-\tfrac{1}{\lambda }f(t)t^2-
	\alpha xtf(t)-\alpha f(t)tx-tx-\alpha \lambda x^2,
	\]
	and
	\[
	pq=\left(-f(t)t-\lambda x\right)\left(\tfrac{\alpha }{\lambda }f(t)t+\tfrac{1}{\lambda }t+
	\alpha x\right)=-\tfrac{\alpha }{\lambda }(f(t)t)^2-\tfrac{1}{\lambda }f(t)t^2-
	\alpha f(t)tx-\alpha xtf(t)-xt-\alpha \lambda x^2,
	\]
	it follows that $qp-pq=-tx+xt=-tx+tx+1=1.$
	
	Moreover, the matrix for condition {\rm(iii)} in Theorem 5.2 and its inverse are given respectively by:
	\[
	\begin{bmatrix}
	-f(t) & -\lambda \\
	\tfrac{1}{\lambda }(\alpha f(t)+1) & \alpha
	\end{bmatrix},\qquad
	\begin{bmatrix}
	\alpha & \lambda \\
	-\tfrac{1}{\lambda }(\alpha f(t)+1) & -f(t)
	\end{bmatrix}.
	\]
	\paragraph{Case \texttt{Type III}} The polynomials $p$ and $q$ are Dixmier polynomials. In fact, since
	\[
	qp=\tfrac{1}{\lambda }x(\lambda t+g(x)x)=xt+\tfrac{1}{\lambda }g(x)x^2=tx+\tfrac{1}{\lambda }g(x)x^2+1=
	(\lambda t+g(x)x)\tfrac{1}{\lambda }x+1=pq+1,
	\]
	it follows that $qp-pq=1.$
	
	Moreover, the matrix for condition {\rm(iii)} in Theorem 5.2 and its inverse are given respectively by:
	\[
	\begin{bmatrix}
	\lambda  & g(x)\\
	0 & \frac{1}{\lambda }
	\end{bmatrix},\qquad
	\begin{bmatrix}
	\frac{1}{\lambda } & -g(x)\\
	0 & \lambda
	\end{bmatrix}.
	\]
	\paragraph{Case \texttt{Type IV}} The polynomials $p$ and $q$ are Dixmier polynomials. In fact, since
	\[
	qp=(\lambda x+g(t)t)\tfrac{1}{\lambda}t=xt+\tfrac{1}{\lambda}g(t)t^2=tx+\tfrac{1}{\lambda}g(t)t^2+1=
	\tfrac{1}{\lambda }t(\lambda x+g(t)t)+1=pq+1,
	\]
	it follows that $qp-pq=1.$
	
	Moreover, the matrix for condition {\rm(iii)} in Theorem 5.2 and its inverse are given respectively by:
	\[
	\begin{bmatrix}
	\frac{1}{\lambda } & 0\\
	g(t) & \lambda
	\end{bmatrix},\qquad
	\begin{bmatrix}
	\lambda  & 0\\
	-g(t) & \tfrac{1}{\lambda }
	\end{bmatrix}.
	\]
\end{proof}

\subsubsection*{Test tables of \texttt{DixmierPolynomial} function}

The following tables present families of Dixmier polynomials obtained using the \texttt{DixmierPolynomials} function. The results correspond to different values of $n$ and $m$, specifically: $n=1$ with $m$ ranging from 1 to 7; $m=1$ with $n$ ranging from $2$ to $7;$ $m=2$ with $n$ ranging from $2$ to $7;$ and $n=5$ with $m=3.$ These tables provide a structured overview of the polynomial forms derived under these specific conditions, offering insight into their behavior and properties. Greek letters represent elements of $K.$

\begin{table}[htb]
\centering\renewcommand{\arraystretch}{2.2}
\normalsize{
\begin{tabular}{|cp{12.7cm}|}\hline %
\textbf{$m$} & \textbf{Cases\quad} \textbf{Polynomials $p$ and $q$}\\[0.2cm] \hline \hline			
$1$ & \parbox[t][0.4\height][b]{12.5cm}{$
\begin{aligned}
\texttt{Case\,1:} &\quad p=\tfrac{1+\lambda_0\lambda_1}{\lambda_2}t+\lambda_0x,\quad q=\lambda_1t+\lambda_2x\\
\texttt{Case\,2:} &\quad p=\lambda_0t-\tfrac{1}{\lambda_1}x,\quad q=\lambda_1t
\end{aligned}$} \\[0.5cm] \hline
$2$ & \parbox[t][0.4\height][b]{12.5cm}{$
\begin{aligned}
\texttt{Case\,1:} &\quad p=\tfrac{1+\lambda_0\lambda_1}{\lambda_3}t+\tfrac{\lambda_0\lambda_2}{\lambda_3}t^2+\lambda_0x,\quad
q=\lambda_1t+\lambda_2t^2+\lambda_3x\\
\texttt{Case\,2:} &\quad p=\lambda_0t+\lambda_1t^2+\lambda_2x,\quad q = -\tfrac{1}{\lambda_2}t
\end{aligned}$} \\[0.5cm] \hline 
$3$ & \parbox[t][0.5\height][b]{12.5cm}{$
\begin{aligned}
\texttt{Case\,1:} &\quad p=\tfrac{1+\lambda_0\lambda_1}{\lambda_4}t+\tfrac{\lambda_0\lambda_2}{\lambda_4}t^2+ \tfrac{\lambda_0\lambda_3}{\lambda_4}t^3+\lambda_0x,\\
&\quad q=\lambda_1t+\lambda_2t^2+\lambda_3t^3+\lambda_4x\\ 
\texttt{Case\,2:} &\quad p =\lambda_0t+\lambda_1t^2+\lambda_2t^3+\lambda_3x, \quad q=-\tfrac{1}{\lambda_3}t \end{aligned}$} \\[1cm] \hline
$4$ & \parbox[t][0.4\height][b]{12.5cm}{$
\begin{aligned}
\texttt{Case\,1:} &\quad p=\tfrac{1+\lambda_0\lambda_1}{\lambda_5}t+\tfrac{\lambda_0\lambda_2}{\lambda_5}t^2+ \tfrac{\lambda_0\lambda_3}{\lambda_5}t^3+\tfrac{\lambda_0\lambda_4}{\lambda_5}t^4+\lambda_0x,\\
&\quad q=\lambda_1t+\lambda_2t^2+\lambda_3t^3+\lambda_4t^4+\lambda_5x\\
\texttt{Case\,2:} &\quad p=\lambda_0t+\lambda_1t^2+\lambda_2t^3+\lambda_3t^4+\lambda_4x, \quad q=-\tfrac{1}{\lambda_4}t
\end{aligned}$} \\[1cm] \hline 
$5$ & \parbox[t][0.4\height][b]{12.5cm}{$
\begin{aligned}
\texttt{Case\,1:} &\quad p=\tfrac{1+\lambda_0\lambda_1}{\lambda_6}t+\tfrac{\lambda_0\lambda_2}{\lambda_6}t^2+ \tfrac{\lambda_0\lambda_3}{\lambda_6}t^3+\tfrac{\lambda_0\lambda_4}{\lambda_6}t^4+
\tfrac{\lambda_0\lambda_5}{\lambda_6}t^5+\lambda_0x,\\
&\quad q=\lambda_1t+\lambda_2t^2+\lambda_3t^3+\lambda_4t^4+\lambda_5t^5+\lambda_6x\\
\texttt{Case\,2:} &\quad p=\lambda_0t+\lambda_1t^2+\lambda_2t^3+\lambda_3t^4+\lambda_4t^5+\lambda_5x, \quad q=-\tfrac{1}{\lambda_5}t
\end{aligned}$} \\[1cm] \hline 
$6$ & \parbox[t][0.4\height][b]{12.5cm}{$
\begin{aligned}
\texttt{Case\,1:} &\quad p=\tfrac{1+\lambda_0\lambda_1}{\lambda_7}t+\tfrac{\lambda_0\lambda_2}{\lambda_7}t^2+ \tfrac{\lambda_0\lambda_3}{\lambda_7}t^3+\tfrac{\lambda_0\lambda_4}{\lambda_7}t^4+
\tfrac{\lambda_0\lambda_5}{\lambda_7}t^5+\tfrac{\lambda_0\lambda_6}{\lambda_7}t^6+
\lambda_0x,\\
&\quad q=\lambda_1t+\lambda_2t^2+\lambda_3t^3+\lambda_4t^4+\lambda_5t^5
+\lambda_6t^6+\lambda_7x\\
\texttt{Case\,2:} &\quad p=\lambda_0t+\lambda_1t^2+\lambda_2t^3+\lambda_3t^4+\lambda_4t^5+
\lambda_5t^6+\lambda_6x, \quad q=-\tfrac{1}{\lambda_6}t
\end{aligned}$} \\[1cm] \hline 
$7$ & \parbox[t][0.4\height][b]{12.5cm}{$
\begin{aligned}
\texttt{Case\,1:} &\quad p=\tfrac{1+\lambda_0\lambda_1}{\lambda_8}t+\tfrac{\lambda_0\lambda_2}{\lambda_8}t^2+ \tfrac{\lambda_0\lambda_3}{\lambda_8}t^3+\tfrac{\lambda_0\lambda_4}{\lambda_8}t^4+
\tfrac{\lambda_0\lambda_5}{\lambda_8}t^5+\tfrac{\lambda_0\lambda_6}{\lambda_8}t^6+
\tfrac{\lambda_0\lambda_7}{\lambda_8}t^7+\lambda_0x,\\
&\quad q=\lambda_1t+\lambda_2t^2+\lambda_3t^3+\lambda_4t^4+\lambda_5t^5
+\lambda_6t^6+\lambda_7t^7+\lambda_8x\\
\texttt{Case\,2:} &\quad p=\lambda_0t+\lambda_1t^2+\lambda_2t^3+\lambda_3t^4+\lambda_4t^5+
\lambda_5t^6+\lambda_6t^7+\lambda_7x, \quad q=-\tfrac{1}{\lambda_7}t
\end{aligned}$}\\[1cm] \hline
\end{tabular}}
\caption{Test table for Dixmier polynomials with $n=1$ and $m$ from $1$ to $7$}\label{TestTable:n1}
\end{table}

\begin{table}[htb]
\centering\renewcommand{\arraystretch}{1.7}
\normalsize{
\begin{tabular}{|clp{11.5cm}|}\hline %
\textbf{$n$} & \textbf{Cases} & \textbf{Polynomials $p$ and $q$}\\ \hline \hline		
\texttt{$2$} & \texttt{Case 1:} & $p=\lambda_0t+\lambda_1x+\lambda_2x^2,$\quad $q=\lambda_3t+\frac{1+\lambda_1\lambda_3}{\lambda_0}x+
\frac{\lambda_2\lambda_3}{\lambda_0}x^2$\\ 
& \texttt{Case 2:} & $p=\lambda_0x,\quad q=-\frac{1}{\lambda_0}t+\lambda_1x+\lambda_2x^2$\\[0.1cm] \hline
\texttt{$3$} & \texttt{Case 1:} &  $p=\lambda_0t+\lambda_1x+\lambda_2x^2+\lambda_3x^3,$\quad $q=\lambda_4t+\frac{1+\lambda_1\lambda_4}{\lambda_0}x+
\frac{\lambda_2\lambda_4}{\lambda_0}x^2+\frac{\lambda_3\lambda_4}{\lambda_0}x^3$\\ 
& \texttt{Case 2:} & $p=\lambda_0x,\quad q=-\frac{1}{\lambda_0}t+\lambda_1x+\lambda_2x^2+\lambda_3x^3$
\\[0.1cm] \hline
\texttt{$4$} & \texttt{Case 1:} &  $p=\lambda_0t+\lambda_1x+\lambda_2x^2+\lambda_3x^3+\lambda_4x^4,$\newline $q=\lambda_5t+\frac{1+\lambda_1\lambda_5}{\lambda_0}x+
\frac{\lambda_2\lambda_5}{\lambda_0}x^2+\frac{\lambda_3\lambda_5}{\lambda_0}x^3+
\frac{\lambda_4\lambda_5}{\lambda_0}x^4$\\ 
& \texttt{Case 2:} & $p=\lambda_0x,\quad q=-\frac{1}{\lambda_0}t+\lambda_1x+\lambda_2x^2+\lambda_3x^3+
\lambda_4x^4$\\[0.1cm] \hline
\texttt{$5$} & \texttt{Case 1:} &  $p=\lambda_0t+\lambda_1x+\lambda_2x^2+\lambda_3x^3+\lambda_4x^4+\lambda_5x^5,$\newline $q=\lambda_6t+\frac{1+\lambda_1\lambda_6}{\lambda_0}x+
\frac{\lambda_2\lambda_6}{\lambda_0}x^2+\frac{\lambda_3\lambda_6}{\lambda_0}x^3+
\frac{\lambda_4\lambda_6}{\lambda_0}x^4+\frac{\lambda_5\lambda_6}{\lambda_0}x^5$\\ 
& \texttt{Case 2:} & $p=\lambda_0x,\quad q=-\frac{1}{\lambda_0}t+\lambda_1x+\lambda_2x^2+\lambda_3x^3+
\lambda_4x^4+\lambda_5x^5$\\[0.1cm] \hline
\texttt{$6$} & \texttt{Case 1:} & $p=\lambda_0t+\lambda_1x+\lambda_2x^2+\lambda_3x^3+\lambda_4x^4+\lambda_5x^5+\lambda_6x^6,$\newline $q=\lambda_7t+\frac{1+\lambda_1\lambda_7}{\lambda_0}x+
\frac{\lambda_2\lambda_7}{\lambda_0}x^2+\frac{\lambda_3\lambda_7}{\lambda_0}x^3+
\frac{\lambda_4\lambda_7}{\lambda_0}x^4+\frac{\lambda_5\lambda_7}{\lambda_0}x^5+
\frac{\lambda_6\lambda_7}{\lambda_0}x^6$\\
& \texttt{Case 2:} & $p=\lambda_0x,\quad q=-\frac{1}{\lambda_0}t+\lambda_1x+\lambda_2x^2+\lambda_3x^3+
\lambda_4x^4+ \lambda_5x^5+\lambda_6x^6$\\[0.1cm] \hline
\texttt{$7$} & \texttt{Case 1:} & $p=\lambda_0t+\lambda_1x+\lambda_2x^2+\lambda_3x^3+\lambda_4x^4+\lambda_5x^5+\lambda_6x^6+ \lambda_7x^7,$\newline $q=\lambda_8t+\frac{1+\lambda_1\lambda_8}{\lambda_0}x+
\frac{\lambda_2\lambda_8}{\lambda_0}x^2+\frac{\lambda_3\lambda_8}{\lambda_0}x^3+
\frac{\lambda_4\lambda_8}{\lambda_0}x^4+\frac{\lambda_5\lambda_8}{\lambda_0}x^5+
\frac{\lambda_6\lambda_8}{\lambda_0}x^6+\frac{\lambda_7\lambda_8}{\lambda_0}x^7$\\ 
& \texttt{Case 2:} & $p=\lambda_0x,\quad q=-\frac{1}{\lambda_0}t+\lambda_1x+\lambda_2x^2+\lambda_3x^3+
\lambda_4x^4+ \lambda_5x^5+\lambda_6x^6+\lambda_7x^7$\\[0.1cm] \hline
\end{tabular}}
\caption{Test table for Dixmier polynomials with $n$ from $2$ to $7$ and $m=1$}\label{TestTable:m1}
\end{table}

\begin{center}
\begin{table}[htb]
\centering\renewcommand{\arraystretch}{1.7}
\normalsize{
\begin{tabular}{|rlp{12.3cm}|}\hline %
$n$ & \textbf{Cases} & \textbf{Polynomials $p$ and $q$}\\[0.1cm] \hline \hline				
\texttt{$2$} & \texttt{Case 1:} & $p=\lambda_0t+\lambda_1t^2+\left(\lambda_2+\frac{2(1+\lambda_2\lambda_4)\lambda_1}{\lambda_0 \lambda_4}t\right)x +\frac{\lambda_1(1+\lambda_2\lambda_4)^2}{\lambda_0^2\lambda_4^2}x^2,\quad
q=\lambda_4t+\frac{1+\lambda_2 \lambda_4}{\lambda_0}x$\\ 
& \texttt{Case 2:} & $p=\lambda_0t+\lambda_1x+\lambda_2x^2,\quad q=\lambda_3t+\frac{1+\lambda_1\lambda_3}{\lambda_0}x+\frac{\lambda_2\lambda_3}{\lambda_0}x^2$\\ 
& \texttt{Case 3:} & $p=\lambda_0t+\lambda_1t^2+\lambda_2x,\quad q =\frac{-\lambda_1+\lambda_0 \lambda_2\lambda_3}{\lambda_1\lambda_2}t+\lambda_3t^2+\frac{\lambda_2\lambda_3}{\lambda_1}x$\\ 
& \texttt{Case 4:} & $p=\lambda_0t+\lambda_1x,\quad q=\lambda_2t+\lambda_3t^2+\left(\frac{1+\lambda_1 \lambda_2}{\lambda_0}+\frac{2\lambda_1\lambda_3}{\lambda_0}t\right)x+
\frac{\lambda_1^2\lambda_3}{\lambda_0^2}x^2$\\[0.1cm] \cline{1-3}
\texttt{$3$} & \texttt{Case 1:} &
$p=\lambda_0t+\lambda_1t^2+\left(\lambda_2+\frac{2\gamma\lambda_1}{\beta\lambda_0}t\right)x+
\frac{\gamma^2\lambda_1}{\beta^2\lambda_0^2}x^2$,\newline
$q=\lambda_3t+\lambda_4t^2+\left(\frac{\lambda_2\lambda_3+1}{\lambda_0}+
\frac{2\gamma\lambda_4}{\beta\lambda_0}t\right)x+\frac{\gamma^2\lambda_4}{\beta^2\lambda_0^2}x^2,\newline
\text{where: }\beta=\lambda_0\lambda_4-\lambda_1\lambda_3,\,\gamma= \beta\lambda_2-\lambda_1$\\
& \texttt{Case 2:} &
$p=\lambda_0t^2+\left(-\frac{1}{\lambda_2}+\lambda_1t\right)x+\frac{\lambda_1^2}{4\lambda_0}x^2,\newline
q=\lambda_2t+\lambda_3t^2+\left(\frac{\lambda_1\lambda_2^2-2\lambda_3}{2\lambda_0\lambda_2}+
\frac{\lambda_3\lambda_1}{\lambda_0}t\right)x+\frac{\lambda_1^2\lambda_3}{4\lambda_0^2}x^2$\\ 
& \texttt{Case 3:} &
$p=\lambda_0t+\lambda_1x+\lambda_2x^2+\lambda_3x^3, \quad q=\frac{\lambda_0\lambda_4}{\lambda_3}t+
\frac{\left(\lambda_0\lambda_1\lambda_4+\lambda_3\right)}{\lambda_0\lambda_3}x+
\frac{\lambda_2\lambda_4}{\lambda_3}x^2+\lambda_4x^3$\\
& \texttt{Case 4:} &
$p=\lambda_0t+\lambda_1x+\lambda_2x^2,\quad q=\frac{\lambda_0\lambda_3}{\lambda_2}t+\frac{\left(\lambda_0
\lambda_1\lambda_3+\lambda_2\right)}{\lambda_0\lambda_2}x+\lambda_3x^2$\\
& \texttt{Case 5:} & $p=\lambda_0x,\quad q=-\frac{1}{\lambda_0}t+\lambda_1x+\lambda_2x^2+\lambda_3x^3$
\\[0.2cm] \cline{1-3}
\texttt{$4$} & \texttt{Case 1:} &
$p=\lambda_0t+\lambda_1t^2+\left(\lambda_2+\frac{2\gamma\lambda_1}{\beta}t\right)x+
\left(\frac{e_1}{2\beta^2\lambda_1}+\lambda_3t\right)x^2+\frac{\gamma\lambda_3}{\beta}x^3+
\frac{\lambda_3^2}{4\lambda_1}x^4,\newline
q=\lambda_4t+\lambda_5t^2+\left(\frac{\zeta}{\lambda_1\lambda_0}+
\frac{2\gamma\lambda_5}{\beta}t\right)x+\left(\frac{e_2}{2\beta^2\lambda_1}+\frac{\lambda_3\lambda_5}
{\lambda_1}t\right)x^2+\frac{\gamma\lambda_3\lambda_5}{\beta\lambda_1}x^3+\frac{\lambda_3^2\lambda_5}
{4\lambda_1^2}x^4,\newline \text{where: }\beta=\lambda_0(\lambda_0\lambda_5-\lambda_1\lambda_4),\,\gamma=
(\lambda_0\lambda_5-\lambda_1\lambda_4)(\lambda_2-\lambda_3)-\lambda_1,\,
\zeta=\lambda_0\lambda_2\lambda_5-\gamma,
\newline
e_1=\beta^2\lambda_0\lambda_2-\lambda_0^2(\gamma+\lambda_1)\beta+2\gamma^2\lambda_1^2,\,
e_2=\beta^2\lambda_2\lambda_4-\lambda_0\lambda_4(\gamma+\lambda_1)\beta+2\gamma^2\lambda_1\lambda_5$\\
& \texttt{Case 2:} & $p=\lambda_0t^2+\left(\frac{\lambda_2\lambda_3-1}{\lambda_3}+\lambda_1t\right)x+
\left(\frac{\lambda_1^2}{4\lambda_0}+\lambda_2t\right)x^2+\frac{\lambda_1\lambda_2}{2\lambda_0}x^3+
\frac{\lambda_2^2}{4\lambda_0}x^4,\newline
q=\lambda_3t+\lambda_4t^2+\left(\frac{\beta}{2\lambda_0\lambda_3}+\frac{\lambda_1\lambda_4}{\lambda_0}t
\right)x+\left(\frac{\gamma}{4\lambda_0^2}+\frac{\lambda_2\lambda_4}{\lambda_0}t\right)x^2+\frac{\lambda_1
\lambda_2\lambda_4}{2\lambda_0^2}x^3+\frac{\lambda_2^2\lambda_4}{4\lambda_0^2}x^4,\newline \text{where: } \beta=\lambda_1\lambda_3^2+2\lambda_2\lambda_3\lambda_4-2\lambda_4,\,
\gamma=2\lambda_0\lambda_2\lambda_3+\lambda_1^2\lambda_4$\\
& \texttt{Case 3:} & $p=\lambda_0t+\lambda_1x+\lambda_2x^2+\lambda_3x^3+\lambda_4x^4,\newline q=\frac{\lambda_0\lambda_5}{\lambda_4}t+\frac{\lambda_0\lambda_1\lambda_5+\lambda_4}{\lambda_0\lambda_4}x+
\frac{\lambda_2\lambda_5}{\lambda_4}x^2+\frac{\lambda_3\lambda_5}{\lambda_4}x^3+\lambda_5x^4$\\
& \texttt{Case 4:} & $p=\lambda_0t+\lambda_1x+\lambda_2x^2+\lambda_3x^3,\quad
q=\frac{\lambda_0\lambda_4}{\lambda_3}t+\frac{\lambda_0\lambda_1\lambda_4+\lambda_3}{\lambda_0\lambda_3}x+
\frac{\lambda_2\lambda_4}{\lambda_3}x^2+\lambda_4x^3$\\ 
& \texttt{Case 5:} & $p=\lambda_0t+\lambda_1x+\lambda_2x^2,\newline
q=-\frac{\beta}{\lambda_2^3}t+\frac{\lambda_0^2\lambda_4}{\lambda_2^2}t^2+
\left(\frac{\gamma}{\lambda_0\lambda_2^3}+\frac{\lambda_1\zeta}
{\lambda_2^2}t\right)x+\left(\lambda_3+\frac{\zeta}{\lambda_2}t\right)x^2+\frac{2\lambda_1
\lambda_4}{\lambda_2}x^3+\lambda_4x^4,\newline
\text{where: }\beta=\lambda_0(\lambda_1^2\lambda_4-\lambda_2^2\lambda_3),\,\gamma=\lambda_2^3+
(2\lambda_0^2\lambda_4+\lambda_0\lambda_1\lambda_3)\lambda_2^2-\lambda_0\lambda_1^3\lambda_4,\,
\zeta=2\lambda_0\lambda_4$\\
& \texttt{Case 6:} & $p=\lambda_0t+\lambda_1x,\quad
q=\lambda_2t+\lambda_3t^2+\left(\frac{\lambda_1\lambda_2+1}{\lambda_0}+\frac{2\lambda_1\lambda_3}
{\lambda_0}t\right)x+\frac{\lambda_1^2\lambda_3}{\lambda_0^2}x^2$\\ 
& \texttt{Case 7:} & $p=\lambda_0x,\quad q=-\frac{1}{\lambda_0}t+\lambda_1x+\lambda_2x^2+\lambda_3x^3+\lambda_4x^4$\\[0.1cm] \cline{1-3}				
\end{tabular}}
\caption{Test table for Dixmier polynomials with $n$ from $2$ to $4$ and $m=2$}\label{TestTable:m2}
\end{table}
\end{center}

\begin{center}
\begin{table}[htb]
\centering\renewcommand{\arraystretch}{1.9}
\begin{tabular}{|lp{13.3cm}|}\hline %
\textbf{Cases} & \textbf{Polynomials $p$ and $q$}\\ \hline \hline		
\texttt{Case 1:} & $p=\lambda_0t+\lambda_1x+\lambda_2x^2+\lambda_3x^3+\lambda_4x^4+\lambda_5x^5,\newline q=\lambda_6t+\frac{1+\lambda_1\lambda_6}{\lambda_0}x+\frac{\lambda_2\lambda_6}{\lambda_0}x^2+			\frac{\lambda_3\lambda_6}{\lambda_0}x^3+\frac{\lambda_4\lambda_6}{\lambda_0}x^4+\frac{\lambda_5\lambda_6}
{\lambda_0}x^5$ \\ \hline
\texttt{Case 2:} & $p=\lambda_0t+\lambda_1x+\lambda_2x^2,\newline
q=\lambda_3t+\lambda_4t^2+\left(\frac{\beta}{\lambda_0}+\frac{2\lambda_1\lambda_4}{\lambda_0}t\right)x+
\left(\frac{\gamma}{\lambda_0^2}+\frac{2\lambda_2\lambda_4}{\lambda_0}t\right)x^2+\frac{2\lambda_1\lambda_2
\lambda_4}{\lambda_0^2}x^3+\frac{\lambda_2^2\lambda_4}{\lambda_0^2}x^4,\newline
\text{where }\beta=\lambda_1\lambda_3+2\lambda_2\lambda_4+1,$\,
$\gamma=\lambda_0\lambda_2\lambda_3+\lambda_1^2\lambda_4$ \\ \hline
\texttt{Case 3:} & $p=\lambda_0x,\quad q=-\frac{1}{\lambda_0}t+\lambda_1x+\lambda_2x^2+\lambda_3x^3+
\lambda_4x^4+ \lambda_5x^5$ \\ \hline
\texttt{Case 4:} & $p=\lambda_0t+\lambda_1t^2+\left(\frac{\beta\gamma+2\lambda_1^2}{2\beta\lambda_1}+\lambda_2t\right)x+
\left(\frac{2\lambda_3\lambda_0+\lambda_2^2}{4\lambda_1}+\lambda_3t\right)x^2+\frac{\lambda_2\lambda_3}
{2\lambda_1}x^3+\frac{\lambda_3^2}{4\lambda_1}x^4,\newline
q=\lambda_4t+\lambda_5t^2+\left(\frac{\zeta\beta+2\lambda_1\lambda_5}{2\beta\lambda_1}+\frac{\lambda_2
\lambda_5}{\lambda_1}t\right)x+\left(\frac{2\lambda_1\lambda_3\lambda_4+\lambda_2^2\lambda_5}{4
\lambda_1^2}+\frac{\lambda_3\lambda_5}{\lambda_1}t\right)x^2+\frac{\lambda_2\lambda_3\lambda_5}{2
\lambda_1^2}x^3+\frac{\lambda_3^2\lambda_5}{4\lambda_1^2}x^4,\newline\text{where: }
\beta=\lambda_0\lambda_5-\lambda_1\lambda_4,\,\gamma=\lambda_0\lambda_2+2\lambda_1\lambda_3,\, \zeta=\lambda_2\lambda_4+2\lambda_3\lambda_5$ \\ \hline
\texttt{Case 5:} & $p=\lambda_0t+\lambda_1t^2+\lambda_2x,\quad q=\lambda_3t+\frac{\lambda_1(1+\lambda_2\lambda_3)}{\lambda_0\lambda_2}t^2+
\frac{1+\lambda_2\lambda_3}{\lambda_0}x$ \\ \hline
\end{tabular}
\caption{Test table for Dixmier polynomials with $n=5$ and $m=2$}\label{TestTable:n5m2}
\end{table}
\end{center}

\begin{center}
\begin{table}[htb]
\centering\renewcommand{\arraystretch}{1.9}
\begin{tabular}{|lp{13.8cm}|}\hline %
\textbf{Cases} & \textbf{Polynomials $p$ and $q$}\\ \hline \hline 
\texttt{Case 1:} & $p=\lambda_0t+\lambda_1x+\lambda_2x^2+\lambda_3x^3+\lambda_4x^4+\lambda_5x^5+\lambda_6x^6,\newline q=\lambda_7t+\frac{1+\lambda_1\lambda_7}{\lambda_0}x+\frac{\lambda_2\lambda_7}{\lambda_0}x^2+
\frac{\lambda_3\lambda_7}{\lambda_0}x^3+\frac{\lambda_4\lambda_7}{\lambda_0}x^4+
\frac{\lambda_5\lambda_7}{\lambda_0}x^5+\frac{\lambda_6\lambda_7}{\lambda_0}x^6$ \\[1ex] \hline
\texttt{Case 2:} & $p=\lambda_0t+\lambda_1x+\lambda_2x^2+\lambda_3x^3,\newline
q=\lambda_4t+\lambda_5t^2+\left(\frac{\beta}{\lambda_0}+
\upsilon\lambda_1t\right)x+\left(\frac{\gamma}{\lambda_0^2}+\upsilon\lambda_2t\right)x^2+
\left(\frac{\zeta}{\lambda_0^2}+\upsilon\lambda_3t\right)x^3+
\frac{\eta}{\lambda_0^2}x^4+\frac{\vartheta}{\lambda_0^2}x^5+\frac{\lambda_3^2\lambda_5}{\lambda_0^2}x^6,
\newline\text{where: } \beta=\lambda_1\lambda_4+2\lambda_2\lambda_5+1,\,
\gamma=\lambda_0\lambda_2\lambda_4+3\lambda_0\lambda_3\lambda_5+\lambda_1^2\lambda_5,\,
\zeta=\lambda_0\lambda_3\lambda_4+2\lambda_1\lambda_2\lambda_5,\newline
\eta=2\lambda_1\lambda_3\lambda_5+\lambda_2^2\lambda_5,\,\vartheta=2\lambda_2\lambda_3\lambda_5,\,
\upsilon=2\lambda_5/\lambda_0$ \\\hline
\texttt{Case 3:} & $p=\lambda_0x,\quad q=-\frac{1}{\lambda_0}t+\lambda_1x+\lambda_2x^2+\lambda_3x^3+
\lambda_4x^4+ \lambda_5x^5+\lambda_6x^6$ \\ \cline{1-2}
\texttt{Case 4:} & $p=\lambda_0t+\lambda_1t^2+\left(\frac{e_1}{\beta\lambda_2^3}-\frac{e_4\gamma}{\lambda_2^2}t\right)x+
\left(\frac{e_2}{2\lambda_2^6}+e_4\lambda_4t\right)x^2+\left(\frac{e_3}{2\lambda_1\lambda_2^4}+\lambda_2t
\right)x^3+\lambda_3x^4+\lambda_4x^5+\frac{\lambda_2^2}{4\lambda_1}x^6,\newline q=\lambda_5t+\lambda_6t^2+\left(\frac{e_5}{\beta\lambda_2^3}+e_{13}t\right)x+
\left(\frac{e_6}{2\lambda_1\lambda_2^6}+e_7t\right)x^2+\left(\frac{e_8}{2\lambda_1\lambda_2^4}+e_9t\right)x^3
+e_{10}x^4+e_{11}x^5+e_{12}x^6,\newline
\text{where: }\beta=\lambda_0\lambda_6-\lambda_1\lambda_5,\,
\gamma=\lambda_1\lambda_4^2-\lambda_2^2\lambda_3,\,e_1=2\beta\lambda_1\lambda_2^2\lambda_4+\lambda_1
\lambda_2^3-\beta\gamma\lambda_0,\,e_2=2\lambda_0\lambda_2^5\lambda_4+3\lambda_2^7+2\gamma^2\lambda_1,\,
e_3=\lambda_0\lambda_2^5-4\gamma\lambda_1^2\lambda_4,\,e_4=2\lambda_1/\lambda_2,\,e_5=(2\lambda_2^2\lambda_4
\lambda_6-\gamma\lambda_5)\beta+\lambda_2^3\lambda_6,\,e_6=2\lambda_5\lambda_1\lambda_2^5\lambda_4+3
\lambda_2^7\lambda_6+2\gamma^2\lambda_1\lambda_6,\,e_7=2\lambda_4\lambda_6/\lambda_2
,\,e_8=\lambda_2^5\lambda_5-4\gamma\lambda_1\lambda_4\lambda_6,\,e_9=\lambda_2\lambda_6/\lambda_1
,\,e_{10}=\lambda_3\lambda_6/\lambda_1,\,e_{11}=\lambda_4\lambda_6/\lambda_1,\,
e_{12}=\lambda_6\lambda_2^2/(4\lambda_1^2),\,e_{13}=-2\gamma\lambda_6/\lambda_2^3$ \\ \hline
\texttt{Case 5:} & $p=\lambda_0t+\lambda_1t^2+\left(\frac{\beta\lambda_0\lambda_2+2\lambda_1^2}{2\beta
\lambda_1}+\lambda_2t\right)x+\frac{6\lambda_1\lambda_3+\lambda_2^2}{4\lambda_1}x^2+
\left(\frac{\lambda_0\lambda_3}{2\lambda_1}+\lambda_3t\right)x^3+\frac{\lambda_3\lambda_2}{2\lambda_1}x^4
+\frac{\lambda_3^2}{4\lambda_1}x^6,\newline
q=\lambda_4t+\lambda_5t^2+\left(\frac{\gamma}{2\beta\lambda_1}+\frac{\lambda_2\lambda_5}{\lambda_1}t\right)x+
\frac{(6\lambda_1\lambda_3+\lambda_2^2)\lambda_5}{4\lambda_1^2}x^2+\left(\frac{\lambda_3\lambda_4}
{2\lambda_1}+\frac{\lambda_3\lambda_5}{\lambda_1}t\right)x^3+\frac{\lambda_2\lambda_3\lambda_5}{2\lambda_1^2}
x^4+\frac{\lambda_5\lambda_3^2}{4\lambda_1^2}x^6,\newline\text{where: }
\beta=\lambda_0\lambda_5-\lambda_1\lambda_4,\,
\gamma=\beta\lambda_2\lambda_4+2\lambda_5\lambda_1,\,$ \\ \cline{1-2}
\texttt{Case 6:} & $p=\lambda_0t+\lambda_1t^2+\left(\frac{(\lambda_2\lambda_0+2\lambda_1\lambda_3)\beta+2
\lambda_1^2}{2\beta\lambda_1}+\lambda_2t\right)x+\left(\frac{2\lambda_3\lambda_0+\lambda_2^2}
{4\lambda_1}+\lambda_3t\right)x^2+\frac{\lambda_2\lambda_3}{2\lambda_1}x^3+\frac{\lambda_3^2}{4\lambda_1}
x^4,\newline q=\lambda_4t+\lambda_5t^2+\left(\frac{2\beta\lambda_3\lambda_5+\gamma}{2\beta\lambda_1}+
\frac{\lambda_2\lambda_5}{\lambda_1}t\right)x+\left(\frac{2\lambda_1\lambda_4\lambda_3+\lambda_5
\lambda_2^2}{4\lambda_1^2}+\frac{\lambda_3\lambda_5}{\lambda_1}t\right)x^2+\frac{\lambda_2\lambda_3
\lambda_5}{2\lambda_1^2}x^3+\frac{\lambda_3^2\lambda_5}{4\lambda_1^2}x^4,\newline\text{where: }
\beta=\lambda_0\lambda_5-\lambda_1\lambda_4,\,
\gamma=\beta\lambda_2\lambda_4+2\lambda_5\lambda_1,\,$ \\[1ex] \cline{1-2}
\texttt{Case 7:} & $p=\lambda_0t+\lambda_1t^2+\lambda_2x,\quad q=\lambda_3t+\frac{\lambda_1(\lambda_2\lambda_3+1)}{\lambda_0\lambda_2}t^2+\frac{\lambda_2\lambda_3+1}
{\lambda_0}x$\\[1ex] \cline{1-2}
\texttt{Case 8:} & $p=\lambda_0t+\lambda_1t^2+\left(\frac{\zeta\gamma+2\lambda_1^2}{2\zeta\lambda_1}
+\lambda_2t\right)x+\left(\frac{2\lambda_3\lambda_0+\lambda_2^2}{4\lambda_1}+\lambda_3t\right)x^2+
\frac{\lambda_2\lambda_3}{2\lambda_1}x^3+\frac{\lambda_3^2}{4\lambda_1}x^4,\newline
q=\lambda_4t+\lambda_5t^2+\left(\frac{e_1}{\zeta\lambda_4}+\gamma\lambda_1t\right)x+\left(\frac{e_2}
{\zeta^2}+\gamma\lambda_2t\right)x^2+\left(\frac{e_3}{\zeta\lambda_0}+\gamma\lambda_3t\right)x^3+
\frac{e_4}{\zeta^2}x^4+\frac{e_5}{\zeta\lambda_0}x^5+\frac{e_6}{\zeta^2}x^6,
\newline\text{where: }e_1=\lambda_1\lambda_4^2+2\lambda_2\lambda_5\lambda_4-2\lambda_5,\,
e_2=(-\lambda_2\lambda_0+3\gamma)\lambda_4+\lambda_1^2\lambda_5+6\zeta\lambda_3,\,
e_3=\lambda_3\lambda_0\lambda_4+\lambda_1\lambda_2\lambda_5,\,e_4=\lambda_5\lambda_2^2+(-\lambda_1
\lambda_4-\zeta)\lambda_2+\gamma\lambda_5,\,e_5=\lambda_2\lambda_3\lambda_5,\,e_6=\lambda_3^2\lambda_5,\,
\gamma=\lambda_5/\lambda_0,\,\zeta=2\lambda_0
$ \\ \cline{1-2}		
\end{tabular}
\caption{Test table for Dixmier polynomials with $n=6$ and $m=2$}\label{TestTable:n6m2}
\end{table}
\end{center}

\begin{center}
\begin{table}[htb]
\centering\renewcommand{\arraystretch}{1.9}
\begin{tabular}{|lp{13.8cm}|}\hline %
\textbf{Cases} & \textbf{Polynomials $p$ and $q$}\\ \hline \hline			
\texttt{Case 1:} & $p=\lambda_0t+\lambda_1x+\lambda_2x^2+\lambda_3x^3+\lambda_4x^4+\lambda_5x^5+\lambda_6x^6+\lambda_7x^7,
\newline q= \frac{\lambda_0\lambda_8}{\lambda_7}t+\frac{\lambda_0\lambda_1\lambda_8+\lambda_7}
{\lambda_0\lambda_7}x+\frac{\lambda_2\lambda_8}{\lambda_7}x^2+\frac{\lambda_3\lambda_8}{\lambda_7}x^3+
\frac{\lambda_4\lambda_8}{\lambda_7}x^4+\frac{\lambda_5\lambda_8}{\lambda_7}x^5+\frac{\lambda_6\lambda_8}
{\lambda_7}x^6+\lambda_8x^7$ \\ \cline{1-2}
\texttt{Case 2:} & $p=\lambda_0t+\lambda_1x+\lambda_2x^2+\lambda_3x^3+\lambda_4x^4+\lambda_5x^5+\lambda_6x^6,
\newline q= \frac{\lambda_0\lambda_7}{\lambda_6}t+\frac{\lambda_0\lambda_1\lambda_7+\lambda_6}
{\lambda_0\lambda_6}x+\frac{\lambda_2\lambda_7}{\lambda_6}x^2+\frac{\lambda_3\lambda_7}{\lambda_6}x^3+
\frac{\lambda_4\lambda_7}{\lambda_6}x^4+\frac{\lambda_5\lambda_7}{\lambda_6}x^5+\lambda_7x^6$ \\ \cline{1-2}
\texttt{Case 3:} & $p=\lambda_0t+\lambda_1x+\lambda_2x^2+\lambda_3x^3,
\newline q=\lambda_4t+\lambda_5t^2+\left(\frac{e_1}{\lambda_0}+\frac{2\lambda_5\lambda_1}
{\lambda_0}t\right)x+\left(\frac{e_2}{\lambda_0^2}+\frac{2\lambda_2\lambda_5}{\lambda_0}t\right)x^2+
\left(\frac{e_3}{\lambda_0^2}+\frac{2\lambda_3\lambda_5}{\lambda_0}t\right)x^3+\frac{e_4}{\lambda_0^2}x^4
+\frac{e_5}{\lambda_0^2}x^5+\frac{e_6}{\lambda_0^2}x^6,\newline\text{where }
e_1=\lambda_1\lambda_4+2\lambda_2\lambda_5+1,\,e_2=\lambda_0\lambda_2\lambda_4+3\lambda_0\lambda_3
\lambda_5+\lambda_1^2\lambda_5,\,e_3=\lambda_0\lambda_3\lambda_4+2\lambda_1\lambda_2\lambda_5,\,
e_4=\lambda_5(2\lambda_1\lambda_3+\lambda_2^2),\,e_5=2\lambda_2\lambda_3\lambda_5,\,e_6=\lambda_3^2
\lambda_5$ \\ \cline{1-2}
\texttt{Case 4:} & $p=\lambda_0x,\quad q=-\frac{1}{\lambda_0}t+\lambda_1x+\lambda_2x^2+\lambda_3x^3+
\lambda_4x^4+\lambda_5x^5+\lambda_6x^6+\lambda_7x^7$ \\ \cline{1-2}
\texttt{Case 5:} & $p=\lambda_0t+\lambda_1x+\lambda_2x^2+\lambda_3x^3+\lambda_4x^4+\lambda_5x^5,
\newline q= \frac{\lambda_0\lambda_6}{\lambda_5}t+\frac{\lambda_0\lambda_1\lambda_6+\lambda_5}
{\lambda_0\lambda_5}x+\frac{\lambda_2\lambda_6}{\lambda_5}x^2+\frac{\lambda_3\lambda_6}{\lambda_5}x^3+
\frac{\lambda_4\lambda_6}{\lambda_5}x^4+\lambda_6x^5$ \\ \cline{1-2}
\texttt{Case 6:} & $p=\lambda_0t+\lambda_1x+\lambda_2x^2+\lambda_3x^3+\lambda_4x^4,
\newline q= \frac{\lambda_0\lambda_5}{\lambda_4}t+\frac{\lambda_0\lambda_1\lambda_5+\lambda_4}
{\lambda_0\lambda_4}x+\frac{\lambda_2\lambda_5}{\lambda_4}x^2+\frac{\lambda_3\lambda_5}{\lambda_4}x^3+
\lambda_5x^4$ \\ \cline{1-2}
\texttt{Case 7:} & $p=\lambda_0t+\lambda_1t^2+\left(\frac{e_1}{\gamma\lambda_2^3}-\frac{\zeta\beta}{\lambda_2^3}t\right)
x+\left(\frac{e_2}{2\lambda_2^6}+\frac{\zeta\lambda_4}{\lambda_2}t\right)x^2+\left(\frac{e_3}
{\zeta\lambda_2^4}+\lambda_2t\right)x^3+\lambda_3x^4+\lambda_4x^5+\frac{\lambda_2^2}{4\lambda_1}x^6,
\newline q=\lambda_5t+\lambda_6t^2+\left(\frac{e_4}{\gamma\lambda_2^3}-e_7t\right)x
+\left(\frac{e_5}{2\lambda_1\lambda_2^6}+e_8t\right)x^2+\left(\frac{e_6}{2\lambda_1\lambda_2^4}+e_9t
\right)x^3+e_{10}x^4+e_{11}x^5+\frac{\lambda_6
\lambda_2^2}{4\lambda_1^2}x^6,\newline
\text{where }\beta=\lambda_1\lambda_4^2-\lambda_2^2\lambda_3,\,\gamma=\lambda_0\lambda_6-\lambda_1
\lambda_5,\,\zeta=2\lambda_1,\,e_1=2\gamma\lambda_1\lambda_2^2\lambda_4+\lambda_1\lambda_2^3-\beta\gamma
\lambda_0,\,e_2=2\lambda_0\lambda_2^5\lambda_4+3\lambda_2^7+2\beta^2\lambda_1,\,e_3=\lambda_0\lambda_2^5
-4\beta\lambda_1^2\lambda_4,\,e_4=(2\lambda_4\lambda_2^2\lambda_6-\beta\lambda_5)\gamma+\lambda_2^3
\lambda_6,\,e_5=2\lambda_1\lambda_2^5\lambda_4\lambda_5+3\lambda_2^7\lambda_6+2\beta^2\lambda_1\lambda_6,
\,e_6=\lambda_2^5\lambda_5-4\beta\lambda_1\lambda_4\lambda_6,\,e_7=2\beta\lambda_6/\lambda_2^3,\,
e_8=2\lambda_4\lambda_6/\lambda_2,\,e_9=\lambda_2\lambda_6/\lambda_1,\,e_{10}=\lambda_3\lambda_6/\lambda_1,
\,e_{11}=\lambda_4\lambda_6/\lambda_1
$ \\ \cline{1-2}
\texttt{Case 8:} & $p=\lambda_0t+\lambda_1t^2+\left(\frac{\gamma}{2\beta\lambda_1}+\lambda_2t\right)x+\frac{\eta}{4\lambda_1}
x^2+\left(\frac{\lambda_3\lambda_0}{2\lambda_1}+\lambda_3t\right)x^3+\frac{\lambda_2\lambda_3}{2\lambda_1}
x^4+\frac{\lambda_3^2}{4\lambda_1}x^6,\newline
q=\lambda_4t+\lambda_5t^2+\left(\frac{\zeta}{2\beta\lambda_1}+\frac{\lambda_2\lambda_5}{\lambda_1}t\right)
x+\frac{\eta\lambda_5}{4\lambda_1^2}x^2+\left(\frac{\lambda_3\lambda_4}{2\lambda_1}+\frac{\lambda_3
\lambda_5}{\lambda_1}t\right)x^3+\frac{\lambda_2\lambda_3\lambda_5}{2\lambda_1^2}x^4+\frac{\lambda_3^2
\lambda_5}{4\lambda_1^2}x^6,\newline
\text{where }\beta=\lambda_0\lambda_5-\lambda_1\lambda_4,\,\gamma=\beta\lambda_0\lambda_2+2\lambda_1^2,\,
\zeta=\beta\lambda_2\lambda_4+2\lambda_1\lambda_5,\,\eta=6\lambda_1\lambda_3+\lambda_2^2$ \\ \cline{1-2}
\texttt{Case 9:} &			
$p=\lambda_0t+\lambda_1t^2+\left(\frac{\gamma}{2\beta\lambda_1}+\lambda_2t\right)x+
\left(\frac{2\lambda_3\lambda_0+\lambda_2^2}{4\lambda_1}+\lambda_3t
\right)x^2+\frac{\lambda_2\lambda_3}{2\lambda_1}x^3+\frac{\lambda_3^2}{4\lambda_1}x^4,\newline		
q=\lambda_4t+\lambda_5t^2+\left(\frac{\zeta}{2\beta\lambda_1}+\frac{\lambda_2\lambda_5}{\lambda_1}t\right)
x+\left(\frac{2\lambda_1\lambda_4\lambda_3+\lambda_5\lambda_2^2}{4\lambda_1^2}+\frac{\lambda_3\lambda_5}
{\lambda_1}t\right)x^2+\frac{\lambda_2\lambda_3\lambda_5}{2\lambda_1^2}x^3+\frac{\lambda_3^2\lambda_5}
{4\lambda_1^2}x^4,\newline
\text{where }\beta=\lambda_1\lambda_4^2-\lambda_2^2\lambda_3,\,
\gamma=(\lambda_2\lambda_0+2\lambda_1\lambda_3)\beta+2\lambda_1^2,\,
\zeta=\beta\lambda_2\lambda_4+2\beta\lambda_3\lambda_5+2\lambda_1\lambda_5$ \\ \cline{1-2}
\texttt{Case 10:} & $p=\lambda_0t+\lambda_1t^2+\lambda_2x,\quad
q= \lambda_3t+\frac{\lambda_1(\lambda_2\lambda_3+1)}{\lambda_0\lambda_2}t^2+\frac{\lambda_2
\lambda_3+1}{\lambda_0}x$ \\ \cline{1-2}
\texttt{Case 11:} & $p=\lambda_0t^2+\left(\frac{e_1}{\lambda_4}+\lambda_1t\right)x+
\left(\frac{e_2}{4\lambda_0}+\lambda_2t\right)x^2+\left(\frac{e_3}{2\lambda_0}+\lambda_3t\right)x^3+
\frac{e_4}{4\lambda_0}x^4+e_5x^5+e_6x^6,\newline
q=\lambda_4t+\lambda_5t^2+\left(\frac{e_7}{2\lambda_4\lambda_0}+e_{13}t\right)x+\left(\frac{e_8}
{4\lambda_0^2}+e_{14}t\right)x^2+\left(\frac{e_9}{2\lambda_0^2}+e_{14}t\right)x^3+\frac{e_{10}}{4
\lambda_0^2}x^4+e_{11}x^5+e_{12}x^6,\newline
\text{where }e_1=\lambda_2\lambda_4-1,\,e_2=6\lambda_0\lambda_3+\lambda_1^2,\,e_3=\lambda_1\lambda_2,\,
e_4=2\lambda_1\lambda_3+\lambda_2^2,\,e_5=\lambda_2\lambda_3/(2\lambda_0),\,e_6=\lambda_3^2/(4\lambda_0),\,
e_7=\lambda_1\lambda_4^2+2\lambda_2\lambda_4\lambda_5-2\lambda_5,\,e_8=2\lambda_0\lambda_2\lambda_4+6
\lambda_0\lambda_3\lambda_5+\lambda_1^2\lambda_5,\,e_9=\lambda_0\lambda_3\lambda_4+\lambda_1\lambda_2
\lambda_5,\,e_{10}=\lambda_5(2\lambda_1\lambda_3+\lambda_2^2),\,e_{11}=\lambda_2\lambda_3
\lambda_5/(2\lambda_0^2),\,e_{12}=\lambda_5\lambda_3^2/(4\lambda_0^2),\,e_{13}=\lambda_1\lambda_5
\lambda_0,\,e_{14}=\lambda_2\lambda_5/\lambda_0,\,e_{15}=\lambda_3\lambda_5/\lambda_0\,$ \\[1ex] \hline
\end{tabular}
\caption{Test table for Dixmier polynomials with $n=7$ and $m=2$}\label{TestTable:n7m2}
\end{table}
\end{center}

\begin{center}
\begin{table}[htb]
\centering\renewcommand{\arraystretch}{1.9}
\begin{tabular}{|lp{13.8cm}|}\hline %
\textbf{Cases} & \textbf{Polynomials $p$ and $q$}\\ \hline \hline			
\texttt{Case 1:} & $p=\lambda_0t+\lambda_1x+\lambda_2x^2+\lambda_3x^3+\lambda_4x^4+\lambda_5x^5,
\newline q=\frac{\lambda_0\lambda_6}{\lambda_5}t+\frac{\lambda_0\lambda_1\lambda_6+\lambda_5}
{\lambda_0\lambda_5}x+\frac{\lambda_2\lambda_6}{\lambda_5}x^2+\frac{\lambda_3\lambda_6}{\lambda_5}x^3+
\frac{\lambda_4\lambda_6}{\lambda_5}x^4+\lambda_6x^5$ \\ \cline{1-2}
\texttt{Case 2:} & $p=\lambda_0t+\lambda_1t^2+\left(\frac{\lambda_0^2\lambda_2\lambda_4-\lambda_0\lambda_1
\lambda_2\lambda_3+2\lambda_1^2}{2\lambda_1(\lambda_0\lambda_4-\lambda_1\lambda_3)}+\lambda_2t\right)x+
\frac{\lambda_2^2}{4\lambda_1}x^2,\newline q=\lambda_3t+\lambda_4t^2+\left(\frac{(-\lambda_2\lambda_3^2+
2\lambda_4)\lambda_1+\lambda_4\lambda_0\lambda_3\lambda_2}{2\lambda_1(\lambda_0\lambda_4-\lambda_1
\lambda_3)}+\frac{\lambda_2\lambda_4}{\lambda_1}t\right)x+\frac{\lambda_2^2\lambda_4}{4\lambda_1^2}x^2$ \\ \cline{1-2}
\texttt{Case 3:} & $p=\lambda_0t+\lambda_1x+\lambda_2x^2+\lambda_3x^3,\quad q=\frac{\lambda_0\lambda_4}{\lambda_3}t+\frac{\lambda_0\lambda_1\lambda_4+\lambda_3}
{\lambda_0\lambda_3}x+\frac{\lambda_2\lambda_4}{\lambda_3}x^2+\lambda_4x^3$ \\ \cline{1-2}
\texttt{Case 4:} & $p=\lambda_0t+\lambda_1x+\lambda_2x^2,\newline q=\tfrac{e_1}{\lambda_2^3}t+\tfrac{\lambda_0^2\lambda_4}
{\lambda_2^2}t^2+\left(\tfrac{e_2}{\lambda_0\lambda_2^3}+\tfrac{2\lambda_0\lambda_1\lambda_4}{\lambda_2^2}t
\right)x+\left(\lambda_3+\tfrac{2\lambda_0\lambda_4}{\lambda_2}t\right)x^2+\tfrac{2\lambda_1\lambda_4}
{\lambda_2}x^3+\lambda_4x^4,\newline
\text{where }e_1=-\lambda_0(\lambda_1^2\lambda_4-\lambda_2^2\lambda_3),\,e_2=\lambda_2^3+(2\lambda_0^2
\lambda_4+\lambda_0\lambda_1\lambda_3)\lambda_2^2-\lambda_0\lambda_1^3\lambda_4$ \\ \cline{1-2}
\texttt{Case 5:} & $p=\lambda_0t+\lambda_1x,
\newline q=\lambda_2t+\lambda_3t^2+\lambda_4t^3+\left(\frac{\lambda_1\lambda_2+1}{\lambda_0}+\frac{2
\lambda_1\lambda_3}{\lambda_0}t+\frac{3\lambda_1\lambda_4}{\lambda_0}t^2\right)x+\left(\frac{\lambda_1^2
\lambda_3}{\lambda_0^2}+\frac{3\lambda_1^2\lambda_4}{\lambda_0^2}t\right)x^2+\frac{\lambda_1^3\lambda_4}
{\lambda_0^3}x^3$ \\ \cline{1-2}
\texttt{Case 6:} & $p=\lambda_0x,\quad q=-\frac{1}{\lambda_0}t+\lambda_1x+\lambda_2x^2+\lambda_3x^3+\lambda_4x^4+\lambda_5x^5$ \\ \cline{1-2}
\texttt{Case 7:} & $p=\lambda_0t+\lambda_1x+\lambda_2x^2+\lambda_3x^3+\lambda_4x^4+\lambda_5x^5,
\newline q= \frac{\lambda_0\lambda_6}{\lambda_5}t+\frac{\lambda_0\lambda_1\lambda_6+\lambda_5}
{\lambda_0\lambda_5}x+\frac{\lambda_2\lambda_6}{\lambda_5}x^2+\frac{\lambda_3\lambda_6}{\lambda_5}x^3+
\frac{\lambda_4\lambda_6}{\lambda_5}x^4+\lambda_6x^5$ \\ \cline{1-2}
\texttt{Case 8:} & $p=\frac{-\lambda_1^2\lambda_4+\lambda_2\lambda_3^2}{2\lambda_3\lambda_4}t+\frac{
\lambda_3^2}{4\lambda_4}t^2+(\lambda_0+\lambda_1t)x+(\lambda_2+\lambda_3t)x^2+\frac{2\lambda_1\lambda_4}
{\lambda_3}x^3+\lambda_4x^4,\newline q=\frac{\zeta}{2\beta\lambda_2\lambda_3\lambda_4}t+\frac{\gamma\lambda_3^2}{4\beta\lambda_2
\lambda_4}t^2+\left(\frac{\eta}{\beta\lambda_2}+\frac{\gamma\lambda_1}{\beta\lambda_2}t\right)x+
\left(\lambda_5+\frac{\gamma\lambda_3}{\beta\lambda_2}t\right)x^2+\frac{2\gamma\lambda_1\lambda_4}
{\beta\lambda_2\lambda_3}x^3+\frac{\gamma\lambda_4}{\beta\lambda_2}x^4,\newline\text{where }
\beta=\lambda_0\lambda_3^3+\lambda_1^3\lambda_4-\lambda_1\lambda_2
\lambda_3^2-\lambda_3^4,\,\gamma=\beta\lambda_5+2\lambda_3^2\lambda_4,\,\zeta=,\lambda_3^2(-2\lambda_3^2
\lambda_4+\gamma)\lambda_2-\gamma\lambda_1^2\lambda_4,\newline\,
\eta=-2\lambda_1\lambda_2\lambda_3\lambda_4+\gamma\lambda_0$\\ \cline{1-2}
\texttt{Case 9:} &			
$p=-\frac{\lambda_0^2}{2\lambda_1}t+\frac{\lambda_1^2}{4\lambda_2}t^2+\left(\frac{(-\lambda_0^3\lambda_2+
\lambda_1^4)\lambda_3-2\lambda_1^2\lambda_2}{\lambda_1^3\lambda_3}+\lambda_0t\right)x+\lambda_1tx^2+
\frac{2\lambda_2\lambda_0}{\lambda_1}x^3+\lambda_2x^4,\newline
q=\frac{-\lambda_0^2\lambda_4+\lambda_1^2\lambda_3}{2\lambda_1\lambda_2}t+\frac{\lambda_1^2\lambda_4}
{4\lambda_2^2}t^2+\left(\frac{\beta}{\lambda_1^3\lambda_2\lambda_3}+\frac{\lambda_0
\lambda_4}{\lambda_2}t\right)x+\left(\lambda_3+\frac{\lambda_1\lambda_4}{\lambda_2}t\right)x^2+\frac{2
\lambda_0\lambda_4}{\lambda_1}x^3+\lambda_4x^4,\newline
\text{where }\beta=\lambda_1^4\lambda_3\lambda_4+\lambda_2(\lambda_0\lambda_3^2-2\lambda_4)
\lambda_1^2-\lambda_0^3\lambda_2\lambda_3\lambda_4$ \\ \cline{1-2}
\texttt{Case 10:} &
$p=\lambda_0t+\lambda_1t^2+\lambda_2t^3+(\lambda_3+\frac{2\gamma\lambda_1}{\beta\lambda_0}t+\frac{3\gamma
\lambda_2}{\beta\lambda_0}t^2)x+(\frac{\gamma^2\lambda_1}{\beta^2\lambda_0^2}+\frac{3\gamma^2\lambda_2}{
\beta^2\lambda_0^2}t)x^2+\frac{\gamma^3\lambda_2}{\beta^3\lambda_0^3}x^3,\newline
q=\lambda_4t+\frac{\lambda_1\lambda_5}{\lambda_2}t^2+\lambda_5t^3+\left(\frac{\lambda_3\lambda_4+1}
{\lambda_0}+\frac{2\gamma\lambda_1\lambda_5}{\beta\lambda_0\lambda_2}t+\frac{3\gamma\lambda_5}{\beta
\lambda_0}t^2\right)x+\left(\frac{\gamma^2\lambda_1\lambda_5}{\beta^2\lambda_0^2\lambda_2}+\frac{3\gamma^2
\lambda_5}{\beta^2\lambda_0^2}t\right)x^2+\frac{\gamma^3\lambda_5}{\beta^3\lambda_0^3}x^3,\newline
\text{where }\beta=\lambda_0\lambda_5-\lambda_2\lambda_4,\,\gamma=\beta\lambda_3-\lambda_2$ \\ \cline{1-2}
\texttt{Case 11:} & $p=\lambda_0t^2+\lambda_1t^3+\left(-\frac{1}{\lambda_3}+\frac{2\lambda_0\lambda_2}{3\lambda_1}t+\lambda_2
t^2\right)x+\left(\frac{\lambda_0\lambda_2^2}{9\lambda_1^2}+\frac{\lambda_2^2}{3\lambda_1}t\right)x^2+
\frac{\lambda_2^3}{27\lambda_1^2}x^3,\newline
q=\lambda_3t+\frac{\lambda_0\lambda_4}{\lambda_1}t^2+\lambda_4t^3+\left(\frac{\lambda_2\lambda_3^2-3
\lambda_4}{3\lambda_1\lambda_3}+\frac{2\lambda_0\lambda_2\lambda_4}{3\lambda_1^2}t+\frac{\lambda_2
\lambda_4}{\lambda_1}t^2\right)x+\left(\frac{\lambda_0\lambda_2^2\lambda_4}{9\lambda_1^3}+\frac{
\lambda_2^2\lambda_4}{3\lambda_1^2}t\right)x^2+\frac{\lambda_2^3\lambda_4}{27\lambda_1^3}x^3$ \\[1ex] \hline
\end{tabular}
\caption{Test table for Dixmier polynomials with $n=5$ and $m=3$}\label{TestTable:n5m3}
\end{table}
\end{center}
\clearpage

\subsubsection*{\texttt{DixmierAutomorphism} and \texttt{inverseAutomorphism} functions}

The \texttt{DixmierAutomorphism} function, a component of the \texttt{DixmierProblem} package, facilitates the random generation of automorphisms for the Weyl algebra $A_1(K).$ In this framework, two Dixmier polynomials $p$ and $q$ are respectively assigned to the variables $t$ and $x.$ Additionally, the function accepts several optional parameters, thereby allowing for enhanced flexibility in its application. \verb|n=n_value|, \verb|m=m_value|, and \verb|parametric::boolean=param_value|, where the parameters $n$ and $m$ play the same role as in the \texttt{DixmierPolynomials} function, with default values $n=3$ and $m=2$, and with \verb|parametric| defaulting to \texttt{false}. Additional options include \verb|view| and \verb|outputMode|. When \verb|parametric| is set to \texttt{false}, the automorphism is generated with concrete values for $p$ and $q$; when it is set to \texttt{true}, a parametric automorphism is produced in this case and the Dixmier polynomials $p$ and $q$ are expressed in terms of parameters $\lambda_i$. These polynomials are determined by the procedure implemented in the \texttt{DixmierPolynomials} function, and the function returns a module (i.e., a Maple object) that encapsulates all information pertinent to the automorphism of $A_1(K)$.

In the example below, we illustrate this process by generating a Dixmier automorphism using the non-parametric option, thereby obtaining explicit values for $p$ and $q$. After extracting these defining polynomials, a symbolic polynomial is constructed, and the automorphism is evaluated on it. Finally, the \texttt{inverseAutomorphism} function is applied to compute the inverse of the initially generated automorphism. This inverse function accepts the same optional parameters as \texttt{DixmierAutomorphism} and additionally requires, as an argument, a Dixmier automorphism.

\begin{example}
To define an automorphism, the symbol \texttt{auto1} is assigned a new automorphism using the command
\texttt{auto1 := DixmierAutomorphism()}. Consequently, the following output is obtained:
\begin{center}
\textit{Dixmier automorphism successfully defined}\\
\textit{auto1 := $\alpha(t, x)$}
\end{center}
This indicates that the Dixmier automorphism has been successfully defined. In other words, there exist Dixmier polynomials $ p $ and $ q $ that define the automorphism. Using the syntax \texttt{auto1:-getP();} and \texttt{auto1:-getQ();}, we obtain the polynomials
\[
p = x, \quad q = -t - 3x - 2x^2 - 3x^3.
\]
Additionally, the syntax \texttt{auto1:-Properties:-qpcommutator} can be used to compute the value of $[q,p]$, which in this case equals $1$.
	
To enable the application of the automorphism, it is assigned to the variable $f$ as follows: \texttt{f:= auto1:-Apply;}. The variable $f$ then represents the evaluation function corresponding to the Dixmier automorphism \texttt{Auto1}.
	
To define a generic polynomial in $A_1(K)$ with maximum degrees $n$ and $m$ for the variables $x$ and $t$, respectively, specifically when $n=2$ and $m=5$ the statement \ \ \texttt{poly := generatorPoly(2,\,5,\, vars=[t, x],\, nameCoefs='p');} is used. This assigns to \textit{poly} a symbolic polynomial in the variables $t$ and $x$, with degree $2$ for $x$ and degree $5$ for $t$ (observe that \texttt{[t, x]} is not a commutator, but it is a list of variables in Maple. Moreover, the degrees of variables are presented in a inverse order):
\begin{align*}
poly &:= p_{0,0}+p_{0,1}t+p_{0,2}t^2+p_{0,3}t^3+p_{0,4}t^4+
p_{0,5}t^5+\left(p_{1,0}+p_{1,1}t+p_{1,2}t^2+p_{1,3}t^3+p_{1,4}t^4+p_{1,5}t^5\right)x\\
&\quad +\left(p_{2,0}+p_{2,1}t+p_{2,2}t^2+p_{2,3}t^3+p_{2,4}t^4+p_{2,5}t^5\right)x^2.
\end{align*}
Evaluating $f$ at \texttt{poly} using \texttt{f(poly)} yields the result:
$p_{0,0}+3p_{2,0}+2p_{2,2}-p_{1,1}+(2p_{2,1}-p_{1,0})t+p_{2,0}t^2+
\big(p_{0,1}+4p_{2,0}+9p_{2,1}+6p_{2,3}-3p_{1,0}-2p_{1,2}+(6p_{2,0}+4p_{2,2}-p_{1,1})t+ p_{2,1}t^2\big)x+\big(p_{0,2}+18p_{2,0}+8p_{2,1}+15p_{2,2}+12p_{2,4}-2p_{1,0}-3p_{1,1}-3p_{1,3}+(6p_{2,3}+ 4p_{2,0}+6p_{2,1}-p_{1,2})t+p_{2,2}t^2\big)x^2+\big(p_{0,3}+12p_{2,0}+24p_{2,1}+12p_{2,2}+21p_{2,3}+
20p_{2,5}-3p_{1,0}-2p_{1,1}-3p_{1,2}-4p_{1,4}+(6p_{2,0}+4p_{2,1}+6p_{2,2}+8p_{2,4}-p_{1,3})t+
p_{2,3}t^2\big)x^3+\big(p_{0,4}+22p_{2,0}+12p_{2,1}+30p_{2,2}+16p_{2,3}+27p_{2,4}-3p_{1,1}-2p_{1,2}-
3p_{1,3}-5p_{1,5}(10p_{2,5}-p_{1,4}+6p_{2,1}+4p_{2,2}+6p_{2,3})t+p_{2,4}t^2\big)x^4+
\big(p_{0,5}+12p_{2,0}+22p_{2,1}+12p_{2,2}+36p_{2,3}+20p_{2,4}+33p_{2,5}-3p_{1,2}-2p_{1,3}-3p_{1,4}+
(6p_{2,2}+4p_{2,3}+6p_{2,4}-p_{1,5})t+p_{2,5}t^2\big)x^5+\big(9p_{2,0}+12p_{2,1}+22p_{2,2}+12p_{2,3}+
42p_{2,4}+24p_{2,5}-3p_{1,3}-2p_{1,4}-3p_{1,5}+(6p_{2,3}+4p_{2,4}+6p_{2,5})t\big)x^6+\big(9p_{2,1}+
12p_{2,2}+22p_{2,3}+12p_{2,4}+48p_{2,5}-3p_{1,4}-2p_{1,5}+(6p_{2,4}+4p_{2,5})t\big)x^7+\big(9p_{2,2}+
12p_{2,3}+22p_{2,4}+12p_{2,5}-3p_{1,5}+6p_{2,5}t\big)x^8+\big(9p_{2,3}+12p_{2,4}+22p_{2,5}\big)x^9+
\big(9p_{2,4}+12p_{2,5}\big)x^{10}+9p_{2,5}x^{11}.$

\medskip

To compute the inverse automorphism of \texttt{auto1}, we define a new automorphism, \texttt{auto2}, using the command: \verb"auto2 := inverseAutomorphism(auto1, n=4, m=3)". The execution of this command produces the following output:
\begin{center}
\textit{Dixmier automorphism successfully defined\\
auto2 := $\alpha(t, x)$}
\end{center}
Subsequently, retrieving the corresponding polynomials $p$ and $q$ via the commands \texttt{auto2:-getP();} and \texttt{auto2:-getQ();} yields:
\[
p = -3t - 2t^2 - 3t^3 - x,\qquad q = t
\]
\end{example}

In the following example, we illustrate the generation, inversion, composition, and evaluation of Dixmier automorphisms using the \texttt{DixmierProblem} package in Maple. The example employs the lambda-parametric option (i.e., \verb|parametric = true|), so that the defining Dixmier polynomials $p$ and $q$ are expressed in terms of the parameter $\lambda$. Additionally, we demonstrate the use of the \texttt{composeAutomorphism} function, which accepts two Dixmier automorphisms as arguments and returns their composition. The steps performed include generating a lambda-parametric automorphism, computing its inverse, composing the automorphism with its inverse, and finally, evaluating the resulting compositions to confirm that they correspond to the identity mapping.
\begin{example}
	
\noindent
\emph{Step 1: Generation of a lambda-parametric Dixmier automorphism}
	
We generate a lambda-parametric Dixmier automorphism and store it in \texttt{auto1}. This automorphism is characterized by its defining polynomials $p$ and $q$ being given in terms of $\lambda$.
\begin{verbatim}
auto1 := DixmierAutomorphism(parametric = true);
\end{verbatim}
\emph{Maple output:}
	
\noindent
\verb"Dixmier automorphism successfully defined"\\
\verb"auto1" $:=\alpha(t,x)$

\smallskip

\noindent
\emph{Step 2: Extraction of the defining polynomials of \texttt{auto1}}

We next extract the Dixmier polynomials $p$ and $q$ that define \texttt{auto1}.
\begin{verbatim}
print('p' = auto1:-getP());
print('q' = auto1:-getQ());
\end{verbatim}
\emph{Expected Maple output:}
\begin{align*}
p &= \lambda_0t^2+\left(-\frac{1}{\lambda_2}+\lambda_1t\right)x+\frac{\lambda_1^2}{4\lambda_0}x^2\\
q &= \lambda_2t+\lambda_3t^2+\left(\frac{\lambda_1\lambda_2^2-2\lambda_3}{2\lambda_0\lambda_2}+
\frac{\lambda_1\lambda_3}{\lambda_0}t\right)x+\frac{\lambda_1^2\lambda_3}{4\lambda_0^2}x^2
\end{align*}
	
\noindent
\emph{Step 3: Computation of the inverse automorphism}
	
The inverse automorphism of \texttt{auto1} is then computed and stored in \texttt{auto2}.
\begin{verbatim}
auto2 := InverseAutomorphism(auto1);
\end{verbatim}
\emph{Maple output:}

\noindent
\verb"Dixmier automorphism successfully defined"\\
\verb"auto2" $:=\alpha(t,x)$

\smallskip
	
\noindent
\emph{Step 4: Extraction of the defining polynomials of the inverse automorphism}
	
We retrieve the defining Dixmier polynomials for the inverse automorphism stored in \texttt{auto2}.
\begin{verbatim}
print('p' = auto2:-getP());
print('q' = auto2:-getQ());
\end{verbatim}
\emph{Expected Maple output:}
\begin{align*}
p &= \frac{\lambda_1(\lambda_1\lambda_2^2+2\lambda_3)}{4\lambda_2\lambda_0}+ \frac{\lambda_1\lambda_2^2-2\lambda_3}{2\lambda_0\lambda_2}t-\frac{\lambda_1\lambda_3^2}
	{2\lambda_0^2\lambda_2}t^2+\left(\frac{1}{\lambda_2}+\frac{\lambda_1\lambda_3}{\lambda_0\lambda_2}t\right)x
-\frac{\lambda_1}{2\lambda_2}x^2\\
q &= \frac{-\lambda_1\lambda_2^2-2\lambda_3}{2\lambda_2}-\lambda_2t+\frac{\lambda_3^2}{\lambda_0\lambda_2}t^2-
\frac{2\lambda_3}{\lambda_2}tx+\frac{\lambda_0}{\lambda_2}x^2.
\end{align*}
	
\noindent
\emph{Step 5: Composition of automorphisms}
	
We then compose the automorphisms using the \texttt{ComposeAutomorphism} function. Two compositions are formed: one as $\texttt{auto3}=\texttt{auto2}\circ\texttt{auto1}$ and the other as $\texttt{auto4}=\texttt{auto1}\circ\texttt{auto2}.$
	
\emph{Composition commands:}
\begin{verbatim}
auto3 := composeAutomorphism(auto2, auto1);
auto4 := composeAutomorphism(auto1, auto2);
\end{verbatim}
\emph{Maple output:}
	
\verb"Dixmier automorphism successfully defined"
	
\verb"auto3" := $\alpha(t, x)$
	
\verb"Dixmier automorphism successfully defined"
	
\verb"auto4" := $\alpha(t, x)$
	
\noindent
\emph{Step 6: Evaluation of the composed automorphisms}
	
Finally, we evaluate the composed automorphisms on the generators $t$ and $x$ to confirm that they behave as the identity mapping:
\begin{verbatim}
auto3:-Apply(t), auto3:-Apply(x), auto4:-Apply(t), auto4:-Apply(x);
\end{verbatim}
\emph{Expected results:}
\[
\texttt{auto3}(t) = t, \qquad \texttt{auto3}(x) = x, \qquad \texttt{auto4}(t) = t, \qquad \texttt{auto4}(x) = x
\]
These results confirm that the compositions indeed yield the identity automorphism, thereby validating our inversion and composition operations.
\end{example}

\subsection{\texttt{DixmierAutomorphismFactor} function}\label{DixmierAutomorphismFactor}

The \texttt{DixmierAutomorphismFactor} function plays a pivotal role in our approach to the Dixmier conjecture by decomposing complex endomorphisms (=automorphisms, Theorem \ref{Theorem1.3}), obtained from families of Dixmier polynomials, into a composition of elementary automorphisms, namely $\Phi_{n,\lambda}$ and $\Psi_{n,\lambda}$. This factorization simplifies the analysis of the underlying structural properties of the Weyl algebra $A_1(K)$ and its generalizations, providing a constructive method for both symbolic and numerical validation of the problem. By breaking down intricate algebraic mappings into manageable components, the function enhances computational efficiency and analytical rigor, while deepening our understanding of invariants, symmetries, and transformation properties. In this section we present computational results that illustrate the practical application and robustness of this factorization procedure.

The overall procedure follows these steps:
\begin{enumerate}
\item \textbf{Family Generation:} We first produce a family of Dixmier polynomials using the function
\linebreak \texttt{DixmierPolynomials} with parameters $n$ (the maximum degree in $x$) and $m$ (the maximum degree in $t$).
\item \textbf{Configuration Determination:} For each generated family, an appropriate automorphism
configuration is selected. This configuration consists of ordered applications of the automorphisms:
\[
\begin{array}{rcl}
\Phi(n,\lambda):=\Phi_{n,\lambda} & : & t \mapsto t+\lambda\,x^n,\quad x\mapsto x,\\[1mm]
\Psi(n,\lambda):=\Phi'_{n,\lambda} & : & t \mapsto t,\quad x\mapsto x+\lambda\,t^n,	
\end{array}
\]
\item \textbf{Equation System Setup:} The function then computes the composition of the selected
automorphisms. Denote by $p_{\text{comp}}(x,t)$ and $q_{\text{comp}}(x,t)$ the images of $t$ and $x$, respectively, after applying the sequence of automorphisms. These are then equated coefficient wise to the corresponding Derived Dixmier polynomials
\[
p_{\text{orig}}(x,t), \quad q_{\text{orig}}(x,t)
\]
built by \texttt{DixmierPolynomials}. This yields a system of polynomial equations in the parameters (e.g., $\lambda$'s and $\mu$).
\item \textbf{System Resolution:} The resulting system is solved using Maple symbolic solvers, via the
direct use of \texttt{solve} or, when necessary due to high nonlinearity, through the application of Gröbner basis methods.
\item \textbf{Verification:} Finally, the solution is substituted back into the composed automorphism to
verify that the original action is recovered, confirming that the composition is indeed an automorphism.
\end{enumerate}

\subsubsection*{Factorization Results}

Below we present comprehensive tables summarizing the results generated by the function\linebreak \texttt{DixmierAutomorphismFactor} for various maximum degrees. In each case, the function decomposes an endomorphism (=automorphism, Theorem \ref{Theorem1.3}) as a composition of the automorphisms $\Phi$ and $\Psi,$ with the corresponding parameter values detailed in each table.
\clearpage

\begin{center}
\begin{table}[htb]
\centering
\renewcommand{\arraystretch}{2.5}
\begin{tabular}{|c p{12.8cm}|}\hline
\textbf{Values of $m$}  & \textbf{Output:} Factorization into $\Phi_{i,\mu}$ and $\Psi_{i,\mu}$ for Dixmier polynomial pairs in Table \ref{TestTable:n1} \\ \hline \hline
$m=1:$ & \parbox[t][0.3\height][b]{10.8cm}{$
\begin{aligned}
\texttt{Case\,1:} &\ \Psi_{1,\frac{\lambda_0\lambda_1-\lambda_2+1}{\lambda_2\lambda_0}}\Phi_{1,\lambda_0}
\Psi_{1,\frac{\lambda_2-1}{\lambda_0}},\\[1ex]
\texttt{Case\,2:} &\  \Psi_{1,-\lambda_0\lambda_1+\lambda_1}\Phi_{1,-\frac{1}{\lambda_1}}\Psi_{1,\lambda_1}
\end{aligned}$} \\[0.7cm] \hline
$m=2:$ & \parbox[t][0.3\height][b]{10.8cm}{$
\begin{aligned}
\texttt{Case\,1:} &\ \Psi_{1,\frac{\lambda_0\lambda_1-\lambda_3+1}{\lambda_0\lambda_3}}
\Psi_{2,\frac{\lambda_2}{\lambda_3}}\Phi_{1,\lambda_0}\Psi_{1,\frac{\lambda_3-1}{\lambda_0}},\\[1ex]
\texttt{Case\,2:} &\ \Psi_{1,\frac{\lambda_0-1}{\lambda_2}}\Psi_{2,\frac{\lambda_1}{\lambda_2}}
\Phi_{1,\lambda_2}\Psi_{1,-\frac{1}{\lambda_2}}
\end{aligned}$} \\[0.7cm] \hline
$m=3:$ & \parbox[t][0.3\height][b]{10.8cm}{$
\begin{aligned}
\texttt{Case\,1:} &\ \Psi_{1,\frac{\lambda_0\lambda_1-\lambda_4+1}{\lambda_0\lambda_4}}
\Psi_{2,\frac{\lambda_2}{\lambda_4}}\Psi_{3,\frac{\lambda_3}{\lambda_4}}\Phi_{1,\lambda_0}
\Psi_{1,\frac{\lambda_4-1}{\lambda_0}},\\[1ex]
\texttt{Case\,2:} &\ \Psi_{1,\frac{\lambda_0-1}{\lambda_3}}\Psi_{2,\frac{\lambda_1}{\lambda_3}}
\Psi_{3,\frac{\lambda_2}{\lambda_3}}\Phi_{1,\lambda_3}\Psi_{1,-\frac1{\lambda_3}}
\end{aligned}$} \\[0.7cm] \hline
$m=4:$ & \parbox[t][0.3\height][b]{10.8cm}{$
\begin{aligned}
\texttt{Case\,1:} &\ \Psi_{1,\frac{\lambda_0\lambda_1-\lambda_5+1}{\lambda_0\lambda_5}}
\Psi_{2,\frac{\lambda_2}{\lambda_5}}\Psi_{3,\frac{\lambda_3}{\lambda_5}}\Psi_{4,\frac{\lambda_4}{\lambda_5}}
\Phi_{1,\lambda_0}\Psi_{1,\frac{\lambda_5-1}{\lambda_0}},\\[1ex]
\texttt{Case\,2:} &\ \Psi_{1,\frac{\lambda_0-1}{\lambda_4}}\Psi_{2,\frac{\lambda_1}{\lambda_4}}
\Psi_{3,\frac{\lambda_2}{\lambda_4}}\Psi_{4,\frac{\lambda_3}{\lambda_4}}\Phi_{1,\lambda_4}
\Psi_{1,-\frac1{\lambda_4}}
\end{aligned}$} \\[0.7cm] \hline
$m=5:$ & \parbox[t][0.3\height][b]{10.8cm}{$
\begin{aligned}
\texttt{Case\,1:} &\ \Psi_{1,\frac{\lambda_0\lambda_1-\lambda_6+1}{\lambda_0\lambda_6}}
\Psi_{2,\frac{\lambda_2}{\lambda_6}}\Psi_{3,\frac{\lambda_3}{\lambda_6}}
\Psi_{4,\frac{\lambda_4}{\lambda_6}}\Psi_{5,\frac{\lambda_5}{\lambda_6}}\Phi_{1,\lambda_0}
\Psi_{1,\frac{\lambda_6-1}{\lambda_0}},\\[1ex]
\texttt{Case\,2:} &\ \Psi_{1,\frac{\lambda_0-1}{\lambda_5}}\Psi_{2,\frac{\lambda_1}{\lambda_5}}\Psi_{3,\frac{\lambda_2}{\lambda_5}}
\Psi_{4,\frac{\lambda_3}{\lambda_5}}\Psi_{5,\frac{\lambda_4}{\lambda_5}}\Phi_{1,\lambda_5}
\Psi_{1,-\frac1{\lambda_5}}
\end{aligned}$} \\[0.7cm] \hline
$m=6:$ & \parbox[t][0.3\height][b]{10.8cm}{$
\begin{aligned}
\texttt{Case\,1:} &\ \Psi_{1,\frac{\lambda_0\lambda_1-\lambda_7+1}{\lambda_0\lambda_7}}
\Psi_{2,\frac{\lambda_2}{\lambda_7}}\Psi_{3,\frac{\lambda_3}{\lambda_7}}
\Psi_{4,\frac{\lambda_4}{\lambda_7}}\Psi_{5,\frac{\lambda_5}{\lambda_7}}
\Psi_{6,\frac{\lambda_6}{\lambda_7}}\Phi_{1,\lambda_0}\Psi_{1,\frac{\lambda_7-1}{\lambda_0}},\\[1ex]
\texttt{Case\,2:} &\ \Psi_{1,\frac{\lambda_0-1}{\lambda_6}}\Psi_{2,\frac{\lambda_1}{\lambda_6}}\Psi_{3,\frac{\lambda_2}{\lambda_6}}
\Psi_{4,\frac{\lambda_3}{\lambda_6}}\Psi_{5,\frac{\lambda_4}{\lambda_6}}\Psi_{6,\frac{\lambda_5}{\lambda_6}}
\Phi_{1,\lambda_6}\Psi_{1,-\frac1{\lambda_6}}
\end{aligned}
$} \\[0.7cm] \hline
$m=7:$ & \parbox[t][0.3\height][b]{10.8cm}{$
\begin{aligned}
\texttt{Case\,1:} &\ \Psi_{1,\frac{\lambda_0\lambda_1-\lambda_8+1}{\lambda_0\lambda_8}}
\Psi_{2,\frac{\lambda_2}{\lambda_8}}\Psi_{3,\frac{\lambda_3}{\lambda_8}}
\Psi_{4,\frac{\lambda_4}{\lambda_8}}\Psi_{5,\frac{\lambda_5}{\lambda_8}}\Psi_{6,\frac{\lambda_6}{\lambda_8}}
\Psi_{7,\frac{\lambda_7}{\lambda_8}}\Phi_{1,\lambda_0}\Psi_{1,\frac{\lambda_8-1}{\lambda_0}},\\[1ex]
\texttt{Case\,2:} &\ \Psi_{1,\frac{\lambda_0-1}{\lambda_7}}\Psi_{2,\frac{\lambda_1}{\lambda_7}}
\Psi_{3,\frac{\lambda_2}{\lambda_7}}\Psi_{4,\frac{\lambda_3}{\lambda_7}}\Psi_{5,\frac{\lambda_4}{\lambda_7}}
\Psi_{6,\frac{\lambda_5}{\lambda_7}}\Psi_{7,\frac{\lambda_6}{\lambda_7}}\Phi_{1,\lambda_7}\Psi_{1,-
\frac1{\lambda_7}}
\end{aligned}
$} \\[0.7cm] \hline
\end{tabular}
\caption{Comprehensive outputs of \texttt{DixmierAutomorphismFactor}$(1,m)$ with $m$ from $1$ to $7$.}
\label{tab1:DixmierAutomorphismFactor_n=1}
\end{table}
\end{center}
In \texttt{Case 1}, \texttt{DixmierAutomorphismFactor}$(1,m)$ equals
\[
\Psi_{1,\frac{\lambda_0\lambda_1-\lambda_{m+1}+1}{\lambda_0\lambda_{m+1}}}
\Psi_{2,\frac{\lambda_2}{\lambda_{m+1}}}\Psi_{3,\frac{\lambda_3}{\lambda_{m+1}}}
\Psi_{4,\frac{\lambda_4}{\lambda_{m+1}}}\cdots\Psi_{m,\frac{\lambda_m}{\lambda_{m+1}}}
\Phi_{1,\lambda_0}\Psi_{1,\frac{\lambda_{m+1}-1}{\lambda_0}},
\]
while in \texttt{Case 2}, it equals
\[
\Psi_{1,\frac{\lambda_0-1}{\lambda_m}}\Psi_{2,\frac{\lambda_1}{\lambda_m}}\Psi_{3,\frac{\lambda_2}{\lambda_m}}
\Psi_{4,\frac{\lambda_3}{\lambda_m}}\cdots\Psi_{m,\frac{\lambda_{m-1}}{\lambda_m}}
\Phi_{1,\lambda_m}\Psi_{1,-\frac1{\lambda_m}}.
\]
\clearpage
\begin{center}
\begin{table}[htb]
\centering
\renewcommand{\arraystretch}{2.5}
\begin{tabular}{|c p{13cm}|}\hline
\textbf{$n$ value}  & \textbf{Output:} Factorization into $\Phi_{i,\mu}$ and $\Psi_{i,\mu}$ for Dixmier polynomial pairs in Table \ref{TestTable:m1}
 \\ \hline \hline
$n=2$ & \parbox[t][0.3\height][b]{13cm}{$
\begin{aligned}
\texttt{Case\,1:} &\ \Phi_{1,\frac{\lambda_1\lambda_3-\lambda_0+1}{\lambda_0\lambda_3}}
\Phi_{2,\frac{\lambda_2}{\lambda_0}}\Psi_{1,\lambda_3}\Phi_{1,\frac{\lambda_0-1}{\lambda_3}},\\[1ex]
\texttt{Case\,2:} &\ \Phi_{1,-\lambda_0\lambda_1+\lambda_0}\Phi_{2,-\lambda_0\lambda_2}
\Psi_{1,-\frac{1}{\lambda_0}}\Phi_{1,\lambda_0}
\end{aligned}$} \\[0.7cm] \hline
$n=3$ & \parbox[t][0.3\height][b]{13cm}{$
\begin{aligned}
\texttt{Case\,1:} &\ \Phi_{1,\frac{\lambda_1\lambda_4-\lambda_0+1}{\lambda_0\lambda_4}}
\Phi_{2,\frac{\lambda_2}{\lambda_0}}\Phi_{3,\frac{\lambda_3}{\lambda_0}}\Psi_{1,\lambda_4}
\Phi_{1,\frac{\lambda_0-1}{\lambda_4}},\\[1ex]
\texttt{Case\,2:} &\ \Phi_{1,-\lambda_0\lambda_1+\lambda_0}\Phi_{2,-\lambda_0\lambda_2}
\Phi_{3,-\lambda_0\lambda_3}\Psi_{1,-\frac{1}{\lambda_0}}\Phi_{1,\lambda_0}
\end{aligned}$} \\[0.7cm] \hline
$n=4$ & \parbox[t][0.3\height][b]{13cm}{$
\begin{aligned}
\texttt{Case\,1:} &\ \Phi_{1,\frac{\lambda_1\lambda_5-\lambda_0+1}{\lambda_0\lambda_5}}
\Phi_{2,\frac{\lambda_2}{\lambda_0}}\Phi_{3,\frac{\lambda_3}{\lambda_0}}\Phi_{4,\frac{\lambda_4}
{\lambda_0}}\Psi_{1,\lambda_5}\Phi_{1,\frac{\lambda_0-1}{\lambda_5}},\\[1ex]
\texttt{Case\,2:} &\ \Phi_{1,-\lambda_0\lambda_1+\lambda_0}\Phi_{2,-\lambda_0\lambda_2}
\Phi_{3,-\lambda_0\lambda_3}\Phi_{4,-\lambda_0\lambda_4}\Psi_{1,-\frac{1}{\lambda_0}}\Phi_{1,\lambda_0}
\end{aligned}$} \\[0.7cm] \hline
$n=5$ & \parbox[t][0.3\height][b]{13cm}{$
\begin{aligned}
\texttt{Case\,1:} &\ \Phi_{1,\frac{\lambda_1\lambda_6-\lambda_0+1}{\lambda_0\lambda_6}}
\Phi_{2,\frac{\lambda_2}{\lambda_0}}\Phi_{3,\frac{\lambda_3}{\lambda_0}}
\Phi_{4,\frac{\lambda_4}{\lambda_0}}\Phi_{5,\frac{\lambda_5}{\lambda_0}}
\Psi_{1,\lambda_6}\Phi_{1,\frac{\lambda_0-1}{\lambda_6}},\\[1ex]
\texttt{Case\,2:} &\ \Phi_{1,-\lambda_0\lambda_1+\lambda_0}\Phi_{2,-\lambda_0\lambda_2}
\Phi_{3,-\lambda_0\lambda_3}\Phi_{4,-\lambda_0\lambda_4}\Phi_{5,-\lambda_0\lambda_5}
\Psi_{1,-\frac{1}{\lambda_0}}\Phi_{1,\lambda_0}
\end{aligned}$} \\[0.7cm] \hline
$n=6$ & \parbox[t][0.3\height][b]{13cm}{$
\begin{aligned}
\texttt{Case\,1:} &\ \Phi_{1,\frac{\lambda_1\lambda_7-\lambda_0+1}{\lambda_0\lambda_7}}
\Phi_{2,\frac{\lambda_2}{\lambda_0}}\Phi_{3,\frac{\lambda_3}{\lambda_0}}
\Phi_{4,\frac{\lambda_4}{\lambda_0}}\Phi_{5,\frac{\lambda_5}{\lambda_0}}
\Phi_{6,\frac{\lambda_6}{\lambda_0}}\Psi_{1,\lambda_7}\Phi_{1,\frac{\lambda_0-1}
{\lambda_7}},\\[1ex]
\texttt{Case\,2:} &\ \Phi_{1,-\lambda_0\lambda_1+\lambda_0}\Phi_{2,-\lambda_0\lambda_2}
\Phi_{3,-\lambda_0\lambda_3}\Phi_{4,-\lambda_0\lambda_4}\Phi_{5,-\lambda_0\lambda_5}
\Phi_{6,-\lambda_0\lambda_6}\Psi_{1,-\frac{1}{\lambda_0}}\Phi_{1,\lambda_0}
\end{aligned}$} \\[0.7cm] \hline
$n=7$ & \parbox[t][0.3\height][b]{13cm}{$
\begin{aligned}
\texttt{Case\,1:} &\ \Phi_{1,\frac{\lambda_1\lambda_8-\lambda_0+1}{\lambda_0\lambda_8}}
\Phi_{2,\frac{\lambda_2}{\lambda_0}}\Phi_{3,\frac{\lambda_3}{\lambda_0}}
\Phi_{4,\frac{\lambda_4}{\lambda_0}}\Phi_{5,\frac{\lambda_5}{\lambda_0}}
\Phi_{6,\frac{\lambda_6}{\lambda_0}}\Phi_{7,\frac{\lambda_7}{\lambda_0}}
\Psi_{1,\lambda_8}\Phi_{1,\frac{\lambda_0-1}{\lambda_8}},\\[1ex]
\texttt{Case\,2:} &\ \Phi_{1,-\lambda_0\lambda_1+\lambda_0}\Phi_{2,-\lambda_0\lambda_2}
\Phi_{3,-\lambda_0\lambda_3}\Phi_{4,-\lambda_0\lambda_4}\Phi_{5,-\lambda_0\lambda_5}
\Phi_{6,-\lambda_0\lambda_6}\Phi_{7,-\lambda_0\lambda_7}\Psi_{1,-\frac{1}{\lambda_0}}
\Phi_{1,\lambda_0}
\end{aligned}$} \\[0.7cm] \hline
\end{tabular}
\caption{Comprehensive outputs of \texttt{DixmierAutomorphismFactor}$(n,1)$ with $n$ from $2$ to $7$.}
\label{tab:DixmierAutomorphismFactor_m=1}
\end{table}
\end{center}
In \texttt{Case 1}, \texttt{DixmierAutomorphismFactor}$(n,1)$ equals
\[
\Phi_{1,\frac{\lambda_1\lambda_{n+1}-\lambda_0+1}{\lambda_0\lambda_{n+1}}}\Phi_{2,\frac{\lambda_2}{\lambda_0}}
\Phi_{3,\frac{\lambda_3}{\lambda_0}}\Phi_{4,\frac{\lambda_4}{\lambda_0}}\cdots
\Phi_{n,\frac{\lambda_n}{\lambda_0}}\Psi_{1,\lambda_{n+1}}\Phi_{1,\frac{\lambda_0-1}{\lambda_{n+1}}},
\]
while in \texttt{Case 2}, it equals
\[
\Phi_{1,-\lambda_0\lambda_1+\lambda_0}\Phi_{2,-\lambda_0\lambda_2}\Phi_{3,-\lambda_0\lambda_3}
\Phi_{4,-\lambda_0\lambda_4}\cdots\Phi_{n,-\lambda_0\lambda_n}\Psi_{1,-\frac{1}{\lambda_0}}\Phi_{1,\lambda_0}.
\]
\begin{center}
\begin{table}[htb]
\centering
\renewcommand{\arraystretch}{2.5}
\begin{tabular}{|c p{13.2cm}|}\hline
\textbf{$n$}  & \textbf{Output:} Factorization into $\Phi_{i,\mu}$ and $\Psi_{i,\mu}$ for Dixmier polynomial pairs in Table \ref{TestTable:m2} \\ \hline \hline
$n=2$ &	\parbox[t][1.2\height][c]{13.2cm}{$
\begin{aligned}
\texttt{Case\,1:} &\ \Phi_{1,\frac{\lambda_2\lambda_4-\lambda_0+1}{\lambda_4\lambda_0}}
\Psi_{1,\lambda_4}\Phi_{2,\frac{\lambda_1}{\lambda_4^2}}\Phi_{1,\frac{\lambda_0-1}{\lambda_4}}
\Phi_{0,\frac{-\lambda_1(\lambda_2\lambda_4+1)}{\lambda_4\lambda_0}},\\[1ex]
\texttt{Case\,2:} &\ \Phi_{1,\frac{\lambda_1\lambda_3-\lambda_0+1}{\lambda_0\lambda_3}}
\Phi_{2,\frac{\lambda_2}{\lambda_0}}\Psi_{1,\lambda_3}\Phi_{1,\frac{\lambda_0-1}{\lambda_3}},\\[1ex]
\texttt{Case\,3:} &\ \Psi_{1,\frac{\lambda_0-1}{\lambda_2}}\Psi_{2,\frac{\lambda_1}{\lambda_2}}
\Phi_{1,\lambda_2}\Psi_{1,\frac{\lambda_2\lambda_3-\lambda_1}{\lambda_1\lambda_2}},\\[1ex]
\texttt{Case\,4:} &\ \Psi_{1,\frac{\lambda_0-1}{\lambda_1}}
\Phi_{1,\lambda_1}\Psi_{2,\frac{\lambda_3}{\lambda_0^2}}\
\Psi_{1,\frac{\lambda_1\lambda_2-\lambda_0+1}{\lambda_1\lambda_0}}
\Psi_{0,\frac{-\lambda_3\lambda_1}{\lambda_0}}
\end{aligned}
$} \\[2.5cm] \hline
$n=3$ & \parbox[t][1.5\height][c]{13.2cm}{$
\begin{aligned}
\texttt{Case\,1:} &\ \Psi_{1,\frac{\beta\lambda_0}{\gamma}}\Phi_{2,\frac{-\gamma^3}
{\beta^2\lambda_0^3}}\Phi_{1,\frac{\zeta\gamma}{\lambda_0^2\lambda_4}}
\Psi_{1,\frac{-\lambda_0\lambda_4}{\gamma}}\Phi_{1,\frac{\lambda_0\lambda_1-\gamma}{
\lambda_0\lambda_4}}\Phi_{0,\frac{-\lambda_1\gamma}{\beta\lambda_0}}\Psi_{0,\frac{-\lambda_4\gamma}{
\beta\lambda_0}}\,\\[1ex]
&\ \text{where } \beta=\lambda_0\lambda_4-\lambda_1\lambda_3,\,\gamma= \beta\lambda_2-\lambda_1,\,
\zeta=-\lambda_2\lambda_3+\lambda_0-1\\
\texttt{Case\,2:} &\ \Psi_{1,\frac{2\lambda_0}{\lambda_1}}\Phi_{2,\frac{\lambda_1^3\lambda_2}{8\lambda_0^2}}
\Phi_{1,\tfrac{-\left(-\frac{1}{2}\lambda_1\lambda_2^2+\lambda_0\lambda_2+\lambda_3\right)\lambda_1}
{2\lambda_0\lambda_3}}\Psi_{1,\frac{2\lambda_3}{\lambda_1\lambda_2}}\Phi_{1,\frac{-\lambda_1\lambda_2+2\lambda_0}
{2\lambda_3}}\Phi_{0,\frac{-\lambda_1}{2}}\Psi_{0,\frac{-\lambda_3\lambda_1}{2\lambda_0}},\\[1ex]
\texttt{Case\,3:} &\ \Phi_{1,\frac{(\lambda_1\lambda_4-\lambda_3)\lambda_0+\lambda_3}{\lambda_0^2\lambda_4}}
\Phi_{2,\frac{\lambda_2}{\lambda_0}}\Phi_{3,\frac{\lambda_3}{\lambda_0}}\Psi_{1,\frac{\lambda_4
\lambda_0}{\lambda_3}}\Phi_{1,\frac{\lambda_3(\lambda_0-1)}{\lambda_0\lambda_4}},\\[1ex]
\texttt{Case\,4:} &\ \Phi_{1,\frac{(\lambda_1\lambda_3-\lambda_2)\lambda_0+\lambda_2}{\lambda_0^2\lambda_3}}
\Phi_{2,\frac{\lambda_2}{\lambda_0}}\Psi_{1,\frac{\lambda_3\lambda_0}{\lambda_2}}\Phi_{1,\frac{\lambda_2(
\lambda_0-1)}{\lambda_3\lambda_0}},\\[1ex]
\texttt{Case\,5:} &\ \Phi_{1,-\lambda_0\lambda_1+\lambda_0}\Phi_{2,-\lambda_0\lambda_2}
\Phi_{3,-\lambda_3\lambda_0}\Psi_{1,-\frac{1}{\lambda_0}}\Phi_{1,\lambda_0}
\end{aligned}$} \\[4.3cm] \hline
$n=4$ &
\parbox[t][1.0\height][t]{13.2cm}{$
\begin{aligned}
\texttt{Case\,1:} &\ \Phi_{2,\frac{\lambda_3}{2\lambda_1}}\Psi_{1,\tfrac{\beta}{\gamma}}
\Phi_{2,\tfrac{-\gamma^3}{\lambda_0\beta^2}}\Phi_{1,\tfrac{\eta}{\lambda_0\lambda_1\lambda_4+\beta}}
\Psi_{1,\tfrac{-\lambda_0\lambda_5}{\gamma}}\Phi_{1,\tfrac{\lambda_0\lambda_1+\gamma}{\lambda_0\lambda_5}}
\Phi_{0,\tfrac{-\gamma\lambda_1}{\beta}}\Psi_{0,\tfrac{-\gamma\lambda_5}{\beta}},\\[1ex]
&\ \text{where }\beta=\lambda_0(\lambda_0\lambda_5-\lambda_1\lambda_4),\,
\gamma=(\lambda_0\lambda_5-\lambda_1\lambda_4)(\lambda_2-\lambda_3)-\lambda_1,\\
&\ \eta=\gamma((-\lambda_2+\lambda_3)\lambda_4+\lambda_0-1)\\
\texttt{Case\,2:} &\ \Phi_{2,\frac{\lambda_2}{2\lambda_0}}\Psi_{1,\frac{2\lambda_0}{\lambda_1}}\Phi_{2,\frac{\lambda_1^3
\lambda_3}{8\lambda_0^2}}\Phi_{1,\frac{\zeta}{4\lambda_0\lambda_4}}\Psi_{1,\frac{2\lambda_4}{\lambda_3\lambda_1}}
\Phi_{1,\frac{-\lambda_1\lambda_3+2\lambda_0}{2\lambda_4}}\Phi_{0,\frac{-\lambda_1}{2}}
\Psi_{0,\frac{-\lambda_1\lambda_4}{2\lambda_0}},\\
&\ \text{where }\zeta=-\lambda_1\left(-\lambda_1\lambda_3^2+2\lambda_3\lambda_0+2\lambda_4\right)
\\[1ex]
\texttt{Case\,3:} &\ \Phi_{1,\frac{(\lambda_1\lambda_5-\lambda_4)\lambda_0+\lambda_4}{\lambda_0^2\lambda_5}}
\Phi_{2,\frac{\lambda_2}{\lambda_0}}\Phi_{3,\frac{\lambda_3}{\lambda_0}}
\Phi_{4,\frac{\lambda_4}{\lambda_0}}\Psi_{1,\frac{\lambda_0\lambda_5}{\lambda_4}}
\Phi_{1,\frac{\lambda_4(\lambda_0-1)}{\lambda_0\lambda_5}},\\[1ex]
\texttt{Case\,4:} &\ \Phi_{1,\frac{(\lambda_1\lambda_4-\lambda_3)\lambda_0+\lambda_3}{\lambda_0^2\lambda_4}}
\Phi_{2,\frac{\lambda_2}{\lambda_0}}\Phi_{3,\frac{\lambda_3}{\lambda_0}}
\Psi_{1,\frac{\lambda_0\lambda_4}{\lambda_3}}
\Phi_{1,\frac{\lambda_3(\lambda_0-1)}{\lambda_0\lambda_4}},\\[1ex]
\texttt{Case\,5:} &\ \Phi_{2,\frac{\lambda_2}{\lambda_0}}\Psi_{1,\frac{\lambda_0-1}{\lambda_1}}
\Phi_{1,\lambda_1}\Psi_{2,\frac{\lambda_4}{\lambda_2^2}}\Psi_{1,\frac{(1-\lambda_0)\lambda_2^3+\lambda_0\lambda_1
\lambda_2^2\lambda_3-\lambda_0\lambda_1^3\lambda_4}{\lambda_2^3\lambda_1\lambda_0}}
\Psi_{0,\frac{-\lambda_0\lambda_1\lambda_4}{\lambda_2^2}},\\[1ex]
\texttt{Case\,6:} &\ \Psi_{1,\frac{\lambda_0-1}{\lambda_1}}
\Phi_{1,\lambda_1}\Psi_{2,\frac{\lambda_3}{\lambda_0^2}}
\Psi_{1,\frac{\lambda_1\lambda_2-\lambda_0+1}{\lambda_0\lambda_1}}
\Psi_{0,\frac{-\lambda_3\lambda_1}{\lambda_0}},\\[1ex]
\texttt{Case\,7:} &\ \Phi_{1,-\lambda_0\lambda_1+\lambda_0}\Phi_{2,-\lambda_0\lambda_2}
\Phi_{3,-\lambda_3\lambda_0}\Phi_{4,-\lambda_0\lambda_4}\Psi_{1,-\frac{1}{\lambda_0}}
\Phi_{1,\lambda_0}
\end{aligned}
$} \\[4.0cm] \hline
\end{tabular}
\caption{Comprehensive outputs of \texttt{DixmierAutomorphismFactor}$(n,2)$ with $n$ from $2$ to $4$.}
\label{tab:DixmierAutomorphismFactorn=2}
\end{table}
\end{center}
\begin{center}
\begin{table}[htb]
\centering
\renewcommand{\arraystretch}{3.5}
\begin{tabular}{|cp{12.2cm}|}\hline
$n$ & \textbf{Output:} Factorization into $\Phi_{i,\mu}$ and $\Psi_{i,\mu}$ for Dixmier polynomial pairs in Tables \ref{TestTable:n5m2} and \ref{TestTable:n6m2} \\ \hline\hline
$n=5:$ & \parbox[t]{12.2cm}{$
\begin{aligned}
\texttt{Case\,1:} &\ \Phi_{1,\tfrac{\lambda_1\lambda_6-\lambda_0+1}{\lambda_0\lambda_6}}
\Phi_{2,\tfrac{\lambda_2}{\lambda_0}}\Phi_{3,\tfrac{\lambda_3}{\lambda_0}}
\Phi_{4,\tfrac{\lambda_4}{\lambda_0}}\Phi_{5,\tfrac{\lambda_5}{\lambda_0}}\Psi_{1,\lambda_6}
\Phi_{1,\tfrac{\lambda_0-1}{\lambda_6}},\\[1ex]
\texttt{Case\,2:} &\ \Phi_{2,\frac{\lambda_2}{\lambda_0}}\Psi_{1,\frac{\lambda_0-1}{\lambda_1}}
\Phi_{1,\lambda_1}\Psi_{2,\frac{\lambda_4}{\lambda_0^2}}\Psi_{1,\frac{\lambda_1\lambda_3-
\lambda_0+1}{\lambda_1\lambda_0}}\Psi_{0,\frac{-\lambda_1\lambda_4}{\lambda_0}},\\[1ex]
\texttt{Case\,3:} &\ \Phi_{1,-\lambda_0\lambda_1+\lambda_0}\Phi_{2,-\lambda_0\lambda_2}
\Phi_{3,-\lambda_0\lambda_3}\Phi_{4,-\lambda_0\lambda_4}\Phi_{5,-\lambda_0\lambda_5}
\Psi_{1,-\tfrac{1}{\lambda_0}}\Phi_{1,\lambda_0},\\[1ex]
\texttt{Case\,4:} &\
\Phi_{2,\tfrac{\lambda_3}{2\lambda_1}}\Psi_{1,\tfrac{2\lambda_1}{\lambda_2}}
\Phi_{2,-\tfrac{\beta\lambda_2^3}{8\lambda_1^3}}
\Phi_{1,\frac{e_1}{4\lambda_1^2\lambda_5}}
\Psi_{1,-\tfrac{2\lambda_1\lambda_5}{\beta\lambda_2}}
\Phi_{1,\tfrac{\beta\lambda_2+2\lambda_1^2}{2\lambda_1\lambda_5}}
\Phi_{0,-\tfrac{\lambda_2}{2}}\Psi_{0,-\tfrac{\lambda_2\lambda_5}{2\lambda_1}},\\[1ex]
&\ \text{where } \beta=\lambda_0\lambda_5-\lambda_1\lambda_4,\,
e_1=\lambda_2(\beta(-\lambda_2\lambda_4+2\lambda_1)-2\lambda_1\lambda_5),\\
\texttt{Case\,5:} &\ \Psi_{1,\tfrac{\lambda_0-1}{\lambda_2}}\Psi_{2,\tfrac{\lambda_1}{\lambda_2}}
\Phi_{1,\lambda_2}\Psi_{1,\tfrac{\lambda_2\lambda_3-\lambda_0+1}{\lambda_2\lambda_0}}
\end{aligned}
$} \\[2.3cm] \hline
$n=6:$ & \parbox[t]{12.2cm}{$
\begin{aligned}
\texttt{Case\,1:} &\ \Phi_{1,\tfrac{\lambda_1\lambda_7-\lambda_0+1}{\lambda_0\lambda_7}}
\Phi_{2,\tfrac{\lambda_2}{\lambda_0}}\Phi_{3,\tfrac{\lambda_3}{\lambda_0}}
\Phi_{4,\tfrac{\lambda_4}{\lambda_0}}\Phi_{5,\tfrac{\lambda_5}{\lambda_0}}
\Phi_{6,\tfrac{\lambda_6}{\lambda_0}}\Psi_{1,\lambda_7}
\Phi_{1,\tfrac{\lambda_0-1}{\lambda_7}},\\[1ex]
\texttt{Case\,2:} &\
\Phi_{3,\tfrac{\lambda_3}{\lambda_0}}\Phi_{2,\tfrac{\lambda_2}{\lambda_0}}
\Psi_{1,\tfrac{\lambda_0-1}{\lambda_1}}\Phi_{1,\lambda_1}\Psi_{2,\tfrac{\lambda_5}{\lambda_0^2}}
\Psi_{1,\tfrac{\lambda_1\lambda_4-\lambda_0+1}{\lambda_0\lambda_1}}
\Psi_{0,-\tfrac{\lambda_1\lambda_5}{\lambda_0}},\\[1ex]
\texttt{Case\,3:} &\ \Phi_{1,-\lambda_0\lambda_1+\lambda_0}\Phi_{2,-\lambda_0\lambda_2}
\Phi_{3,-\lambda_0\lambda_3}\Phi_{4,-\lambda_0\lambda_4}\Phi_{5,-\lambda_0\lambda_5}
\Phi_{6,-\lambda_0\lambda_6}\Psi_{1,-\tfrac{1}{\lambda_0}}\Phi_{1,\lambda_0},\\[1ex]
\texttt{Case\,4:} &\ \Phi_{3,\frac{\lambda_2}{2\lambda_1}}\Phi_{2,\frac{\lambda_4}{\lambda_2}}\Psi_{1,\frac{-\lambda_2^3}{\gamma}}
\Phi_{2,\frac{\beta\gamma^3}{\lambda_2^9}}\Phi_{1,\frac{\zeta}{\lambda_2^6\lambda_6}}\Psi_{1,\frac{
\lambda_2^3\lambda_6}{\beta\gamma}}\Phi_{1,\frac{\lambda_1\lambda_2^3-\beta\gamma}{\lambda_2^3\lambda_6}}
\Phi_{0,\frac{\gamma\lambda_1}{\lambda_2^3}}\Psi_{0,\frac{\gamma\lambda_6}{\lambda_2^3}},\\
&\ \text{where }\zeta=-((\beta-\lambda_6)\lambda_2^3+\beta\gamma\lambda_5)\gamma\\[1ex]
\texttt{Case\,5:} &\ \Phi_{3,\frac{\lambda_3}{2\lambda_1}}\Psi_{1,\frac{2\lambda_1}{\lambda_2}}
\Phi_{2,\frac{-\beta\lambda_2^3}{8\lambda_1^3}}\Phi_{1,\frac{\lambda_2(2\beta\lambda_1-\gamma)}
{4\lambda_1^2\lambda_5}}\Psi_{1,\frac{-2\lambda_5 \lambda_1}{\beta\lambda_2}}
\Phi_{1,\frac{\beta\lambda_2+2\lambda_1^2}{2\lambda_1\lambda_5}}
\Phi_{0,\frac{-\lambda_2}{2}}\Psi_{0,\frac{-\lambda_2\lambda_5}{2\lambda_1}},\\
&\ \text{where } \beta=\lambda_0\lambda_5-\lambda_1\lambda_4,\,\gamma=\beta\lambda_2\lambda_4+2\lambda_2\lambda_5,
\\[1ex]
\texttt{Case\,6:} &\ \Phi_{2,\frac{\lambda_3}{2\lambda_1}}\Psi_{1,\frac{2\lambda_1}{\lambda_2}}
\Phi_{2,\frac{-\beta\lambda_2^3}{8\lambda_1^3}}\Phi_{1,\frac{\lambda_2(2\beta\lambda_1-\gamma)}
{4\lambda_1^2\lambda_5}}\Psi_{1,\frac{-2\lambda_5 \lambda_1}{\beta\lambda_2}}
\Phi_{1,\frac{\beta\lambda_2+2\lambda_1^2}{2\lambda_1\lambda_5}}
\Phi_{0,\frac{-\lambda_2}{2}}\Psi_{0,\frac{-\lambda_2\lambda_5}{2\lambda_1}},\\
&\ \text{where } \beta=\lambda_0\lambda_5-\lambda_1\lambda_4,\,\gamma=\beta\lambda_2\lambda_4+2\lambda_2\lambda_5,\\[1ex]
\texttt{Case\,7:} &\
\Psi_{1,\tfrac{\lambda_0-1}{\lambda_2}}\Psi_{2,\tfrac{\lambda_1}{\lambda_2}}
\Phi_{1,\lambda_2}\Psi_{1,\tfrac{\lambda_2\lambda_3-\lambda_0+1}{\lambda_0\lambda_2}},\\[1ex]
\texttt{Case\,8:} &\ \Phi_{3,\frac{\lambda_3}{2\lambda_0}}\Phi_{2,\frac{\lambda_2}{2\lambda_0}}\Psi_{1, \frac{2\lambda_0}{\lambda_1}}\Phi_{2,\frac{\lambda_1^3\lambda_4}{8\lambda_0^2}}\Phi_{1, \frac{\beta}{2\lambda_5\lambda_0}}\Psi_{1,\frac{2\lambda_5}{\lambda_1\lambda_4}}\Phi_{1, \frac{-\lambda_1\lambda_4 + 2\lambda_0}{2\lambda_5}}\Phi_{0, \frac{-\lambda_1}{2}}\Psi_{0, \frac{-\lambda_5\lambda_1}{2\lambda_0}},\\
&\ \text{where } \beta=\left(\tfrac{1}{2}\lambda_1\lambda_4^2-\lambda_4\lambda_0-\lambda_5\right)\lambda_1\\[1ex]
\end{aligned}
$} \\[2.7cm] \hline
\end{tabular}
\caption{Comprehensive outputs of \texttt{DixmierAutomorphismFactor}$(n,2)$ with $n=5,6$.}
\label{tab:DixmierAutomorphismFactor:n5-6m2}
\end{table}
\end{center}

\begin{center}
\begin{table}[htb]
\centering
\renewcommand{\arraystretch}{2.8}
\begin{tabular}{|p{13.0cm}|}\hline
Factorization into $\Phi_{i,\mu}$ and $\Psi_{i,\mu}$ for Dixmier polynomial pairs in Table \ref{TestTable:n7m2} \\ \hline \hline
\parbox[t]{10.8cm}{$
\begin{aligned}
\texttt{Case\,1:}\; & \Phi_{1,\frac{(\lambda_1\lambda_8-\lambda_7)\lambda_0+\lambda_7}{\lambda_0^2\lambda_8}}
\Phi_{2,\frac{\lambda_2}{\lambda_0}}\Phi_{3,\frac{\lambda_3}{\lambda_0}}
\Phi_{4,\frac{\lambda_4}{\lambda_0}}\Phi_{5,\frac{\lambda_5}{\lambda_0}}
\Phi_{6,\frac{\lambda_6}{\lambda_0}}\Phi_{7,\frac{\lambda_7}{\lambda_0}}
\Psi_{1,\frac{\lambda_0\lambda_8}{\lambda_7}}\Phi_{1,\frac{\lambda_7(\lambda_0-1)}
{\lambda_0\lambda_8}}\\[1ex]
\texttt{Case\,2:}\; & \Phi_{1,\frac{(\lambda_1\lambda_7-\lambda_6)\lambda_0+\lambda_6}{\lambda_0^2\lambda_7}}
\Phi_{2,\frac{\lambda_2}{\lambda_0}}\Phi_{3,\frac{\lambda_3}{\lambda_0}}
\Phi_{4,\frac{\lambda_4}{\lambda_0}}\Phi_{5,\frac{\lambda_5}{\lambda_0}}
\Phi_{6,\frac{\lambda_6}{\lambda_0}}\Psi_{1,\frac{\lambda_0\lambda_7}{\lambda_6}}
\Phi_{1,\frac{\lambda_6(\lambda_0-1)}{\lambda_0\lambda_7}}\\[1ex]
\texttt{Case\,3:}\; & \Phi_{3,\frac{\lambda_3}{\lambda_0}}
\Phi_{2,\frac{\lambda_2}{\lambda_0}}\Psi_{1,\frac{\lambda_0-1}{\lambda_1}}
\Phi_{1,\lambda_1}\Psi_{2,\frac{\lambda_5}{\lambda_0^2}}
\Psi_{1,\frac{\lambda_1\lambda_4-\lambda_0+1}{\lambda_0\lambda_1}}
\Psi_{0,\frac{-\lambda_5\lambda_1}{\lambda_0}}\\[1ex]
\texttt{Case\,4:}\; & \Phi_{1,-\lambda_0\lambda_1+\lambda_0}\Phi_{2,-\lambda_2\lambda_0}
\Phi_{3,-\lambda_3\lambda_0}\Phi_{4,-\lambda_0\lambda_4}\Phi_{5,-\lambda_5\lambda_0}
\Phi_{6,-\lambda_6\lambda_0}\Phi_{7,-\lambda_0\lambda_7}\Psi_{1,-\frac{1}{\lambda_0}}
\Phi_{1,\lambda_0}\\[1ex]
\texttt{Case\,5:}\; & \Phi_{1,\frac{(\lambda_1\lambda_6-\lambda_5)\lambda_0+\lambda_5}{\lambda_6\lambda_0^2}}
\Phi_{2,\frac{\lambda_2}{\lambda_0}}\Phi_{3,\frac{\lambda_3}{\lambda_0}}
\Phi_{4,\frac{\lambda_4}{\lambda_0}}\Phi_{5,\frac{\lambda_5}{\lambda_0}}\Psi_{1,\frac{\lambda_6
\lambda_0}{\lambda_5}}\Phi_{1,\frac{\lambda_5(\lambda_0-1)}{\lambda_6\lambda_0}}\\[1ex]
\texttt{Case\,6:}\; & \Phi_{1,\frac{(\lambda_1\lambda_5-\lambda_4)\lambda_0+\lambda_4}{\lambda_0^2\lambda_5}}
\Phi_{2,\frac{\lambda_2}{\lambda_0}}\Phi_{3,\frac{\lambda_3}{\lambda_0}}
\Phi_{4,\frac{\lambda_4}{\lambda_0}}\Psi_{1,\frac{\lambda_5\lambda_0}{\lambda_4}}
\Phi_{1,\frac{\lambda_4(\lambda_0-1)}{\lambda_5\lambda_0}}\\[1ex]
\texttt{Case\,7:}\; & \Phi_{3,\frac{\lambda_2}{2\lambda_1}}\Phi_{2,\frac{\lambda_4}{\lambda_2}}
\Psi_{1,\tfrac{-\lambda_2^3}{\beta}}\Phi_{2,\tfrac{\beta^3\gamma}{\lambda_2^{9}}}
\Phi_{1,\tfrac{\nu}{\lambda_2^{6}\lambda_6}}\Psi_{1,\tfrac{
\lambda_2^3\lambda_6}{\beta\gamma}}\Phi_{1,\tfrac{-\beta\gamma+\lambda_1\lambda_2^3}{\lambda_2^3\lambda_6}}
\Phi_{0,\tfrac{\beta\lambda_1}{\lambda_2^3}}\Psi_{0,\tfrac{\beta\lambda_6}{\lambda_2^3}}\\[1ex]
&\ \text{where } \beta=\lambda_1\lambda_4^2-\lambda_2^2\lambda_3,\gamma=\lambda_0\lambda_6-\lambda_1\lambda_5,\,
\nu=-\left((\gamma-\lambda_6)\lambda_2^3+\beta\gamma\lambda_5\right)\beta\\
\texttt{Case\,8:}\; & \Phi_{3,\frac{\lambda_3}{2\lambda_1}}\Psi_{1,\frac{2\lambda_1}{\lambda_2}}\Phi_{2,
\tfrac{-\beta\lambda_2^3}{8\lambda_1^3}}\Phi_{1,\tfrac{\nu}{4\lambda_1^2\lambda_5}}\Psi_{1,\tfrac{-2\lambda_5\lambda_1}
{\beta\lambda_2}}\Phi_{1,\tfrac{\beta\lambda_2+2\lambda_1^2}{2\lambda_1\lambda_5}}\Phi_{0,\frac{-\lambda_2}{2}}
\Psi_{0,\frac{-\lambda_2\lambda_5}{2\lambda_1}}\\[1ex]
&\ \text{where}\beta=\lambda_0\lambda_5-\lambda_1\lambda_4,\,\nu=\lambda_2\left(2\beta\lambda_1-2\lambda_1
\lambda_5-\beta\lambda_2\lambda_4\right)\\
\texttt{Case\,9:}\; & \Phi_{2,\frac{\lambda_3}{2\lambda_1}}\Psi_{1,\frac{2\lambda_1}{\lambda_2}}
\Phi_{2,\tfrac{-\beta\lambda_2^3}{8\lambda_1^3}}\Phi_{1,\tfrac{\nu}{4\lambda_1^2\lambda_5}}\Psi_{1,
\tfrac{-2\lambda_5\lambda_1}{\beta\lambda_2}}\Phi_{1,\tfrac{\beta\lambda_2+2\lambda_1^2}{2\lambda_1
\lambda_5}}\Phi_{0,\frac{-\lambda_2}{2}}\Psi_{0,\frac{-\lambda_2\lambda_5}{2\lambda_1}}\\[1ex]
&\ \text{where }\beta=\lambda_0\lambda_5-\lambda_1\lambda_4,\,\nu=\lambda_2\left(2\beta\lambda_1
-2\lambda_1\lambda_5-\beta\lambda_2\lambda_4\right)\\
\texttt{Case\,10:}\; & \Psi_{1,\frac{\lambda_0-1}{\lambda_2}}\Psi_{2,\frac{\lambda_1}{\lambda_2}}\Phi_{1,\lambda_2}
\Psi_{1,\frac{\lambda_2\lambda_3-\lambda_0+1}{\lambda_0\lambda_2}}\\[1ex]
\texttt{Case\,11:}\; & \Phi_{3,\frac{\lambda_3}{2\lambda_0}}\Phi_{2,\frac{\lambda_2}{2\lambda_0}}
\Psi_{1,\frac{2\lambda_0}{\lambda_1}}\Phi_{2,\frac{\lambda_1^3\lambda_4}{8\lambda_0^2}}
\Phi_{1,\frac{\beta}{2\lambda_5\lambda_0}}\Psi_{1,\frac{2\lambda_5}{\lambda_1\lambda_4}}
\Phi_{1,\frac{-\lambda_1\lambda_4+2\lambda_0}{2\lambda_5}}\Phi_{0,\frac{-\lambda_1}{2}}
\Psi_{0,\frac{-\lambda_5\lambda_1}{2\lambda_0}},\\
&\ \text{where }\beta=-\left(-\tfrac{1}{2}\lambda_1\lambda_4^2+\lambda_0\lambda_4+\lambda_5\right)
\lambda_1
\end{aligned}$} \\[1cm] \hline
\end{tabular}
\caption{Comprehensive outputs of \texttt{DixmierAutomorphismFactor}$(7,2)$.}
\label{tab:DixmierAutomorphismFactor_n7m2}
\end{table}
\end{center}
\clearpage
\begin{center}
\begin{table}[htb]
\centering
\renewcommand{\arraystretch}{2.5}
\begin{tabular}{|p{12.18cm}|}\hline
Factorization into $\Phi_{i,\mu}$ and $\Psi_{i,\mu}$ for Dixmier polynomial pairs in Table \ref{TestTable:n5m3} \\ \hline \hline
\parbox[t]{10.8cm}{$
\begin{aligned}
\texttt{Case\,1:}\; & \Phi_{1,\frac{((\lambda_1\lambda_6-\lambda_5)\lambda_0+\lambda_5)}
{\lambda_6\lambda_0^2}}\Phi_{2,\frac{\lambda_2}{\lambda_0}}\Phi_{3,\frac{\lambda_3}{\lambda_0}}
\Phi_{4,\frac{\lambda_4}{\lambda_0}}\Phi_{5,\frac{\lambda_5}{\lambda_0}}
\Psi_{1,\frac{\lambda_0\lambda_6}{\lambda_5}}\Phi_{1,\frac{\lambda_5(\lambda_0-1)}{\lambda_0\lambda_6}}\\[1ex]
\texttt{Case\,2:}\; & \Psi_{1,\frac{2\lambda_1}{\lambda_2}}\Phi_{2,\frac{-\beta\lambda_2^3}{8\lambda_1^3}}
\Phi_{1,\frac{\lambda_2\left(\lambda_1(\beta-\lambda_4)-\frac{\beta\lambda_2\lambda_3}{2}\right)}{2\lambda_1^2
\lambda_4}}\Psi_{1,\frac{-2\lambda_4\lambda_1}{\beta\lambda_2}}\Phi_{1,\frac{\beta\lambda_2+2\lambda_1^2}
{2\lambda_4\lambda_1}}\Phi_{0,\frac{-\lambda_2}{2}}\Psi_{0,\frac{-\lambda_2\lambda_4}{2\lambda_1}}\\[1ex]
&\ \text{where }\beta=\lambda_0\lambda_4-\lambda_1\lambda_3\\
\texttt{Case\,3:}\; & \Phi_{1,\frac{(\lambda_1\lambda_4-\lambda_3)\lambda_0+\lambda_3}
{\lambda_0^2\lambda_4}}\Phi_{2,\frac{\lambda_2}{\lambda_0}}\Phi_{3,\frac{\lambda_3}{\lambda_0}}
\Psi_{1,\frac{\lambda_4\lambda_0}{\lambda_3}}\Phi_{1,\frac{\lambda_3(\lambda_0-1)}{\lambda_0\lambda_4}}
\\[1ex]\texttt{Case\,4:}\; &
\Phi_{2,\frac{\lambda_2}{\lambda_0}}\Psi_{1,\frac{\lambda_0-1}{\lambda_1}}\Phi_{1,\lambda_1}
\Psi_{2,\frac{\lambda_4}{\lambda_2^2}}\Psi_{1,\frac{(-\lambda_0+1)\lambda_2^3+\lambda_0\lambda_1
\lambda_2^2\lambda_3-\lambda_0\lambda_1^3\lambda_4}{\lambda_2^3\lambda_1\lambda_0}}
\Psi_{0,\frac{-\lambda_0\lambda_1\lambda_4}{\lambda_2^2}}\\[1ex]
\texttt{Case\,5:}\; &
\Psi_{1,\frac{\lambda_0-1}{\lambda_1}}\Phi_{1,\lambda_1}\Psi_{3,\frac{\lambda_4}{\lambda_0^3}}
\Psi_{2,\frac{\lambda_3}{\lambda_0^2}}\Psi_{1,\frac{\lambda_0\lambda_1\lambda_2-3\lambda_1^2
\lambda_4-\lambda_0^2+\lambda_0}{\lambda_0^2\lambda_1}}
\Psi_{0,\frac{-\lambda_1\lambda_3}{\lambda_0}}\\[1ex]
\texttt{Case\,6:}\; & \Phi_{1,-\lambda_0\lambda_1+\lambda_0}\Phi_{2,-\lambda_2\lambda_0}
\Phi_{3, -\lambda_0\lambda_3}\Phi_{4,-\lambda_0\lambda_4}\Phi_{5,-\lambda_0\lambda_5}
\Psi_{1, -\frac1{\lambda_0}}\Phi_{1,\lambda_0}\\[1ex]
\texttt{Case\,7:}\; & \Phi_{1, \frac{(\lambda_{1}\lambda_{5} - \lambda_{4})\lambda_0+ \lambda_4}{\lambda_0^2\lambda_5}}\Phi_{2,\frac{\lambda_2}{\lambda_0}}\Phi_{3, \frac{\lambda_3}
{\lambda_0}}\Phi_{4,\frac{\lambda_4}{\lambda_0}}\Psi_{1,\frac{\lambda_0\lambda_5}{\lambda_4}}
\Phi_{1,\frac{\lambda_4(\lambda_0-1)}{\lambda_0\lambda_5}}\\[1ex]
\texttt{Case\,8:}\; &
\Phi_{2,\frac{2\lambda_4}{\lambda_3}}\Psi_{1,\frac{\lambda_3^2}{2\lambda_4\lambda_1}}
\Phi_{2,\frac{-2\lambda_1^3\lambda_4^2}{\beta\lambda_3}}\Phi_{1,e_1}\Psi_{1,
\frac{-\gamma}{2\lambda_1\lambda_2\lambda_3\lambda_4}}\Phi_{1,e_2}\Phi_{0,\frac{-\lambda_1}{2}}
\Psi_{0,\frac{\gamma\lambda_1}{2\beta\lambda_2}}\\
&\ \text{where } e_1=\tfrac{-2((2\lambda_1\lambda_2\lambda_4+
\gamma)\lambda_3+\beta\lambda_2-\gamma\lambda_0)\lambda_4\lambda_1\lambda_3}{\beta\gamma},
e_2=\tfrac{\lambda_2(2\lambda_1\lambda_3\lambda_4 +\beta)}{\gamma},\\
&\ \beta=\lambda_0\lambda_3^3+\lambda_1^3\lambda_4-\lambda_1\lambda_2\lambda_3^2-\lambda_3^4,
\,\gamma=\beta\lambda_5+2\lambda_3^2\lambda_4\\
\texttt{Case\,9:}\; &
\Phi_{2,\frac{2\lambda_2}{\lambda_1}}\Psi_{1,\frac{\lambda_1^2}{2\lambda_0\lambda_2}}
\Phi_{2,\frac{\lambda_0^3\lambda_2\lambda_3}{\lambda_1^3}}
\Phi_{1,e_1}\Psi_{1,\frac{\lambda_4\lambda_1}{\lambda_3\lambda_2\lambda_0}}
\Phi_{1,\frac{-\lambda_2(\lambda_0\lambda_3-\lambda_1)}{\lambda_4\lambda_1}}
\Phi_{0,\frac{-\lambda_0}{2}}\Psi_{0,\frac{-\lambda_4\lambda_0}{2\lambda_2}}\\[1ex]
&\ \text{where }e_1=\tfrac{-\lambda_2}{\lambda_1^4\lambda_4}\left(\lambda_1^3\lambda_3+
(-\lambda_0\lambda_3^2+2\lambda_4)\lambda_1^2+\lambda_0^3\lambda_3\lambda_4\right)\lambda_0\\
\texttt{Case\,10:}\; & \Psi_{1,\frac{\beta \lambda_0}{\gamma}}\Phi_{3,\frac{-\gamma^4}{\beta^3 \lambda_{0}^4}}\Phi_{2,\frac{-\lambda_1\gamma^3}{\beta^2 \lambda_{0}^3 \lambda_{2}}}
\Phi_{1,e_{1}}\Psi_{1,\frac{\lambda_0\lambda_5}{\gamma}}\Phi_{1,\frac{\lambda_0\lambda_2+
\gamma}{\lambda_0\lambda_5}}\Phi_{0,\frac{-\gamma \lambda_{1}}{\beta \lambda_0}}
\Psi_{0,\frac{-\gamma \lambda_1\lambda_5}{\beta \lambda_0\lambda_2}}\\[1ex]
&\ \text{where }e_1=\tfrac{\beta^2\gamma\lambda_0^2-\beta^2\gamma(\lambda_3\lambda_4+ 1)\lambda_0+3\lambda_5\gamma^3}{\beta^2\lambda_0^2(\lambda_2\lambda_4+\beta)},
\beta=\lambda_0\lambda_5-\lambda_2\lambda_4,\gamma=\beta\lambda_3-\lambda_2\\
\texttt{Case\,11:}\; &
\Psi_{1,\frac{3\lambda_1}{\lambda_2}}\Phi_{3,\frac{\lambda_2^4\lambda_3}{81\lambda_1^3}}
\Phi_{2,\frac{\lambda_2^3\lambda_3\lambda_0}{27\lambda_1^3}}\Phi_{1,e_1}
\Psi_{1,\frac{3\lambda_4}{\lambda_2\lambda_3}}\Phi_{1,\frac{-\lambda_2\lambda_3+3\lambda_1}{3\lambda_4}}
\Phi_{0,\frac{-\lambda_2\lambda_0}{3\lambda_1}}\Psi_{0,\frac{-\lambda_2\lambda_0\lambda_4}{3\lambda_1^2}}\\
&\ \text{where }e_1=\tfrac{-\lambda_2}{9\lambda_1^2\lambda_4}\left(3\lambda_1^2\lambda_3+\left(-
\lambda_2\lambda_3^2+3\lambda_4\right)\lambda_1+\lambda_2^2\lambda_3\lambda_4\right)
\end{aligned}$}\\ \hline
\end{tabular}
\caption{Comprehensive outputs of \texttt{DixmierAutomorphismFactor}$(5,3).$}
\label{tab:DixmierAutomorphismFactor_n5m3}
\end{table}
\end{center}
\clearpage

\subsubsection*{Discussion}

The experimental evidence supports the conjecture that every endomorphism derived from Dixmier polynomial
families can be expressed as a composition of automorphisms of the Weyl algebra. In particular, the
\texttt{DixmierAutomorphismFactor} function systematically recovers the parameters that compose the individual automorphism factors. Our method reveals that:
\begin{itemize}
	\item The factorization is sensitive to the selected degrees $n$ and $m$, with higher degrees leading to
	a larger and more complex system of coefficient equality constraints.
	\item For each family, the derived parameters fulfill the required identities in the algebra, thereby
	validating the proposed configuration of automorphisms.
	\item \textbf{Even though the procedure does not amount to an algorithmic proof of the Dixmier conjecture, it
	offers a constructive-computational approach that supports its validity in the examined cases.}
\end{itemize}

\subsubsection*{Implementation Remarks}

The Maple implementation leverages advanced symbolic computation capabilities. In particular:
\begin{itemize}
	\item The resolution of the system of equations is performed adaptively by employing \texttt{solve} for
	simpler cases, and Gröbner basis techniques for more complex systems.
	\item Comprehensive verification routines ensure that the composed automorphism faithfully reproduces
	the original Dixmier polynomials.
\end{itemize}

Future work will extend these results to multidimensional cases and explore further optimizations to handle the growing complexity of the systems in higher degrees.

\subsection{Computing automorphisms of $\mathcal{CSD}_n(K)$}

Induced by the computations of the previous subsections, we will compute now some nontrivial automorphism of the $K$-algebra  $\mathcal{CSD}_n(K)$.

\begin{definition}
	Let $A=\sigma(R)\langle x_1,\dots,x_n\rangle$ be a skew $PBW$ extension with parameters $R, n,
	\sigma_k,\delta_k,c_{i,j}$,\\ $d_{ij}$, $a_{ij}^{(k)}$, $1\leq i<j\leq n$, $1\leq k\leq n$. Let $y_1,\dots,y_n\in A$. We say that $y_1,\dots,y_n$ are \textbf{\textit{skew Dixmier polynomials}} if they satisfy the conditions $\rm{(i)-(ii)}$ of Proposition \ref{122}. In particular, let $p_1,\dots,p_n\in \mathcal{CSD}_n(K)$, we say that $p_1,\dots,p_n$ are skew Dixmier polynomials if they satisfy the following conditions:
	\begin{center}
		$p_jp_i=p_ip_j+d_{ij}$, for all $1\leq i<j\leq n$, with $d_{ij}\in K-\{0\}$.
	\end{center}
\end{definition}

Motivated by our previous theoretical developments, we now turn our attention to obtaining some nontrivial automorphisms of the $K$-algebra $\mathcal{CSD}_n(K)$. In order to compute these automorphisms, the \textbf{SPBWE library} developed in Maple plays a fundamental role. This library allows us to effectively work with skew $PBW$ extensions and, in particular, to define the algebra $\mathcal{CSD}_n(K)$. Within this library the package \texttt{DixmierProblem} is provided, which implements two special functions:

\subsubsection*{\texttt{skewDixmierPolynomials} function}

This function generates collections of skew Dixmier polynomials, organized as $n$-tuples in $\mathcal{CSD}_n(K)$, and accepts the following parameters:
\begin{itemize}
	\item \textbf{\texttt{n}}: A positive integer (default $3$) indicating the number of variables,
	thereby controlling the size of the polynomial $n$-tuples.
	\item \textbf{\texttt{tmp}}: The variable used in the construction of the polynomials (default
	\texttt{x}).
	\item \textbf{\texttt{dij}}: A list of nonzero constants (default is a list of $1$'s of length
	$\binom{n}{2}$) corresponding to the commutation relations.
	\item \textbf{\texttt{view}} and \textbf{\texttt{outputMode}}: These options behave in the same manner
	as in the \texttt{DixmierPolynomials} function.
	\item A sequence of nonnegative integers must be entered, representing the maximum degrees of the
	variables $x_1,\ldots,x_n,$ respectively.
\end{itemize}

\subsubsection*{\texttt{skewDixmierAutomorphism} and \texttt{skewinverseAutomorphism} functions}

Building upon the skew Dixmier polynomials generated by the previous function, this function constructs automorphisms of $\mathcal{CSD}_n(K).$ In addition to the parameters common with \texttt{skewDixmierPolynomials}, this function includes the \texttt{parametric} option, which behaves in the same manner as in the\linebreak \texttt{DixmierAutomorphism} function.

The SPBWE library, through these functions, provides an effective computational framework to study noncommutative phenomena in $\mathcal{CSD}_n(K)$. In the examples that follow, we will illustrate how to use \texttt{skewDixmierPolynomials} to generate families of skew Dixmier polynomials and how to employ \linebreak \texttt{skewDixmierAutomorphism} to construct nontrivial automorphisms of $\mathcal{CSD}_n(K)$. Detailed descriptions of the function arguments and options ensure that users can tailor the computations to their specific needs and explore both concrete and parametric cases within a unified algebraic setting.

\begin{example}
	Consider the $K$-algebra $\mathcal{CSD}_3(K)$ defined by the parameters
	\[
	d_{12} = 2,\quad d_{13} = 3,\quad d_{23} = 2.
	\]
	In this example, we obtain families of skew Dixmier polynomials
	\[
	p_1, p_2, p_3 \in \mathcal{CSD}_3(K),
	\]
	with the maximum degrees for the variables $x_1,$ $x_2,$ and $x_3$ set to $1.$
	
	When executing the command
	\begin{center}
		\texttt{skewDixmierPolynomial(1,\,1,\,1,\,dij=[2,3,2],\,view=true)},
	\end{center}
	the function produces the following output:
	
	\medskip
	
	\noindent
	\texttt{----- Case 1 -----}
	
	\smallskip
	
	\noindent
	\texttt{The skew Dixmier polynomials are:}
	\begin{align*}
	p_1 &= \left(\frac{3\lambda_3}{2}-\lambda_6\right)x_1x_2+\left(-\lambda_3+\frac{2\lambda_6}{3}\right)x_1x_3 +\frac{e_1}{(12\lambda_0+18\lambda_1)\lambda_6}x_1+\lambda_0x_2+\lambda_1x_3\\
	p_2 &= \lambda_3x_1x_2-\frac{2\lambda_3x_1x_3}{3}+\frac{e_2}{(6\lambda_0+9\lambda_1)\lambda_6}x_1+\lambda_2x_2+
	\frac{(4\lambda_0+6\lambda_1- 6\lambda_2)\lambda_3+4\lambda_6\lambda_2}{9\lambda_3-6\lambda_6}x_3\\
	p_3 &= \lambda_6x_1x_2-\frac{2\lambda_6x_1x_3}{3}+\lambda_4x_1+\lambda_5x_2+\frac{(4\lambda_0+6\lambda_1+ 4\lambda_5)\lambda_6-6\lambda_5\lambda_3}{9\lambda_3-6\lambda_6}x_3
	\end{align*}
	where:
	\begin{align*}
	e_1 &= \left(-8\lambda_0^2+(-12\lambda_1-12\lambda_4-8\lambda_5)\lambda_0-18+(-18\lambda_4- 12\lambda_5)\lambda_1\right)\lambda_6\\
	&\quad+18\left(\left(\lambda_4+\frac{2\lambda_5}{3}\right)\lambda_0+\frac{3}{2}+\left(\frac{3\lambda_4}{2}+
	\lambda_5\right)\lambda_1\right)\lambda_3,\\
	e_2 &= \left((6\lambda_4+4\lambda_5)\lambda_0+9+ (9\lambda_4+6\lambda_5)\lambda_1\right)\lambda_3+(-4\lambda_0\lambda_2-6\lambda_1\lambda_2- 6)\lambda_6.\\
	\end{align*}
	
	\medskip
	
	\noindent
	\texttt{----- Case 2 -----}
	
	\smallskip
	
	\noindent
	\texttt{The skew Dixmier polynomials are:}
	\begin{align*}
	p_1 &= \frac{3\lambda_3}{2}x_1x_2-\lambda_3x_1x_3+\frac{e_1}{18\lambda_4+ 12\lambda_5}x_1+\frac{-9+(-9\lambda_4-6\lambda_5)\lambda_0}{6\lambda_4+4\lambda_5}x_2+\lambda_0x_3\\
	p_2 &= \lambda_3x_1x_2-\frac{2\lambda_3x_1x_3}{3}+\lambda_1x_1+\lambda_2x_2+\frac{-6+(-6\lambda_4- 4\lambda_5)\lambda_2}{9\lambda_4+6\lambda_5}x_3\\
	p_3 &= \lambda_4x_1+\lambda_5x_2-\frac{2}{3}\lambda_5x_3
	\end{align*}
	where: $e_1=-18\lambda_4^2+(18\lambda_0+27\lambda_1+18\lambda_2 -24\lambda_5)\lambda_4+18-8\lambda_5^2+(12\lambda_0+18\lambda_1+12\lambda_2)\lambda_5.$
	
	\medskip
	
	\noindent
	\texttt{----- Case 3 -----}
	
	\smallskip
	
	\noindent
	\texttt{The skew Dixmier polynomials are:}
	\begin{align*}
	p_1 &= -\lambda_5x_1x_2+\frac{2\lambda_5}{3}x_1x_3+\frac{e_1}{18\lambda_1+ 12\lambda_2}x_1+\frac{-6+(-9\lambda_1-6\lambda_2)\lambda_0}{6\lambda_1+4\lambda_2}x_2+\lambda_0x_3\\
	p_2 &= \lambda_1x_1+\lambda_2x_2-\frac{2}{3}\lambda_2x_3\\
	p_3 &= \lambda_5x_1x_2-\frac{2\lambda_5}{3}x_1x_3+\lambda_3x_1+\lambda_4x_2+\frac{6+(-6\lambda_1- 4\lambda_2)\lambda_4}{9\lambda_1+6\lambda_2}x_3
	\end{align*}
	where: $e_1=27\lambda_1^2+(18\lambda_0+36\lambda_2-18\lambda_3 -12\lambda_4)\lambda_1+12+12\lambda_2^2+(12\lambda_0-12\lambda_3-8\lambda_4)\lambda_2$
	
	\medskip
	
	\noindent
	\texttt{----- Case 4 -----}
	
	\smallskip
	
	\noindent
	\texttt{The skew Dixmier polynomials are:}
	\begin{align*}
	p_1 &= \left(\frac{3\lambda_3}{2}-\lambda_6\right)x_1x_2+\left(\frac{3\lambda_3}{2}-\lambda_6 \right)x_2x_3+\lambda_0x_1+\frac{e_1}{4\lambda_6(\lambda_0-\lambda_1)}x_2+\lambda_1x_3\\
	p_2 &= \lambda_3x_1x_2+\lambda_3x_2x_3+\frac{e_2}{3\lambda_6(\lambda_0-\lambda_1)}x_1
	+\lambda_2x_2+\frac{e_3}{9(\lambda_0-\lambda_1)\lambda_6 (\lambda_3-\frac{2\lambda_6}{3})}x_3\\
	p_3 &= \lambda_6x_1x_2+\lambda_6x_2x_3+\lambda_4x_1+\lambda_5x_2+\frac{(-2\lambda_0+2\lambda_1- 2\lambda_4)\lambda_6+3\lambda_4\lambda_3}{3\lambda_3-2\lambda_6}x_3
	\end{align*}
	where:
	\begin{align*}
	e_1 &= \left(-6\lambda_0^2+(6\lambda_1-6\lambda_4-4\lambda_5)\lambda_0+6+(6\lambda_4+ 4\lambda_5)\lambda_1\right)\lambda_6\\
	&\quad+9\lambda_3\left(\left(\lambda_4+\frac{2\lambda_5}{3}\right)\lambda_0-1+ \left(-\lambda_4-\frac{2\lambda_5}{3}\right)\lambda_1\right),\\
	e_2 &= \left((3\lambda_4+2\lambda_5)\lambda_0-3+(-3\lambda_4-2\lambda_5)\lambda_1 \right)\lambda_3-2\lambda_6(\lambda_0\lambda_2-\lambda_1\lambda_2-1),\\
	e_3 &= \left((9\lambda_4+6\lambda_5)\lambda_0-9+(-9\lambda_4- 6\lambda_5)\lambda_1\right)\lambda_3^2-6\Bigl(\lambda_0^2+\left(-2\lambda_1+\lambda_2+\lambda_4+ \frac{2\lambda_5}{3}\right)\lambda_0\\
	&\quad-2+\lambda_1^2+\left(-\lambda_2-\lambda_4-\frac{2\lambda_5}{3}\right)\lambda_1\Bigr) \lambda_6\lambda_3+4\lambda_6^2(\lambda_0\lambda_2-\lambda_1\lambda_2-1).
	\end{align*}
	
	\medskip
	
	\noindent
	\texttt{----- Case 5 -----}
	
	\smallskip
	
	\noindent
	\texttt{The skew Dixmier polynomials are:}
	\begin{align*}
	p_1 &= \frac{3\lambda_3}{2}x_1x_2+\frac{3\lambda_3}{2}x_2x_3+
	\frac{3+(3\lambda_4+2\lambda_5)\lambda_0}{3\lambda_4+2\lambda_5}x_1+
	\frac{e_1}{12\lambda_4+ 8\lambda_5}x_2+\lambda_0x_3\\
	p_2 &= \lambda_3x_1x_2+\lambda_3x_2x_3+\lambda_1x_1+\lambda_2x_2+\frac{-2+(3\lambda_4+ 2\lambda_5)\lambda_1}{3\lambda_4+2\lambda_5}x_3\\
	p_3 &= \lambda_4x_1+\lambda_5x_2+\lambda_4x_3
	\end{align*}
	where: $e_1=-18\lambda_4^2+(-18\lambda_0+27\lambda_1+18\lambda_2- 24\lambda_5)\lambda_4-18-8\lambda_5^2+(-12\lambda_0+18\lambda_1+12\lambda_2)\lambda_5$
	
	\medskip
	
	\noindent
	\texttt{----- Case 6 -----}
	
	\smallskip
	
	\noindent
	\texttt{The skew Dixmier polynomials are:}
	\begin{align*}
	p_1 &= -\lambda_5x_1x_2-\lambda_5x_2x_3+\frac{2+(3\lambda_1+2\lambda_2)\lambda_0}{3\lambda_1+2\lambda_2}x_1 +\frac{e_1}{12\lambda_1+8\lambda_2}x_2+\lambda_0x_3\\
	p_2 &= \lambda_1x_1+\lambda_2x_2+\lambda_1x_3\\
	p_3 &= \lambda_5x_1x_2+\lambda_5x_2x_3+\lambda_3x_1+\lambda_4x_2+\frac{2+\lambda_3(3\lambda_1+ 2\lambda_2)}{3\lambda_1+2\lambda_2}x_3
	\end{align*}
	where: $e_1=27\lambda_1^2+(-18\lambda_0+36\lambda_2-18\lambda_3-12\lambda_4)\lambda_1-12+12\lambda_2^2+ (-12\lambda_0-12\lambda_3-8\lambda_4)\lambda_2.$
	
	\medskip
	
	\noindent
	\texttt{----- Case 7 -----}
	
	\smallskip
	
	\noindent
	\texttt{The skew Dixmier polynomials are:}
	\begin{align*}
	p_1 &= \left(\frac{3\lambda_3}{2}-\lambda_6\right)x_1x_3+\left(-\frac{9\lambda_3}{4}+ \frac{3\lambda_6}{2}\right)x_2x_3+\lambda_0x_1+\lambda_1x_2+\frac{e_1}{(18\lambda_0+12\lambda_1)
		\lambda_6}x_3\\
	p_2 &= \lambda_3x_1x_3-\frac{3\lambda_3}{2}x_2x_3+\frac{e_2}{3\lambda_6(3\lambda_0+2\lambda_1)(3\lambda_3- 2\lambda_6)}x_1+\frac{e_3}{(6\lambda_0+ 4\lambda_1)\lambda_6}x_2+\lambda_2x_3\\
	p_3 &= \lambda_6x_1x_3-\frac{3\lambda_6}{2}x_2x_3+\frac{(6\lambda_0+4\lambda_1+4\lambda_4)\lambda_6-
		6\lambda_4\lambda_3}{9\lambda_3- 6\lambda_6}x_1+\lambda_4x_2+\lambda_5x_3
	\end{align*}
	where:
	\begin{align*}
	e_1 &= \left(-8\lambda_1^2+(-12\lambda_0-8\lambda_4- 12\lambda_5)\lambda_1+18+(-12\lambda_4-18\lambda_5)\lambda_0\right)\lambda_6\\
	&\quad+18\lambda_3\left(\left(\frac{2\lambda_4}{3}+\lambda_5\right)\lambda_1-\frac{3}{2}+
	\left(\lambda_4+\frac{3\lambda_5}{2}\right) \lambda_0\right)\\
	e_2 &= \left((-18\lambda_4-27\lambda_5)\lambda_0+27+(-12\lambda_4- 18\lambda_5)\lambda_1\right)\lambda_3^2+18\Biggl(\lambda_0^2+\left(\frac{4\lambda_1}{3}+ \frac{3\lambda_2}{2}+\frac{2\lambda_4}{3}+\lambda_5\right)\lambda_0\\
	&\quad -2+\frac{4\lambda_1^2}{9}+\left(\lambda_2+\frac{4\lambda_4}{9}+\frac{2\lambda_5}{3}\right)\lambda_1\Biggr)
	\lambda_6\lambda_3-18\left(\lambda_0\lambda_2+\frac{2}{3}\lambda_1\lambda_2-\frac{2}{3}\right)\lambda_6^2\\
	e_3 &= \left((6\lambda_4+9\lambda_5)\lambda_0-9+(4\lambda_4+ 6\lambda_5)\lambda_1\right)\lambda_3+(-9\lambda_0\lambda_2-6\lambda_1\lambda_2+6)\lambda_6
	\end{align*}
	
	\medskip
	
	\noindent
	\texttt{----- Case 8 -----}
	
	\smallskip
	
	\noindent
	\texttt{The skew Dixmier polynomials are:}
	\begin{align*}
	p_1 &= \frac{3\lambda_3x_1x_3}{2}- \frac{9\lambda_3}{4}x_2x_3+
	\frac{9+(-4\lambda_4-6\lambda_5)\lambda_0}{6\lambda_4+9\lambda_5}x_1+
	\lambda_0x_2+\left(-\frac{2\lambda_4}{3}-\frac{2\lambda_0}{3}+\lambda_1+ \frac{3\lambda_2}{2}-\lambda_5\right)x_3\\
	p_2 &= \lambda_3x_1x_3- \frac{3\lambda_3}{2}x_2x_3
	+\frac{6+(-4\lambda_4-6\lambda_5)\lambda_1}{6\lambda_4+9\lambda_5}x_1+\lambda_1x_2+\lambda_2x_3\\
	p_3 &= -\frac{2}{3}\lambda_4x_1+\lambda_4x_2+\lambda_5x_3
	\end{align*}
	
	\newpage
	
	\noindent
	\texttt{----- Case 9 -----}
	
	\smallskip
	
	\noindent
	\texttt{The skew Dixmier polynomials are:}
	\begin{align*}
	p_1 &= -\lambda_5x_1x_3+ \frac{3\lambda_5}{2}x_2x_3
	+\frac{6+(-4\lambda_1-6\lambda_2)\lambda_0}{6\lambda_1+9\lambda_2}x_1+\lambda_0x_2+
	\left(-\frac{2\lambda_3}{3}-\frac{2\lambda_0}{3}+\lambda_1+ \frac{3\lambda_2}{2}-\lambda_4\right)x_3\\
	p_2 &= -\frac{2}{3}\lambda_1x_1+\lambda_1x_2+\lambda_2x_3\\
	p_3 &= \lambda_5x_1x_3- \frac{3\lambda_5}{2}x_2x_3
	+\frac{-6+(-4\lambda_1-6\lambda_2)\lambda_3}{6\lambda_1+9\lambda_2}x_1+\lambda_3x_2+\lambda_4x_3
	\end{align*}
	
	\medskip
	
	\noindent
	\texttt{----- Case 10 -----}
	
	\smallskip
	
	\noindent
	\texttt{The skew Dixmier polynomials are:}
	\begin{align*}
	p_1 &= \lambda_0x_1+\frac{-3+(-3\lambda_4+3\lambda_5)\lambda_0}{2\lambda_4-2\lambda_5}x_2+\lambda_0x_3\\
	p_2 &= \lambda_3x_1x_2+\lambda_3x_2x_3+
	\left(\lambda_2+\frac{2\lambda_4}{3}-\frac{2\lambda_5}{3}\right)x_1+ \lambda_1x_2+\lambda_2x_3\\
	p_3 &= \frac{3\lambda_3}{2}x_1x_2+\frac{3\lambda_3}{2}x_2x_3+\lambda_4x_1+\frac{6\lambda_5^2+(-6\lambda_1- 9\lambda_2-6\lambda_4)\lambda_5+6+(6\lambda_1+9\lambda_2)\lambda_4}{4\lambda_4-4\lambda_5}x_2+\lambda_5 x_3
	\end{align*}
	
	\medskip
	
	\noindent
	\texttt{----- Case 11 -----}
	
	\smallskip
	
	\noindent
	\texttt{The skew Dixmier polynomials are:}
	\begin{align*}
	p_1 &= \frac{3+(3\lambda_4+2\lambda_5)\lambda_0}{3\lambda_4+2\lambda_5}x_1+\frac{-9+(-9\lambda_4- 6\lambda_5)\lambda_0}{6\lambda_4+4\lambda_5}x_2+\lambda_0x_3\\
	p_2 &= -\frac{2\lambda_3}{3}x_1x_3+\lambda_3x_2x_3+\left(-\frac{2\lambda_1}{3}+\frac{2\lambda_4}{3}+
	\frac{4\lambda_5}{9} \right)x_1+\lambda_1x_2+\lambda_2x_3\\
	p_3 &= -\lambda_3x_1x_3+\frac{3\lambda_3}{2}x_2x_3+\lambda_4x_1+\lambda_5x_2+\frac{-8\lambda_5^2+ (12\lambda_1+18\lambda_2-12\lambda_4)\lambda_5+18+(18\lambda_1+27\lambda_2)\lambda_4}{18\lambda_4+ 12\lambda_5}x_3
	\end{align*}
	
	\medskip
	
	\noindent
	\texttt{----- Case 12 -----}
	
	\smallskip
	
	\noindent
	\texttt{The skew Dixmier polynomials are:}
	\begin{align*}
	p_1 &= \frac{2+(2\lambda_1+3\lambda_2)\lambda_0}{2\lambda_1+3\lambda_2}x_1-\frac{3\lambda_0}{2}x_2+ \lambda_0x_3\\
	p_2 &= \frac{2\lambda_4}{3}x_1x_2-\frac{4\lambda_4}{9}x_1x_3+\frac{9\lambda_2^2+(6\lambda_1+6\lambda_3- 6\lambda_5)\lambda_2+4+(4\lambda_3-4\lambda_5)\lambda_1}{6\lambda_1+9\lambda_2}x_1+\lambda_1x_2+\lambda_2 x_3\\
	p_3 &= \lambda_4x_1x_2-\frac{2\lambda_4}{3}x_1x_3+\lambda_3x_1+\left(\frac{3\lambda_1}{2}+ \frac{9\lambda_2}{4}-\frac{3\lambda_5}{2}\right)x_2+\lambda_5x_3
	\end{align*}

	\medskip
	
	\noindent
	\texttt{----- Case 13 -----}
	
	\smallskip
	
	\noindent
	\texttt{The skew Dixmier polynomials are:}
	\begin{align*}
	p_1 &= \frac{e_1}{4\lambda_4+6\lambda_5}x_1+\left( -\frac{3\lambda_0}{2}+\frac{3\lambda_1}{2}+\frac{9\lambda_2}{4}-\lambda_4-\frac{3\lambda_5}{2}\right)x_2+ \lambda_0x_3\\
	p_2 &= \frac{(2\lambda_3-2\lambda_5)\lambda_1+2+(3\lambda_3+2\lambda_4)\lambda_2}{2\lambda_4+3\lambda_5}x_1 +\lambda_1x_2+\lambda_2x_3\\
	p_3 &= \lambda_3x_1+\lambda_4x_2+\lambda_5x_3
	\end{align*}
	where: $e_1=6\lambda_5^2+(6\lambda_0-6\lambda_1-9\lambda_2-6\lambda_3+4\lambda_4)\lambda_5+(6\lambda_1+ 9\lambda_2-4\lambda_4)\lambda_3+4\lambda_4\lambda_0+6.$
	
	\medskip
	
	\noindent
	\texttt{----- Case 14 -----}
	
	\smallskip
	
	\noindent
	\texttt{The skew Dixmier polynomials are:}
	\begin{align*}
	p_1 &= \left(\lambda_0+\frac{3\lambda_1}{2}-\frac{3\lambda_2}{2}-\lambda_3+\lambda_4\right)x_1+\frac{-3+ (-3\lambda_3+3\lambda_4)\lambda_0}{2\lambda_3-2\lambda_4}x_2+\lambda_0x_3\\
	p_2 &= \lambda_1x_1+\frac{-2+(-3\lambda_3+3\lambda_4)\lambda_2}{2\lambda_3-2\lambda_4}x_2+\lambda_2x_3\\
	p_3 &= \lambda_3x_1-\frac{3}{2}\lambda_4x_2+\lambda_4x_3
	\end{align*}
\end{example}

\begin{example}
	Consider the family of skew Dixmier polynomials $p_1$, $p_2$, and $p_3$ obtained in \texttt{Case 14} of the preceding example. Our goal is to construct a skew Dixmier automorphism by assigning $p_1$, $p_2$, and $p_3$ to the variables $x_1$, $x_2$, and $x_3$, respectively. To accomplish this, we use the function \texttt{newSkewAutomorphism} from the \texttt{DixmierProblem} package with the following syntax:
	\begin{center}
		\verb|newAuto := newSkewAutomorphism([p_1, p_2, p_3], multiCommutator([p_1, p_2, p_3]))|
	\end{center}
	
	Here, the function \texttt{multiCommutator} from the \texttt{DixmierAutomorphism} package generates a list of commutators $[[p_2, p_1], [p_3, p_1], [p_3, p_2]].$
	
	\medskip
	When executing this statement, Maple returns:
	\medskip
	
	\noindent\verb|Dixmier automorphism successfully defined|\\
	\verb|newAuto| $:= \alpha(x_1,x_2,x_3)$
	
	Next, we use the function \texttt{skewInverseAutomorphism} from the \texttt{DixmierProblem} package to compute the inverse automorphism of the initially generated skew Dixmier automorphism. The syntax is:
	\begin{center}
		\verb|invAuto := skewInverseAutomorphism(newAuto, expsMaxi=[2, 2, 2]);|
	\end{center}
	Here, the option \verb|expsMaxi=[2, 2, 2]| specifies that the maximum degrees of the variables $x_1$, $x_2$, and $x_3$ in the polynomials of the generated automorphism should be equal to $2$.
	
	Upon executing the \texttt{skewInverseAutomorphism} statement, followed by the statement\linebreak
	\verb|invAuto:-getPolys();|,
	Maple returns a list \verb|[P1, P2, P3]|, where \verb|P1|, \verb|P2|, and \verb|P3| are the respective polynomials of the skew inverse automorphism. In this case, the output is given by:
	\begin{align*}
	\texttt{P1} &= \frac{-2\lambda_4x_1}{(\lambda_3-\lambda_4)(2\lambda_0-3\lambda_2+2\lambda_4)}+\frac{3\lambda_4 x_2}{(\lambda_3-\lambda_4)(2\lambda_0-3\lambda_2+2\lambda_4)}+\frac{2\lambda_0-3\lambda_2}{(\lambda_3- \lambda_4)(2\lambda_0-3\lambda_2+2\lambda_4)}x_3,\\
	\texttt{P2} &= \frac{-2\lambda_1\lambda_4+2\lambda_2\lambda_3}{2\lambda_0-3\lambda_2+2\lambda_4}x_1+ \frac{2\lambda_4^2+(2\lambda_0+3\lambda_1-3\lambda_2-2\lambda_3)\lambda_4-2\lambda_0\lambda_3}{2\lambda_0 -3\lambda_2+2\lambda_4}x_2+\frac{e_1}{2\lambda_0-3\lambda_2+2\lambda_4}x_3,\\
	\texttt{P3} &= \frac{e_2}{(\lambda_3-\lambda_4)(2\lambda_0-3\lambda_2+2\lambda_4)}x_1+\frac{e_3}{4(\lambda_0-
		\frac{3\lambda_2}{2}+\lambda_4)(\lambda_3-\lambda_4)}x_2+
	\frac{e_4}{4(\lambda_0-\frac{3\lambda_2}{2}+\lambda_4)(\lambda_3-\lambda_4)}x_3.
	\end{align*}	
	where:
	\begin{align*}
	e_1 &= 3\lambda_2^2+(-2\lambda_0-3\lambda_1+2\lambda_3-2\lambda_4)\lambda_2+ 2\lambda_1\lambda_0,\\
	e_2 &= 3\lambda_2\lambda_3^2+\left(2+(-3\lambda_1-3\lambda_2)\lambda_4\right)\lambda_3+3\lambda_1 \lambda_4^2,\\
	e_3 &= -6\lambda_4^3+(-6\lambda_0 -9\lambda_1+9\lambda_2+12\lambda_3)\lambda_4^2+12\left(\lambda_0+\frac{3\lambda_1}{4}- \frac{3\lambda_2}{4}-\frac{\lambda_3}{2}\right)\lambda_3\lambda_4-6\lambda_0\lambda_3^2- 6\lambda_3,\\
	e_4 &= (9\lambda_3- 9\lambda_4)\lambda_2^2+\left(6\lambda_4^2+(6\lambda_0+9\lambda_1-12\lambda_3)\lambda_4+6+6\lambda_3^2+ (-6\lambda_0-9\lambda_1)\lambda_3\right)\lambda_2\\
	& \quad+(-6\lambda_0\lambda_1-4)\lambda_4+6\lambda_1\lambda_0 \lambda_3-4\lambda_0+4\lambda_3.
	\end{align*}
	Next, we execute the following Maple statements to verify that \verb"invAuto" correctly corresponds to the inverse automorphism of \verb"newAuto":
	
	\smallskip
	
	\noindent
	\verb"auto0 := composeSkewAutomorphism(newAuto, invAuto);"\\
	\verb"auto1 := composeSkewAutomorphism(invAuto, newAuto);"\\
	\verb"auto0:-Apply"$(x_1),$ \verb"auto0:-Apply"$(x_2),$ \verb"auto0:-Apply"$(x_3),$\\
	\verb"auto1:-Apply"$(x_1),$ \verb"auto1:-Apply"$(x_2),$ \verb"auto1:-Apply"$(x_3);$
	
	\smallskip
	
	After executing these statements, Maple confirms the correct definition of the Dixmier automorphisms:
	
	\noindent
	\verb"Dixmier automorphism successfully defined"\\
	\verb"auto0" $:=\alpha(x_1,x_2,x_3)$
	
	\noindent
	\verb"Dixmier automorphism successfully defined"\\
	\verb"auto1" $:=\alpha(x_1,x_2,x_3)$
	
	\smallskip
	
	The final output:
	
	\noindent
	$x_1, x_2, x_3, x_1, x_2, x_3,$
	
	indicates that the automorphisms \texttt{auto0} and \texttt{auto1} are indeed the identity automorphisms, verifying that \texttt{invAuto} is the correct inverse of \texttt{newAuto}.
\end{example}

In the following example, we consider the $K$-algebra $\mathcal{CSD}_3(K)$ with the parameters $d_{12}$, $d_{13}$, and $d_{23}$ treated symbolically. This symbolic formulation underscores the flexibility and strength of the functions built within the \texttt{DixmierProblem} package in Maple, permitting a generic exploration of algebraic properties. We then employ the \texttt{skewDixmierAutomorphism} command to generate a random automorphism of the algebra. In this instance, the skew Dixmier polynomials that define the automorphism are produced using the option \texttt{parametric=false}, the default setting, which ensures a standardized yet powerful method for constructing these functions.

\begin{example}
	Consider the skew Dixmier automorphism defined as follows:
	\begin{center}
		\verb|skewAuto1:=skewDixmierAutomorphism|$\left(1, 1, 1, \texttt{dij}=[d_{1,2},d_{1,3},d_{2,3}]\right)$
	\end{center}
	By retrieving the polynomials associated with this automorphism using the command:
	\begin{center}
		\verb|skewAuto1:-getPolys();|
	\end{center}
	we obtain the following set of polynomials:
	\begin{align*}
	P_1 &= \frac{e_1}{d_{1,2}}+\frac{-2d_{1,3}^2+7d_{1,3}d_{2,3}+3 d_{2,3}^2}{d_{1,2}}x_1+\frac{d_{1,2}^2-2d_{1,2}d_{1,3}+2d_{1,3}^2}{d_{1,2}}x_2-3x_3,\\
	P_2 &= \frac{-d_{1,2}-2d_{1,3}-4d_{2,3}}{-d_{1,2}+3d_{1,3}}x_1+x_2+2x_3,\\
	P_3 &= -x_1-x_2+3x_3.
	\end{align*}
	where: $e_1=3d_{1,3}-d_{1,2}+(2d_{1,3}-3d_{2,3})d_{1,2}.$
	
	Next, we compute the inverse skew Dixmier automorphism:
	\begin{center}
		\verb"skewAuto2 := skewInverseAutomorphism(skewAuto1, expsMaxi=[2, 2, 2]);"
	\end{center}
	
	The resulting polynomials for the inverse automorphism are:
	\begin{align*}	Q_1&=\frac{5d_{1,2}}{3d_{1,2}-2d_{1,3}-3d_{2,3}}x_1+\frac{-3d_{1,2}d_{2,3}+(6d_{1,3}+3d_{2,3})d_{1,2}-6 d_{1,3}}{-3d_{1,2}}x_2-\frac{2d_{1,2}^2}{3d_{1,2}-2d_{1,3}-3d_{2,3}}x_3,\\
	Q_2&=\frac{d_{1,2}(5d_{1,2}+12d_{2,3})}{(3d_{1,2}-2d_{1,3}-3d_{2,3})(d_{1,2}-3d_{1,3})}x_1+\frac{-2d_{1,2}^3 +(4d_{1,3}-9d_{2,3})d_{1,2}^2}{3d_{1,2}-2d_{1,3}-3d_{2,3}}x_2,\\
	Q_3&=\frac{-5d_{1,3}-4d_{2,3}d_{1,2}}{(d_{1,2}-3d_{1,3})(3d_{1,2}-2d_{1,3}-3d_{2,3})}x_1+\frac{3d_{1,2} d_{2,3}(d_{1,2}-d_{2,3})}{3d_{1,2}-2d_{1,3}-3d_{2,3}}x_2x_3.
	\end{align*}
	To verify that \verb"skewAuto2" is indeed the inverse of \verb"skewAuto1", we compose the two automorphisms using the \texttt{ComposeSkewAutomorphism} function, which is specifically designed to compose skew Dixmier automorphisms. In particular, we perform the following operations:
	\begin{center}
		\verb"skewAuto3 := composeSkewAutomorphism(skewAuto1, skewAuto2);"\\[1ex]
		\verb"skewAuto4 := composeSkewAutomorphism(skewAuto2, skewAuto1);"
	\end{center}
	Subsequently, by executing
	\begin{center}
		\verb"skewAuto3:-getPolys(),"\ \verb"skewAuto4:-getPolys();"
	\end{center}
	Maple returns:
	\begin{center}
		$[x_1, x_2, x_3],$\ $[x_1, x_2, x_3]$
	\end{center}
	This output confirms that both compositions are equivalent to the identity automorphism.
\end{example}

In next example, we construct a skew Dixmier automorphism for the $K$-algebra $\mathcal{CSD}_3(K)$ using symbolic parameters. The use of \texttt{parametric=true} allows us to define the parameters symbolically, which is particularly useful for obtaining generic algebraic information via the powerful functions provided by the \texttt{DixmierProblem} package in Maple.
\begin{example}
	Consider the $k$-algebra $\mathcal{CSD}_3(K)$, where the elements $d_{12}$, $d_{13}$, and $d_{23}$ are represented symbolically. The utilization of symbolic parameters enables a comprehensive analysis of the algebraic properties inherent to this structure, illustrating the versatility and robustness of the library.
	
	\bigskip
	
	\noindent
	\emph{Step 1: Generation of the automorphism and its defining polynomials}
	
	First, the automorphism is generated by executing:
	\begin{center}
		$\texttt{skewAuto1 := skewDixmierAutomorphism$\bigl($1, 1, 1, parametric = true,
			dij} = [d_{1,2}, d_{1,3}, d_{2,3}]\texttt{$\bigr)$;}$
	\end{center}
	Maple returns:
	
	\medskip
	
	\noindent
	\verb"Dixmier automorphism successfully defined"\\
	\verb"skewAuto1" $:= \alpha(x_1, x_2, x_3)$
	
	\medskip
	
	Next, the defining polynomials are retrieved by:
	\begin{verbatim}
	skewAuto1:-getPolys();
	\end{verbatim}
	yielding the list $[P_1,\,P_2,\,P_3]$ with
	\[
	\begin{aligned}
	P_1 &= -\frac{d_{2,3}\lambda_0}{d_{1,3}}x_1+\lambda_0x_2+ \frac{-d_{1,2}\left(d_{1,3}\lambda_4+d_{2,3}\lambda_5\right)\lambda_0-d_{1,3}^2}{d_{1,3}
		\left(d_{1,3}\lambda_4+d_{2,3}\lambda_5\right)}x_3,\\
	P_2 &= -\frac{d_{2,3}\lambda_3}{d_{1,3}}x_1x_3+\lambda_3x_2x_3+ \frac{d_{1,2}d_{1,3}\lambda_4+d_{1,2}d_{2,3}\lambda_5-d_{1,3}d_{2,3}\lambda_1}{d_{1,3}^2}
	x_1+\lambda_1x_2+\lambda_2x_3,\\
	P_3 &= -\frac{d_{2,3}\lambda_3}{d_{1,2}}x_1x_3+\frac{d_{1,3}\lambda_3}{d_{1,2}}x_2x_3+\lambda_4x_1+\lambda_5x_2+ \frac{d_{1,3}^3\lambda_2\lambda_4+\left(d_{1,2}\lambda_1\lambda_4+d_{2,3}\lambda_2\lambda_5+
		d_{2,3}\right)d_{1,3}^2}{d_{1,2}d_{1,3}\left(d_{1,3}\lambda_4+d_{2,3}\lambda_5\right)}x_3.
	\end{aligned}
	\]
	We also verify the symbolic parameters by executing:
	\begin{verbatim}
	skewAuto1:-Properties:-multicommutator;
	\end{verbatim}
	which returns
	\[
	[d_{1,2},\, d_{1,3},\, d_{2,3}],
	\]
	confirming that the relations among $p_1$, $p_2$, and $p_3$ hold as expected.
	
	\bigskip
	
	\noindent
	\emph{Step 2: Inverse automorphism and its defining polynomials}
	
	Next, the inverse automorphism is computed with:
	\begin{verbatim}
	skewAuto2 := skewInverseAutomorphism(skewAuto1, expsMaxi=[2, 2, 2]);
	\end{verbatim}
	Maple returns:
	\medskip
	
	\noindent
	\verb"Dixmier automorphism successfully defined"\\
	\verb"skewAuto2" $:= \alpha(x_1, x_2, x_3)$
	
	\medskip
	
	Retrieving its defining polynomials by:
	\begin{verbatim}
	skewAuto2:-getPolys();
	\end{verbatim}
	yields the list $[Q_1,\, Q_2,\, Q_3],$ where:	
	\begin{align*}
	Q_1 &=
	-\frac{e_1\left(d_{1,2}e_1e_2-d_{1,3}^2d_{2,3}\right)\lambda_3}{d_{1,2}d_{1,3}e_3^2}x_1^2- \frac{2\left(d_{1,3}^2\left(e_1-\tfrac{e_3}{2}\right)+ d_{1,2}e_1e_2\lambda_{0}\right)\lambda_3}{d_{1,2}e_3^2}x_1x_2\\
	& \quad +\frac{2\left(d_{1,3}^2\left(e_1-\tfrac{e_3}{2}\right)+ d_{1,2}e_1e_2\lambda_{0}\right)\lambda_3}{d_{1,3}e_3^2}x_1x_3-\frac{d_{1,3}\left(d_{1,2}e_2\lambda_{0}+
		d_{1,3}^2\right)\lambda_0\lambda_3}{d_{1,2}e_3^2}x_2^2 \\
	& \quad +\frac{2\left(d_{1,2}e_2\lambda_{0}+d_{1,3}^2\right)\lambda_0\lambda_3}{e_3^2}x_2x_3
	-\frac{d_{1,2}\left(d_{1,2}e_2\lambda_{0}+d_{1,3}^2\right)\lambda_0\lambda_3}{d_{1,3}e_3^2}x_3^2\\
	& \quad +\frac{\left(d_{1,2}\lambda_1+d_{1,3}\lambda_2\right)e_1e_2-d_{1,3}^2d_{2,3}\lambda_1}{d_{1,2} e_2e_3}x_1+\frac{d_{1,3}\left(d_{1,3}\lambda_1+e_2\lambda_{0}\lambda_2+e_3\right)+ d_{1,2}e_2\lambda_{0}\lambda_1}{d_{1,2}e_2e_3}x_2\\
	& \quad -\frac{\left(d_{1,2}\lambda_1+d_{1,3}\lambda_2\right)e_2\lambda_0+d_{1,3}^2\lambda_1}{e_2e_3} x_3-\frac{\left(d_{1,2}e_2\lambda_{0}+d_{1,3}^2\right)\left(e_1-e_3\right)\lambda_3}{e_3^2},
	\end{align*}
	\begin{align*}
	Q_2 &=
	-\frac{e_1d_{2,3}\lambda_3\left(d_{1,2}e_1e_2-d_{1,3}^{2}d_{2,3}\right)}{d_{1,2}d_{1,3}^{2}e_3^{2}} x_1^{2}+\frac{2\lambda_3\left[-d_{2,3}d_{1,3}^{2}\left(e_1-\tfrac{e_3}{2}\right)+ d_{1,2}e_1e_2(e_1-e_3)\right]}{d_{1,2}d_{1,3}e_3^{2}}x_1x_2\\
	& \quad-\frac{2\left(-d_{2,3}d_{1,3}^{2}\left(e_1-\tfrac{e_3}{2}\right)+ d_{1,2}e_1e_2(e_1-e_3)\right)\lambda_3}{d_{1,3}^{2}e_3^{2}}x_1x_3
	+\frac{\left(d_{1,2}e_2\lambda_0+d_{1,3}^{2}\right)(e_1-e_3)\lambda_3}{d_{1,2}e_3^{2}}x_2^{2}\\
	& \quad -\frac{2\left(d_{1,2}e_2\lambda_0+d_{1,3}^{2}\right)(e_1-e_3)\lambda_3}{d_{1,3}e_3^{2}}x_2x_3
	+\frac{d_{1,2}\left(d_{1,2}e_2\lambda_0+d_{1,3}^{2}\right)(e_1-e_3)\lambda_3}{d_{1,3}^{2}e_3^{2}} x_3^2\\
	& \quad-\frac{d_{1,3}^{3}d_{2,3}^{2}\lambda_1-d_{2,3}d_{1,3}^2e_2\left(e_1\lambda_2+d_{1,2}\right)+ d_{1,2}d_{1,3}d_{2,3}e_1e_2\lambda_1-d_{1,2}^{2}e_1e_2^{2}}{d_{1,2}d_{1,3}^{2}e_2e_3}x_1 \\
	&\quad+\frac{d_{1,3}^{3}d_{2,3}\lambda_1+d_{1,3}^{2}\left[e_2(-e_1\lambda_2+e_3\lambda_2-d_{1,2})+ d_{2,3}e_3\right]-d_{1,2}d_{1,3}e_2(e_1-e_3)\lambda_1- d_{1,2}^{2}e_2^{2}\lambda_0}{d_{1,2}d_{1,3}e_2e_3}x_2\\	& \quad +\frac{-d_{1,3}^{3}d_{2,3}\lambda_1+d_{1,3}^{2}e_2\left(e_1\lambda_2-e_3\lambda_2+d_{1,2}\right)+ d_{1,2}d_{1,3}e_2(e_1-e_3)\lambda_1+d_{1,2}^{2}e_2^2\lambda_0}{d_{1,3}^{2}e_2e_3}x_3\\
	& \quad+\frac{\left[d_{1,2}e_2(e_1-e_3)-d_{1,3}^{2}d_{2,3}\right](e_1-e_3)\lambda_3}{d_{1,3}e_3^{2}},
	\end{align*}
	\[
	Q_3=
	-\frac{e_1e_2}{d_{1,3}e_3}x_1-\frac{e_2\lambda_0}{e_3}x_2+\frac{d_{1,2}e_2\lambda_0}{d_{1,3}e_3}x_3.
	\]
	where: $e_1=d_{1,2}\lambda_5-d_{1,3}\lambda_1,$ $e_2=d_{1,3}\lambda_4+d_{2,3}\lambda_5,$
	$e_3=e_1+d_{2,3}\lambda_0=d_{1,2}\lambda_5-d_{1,3}\lambda_1+d_{2,3}\lambda_0$

	\emph{Step 3: Verification by composition}
	
	To confirm that \texttt{skewAuto2} is indeed the inverse of \texttt{skewAuto1}, we compose the automorphisms:
	\begin{verbatim}
	skewAuto3 := composeSkewAutomorphism(skewAuto1, skewAuto2);
	skewAuto4 := composeSkewAutomorphism(skewAuto2, skewAuto1);
	\end{verbatim}
	Subsequently, by executing
	\begin{center}
		\verb"skewAuto3:-getPolys(),"\ \verb"skewAuto4:-getPolys();"
	\end{center}
	Maple returns:
	\begin{center}
		$[x_1, x_2, x_3],$\ $[x_1, x_2, x_3]$
	\end{center}
	That is, both compositions yield the identity confirming the correctness of the inverse automorphism.

\end{example}

Despite of the previous particular interesting examples, the following easy counterexample, computed with the \texttt{skewDixmierPolynomials} \textbf{function}, shows that the conjecture for $CSD_3(K)$ is not true.

\begin{example}\label{example5.29}
The polynomials  $p_1:=x_1$, $p_2:=x_2$, $p_3:=-\frac{d_{23}}{d_{12}}x_1+\frac{d_{13}}{d_{12}}x_2$ in $CSD_3(K)$ are skew Dixmier, but clearly the corresponding  endomporhism is not surjective.
\end{example}
	
Induced by this example, next we provide a criterion ensuring that a certain class of algebras $\mathcal{CSD}_n(K)$, for $n$ odd, do not satisfy the Dixmier property. Our approach is based on examining a system of conditions imposed on the generators by the skew Dixmier property and on analyzing the associated coefficient matrix. In particular, if the matrix, defined in terms of the structure constants of the algebra, has a nonzero determinant, then the specific endomorphism of the algebra fails to be surjective. This result not only illustrates a sharp obstruction to the Dixmier property in the setting of general skew PBW extensions, but also paves the way for further investigations into the automorphism groups of such algebras.

\begin{theorem}\label{theorem5.30}For $n$ odd, consider in $\mathcal{CSD}_n(K)$ the following polynomials:
	\[
	p_1=x_1, p_2=x_2,\dots ,p_{n-1}=x_{n-1},p_n=\sum_{k=1}^{n-1} a_k x_k, \ with \ a_k\in K.
	\]
	 Let $M=[M_{ik}]$ be the skew-symmetric matrix of size $(n-1)\times (n-1)$ defined by
	\[
	M_{ik}:=\begin{cases}
	-d_{ki}, & \text{if } k<i,\\[1mm]
	0, & \text{if } k=i,\\[1mm]
	d_{ik}, & \text{if } k>i.
	\end{cases}\qquad i,k=1,\dots,n-1,
	\]
	If $p_1,\dots, p_n$ are skew Dixmier and $\det(M)\neq 0$, then the coefficients $a_1,\dots, a_{n-1}$ are uniquely determined in terms of constants $d_{ij}$, $1\leq i<j\leq n-1$, and the corresponding endomorphism is not surjective.
\end{theorem}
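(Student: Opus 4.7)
The plan is to translate the skew Dixmier conditions for $p_1,\dots,p_n$ into a linear system in the unknowns $a_1,\dots,a_{n-1}$, and then to exploit the PBW basis of $\mathcal{CSD}_n(K)$ to rule out surjectivity of the associated endomorphism.

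First I would observe that for $1\le i<j\le n-1$ the identity $p_jp_i-p_ip_j=x_jx_i-x_ix_j=d_{ij}$ holds automatically, so the skew Dixmier conditions reduce to the $n-1$ relations
\begin{equation*}
p_np_i-p_ip_n=d_{in}, \qquad 1\le i\le n-1.
\end{equation*}
Substituting $p_n=\sum_{k=1}^{n-1}a_kx_k$ and using the defining relations of $\mathcal{CSD}_n(K)$, a direct case analysis on $k<i$, $k=i$, $k>i$ shows that $x_kx_i-x_ix_k=M_{ik}$, so
\begin{equation*}
p_np_i-p_ip_n=\sum_{k=1}^{n-1}a_k\,M_{ik}.
\end{equation*}
Consequently the skew Dixmier conditions are equivalent to the linear system $M\mathbf{a}=\mathbf{d}$, where $\mathbf{a}=(a_1,\dots,a_{n-1})^T$ and $\mathbf{d}=(d_{1n},\dots,d_{n-1,n})^T$. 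The hypothesis $\det(M)\neq 0$, which is consistent with $M$ being skew-symmetric precisely because $n-1$ is even, then yields the unique solution $\mathbf{a}=M^{-1}\mathbf{d}$, giving the first assertion.

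For the non-surjectivity, I would invoke the universal property (Proposition \ref{122}) to obtain the endomorphism $\alpha$ of $\mathcal{CSD}_n(K)$ determined by $\alpha(x_i):=p_i$, $1\le i\le n$. Since $p_n$ is a $K$-linear combination of $x_1,\dots,x_{n-1}$, the image $\mathrm{Im}(\alpha)$ is contained in the subring $B$ of $\mathcal{CSD}_n(K)$ generated by $K$ together with $x_1,\dots,x_{n-1}$. The PBW basis $\{x_1^{\alpha_1}\cdots x_n^{\alpha_n}\}$ of Definition \ref{gpbwextension}(ii) expresses $B$ as the $K$-span of those standard monomials with $\alpha_n=0$; in particular $x_n\notin B$, and therefore $\alpha$ is not surjective.

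The argument is essentially routine once the matrix form of the commutators is identified; the only delicate point will be verifying carefully the sign conventions entering $x_kx_i-x_ix_k=M_{ik}$ for $k<i$ versus $k>i$, together with the appeal to the PBW basis to separate $B$ from $x_n$. I do not anticipate any serious obstacle beyond this bookkeeping.
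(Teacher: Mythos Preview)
Your proposal is correct and follows essentially the same route as the paper's proof: both reduce the skew Dixmier conditions to the linear system $M\mathbf{a}=\mathbf{d}$ by computing $[x_k,x_i]=M_{ik}$, invoke $\det(M)\neq 0$ for uniqueness, and then observe that the image of the resulting endomorphism lies in the subalgebra generated by $x_1,\dots,x_{n-1}$, which is proper. Your added remarks---the parity observation that $n-1$ even is what makes $\det(M)\neq 0$ possible for a skew-symmetric matrix, and the explicit appeal to the PBW basis to confirm $x_n\notin B$---are useful clarifications but do not change the argument.
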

\begin{proof}
	Since
	\[
	p_n=\sum_{k=1}^{n-1}a_kx_k,
	\]
	linearity gives
	\[
	[p_n,x_i] = \sum_{k=1}^{n-1}a_k[x_k,x_i].
	\]
	By the defining commutation relations
	\[
	[x_k,x_i]=
	\begin{cases}
	-d_{ki}, & \text{if } k < i,\\[1mm]
	0, & \text{if } k = i,\\[1mm]
	d_{ik}, & \text{if } k > i,
	\end{cases}
	\]
	and since $p_1,\dots, p_n$ are skew Dixmier, then for each $i=1,\dots,n-1$, we obtain the equation
	\[
	-\sum_{k=1}^{i-1}a_k\,d_{ki} + \sum_{k=i+1}^{n-1}a_k\,d_{ik} = d_{in}.
	\]
	Introduce the vector $\mathbf{a}=[a_1,\dots,a_{n-1}]^T$ and set
	\[
	\mathbf{b}=[d_{1n}, d_{2n}, \dots, d_{n-1n}]^T.
	\]
	The above system of equations can be written in matrix form as
	\[
	M\,\mathbf{a}=\mathbf{b}.
	\]
	Since $M$ is invertible, the linear system has a unique solution, i.e., the coefficients $a_1,\dots, a_{n-1}$ are uniquely determined in terms of constants $d_{ij}$, $1\leq i<j\leq n-1$: In fact, 
	$\mathbf{a}=M^{-1}\mathbf{b}$.
	
	Now consider the corresponding $K$-algebra endomorphism
	\[
	\varphi:
	\begin{cases}
	x_i \mapsto p_i=x_i, & 1\le i\le n-1,\\[1mm]
	x_n \mapsto p_n.
	\end{cases}
	\]
	Notice that the image of $\varphi$ is contained in the subalgebra generated by $x_1,\dots,x_{n-1}$, so $\varphi$ is not surjective.
\end{proof}

The case computed in Example \ref{example5.29} can be deduced from the previous theorem. 
	\begin{corollary}
	Let $p_1:=x_1, p_2:=x_2$ and $p_3:=a_1x_1+a_2x_2$ be skew Dixmier polynomials of $\mathcal{CSD}_3(K)$. Then, $a_1=-\frac{d_{23}}{d_{12}}, a_2=\frac{d_{13}}{d_{12}}$, and the corresponding endomorphism is not surjective.
	\end{corollary}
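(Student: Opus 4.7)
The plan is to apply Theorem \ref{theorem5.30} directly with $n=3$, which is odd, and to verify that all its hypotheses are satisfied in this smallest nontrivial case. First I would identify the matrix $M$ of size $(n-1)\times(n-1)=2\times 2$ from the theorem. Using the stated formula for its entries, with $i,k\in\{1,2\}$, one computes $M_{11}=M_{22}=0$, $M_{12}=d_{12}$ (since $k=2>i=1$), and $M_{21}=-d_{12}$ (since $k=1<i=2$). This gives
\[
M=\begin{bmatrix} 0 & d_{12}\\ -d_{12} & 0\end{bmatrix},
\]
whose determinant is $d_{12}^{2}$.

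Next I would verify the hypothesis $\det(M)\neq 0$: by Definition \ref{definition4.21}, every $d_{ij}\in K-\{0\}$, so in particular $d_{12}^{2}\neq 0$. With the right-hand side $\mathbf{b}=[d_{13},d_{23}]^{T}$, Theorem \ref{theorem5.30} guarantees a unique solution $\mathbf{a}=M^{-1}\mathbf{b}$ of the linear system $M\mathbf{a}=\mathbf{b}$, which I would then solve explicitly. The two scalar equations
\[
d_{12}\,a_2 = d_{13}, \qquad -d_{12}\,a_1 = d_{23},
\]
immediately give $a_1=-d_{23}/d_{12}$ and $a_2=d_{13}/d_{12}$, matching the corollary's claim.

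Finally, the non-surjectivity of the induced endomorphism $\varphi\colon \mathcal{CSD}_3(K)\to \mathcal{CSD}_3(K)$ sending $x_1\mapsto x_1$, $x_2\mapsto x_2$, $x_3\mapsto p_3$ follows directly from the conclusion of Theorem \ref{theorem5.30}: since $\varphi(x_3)=p_3\in K x_1+K x_2$, the image of $\varphi$ is contained in the subalgebra generated by $x_1$ and $x_2$, and by the PBW basis $\mathrm{Mon}(\mathcal{CSD}_3(K))$ this subalgebra does not contain $x_3$. There is essentially no obstacle in the argument, as the corollary is a direct instantiation of the theorem in the case $n=3$; the only point requiring attention is the purely notational identification of the $2\times 2$ matrix $M$ and of its inverse action on $\mathbf{b}$.
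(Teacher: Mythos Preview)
Your proposal is correct and follows essentially the same approach as the paper: identify the $2\times 2$ matrix $M$ from Theorem~\ref{theorem5.30}, note that $\det(M)=d_{12}^{2}\neq 0$, and solve $M\mathbf{a}=\mathbf{b}$ explicitly to obtain $a_1=-d_{23}/d_{12}$, $a_2=d_{13}/d_{12}$. The paper's proof is slightly terser (it simply writes $\mathbf{a}=M^{-1}\mathbf{b}$ with the explicit inverse), but the content is the same.
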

\begin{proof}In this case, the $2\times 2$ skew-symmetric matrix $M$ is given by
\[
M = \begin{bmatrix}
0 & d_{12} \\[1mm]
-d_{12} & 0
\end{bmatrix}, 
\]
with $\det(M)=d_{12}^2\neq 0$. 
The corollary is a consequence of the proof of Theorem \ref{theorem5.30}. For the coefficients of $p_3$ we have 
\begin{center} 
$\mathbf{a}=M^{-1}\mathbf{b}=\begin{bmatrix}0 & -\frac{1}{d_{12}}\\
\frac{1}{d_{12}} & 0\end{bmatrix}\begin{bmatrix}d_{13}\\ d_{23}\end{bmatrix}.$
\end{center}
\end{proof}

\begin{remark}\label{remark5.32}
When $n$ is even, the skew-symmetric matrix $M$ in Theeorem \ref{theorem5.30} has odd order, so $\det(M)=0$ and we can not ensure that the system
$M\mathbf{a}=\mathbf{b}$ has solution. For example, for $n=4$, 
\begin{center}
$M=\begin{bmatrix}0 & d_{12} & d_{13}\\
-d_{12} & 0 & d_{23}\\
-d_{13} & -d_{23} & 0\end{bmatrix}$, 
$\mathbf{a}=[a_1,a_2,a_3]^T$, $\mathbf{b}=[d_{14}, d_{24}, d_{34}]^T$,
\end{center}
and the row echelon form produces the equivalent system
\begin{center}
$a_1-\frac{1}{d_{12}}d_{23}a_3=0$, 

$a_2+\frac{1}{d_{12}}d_{13}a_3=0$, 

$0=1,$
\end{center}
without solution. 
\end{remark}

The following example illustrates the \texttt{DixmierProblem} package applied to $\mathcal{CSD}_4(K)$ with the constants $d_{ij}$ treated symbolically.  We select one representative family of skew Dixmier polynomials of total degree $\le1$, construct the corresponding endomorphism, and compute its inverse. To date, neither theoretical considerations nor computational experiments using this library have produced
any counterexample in the even dimensional case: every skew Dixmier endomorphism tested remains surjective, behaving in practice like an automorphism.

\begin{example}\label{example5.33}
	Let $\mathcal{CSD}_4(K)$ be the $K$\nobreakdash-algebra on generators $x_1,\dots,x_4$ with relations $[x_j,x_i]=d_{ij}$ for $1\le i<j\le4$. From the output of
	\begin{center}
		\texttt{skewDixmierPolynomials$\bigl(1,$ $n=4,$ dij $=[d_{1,2},d_{1,3}, d_{1,4}, d_{2,3}, d_{2,4}, d_{3,4}],$ skewTotalDeg = true$\bigr)$:}
	\end{center}
	we select one representative family of Dixmier polynomials:
	\begin{align*}
	P_1 &= \lambda_0x_1+\tfrac{\beta_1}{\beta_2}x_2+\frac{-d_{2,3}\lambda_4+d_{2,4}\lambda_3}{d_{3,4}}x_3
	-\frac{d_{2,4}d_{1,2}}{\lambda_{4}\beta_3}x_4, \\
	P_2 &= \lambda_1x_1+\lambda_{2}x_2+\frac{-d_{2,3}\lambda_4+d_{2,4}\lambda_3}{d_{3,4}}x_3
	-\frac{d_{2,4}d_{1,2}}{\lambda_{4}\beta_3}x_4,\\
	P_3 &= \frac{\beta_3d_{2,3}\lambda_3\lambda_4-d_{1,2}d_{2,4}d_{3,4}}
	{d_{1,2}\lambda_4\beta_3}x_1+\frac{-\beta_3d_{1,3}\lambda_3\lambda_4+d_{1,2}d_{1,4}
		d_{3,4}}{d_{1,2}\lambda_4\beta_3}x_2+\lambda_3x_3-\frac{d_{3,4}d_{1,2}}{\lambda_4\beta_3}x_4,\\
	P_4 &= \frac{d_{2,3}\lambda_4}{d_{1,2}}x_1-\frac{d_{1,3}\lambda_4}{d_{1,2}}x_2+\lambda_4x_3,
	\end{align*}
	where:
	\begin{align*}
	\beta_1 &= (d_{1,3}\lambda_1-d_{2,3}(\lambda_0-\lambda_2))d_{1,3}\beta_3
	\lambda_4^{2}+\Bigl[d_{3,4}^{2}(\lambda_0\lambda_2-1)d_{1,2}^{2}
	-\bigl((d_{1,4}\lambda_1\lambda_3-d_{2,4}(-\lambda_0\lambda_2+\lambda_0\lambda_3+2)) d_{1,3}+\\
	&\ \quad\ d_{1,4}d_{2,3}\beta_4\bigr)d_{3,4}d_{1,2}+\bigl((d_{1,4}\lambda_1-
	d_{2,4}\lambda_0)d_{1,3}+d_{1,4}d_{2,3}\lambda_2\bigr)\lambda_3(d_{1,3}d_{2,4}-d_{1,4}d_{2,3})
	\Bigr]\lambda_4+(d_{1,4}\lambda_1-d_{2,4}\\
	&\ \quad(\lambda_0-\lambda_2))d_{1,4}d_{3,4}d_{1,2},\\
	\beta_2 &= d_{2,3}^2\beta_3\lambda_4^2+\beta_3\bigl(d_{1,2}d_{3,4}\lambda_1-d_{2,3}d_{2,4}\lambda_3\bigr)\lambda_4+
	d_{1,2}d_{2,4}^2d_{3,4},\\
	\beta_3 &= d_{1,2}d_{3,4}-d_{1,3}d_{2,4}+d_{1,4}d_{2,3},\\
	\beta_4 &= -\lambda_0\lambda_2+\lambda_2\lambda_3+2.
	\end{align*}
	
	\medskip
	\noindent\textbf{Define the endomorphism}
	\[
	\alpha\colon \mathcal{CSD}_4(K)\;\longrightarrow\;\mathcal{CSD}_4(K),
	\quad x_i\mapsto P_i,
	\]
	constructed by
	\begin{center}
		\texttt{skewAuto1 := newSkewAutomorphism$\left(\left[P_1, P_2, P_3, P_4], [d_{12}, d_{13}, d_{14}, d_{23}, d_{24}, d_{34}\right]\right);$}
	\end{center}
	
	\medskip
	\noindent\textbf{Compute its inverse}
	\[
	\alpha^{-1}\colon x_i \mapsto Q_i,
	\]
	via
	\begin{center}
		\verb"skewAuto2 := skewInverseAutomorphism(skewAuto1);"
	\end{center}
	The resulting polynomials for the inverse automorphism are:
	\begin{align*}	
	Q_1 &= \frac{e_1}{\beta_3^2\lambda_4}x_1+\frac{e_2}{e_3\beta_3^2\lambda_4}x_2+
	\frac{e_4}{e_3\beta_3d_{3,4}}x_3+\frac{e_5}{e_3\beta_3^2d_{3,4}\lambda_4}x_4,\\
	Q_2 &= \frac{\beta_2}{\beta_3^2\lambda_4}x_1+\frac{e_6}{\beta_3^2\lambda_4}x_2+
	\frac{e_7}{\beta_3d_{3,4}}x_3+\frac{e_8}{\beta_3^2d_{3,4}\lambda_4}x_4,\\
	Q_3 &= \frac{e_9}{\beta_3^2\lambda_4}x_1+\frac{e_{10}}{\beta_2\beta_3^2\lambda_4}x_2+
	\frac{e_{11}}{\beta_2\beta_3}x_3+\frac{e_{12}}{\beta_2\beta_3^2\lambda_4}x_4,\\
	Q_4 &= \frac{e_{13}}{\beta_3d_{1,2}}x_1+\frac{e_{14}}{\beta_2\beta_3d_{1,2}}x_2+
	\frac{e_{15}}{\beta_2}x_3+\frac{e_{16}}{\beta_2\beta_3}x_4,
	\end{align*}
	where:
	\begin{align*}
	e_1 &= -d_{1,3}d_{2,3}\beta_3\lambda_4^2+\bigl(d_{1,2}d_{3,4}\lambda_2
	+d_{1,3}d_{2,4}\lambda_3\bigr)\beta_3\lambda_4-d_{1,2}d_{1,4}d_{2,4}d_{3,4},\\
	e_2 &=d_{1,3}^2d_{2,3}^2\beta_3^2\lambda_4^4-d_{2,3}d_{1,3}\beta_3^2\left(d_{3,4}(
	-\lambda_0+\lambda_2)d_{1,2}+\lambda_3(d_{1,3}d_{2,4}+d_{1,4}d_{2,3})\right)\lambda_4^3
	-\Bigl(d_{3,4}^3(\lambda_0\lambda_2-1)\\
	&\quad\ d_{1,2}^3-d_{3,4}^2\bigl(d_{2,4}(\lambda_0\lambda_2
	-\lambda_0\lambda_3-2)d_{1,3}+d_{1,4}d_{2,3}\beta_4\bigr)d_{1,2}^2+d_{3,4}\bigl(-d_{1,3}^2d_{2,4}^2
	(\lambda_0\lambda_3+1)+\lambda_3d_{1,4}d_{2,3}d_{2,4}\\
	&\quad (\lambda_2-\lambda_3+\lambda_0)d_{1,3}-d_{1,4}^2(\lambda_2\lambda_3+1)d_{2,3}^2\bigr)d_{1,2}
	+\lambda_3^2d_{1,3}d_{1,4}d_{2,3}d_{2,4}\bigl(d_{1,3}d_{2,4}-d_{1,4}d_{2,3}\bigr)\Bigr)\beta_3\lambda_4^2\\
	&\quad -d_{3,4}d_{1,2}\beta_3d_{2,4}\left(d_{3,4}(-\lambda_0+\lambda_2)d_{1,2}
	+\lambda_3(d_{1,3}d_{2,4}+d_{1,4}d_{2,3})\right)d_{1,4}\lambda_4+d_{1,2}^2d_{1,4}^2d_{2,4}^2d_{3,4}^2,\\
	e_3 &= d_{2,3}^2\beta_3\lambda_4^2+\beta_3\bigl(d_{1,2}d_{3,4}\lambda_1-d_{2,3}d_{2,4}
	\lambda_3\bigr)\lambda_4+d_{1,2}d_{2,4}^2d_{3,4},\\
	e_4 &= -d_{1,3}d_{2,3}^2\beta_5\beta_3\lambda_4^3+\left(\bigl(-d_{1,3}(\lambda_0-
	\lambda_2)d_{2,4}+d_{1,4}(d_{1,3}\lambda_1-d_{2,3}\lambda_2)\bigr)d_{1,2}d_{3,4}+
	\lambda_3d_{1,3}d_{2,4}\beta_5\right)\beta_3d_{2,3}\lambda_4^2\\
	&\quad -\Bigl(d_{1,2}^2\bigl(d_{1,4}\lambda_1\lambda_2-d_{2,4}\lambda_0\lambda_2+d_{2,4}
	\bigr)d_{3,4}^2+\bigl(-d_{1,3}(-\lambda_0\lambda_2+\lambda_0\lambda_3+2)d_{2,4}^2+d_{1,4}
	\bigl((-\lambda_0\lambda_2+2)d_{2,3}+\\
	&\quad\ d_{1,3}\lambda_1(\lambda_3-\lambda_2)\bigr)d_{2,4}+\lambda_1\lambda_2d_{2,3}d_{1,4}^2
	\bigr)d_{1,2}d_{3,4}-d_{2,4}\beta_5d_{1,3}\bigl((-\lambda_0\lambda_3-1)d_{2,4}+d_{1,4}\lambda_1
	\lambda_3\bigr)\Bigr)d_{3,4}d_{1,2}\lambda_4\\
	&\quad +d_{1,2}^2d_{1,4}d_{2,4}d_{3,4}^2(d_{1,4}\lambda_1-d_{2,4}\lambda_0),\\
	e_5 &= -\bigl(d_{1,2}(d_{1,3}\lambda_1-d_{2,3}\lambda_0)d_{3,4}-\lambda_3d_{2,3}\beta_5\bigr)\beta_3^2
	d_{1,3}d_{2,3}\lambda_4^3+\bigl(d_{1,2}^3(d_{1,3}\lambda_1\lambda_2-d_{2,3}\lambda_0\lambda_2+d_{2,3})
	d_{3,4}^3\\
	&\quad +\bigl(d_{1,4}\beta_4d_{2,3}^2-d_{1,3}\bigl(((\lambda_0+\lambda_2)\lambda_3-
	\lambda_2\lambda_0+2)d_{2,4}-d_{1,4}\lambda_1\lambda_2\bigr)d_{2,3}+d_{1,3}^2
	d_{2,4}\lambda_1(\lambda_3-\lambda_2)\bigr)d_{1,2}^2d_{3,4}^2\\
	&\quad -\beta_5\bigl(d_{1,4}(\lambda_2\lambda_3+1)d_{2,3}^2+\lambda_3d_{1,3}d_{2,4}
	(\lambda_3-\lambda_0-\lambda_2)d_{2,3}+\lambda_3d_{1,3}^2d_{2,4}\lambda_1\bigr)
	d_{1,2}d_{3,4}+\lambda_3^2d_{1,3}d_{2,3}d_{2,4}\beta_5^2\bigr)\\
	&\quad\ \beta_3\lambda_4^2-d_{2,4}\beta_3d_{3,4}\bigl(-d_{1,4}(\lambda_0-\lambda_2)d_{2,3}+d_{1,3}(d_{1,4}
	\lambda_1-d_{2,4}\lambda_2)\bigr)d_{1,2}d_{3,4}\lambda_4-d_{1,2}^2d_{1,4}d_{2,4}^2d_{3,4}^2\beta_5,\\
	e_6 &= d_{1,3}d_{2,3}\beta_3\lambda_4^2+\beta_3\bigl(d_{1,2}d_{3,4}\lambda_0-d_{1,4}d_{2,3}\lambda_3
	\bigr)\lambda_4+d_{1,2}d_{1,4}d_{2,4}d_{3,4},\\
	e_7 &= d_{1,2}(d_{1,4}\lambda_1-d_{2,4}\lambda_0)d_{3,4}-\lambda_4d_{2,3}(d_{1,3}d_{2,4}-d_{1,4}d_{2,3})\\
	e_8 &= -\beta_3\bigl(d_{1,2}\bigl(d_{1,3}\lambda_1-d_{2,3}\lambda_0\bigr)d_{3,4}
	-\lambda_3d_{2,3}\bigl(d_{1,3}d_{2,4}-d_{1,4}d_{2,3}\bigr)\bigr)\lambda_4
	-d_{1,2}d_{2,4}d_{3,4}\bigl(d_{1,3}d_{2,4}-d_{1,4}d_{2,3}\bigr),\\
	e_9 &= -\bigl((d_{1,3}\lambda_1+d_{2,3}\lambda_2)\beta_3\lambda_4+d_{1,3}d_{2,4}^2
	-d_{1,4}d_{2,3}d_{2,4}\bigr)d_{3,4},\\
	e_{10} &= \Bigl(d_{1,3}d_{2,3}\beta_3^2(d_{1,3}\lambda_1+d_{2,3}\lambda_2)
	\lambda_4^3+\beta_3\Bigl(-d_{1,4}^2(\lambda_2\lambda_3+1)d_{2,3}^3-d_{1,4}\Bigl((2+
	(\lambda_3-\lambda_0)\lambda_2)d_{1,2}d_{3,4}\\
	&\quad +d_{1,3}(d_{1,4}\lambda_1\lambda_3-d_{2,4}\lambda_2\lambda_3-d_{2,4})\Bigr)
	d_{2,3}^2+\Bigl(d_{3,4}^2(\lambda_0\lambda_2-1)d_{1,2}^2-\bigl((\lambda_0\lambda_2-2)d_{2,4}
	+d_{1,4}\lambda_1(\lambda_3-\\
	&\quad\ \lambda_0)\bigr)d_{1,3}d_{1,2}d_{3,4}+\lambda_3d_{1,3}^2d_{1,4}d_{2,4}\lambda_1
	\Bigr)d_{2,3}+d_{1,2}d_{1,3}d_{3,4}\lambda_0\lambda_1(d_{1,2}d_{3,4}-d_{1,3}d_{2,4})
	\Bigr)\lambda_4^2+d_{2,4}\beta_3\\
	&\quad \Bigl(\lambda_3d_{1,4}^2d_{2,3}^2-\bigl(d_{1,2}(\lambda_0-\lambda_2)d_{3,4}+d_{1,3}
	d_{2,4}\lambda_3\bigr)d_{1,4}d_{2,3}+d_{1,3}d_{1,2}d_{3,4}(d_{1,4}\lambda_1+d_{2,4}
	\lambda_0)\Bigr)\lambda_4\\
	&\quad +d_{1,2}d_{1,4}d_{2,4}^2d_{3,4}(d_{1,3}d_{2,4}-d_{1,4}d_{2,3})\Bigr)d_{3,4},\\
	\end{align*}
	\begin{align*}
	e_{11} &=-d_{2,3}(d_{1,3}\lambda_1+d_{2,3}\lambda_2)(d_{1,3}d_{2,4}-d_{1,4}d_{2,3})\beta_3
	\lambda_4^2+\Bigl(\bigl(d_{1,4}\lambda_1\lambda_2+(-\lambda_0\lambda_2+2)d_{2,4}\bigr)d_{1,4}
	d_{2,3}^2+\Bigl(d_{3,4}\\
	&\quad\ (d_{1,4}\lambda_1\lambda_2-d_{2,4}\lambda_0\lambda_2+d_{2,4})d_{1,2}+d_{1,3}\bigl((
	\lambda_0\lambda_2-2)d_{2,4}^2-d_{1,4}\lambda_1(\lambda_0+\lambda_2)d_{2,4}+d_{1,4}^2
	\lambda_1^2\bigr)\Bigr)d_{2,3}+\\
	&\quad\ d_{1,3}\lambda_1(d_{1,4}\lambda_1-d_{2,4}\lambda_0)(d_{1,2}d_{3,4}-d_{1,3}
	d_{2,4})\Bigr)d_{3,4}d_{1,2}\lambda_4+d_{1,2}d_{2,4}d_{3,4}(d_{1,4}\lambda_1-d_{2,4}
	\lambda_0)(d_{1,3}d_{2,4}\\
	&\quad -d_{1,4}d_{2,3}),\\
	e_{12} &= -\bigl(d_{1,2}(d_{1,3}\lambda_1-d_{2,3}\lambda_0)d_{3,4}-\lambda_3d_{2,3}(d_{1,3}
	d_{2,4}-d_{1,4}d_{2,3})\bigr)\beta_3^2(d_{1,3}\lambda_1+d_{2,3}\lambda_2)\lambda_4^2
	+\Bigl[d_{1,2}^2d_{3,4}^2\lambda_1\\
	&\quad -\bigl((-2d_{1,4}\lambda_1+d_{2,4}\lambda_3)d_{2,3}+2d_{1,3}\lambda_1d_{2,4}\bigr)
	d_{3,4}d_{1,2}+2\Bigl(\bigl((\lambda_3+\tfrac{\lambda_0}{2}-\tfrac{\lambda_2}{2})d_{2,4}-\tfrac{1}{2}
	d_{1,4}\lambda_1\bigr)d_{2,3}-\tfrac{1}{2}d_{1,3}\\
	&\quad\ \lambda_1d_{2,4}\Bigr)(d_{1,3}d_{2,4}-d_{1,4}
	d_{2,3})\Bigr]\beta_3d_{3,4}d_{1,2}\lambda_4+d_{1,2}^2d_{2,4}^2d_{3,4}^2(d_{1,2}
	d_{3,4}-2d_{1,3}d_{2,4}+2d_{1,4}d_{2,3}),\\
	e_{13} &= -d_{1,2}d_{1,4}d_{3,4}\lambda_1-d_{1,2}d_{2,4}d_{3,4}\lambda_2+d_{1,3}d_{2,3}d_{2,4}
	\lambda_4-d_{1,3}d_{2,4}^2\lambda_3-d_{1,4}d_{2,3}^2\lambda_4+d_{1,4}d_{2,3}d_{2,4}\lambda_3,\\
	e_{14} &= -d_{1,3}d_{2,3}^2(d_{1,3}d_{2,4}-d_{1,4}d_{2,3})\beta_3\lambda_4^3
	+\Bigl(d_{3,4}\Bigl(d_{1,4}d_{2,3}\lambda_0+\bigl(-\lambda_0+\lambda_2\bigr)d_{2,4}d_{1,3}
	+d_{1,4}\lambda_1d_{1,3}\Bigr)d_{1,2}+\\
	&\quad\ d_{1,3}^2d_{2,4}^2\lambda_3-\lambda_3d_{1,4}^2d_{2,3}^2\Bigr)\beta_3d_{2,3}\lambda_4^2
	+\Bigl(d_{3,4}^3\bigl(d_{1,4}\lambda_0\lambda_1+d_{2,4}\lambda_0\lambda_2-d_{2,4}\bigr)d_{1,2}^3
	+\Bigl(-d_{1,4}\Bigl((\lambda_0+\lambda_2)\lambda_3\\
	&\quad -\lambda_2\lambda_0+2\Bigr)d_{2,4}+d_{2,4}d_{1,3}\Bigl(d_{2,4}(-\lambda_0\lambda_2+\lambda_0
	\lambda_3+2)-d_{1,4}\lambda_1\lambda_0\Bigr)\Bigr)d_{3,4}^2d_{1,2}^2-(d_{1,3}d_{2,4}-d_{1,4}d_{2,3})
	d_{3,4}\\
	&\quad \Bigl(d_{1,4}\lambda_3\bigl((\lambda_3-\lambda_0-\lambda_2)
	d_{2,4}-d_{1,4}\lambda_1\bigr)d_{2,3}+d_{1,3}d_{2,4}^2(\lambda_0\lambda_3+1)\Bigr)d_{1,2}
	+\lambda_3^2d_{1,4}d_{2,3}d_{2,4}(d_{1,3}d_{2,4}-\\
	&\quad\ d_{1,4}d_{2,3})^2\Bigr)\lambda_4+d_{2,4}\Bigl(d_{3,4}(d_{1,4}\lambda_1+d_{2,4}\lambda_2)
	d_{1,2}+(d_{1,3}d_{2,4}-d_{1,4}d_{2,3})\lambda_3d_{2,4}\Bigr)d_{1,4}d_{3,4}d_{1,2},\\
	e_{15} &= \lambda_4\Bigl(d_{2,3}^2(d_{1,2}d_{3,4}-2d_{1,3}d_{2,4}+2d_{1,4}d_{2,3})\lambda_4^2+\bigl(
	d_{1,3}\bigl((2\lambda_3-\lambda_0+\lambda_2)d_{2,3}+\tfrac{1}{2}d_{1,3}\lambda_1\bigr)d_{2,4}^2+\bigl(-\\
	&\quad (2\lambda_3+\lambda_0-\lambda_2)d_{1,4}d_{2,3}^2
	-d_{1,2}\lambda_3d_{2,3}d_{3,4}-2d_{1,2}d_{3,4}d_{1,3}\lambda_1\bigr)d_{2,4}+\lambda_1
	(d_{1,2}^2d_{3,4}^2+2d_{1,2}d_{1,4}d_{2,3}d_{3,4}-\\
	&\quad\ d_{1,4}^2d_{2,3}^2)\bigr)\lambda_4-\bigl(d_{1,3}d_{2,4}^2\lambda_3+(d_{1,2}
	d_{3,4}\lambda_2-d_{1,4}d_{2,3}\lambda_3)d_{2,4}+d_{1,2}d_{1,4}d_{3,4}\lambda_1\bigr)
	(d_{1,4}\lambda_1-d_{2,4}\lambda_0)\Bigr),\\
	e_{16} &= \beta_3d_{2,3}\left(d_{1,4}(2\lambda_3+\lambda_0)d_{2,3}^2
	+\left(-d_{1,3}(2\lambda_3+\lambda_0)d_{2,4}+d_{1,2}d_{3,4}\lambda_3
	-d_{1,4}d_{1,3}\lambda_1\right)d_{2,3}+d_{1,3}^2d_{2,4}\lambda_1\right)\lambda_4^2\\
	&\quad +\Bigl(-d_{2,4}(2\lambda_3+\lambda_0+\lambda_2)d_{1,4}^2\lambda_3d_{2,3}^3-3d_{1,4}\Bigl(-
	(4\lambda_3+2\lambda_0+2\lambda_2)
	d_{1,3}\lambda_3d_{2,4}^2+\bigl(3\lambda_3^2+(\lambda_0+\lambda_2)\lambda_3-\\
	&\quad\ \lambda_2\lambda_0+2\bigr)d_{3,4}d_{1,2}d_{2,4}-d_{1,4}d_{3,4}\lambda_1(2\lambda_3+\lambda_0)
	d_{1,2}\Bigr)d_{2,3}^2+\Bigl(-(2\lambda_3+\lambda_0+\lambda_2)d_{1,3}^2\lambda_3
	d_{2,4}^3+(3\lambda_3^2\\
	&\quad +(\lambda_0+\lambda_2)\lambda_3-\lambda_2\lambda_0+2)d_{1,3}d_{3,4}d_{1,2}d_{2,4}^2-d_{3,4}\Bigl(d_{3,4}
	(-\lambda_0\lambda_2+\lambda_3^2+1)d_{1,2}+d_{1,4}d_{1,3}(4\lambda_3+\lambda_0\\
	&\quad +\lambda_2)\lambda_1\Bigr)d_{1,2}d_{2,4}+d_{1,4}d_{3,4}\Bigl(d_{3,4}(3\lambda_3+\lambda_0
	)d_{1,2}-d_{1,4}d_{1,3}\lambda_1\Bigr)\lambda_1d_{1,2}\Bigr)d_{2,3}+(d_{1,2}d_{3,4}-d_{1,3}d_{2,4})d_{3,4}
	\\ &\quad\ \lambda_1\bigl(-d_{1,3}(2\lambda_3+\lambda_2)d_{2,4}+d_{1,2}d_{3,4}\lambda_3-d_{1,4}
	d_{1,3}\lambda_1\bigr)d_{1,2}\Bigr)\lambda_4+d_{2,4}d_{3,4}\Bigl(\bigl((2\lambda_3+\lambda_2)d_{2,4}+
	d_{1,4}\lambda_1\bigr)d_{1,4}\\
	&\quad\ d_{2,3}+d_{2,4}\bigl(-d_{1,3}(2\lambda_3+\lambda_2)d_{2,4}+d_{1,2}d_{3,4}\lambda_3-d_{1,4}d_{1,3}
	\lambda_1\bigr)\Bigr)d_{1,2}.
	\end{align*}
	
	\medskip
	\noindent\textbf{Verify bijectivity}
	
	Compose in both orders:
	\begin{center}
		\verb"skewAuto3 := composeSkewAutomorphism(skewAuto1, skewAuto2);"\\[1ex]
		\verb"skewAuto4 := composeSkewAutomorphism(skewAuto2, skewAuto1);"
	\end{center}
	and execute
	\begin{center}
		\verb"skewAuto3:-getPolys(),"\ \verb"skewAuto4:-getPolys();"
	\end{center}
	Maple outputs
	\begin{center}
		$[x_1, x_2, x_3, x_4],$\ $[x_1, x_2, x_3, x_4]$
	\end{center}
	confirming both compositions equal the identity.
\end{example}

\begin{conjecture}\label{conjecture4.18}
$\mathcal{CSD}_n(K)$ is $D$ when $n$ is even.
\end{conjecture}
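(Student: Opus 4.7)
The plan is to reduce Conjecture \ref{conjecture4.18} to the generalized Dixmier conjecture $DC_{n/2}$ via an explicit $K$-algebra isomorphism $\mathcal{CSD}_n(K)\cong A_{n/2}(K)$, valid whenever $n$ is even. Since the defining relations $[x_j,x_i]=d_{ij}$, $1\le i<j\le n$, have constant right-hand sides, they are encoded by the skew-symmetric matrix $D\in M_n(K)$ with entries $D_{ij}:=d_{ij}$ for $i<j$, $D_{ji}:=-D_{ij}$, and $D_{ii}:=0$. Any linear change of variables $y_i:=\sum_j P_{ij}x_j$ with $P\in GL_n(K)$ transforms $D$ into $PDP^T$, because
\[
[y_i,y_j] \;=\; \sum_{k,l} P_{ik}P_{jl}[x_k,x_l] \;=\; (PDP^T)_{ij}.
\]
The classical normal form theorem for alternating bilinear forms over a field of characteristic zero ensures that whenever $D$ is non-degenerate, there exists $P\in GL_n(K)$ with $PDP^T=J_{n/2}:=\begin{bmatrix}0 & I_{n/2}\\ -I_{n/2} & 0\end{bmatrix}$.

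First I would check that non-degeneracy of $D$ is forced by the standing hypotheses of Conjecture \ref{conjecture4.18}. Any $v\in\ker(D)$ yields a non-constant element $z:=\sum_i v_i x_i$ with $[z,x_k]=0$ for all $k$, so $z$ is central and lies outside $K^*=\mathcal{CSD}_n(K)^*$ by Proposition \ref{corollary3.2.2}; the two-sided ideal it generates is then proper and nonzero, contradicting simplicity (Theorem \ref{theorem4.16}). Consequently $\ker(D)=0$. Next, with the $y_i$ constructed from a symplectic $P$, I relabel $y_1,\dots,y_{n/2}$ as $t_1,\dots,t_{n/2}$ and $y_{n/2+1},\dots,y_n$ as $x_1,\dots,x_{n/2}$; the commutator identities above become exactly the canonical Weyl relations $[x_i,t_j]=\delta_{ij}$, $[t_i,t_j]=[x_i,x_j]=0$. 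The universal property of skew $PBW$ extensions (Proposition \ref{122}), applied in both directions, yields mutually inverse $K$-algebra homomorphisms $\mathcal{CSD}_n(K)\rightleftarrows A_{n/2}(K)$, establishing the isomorphism.

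Under this isomorphism, $K$-algebra endomorphisms correspond bijectively, and surjectivity is transported; hence $\mathcal{CSD}_n(K)$ is Dixmier if and only if $A_{n/2}(K)$ is. For $n=2$ the conjecture is then immediate from Zheglov's theorem (Theorem \ref{Theorem1.3}), while for $n\ge 4$ it becomes equivalent to the generalized Dixmier conjecture $DC_{n/2}$.

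The only substantive obstacle, and unfortunately the decisive one, is that $DC_m$ is open for every $m\ge 2$; thus any unconditional proof of Conjecture \ref{conjecture4.18} for some $n\ge 4$ would automatically produce one for $DC_{n/2}$. A plausible line of attack inspired by Section \ref{section5} would be to generalize the matrix characterization of Dixmier polynomials in Theorems \ref{theorem5.9a} and \ref{theorem5.9} to the multivariate setting $A_m(K)\cong\mathcal{CSD}_{2m}(K)$, producing determinantal identities on the coefficients of a skew Dixmier $n$-tuple and combining them with the simplicity of $\mathcal{CSD}_n(K)$ together with the non-degenerate symplectic structure carried by $D$. Extensive computational experimentation with the \textbf{SPBWE} library, as reported in Example \ref{example5.33}, has failed to produce any counterexample for even $n$, lending empirical support to the conjecture; however, translating this evidence into a proof ultimately confronts the same fundamental obstacle met in every previous attempt on $DC_m$ for $m\ge 2$.
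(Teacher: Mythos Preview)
The statement is a conjecture; the paper offers no proof, only the heuristic paragraph that follows it. Your proposal is likewise not a proof (as you acknowledge for $n\ge 4$), but the reduction $\mathcal{CSD}_n(K)\cong A_{n/2}(K)$ you outline is sharper than anything the paper provides and is worth examining on its own terms.

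The step that fails is the non-degeneracy of $D$. Your deduction from Theorem~\ref{theorem4.16} is formally correct---a nonzero $v\in\ker D$ yields a non-constant central $z=\sum_i v_ix_i$, impossible in a simple domain with $A^*=K^*$---but the hypothesis ``all $d_{ij}\ne 0$'' does \emph{not} force $\det D\ne 0$. Already for $n=4$ the Pfaffian $d_{12}d_{34}-d_{13}d_{24}+d_{14}d_{23}$ vanishes when $d_{13}=2$ and the remaining five $d_{ij}=1$, and one checks directly that $z=-x_1+x_2-x_4$ is central. This simultaneously exposes a gap in Theorem~\ref{theorem4.16}(c),(d) for these parameters and shows that here $\mathcal{CSD}_4(K)\cong A_1(K)[u,v]$ with $u,v$ central; the endomorphism fixing the Weyl generators and $v$ while sending $u\mapsto u^2$ is a non-surjective $K$-algebra endomorphism, so Conjecture~\ref{conjecture4.18} is \emph{false} as stated. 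Your isomorphism with $A_{n/2}(K)$, and the resulting equivalence with $DC_{n/2}$, are valid precisely on the locus $\mathrm{Pf}(D)\ne 0$---which is also where the paper's own Example~\ref{example5.33} implicitly lives, since $\beta_3=\mathrm{Pf}(D)$ appears in the denominators there. The conjecture should carry this non-degeneracy hypothesis.
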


This conjecture is induced by Theorem \ref{Theorem1.3}, Remark \ref{remark5.32}, Example \ref{example5.33} and the following reasoning: According to Theorem \ref{theorem4.15}, we have to prove that if $f_1,\dots,f_n\in A:=\mathcal{CSD}_n(K)$ are such that
\begin{center}
	$f_jf_i=f_if_j+d_{ij}$, for all $1\leq i<j\leq n$,
\end{center}
i.e., if $f_1,\dots,f_n$ are skew Dixmier, then the subalgebra generated by $f_1,\dots,f_n$ coincides with $A$. By the universal property of the skew $PBW$ extensions (Proposition \ref{122}), given the inclusion $\iota: K\to A$, there exists a unique ring homomorphism $\widetilde{\iota}:A\to A$ such that
$\widetilde{\iota}\iota=\iota$ and $\widetilde{\iota}(x_i)=f_i$, for every $1\leq i\leq n$. To complete the proof we have to show that $\widetilde{\iota}$ is surjective.
From Theorem \ref{theorem4.16}\ $A$ is simple, so $\widetilde{\iota}$ is injective, whence $A\cong Im(\widetilde{\iota})$. Assume that $\widetilde{\iota}$ is not surjective, i.e., there exists $x_i\notin Im(\widetilde{\iota})$. We believe that when $n$ is even a contradiction arise.

\begin{remark}\label{remark4.18}
	If Conjecture \ref{conjecture4.18} is true, and Remark \ref{reamrk3.9} (i) is also true, then the Generalized Dixmier Conjecture is true. This follows from the fact that $A_1(K)=\mathcal{CSD}_2(K)$, with $d_{12}=1$. Moreover, from Proposition \ref{proposition1.7}, then the Jacobian Conjecture is also true.  	
\end{remark}


\subsubsection*{DixmierProblem: A Versatile Toolkit for Computational Noncommutative Algebra}

In the realm of noncommutative algebra and quantum theory, computational tools that enable explicit construction and manipulation of algebraic structures are essential. The \texttt{DixmierProblem} package, a central component of the SPBWE library, delivers a sophisticated framework for computing Dixmier polynomials in both Weyl algebras and the $K$-algebra $\mathcal{CSD}_n(K)$. By facilitating the explicit computation of automorphisms through specialized functions, the package not only deepens our understanding of internal symmetries and invariants but also drives advances in both pure and applied mathematics.

At its core, \texttt{DixmierProblem} allows researchers to generate classical and skew variants of Dixmier polynomials, which play a crucial role in characterizing the commutation behavior of algebra generators. This capability is particularly significant in addressing longstanding problems, such as the Dixmier and Jacobian conjectures, by providing experimental evidence and deep insights into the structure of automorphism groups. Furthermore, in applications ranging from quantum mechanics (where Weyl algebras model canonical commutation relations) to deformation quantization and differential operator theory, the package proves indispensable for both theoretical investigations and practical computations.

Beyond its research utility, the algorithmic and symbolic features embedded in \texttt{DixmierProblem} streamline complex calculations and foster rigorous verification, making it a powerful asset for both educators and researchers. Its potential applications in emerging areas like cryptography demonstrate its broad impact, offering innovative avenues for exploring security protocols based on noncommutative algebra. In summary, \texttt{DixmierProblem} stands as a robust and flexible toolkit, enabling a detailed exploration of algebraic structures that is vital for advancing modern mathematical and physical research.


\bigskip

\bigskip

\bigskip

\bigskip

\bigskip


\end{document}